\documentclass[mnsc,nonblindrev]{informs3-YCA}

\OneAndAHalfSpacedXI %

\usepackage{endnotes}
\let\footnote=\endnote

\usepackage{url}
\usepackage{enumitem}
\usepackage{amsmath}
\usepackage{amssymb}
\usepackage{natbib}
\bibpunct[, ]{(}{)}{,}{a}{}{,}%
\def\bibfont{\small}%
\def\bibsep{\smallskipamount}%
\usepackage{tikz}
\usepackage{pgfplots}
\usetikzlibrary{arrows}
\usetikzlibrary{calc,arrows.meta,positioning}
\usepackage{ctable}
\usepackage{booktabs}
\usepackage{multirow}
\usepackage{algorithm}
\usepackage[noend]{algorithmic}
\usepackage{color}
\usepackage{graphicx}
\usepackage{enumitem}
\usepackage{cancel}

\allowdisplaybreaks

\usepackage{mathtools}
\usepackage[hidelinks]{hyperref}

\usepackage{xspace}

\usepackage{pgfplotstable}
\usetikzlibrary{arrows.meta}
\usepgfplotslibrary{groupplots}

\usepackage[title]{appendix}
\usepackage{titletoc}
\def\Pbb{\mathbb{P}}

\def\Ebb{\mathbb{E}}

\def\Dcal{\mathcal{D}}
\def\Ncal{\mathcal{N}}

\def\Acal{\mathcal{A}}
\def\Ucal{\mathcal{U}}

\def\Scal{\mathcal{S}}
\def\Gcal{\mathcal{G}}
\def\Rcal{\mathcal{R}}
\def\Ical{\mathcal{I}}

\def\Pcal{\mathcal{P}}

\usepackage{accents}
\newcommand{\ubar}[1]{\underaccent{\bar}{#1}}
\newcommand{\pr}[1]{{\rm Pr} \left[ #1 \right]}

\newcommand{\ex}[1]{{\mathbb E} \left[ #1 \right]}

\newcommand{\eps}{\epsilon}
\newcommand{\rev}{{\cal R}}

\def\Ocal{\mathcal{O}}
\def\Fcal{\mathcal{F}}

\def\Ccal{\mathcal{C}}
\def\Vcal{\mathcal{V}}
\newcommand{\opt}{\mathrm{OPT}}
\newcommand{\even}{\mathrm{EVEN}}
\newcommand{\odd}{\mathrm{ODD}}

\def\FP{\textsc{FP}}
\def\JLP{\textsc{JLP}}
\def\REV{\textsf{Rev}}

\def\thetab{\boldsymbol{\theta}}

\def\nopurchase{\texttt{0}}

\DeclarePairedDelimiter\floor{\lfloor}{\rfloor}

\usepackage{titlesec}
\titleformat{\paragraph}[runin]{\normalfont\bfseries}{\theparagraph}{1em}{}

\usepackage{natbib}
 \bibpunct[, ]{(}{)}{,}{a}{}{,}%
 \def\bibfont{\small}%
 \def\bibsep{\smallskipamount}%

\TheoremsNumberedThrough     %
\ECRepeatTheorems

\EquationsNumberedThrough    %

\newcounter{jakecounter}

\newcommand{\YC}[1]{{\color{black}#1}}

\begin{document}

\RUNAUTHOR{Akchen, {\c C}ak{\i}ro{\u g}lu, Caro, and Feldman}

\RUNTITLE{Assortment and Inventory Optimization under the CFTC Choice Model}

\TITLE{\large Do You Have My Size In Stock? Assortment and Inventory Optimization Under the Consider-Fit-Then-Choose Choice Model}

\ARTICLEAUTHORS{%
	\AUTHOR{Yi-Chun Akchen$^1$, Kaan {\c C}ak{\i}ro{\u g}lu$^1$, Felipe Caro$^2$, Jacob Feldman$^3$}
	\AFF{$^1$School of Management, University College London, London E14 5AB, United Kingdom}
	\AFF{$^2$Anderson School of Management, University of California, Los Angeles, California 90095, United States}
	\AFF{$^3$Olin Business School, Washington University in St. Louis, St. Louis 63130, Missouri, United States}
	\AFF{yi-chun.akchen@ucl.ac.uk, kaan.cakiroglu.25@ucl.ac.uk, felipe.caro@anderson.ucla.edu, jbfeldman@wustl.edu}
}

\ABSTRACT{In apparel retail and other applications, when a customer's preferred size is unavailable, demand may shift to nearby sizes. This substitution creates new assortment and inventory optimization challenges because product availability---and, consequently, demand---becomes coupled across sizes. To address these challenges, we introduce the \emph{consider-fit-then-choose} (CFTC) model, a simple yet rich framework for capturing size-dependent choice behavior. The model applies to settings in which each product may be offered in multiple sizes and the set of available sizes directly affects both customer preferences and consideration sets through a notion of fit, measured by the distance between an available size and the customer's ideal size. Under this model, we study both assortment optimization and the \emph{show-all inventory selection problem}, in which the retailer chooses initial inventory levels before the selling horizon and subsequently offers every in-stock product to each arriving customer.

We first show that assortment optimization under the CFTC model is NP-hard and develop a polynomial-time approximation scheme (PTAS) when customers are willing to deviate by at most $O(1)$ sizes from their ideal size. Combined with the recent black-box framework of~\cite{fu2026joint}, this result yields a nearly $0.272$-approximation for the corresponding show-all inventory selection problem. We then ask whether stronger guarantees can be obtained by exploiting the specific choice dynamics of the CFTC model. Our approach to both the fluid and stochastic versions of the inventory problem begins by stocking only every other size, thereby decoupling demand across the stocked sizes. Under this restriction, the CFTC model reduces to a special class of mixed multinomial logit models, which we prove satisfies the convex chain decomposition (CCD) property introduced by~\cite{goyal2023pricing}. This property allows us to translate solutions to a classical choice-based linear program into feasible show-all inventory policies.

Building on this structure, for the fluid problem we develop a polynomial-time $(\frac{1}{2}-\eps)$-approximation algorithm under adjacent-size substitution and a mild condition on the preference weights. For the stochastic problem, we establish an asymptotic $\frac{1}{2}$-approximation guarantee using a new coupling argument that connects the stochastic inventory process to its fluid counterpart. Numerical experiments on instances calibrated using footwear data demonstrate that our inventory solutions achieve small optimality gaps across a broad range of substitution patterns and problem settings.
}%

\KEYWORDS{Apparel retail, assortment optimization, inventory optimization, approximation algorithms}
\HISTORY{This version: August 14, 2026.}

\maketitle

\vspace{-1.0cm}

\section{Introduction}

Apparel retailing has long been an important application area for operations management because of the industry's short product life cycles, uncertain demand, and extensive product variety. These characteristics have motivated a rich stream of research on analytical models for improving operational decisions, including pricing \citep{caro2012clearance}, initial inventory allocation \citep{gallien2015initial}, and buy-back decisions for rental fashion products \citep{apaolaza2025rented}. Among these decisions, inventory management remains one of the most fundamental, as retailers must determine how much inventory to procure before the selling season begins despite substantial uncertainty about future demand. Prior work has developed a variety of inventory-planning models for apparel products, including policies that account for seasonal demand patterns \citep{smith1998clearance}, stochastic dynamic programs for initial and replenishment ordering \citep{fisher2001optimizing}, and mixed-integer optimization models for inventory allocation under practical merchandising constraints \citep{caro2010inventory}.

Unlike in many other retail settings, apparel inventory management features a particularly strong interaction between inventory availability and customer choice, with stockouts inducing substantial demand substitution \citep{ergin2022empirical,li2022estimating}. When a customer's preferred product is unavailable, the customer may purchase an alternative rather than leave without making a purchase \citep[Chapter~4]{gallego2019revenue}. Ignoring such substitution can lead to biased demand estimates and suboptimal operational decisions, thereby reducing the profitability of inventory policies \citep{campo2000towards,musalem2010structural,che2012investigating}. Although substitution across product styles has received considerable attention in the literature \citep{boada2020estimating}, apparel products exhibit an additional and distinctive form of substitution across sizes. Unlike style, which captures intrinsic product attributes such as design and brand, size primarily determines how well a product physically fits a customer. Consequently, size substitution has a localized and ordered structure: customers may be willing to substitute to nearby sizes but are generally unwilling to purchase sizes that differ substantially from their ideal fit. Indeed, recent empirical evidence suggests that such localized size substitution can be economically significant. Using transaction data from one of the largest sports-footwear retailers in China, \citet{li2022estimating} estimate that nearly one quarter of the unmet demand for an out-of-stock size spills over to adjacent sizes of the same style. This finding suggests that inventory decisions are coupled not only across styles but also across sizes, because the inventory available in one size directly affects demand for neighboring sizes.

Despite its practical importance, the operations literature contains relatively few models that explicitly account for size substitution through a notion of fit. A notable recent exception is \citet{akchen2023size}, who propose a two-stage consider-then-choose framework for apparel purchases. In their model, customers first determine which styles are acceptable based on the availability of sizes near their ideal size and then choose among the considered products according to a multinomial logit (MNL) model. Using a proprietary footwear dataset, they estimate a magnitude of size substitution similar to that reported by \citet{li2022estimating}.

\paragraph{Our main focus.}
In this paper, we propose a generalization of the model considered in~\cite{akchen2023size}, which we term the \emph{consider-fit-then-choose} (CFTC) model, named playfully for its connection to the well-known consider-then-choose framework for modeling choice behavior~\citep{farias2013nonparametric,aouad2021assortment,akchen2025consider}. As fully formalized in Section~\ref{sec:model}, the CFTC model captures two-dimensional demand substitution across both product bases (e.g., styles or brands) and sizes. For each product base, a customer first identifies the available size closest to her ideal size, provided that this size lies within her acceptable range. These product--size combinations collectively form the customer's consideration set. The customer then makes a final choice according to a modified MNL model in which the preference weight of a product is discounted whenever its available size differs from the customer's ideal size. Although motivated by apparel retailing, the CFTC model also applies to a broader range of settings (Section~\ref{subsec:model_comparison}).

Under this choice model, we study a canonical inventory optimization problem, which we refer to as the \emph{show-all inventory selection problem}. Before the start of a finite selling horizon, the retailer chooses integral initial inventory levels for all products, subject to a cardinality constraint on total inventory. Thereafter, every arriving customer chooses from all products that remain in stock---hence the \emph{show-all} distinction in the name of the problem---so that changes in the offered assortment arise only through random stockout events. The retailer's objective is to select a feasible initial inventory vector that maximizes expected revenue over the selling horizon. As discussed further in our literature review in Section~\ref{subsec:lit_review}, this problem has primarily been studied under rank-based choice models~\citep{goyal2016near,goldstein2023dynamic} and the Multinomial Logit (MNL) model~\citep{aouad2018greedy,aouad2023stability,sun2024unified}. Very recent work by~\cite{fu2026joint} elegantly provides a black-box $0.272\alpha$-approximation for any choice model within the random-utility-maximization framework, a broad class that includes the CFTC model. Here, $\alpha$ denotes the best available approximation guarantee for the corresponding cardinality-constrained assortment optimization problem under the choice model at hand.

\subsection{Main contributions}

In what follows, we summarize our main contributions, in particular focusing on how our results exploit and build upon the result of~\cite{fu2026joint}.

\paragraph{Assortment optimization (Section~\ref{sec:assortment_optimization} and Appendix~\ref{app-sec:AO}).}
So as to exploit the result of~\cite{fu2026joint}, we first consider cardinality-constrained assortment optimization under the most general form of the CFTC model. In this problem, the retailer selects an assortment to maximize expected revenue when customer choice is governed by the CFTC model, subject to an upper bound on the number of products that can be offered. We show that even the unconstrained version of this problem is NP-hard when each product base is offered in only two sizes. Despite this hardness, we develop a polynomial-time approximation scheme (PTAS) that computes a $(1-\epsilon)$-optimal solution when customers consider only products whose sizes deviate from their ideal size by at most $O(1)$ size levels. Our approach is based on a carefully designed approximate dynamic program that identifies a small number of unprofitable sizes to exclude, thereby decoupling the problem across blocks of contiguous sizes. Combined with the recent black-box result of~\cite{fu2026joint}, this PTAS yields a $(0.272-\epsilon)$-approximation guarantee for the show-all inventory selection problem.

We then ask whether this nearly $0.272$ guarantee can be improved in certain settings by developing algorithms tailored specifically to the choice dynamics of the CFTC model. Interestingly, as summarized next, pursuing this question uncovers a collection of new algorithmic and structural insights that are entirely distinct from those underlying~\cite{fu2026joint}. Moreover, several of these techniques extend beyond the CFTC model and may prove useful for inventory optimization under broader classes of choice models. For tractability, we consider a simplified variant of the CFTC model in which, most notably, a customer expands her consideration set to include only one of the two adjacent sizes when her ideal size is out of stock.

\paragraph{A new CCD choice model (Section~\ref{sec:even_odd}).}
Our approaches for tackling the show-all inventory selection problem under the CFTC model begin by restricting attention to solutions that stock only every other size. This restriction loses at most one half of the optimal value but, critically, decouples demand across the stocked sizes, since each customer will deviate only to neighbors of their ideal size. In doing so, the CFTC model reduces to what we call the \emph{mixed-NP-MNL} model, a special class of mixed-MNL models in which customer types differ only in their no-purchase weights. Thus, our original inventory problem effectively reduces to the show-all inventory selection problem under the mixed-NP-MNL model. From here, we prove that the mixed-NP-MNL model satisfies the \emph{convex chain decomposition} (CCD) property introduced by~\citet{goyal2023pricing}, whose previously known members included only the MNL and Markov chain choice models. This property allows us to formulate the fluid version of our inventory problem as a simple adaptation of the classical choice-based deterministic linear program~\citep{gallego2004managing,liu2008choice}. In general, this linear program does not enforce that the displayed assortment consists of all in-stock products. The CCD property provides precisely the bridge needed to overcome this issue: it allows us to show that an optimal solution to the linear program can be implemented as a show-all policy, thereby establishing the validity of our approach.

\paragraph{Results for the fluid setting (Section~\ref{sec:fluid_setting}).} We start by considering the fluid version of the show-all inventory problem, in which stochastic demand is replaced by its mean. In this setting, we develop a $(\frac{1}{2}-\epsilon)$-approximation algorithm whose running time is polynomial when the ratio between the maximum and minimum base preference weights is $O(1)$. As discussed above, our approach first restricts attention to stocking either the even- or odd-indexed sizes, losing at most one half of the optimal revenue while decoupling demand across the retained sizes. We then partition products into weight classes and round their preference weights within each class, which reveals a useful structure in the optimal inventory vector: within each class, products can be partitioned into \emph{stocking groups} according to their inventory levels, with higher-revenue products receiving weakly more inventory. After efficiently guessing these groups and the inventories of low-stock products, the remaining high-stock inventory levels are determined through a fluid linear program. Crucially, the CCD property of the resulting mixed-NP-MNL model allows us to show that the solution of this linear program can be implemented as a feasible show-all policy, completing the $(\frac{1}{2}-\epsilon)$ guarantee.

\paragraph{Results for the stochastic setting (Section~\ref{sec:asym_one_half}).}
We next study the stochastic version of the inventory problem, in which inventory is depleted one unit at a time according to customers' discrete choices. Unlike in the fluid setting, these choices generate random inventory depletion and stockout times, so the sequence of offered assortments depends on the realized purchase outcomes. We develop an asymptotic $\frac{1}{2}$-approximation algorithm for this stochastic inventory optimization problem. Our approach again restricts attention to either even- or odd-indexed sizes, thereby reducing the problem to independent mixed-NP-MNL instances while losing at most one half of the optimal benchmark. We then solve a joint fluid linear program that simultaneously determines inventory levels and assortment offerings, and round down its inventory solution to obtain feasible initial inventories. The CCD property of the mixed-NP-MNL model allows us to show that, in the fluid setting, the resulting show-all inventory process asymptotically achieves the value prescribed by this linear program.

The remaining challenge is to transfer this guarantee to the stochastic setting, where random purchases lead to random stockout trajectories. We overcome this difficulty through a novel coupling argument. Specifically, we construct an auxiliary stochastic process that follows the stockout trajectory prescribed by the fluid solution while preserving randomness in customer choices. We show that the expected revenue of this auxiliary process asymptotically approaches the fluid revenue and is no greater than the expected revenue of the original stochastic inventory process. This completes the bridge from the fluid linear program to the stochastic $\frac{1}{2}$-approximation guarantee. We believe that this coupling framework may be of independent interest for analyzing stochastic inventory systems under more general forms of dynamic demand substitution. Moreover, as discussed at the end of this section, this asymptotic result extends trivially when there are ordering or stocking costs associated with each products, in addition to the upper bound on the total number of units that can stocked.  To the best of knowledge, the result of~\cite{fu2026joint} cannot accommodate these sorts of costs.

\paragraph{Numerical experiments (Section~\ref{sec:numerical_experiments}).} We complement our theoretical results with numerical experiments based on instances calibrated from real-world footwear data. The experiments demonstrate that the proposed inventory policies achieve consistently small optimality gaps relative to the fluid upper bound while remaining effective across a wide range of problem settings and substitution levels. These results suggest that the structural insights developed in the paper also translate into strong practical performance.

\subsection{Related literature}\label{subsec:lit_review}

\paragraph{Choice models with consideration sets.} Our CFTC model generalizes the choice model proposed by~\cite{akchen2023size}; as discussed in Section~\ref{subsec:model_comparison}, this extension better captures several important features of real-world size-substitution behavior. More broadly, our work contributes to the growing literature on assortment optimization under consideration set-based choice models~\citep{feldman2018capacitated,feldman2019assortment,aouad2021assortment,gallego2024random,aouad2025click,akchen2025consider,farzaneh2026feature}. A key distinction of the CFTC model is that customers' consideration sets are \emph{endogenous}: rather than being specified independently of the retailer's decisions, they depend on the offered assortment through customers' size-substitution behavior. This endogeneity creates new challenges for assortment and inventory optimization because changes in product availability affect not only customers' final choices but also the products they consider. These challenges become particularly pronounced in the inventory setting, where stockouts dynamically reshape consideration sets and, consequently, future demand.

The CFTC model also contributes to the growing literature on choice models tailored to apparel products. In addition to~\cite{akchen2023size}, our work is related to~\cite{boada2020estimating} and~\cite{alavi2024designing}. In~\cite{boada2020estimating}, all sizes associated with the same product base are aggregated into a single product. Motivated by the \emph{broken assortment effect}, whereby demand for an aggregated product decreases as more of its sizes become unavailable, the authors develop an inventory-dependent choice model and optimize initial inventory levels through an integer programming formulation. In contrast, both our work and~\cite{alavi2024designing} model products at the base--size level. The latter study proposes a two-stage choice model in which customers first choose a product base and then select among the available sizes within that base. Building on this model, the authors study an assortment optimization problem that balances revenue with size fairness across customers with different body sizes. Our model differs by allowing size availability to determine which product bases enter a customer's consideration set and by permitting substitution across both bases and sizes.

\paragraph{The show-all inventory selection problem.}
Our paper contributes to the literature on inventory optimization under demand substitution. This problem is challenging because product availability evolves endogenously as customers arrive sequentially and make stochastic purchase decisions. Early work by~\cite{mahajan2001stocking} shows that the resulting profit function need not be quasi-concave in the initial inventory levels. More recent work has largely proceeded along two directions. The first studies unconstrained inventory optimization with explicit inventory costs and develops policies whose optimality gaps grow sublinearly with the length of the selling horizon~\citep{honhon2010assortment,honhon2013fixed,zhang2025leveraging,mouchtaki2026joint}.

The second direction, to which our work belongs, studies capacitated inventory optimization, in which the initial inventory vector is subject to a cardinality constraint and the objective is to develop approximation algorithms with multiplicative performance guarantees~\citep{goyal2016near,aouad2018greedy,aouad2019approximation,aouad2023stability,sun2024unified}. Our work is closely related to~\cite{aouad2023stability,sun2024unified,fu2026joint}. Having already summarized the results of the latter, we focus on the former two papers, which both consider MNL choice. Specifically,~\cite{aouad2023stability} develop a $(1-\epsilon)$-approximation for stochastic show-all inventory optimization; like our fluid algorithm, their approximation scheme runs in polynomial time when the ratio $\Delta$ between the largest and smallest preference weights is $O(1)$. More recently,~\cite{sun2024unified} obtain polynomial-time guarantees of $0.474-\epsilon$ by also exploiting the CCD property of the MNL model.

\section{The Consider-Fit-Then-Choose Model and the Show-All Inventory Selection Problem}
\label{sec:model}

To start, we formalize the dynamics of the CFTC model, which captures customer purchasing behavior in settings where each product may be offered in multiple sizes and where the set of available sizes directly affects customer preferences and induced consideration sets through a notion of fit.  From here, we how CFTC model naturally extends to a broader class of applications beyond apparel in Section~\ref{subsec:other-applications}. Furthermore, in Section~\ref{subsec:model_comparison},  we explicitly discuss how the CFTC model extends the style–size choice framework of \citet{akchen2023size} in several important dimensions. We conclude this section with a formal description of the stochastic and fluid versions of our show-all inventory selection problem.

\subsection{Preliminaries and high-level overview of the CFTC model}
\label{subsec:model_set_up}

In what follows, we introduce preliminary notation and provide a high-level overview of our two-stage consider-then-choose framework. The two subsections that follow are then devoted to detailing each stage in turn, thereby fully formalizing the dynamics of the model.

\paragraph{Product preliminaries.} We consider a universe $\Ucal$ of substitutable products, where each product is represented as a \emph{base–size} pair. Formally, let $\Ncal = \{1,2,\ldots,n\}$ denote the set of $n$ substitutable bases, which capture all product attributes except size.  We also introduce a “size-less” no-purchase option, indexed by $0$, which represents the ever-present choice of leaving the store without making a purchase. Additionally, let $\Scal = \{1,\ldots,m\}$ denote the set of all possible sizes, indexed from smallest to largest according to the product-category-specific notion of size; for example, in footwear these indices correspond to numerical shoe sizes, whereas in apparel they may represent labels such as S, M, L, XL, and so on. We also introduce two “dummy” sizes, $0$ and $m+1$, to simplify the notation used to formalize the consideration set formation process. We assume that these dummy sizes are implicitly offered for every base in every assortment. For each base $i \in \Ncal$, we assume that only sizes in a subset $\Scal_i \subseteq \Scal$ can be offered as base-size combinations. Such restrictions may reflect practical considerations, including manufacturers’ product-line decisions or inventory limitations arising from upstream supply conditions. The resulting product universe is $\Ucal = \{ (i,s) \mid i \in \Ncal, s \in \Scal_i \}$, where $r_{i,s}$ denotes the revenue earned from selling a single unit of product $(i,s) \in \Ucal$.

\paragraph{Customer-type preliminaries.} We model the customer population as a collection of types $\Gcal$, where each type $g \in \Gcal$ comprises a fraction $\mu_g \ge 0$ of the population, and is characterized by a triple $(s_g, \tau_g, \ell_g) \in \Scal \times \{\uparrow, \downarrow\} \times [\ell_{\max}]$. %
The first element, $s_g \in \Scal$, denotes the customer's \emph{most preferred size}, determined by factors such as body size in apparel settings. In other applications, it can instead represent the customer's ideal value of a one-dimensional ordered attribute (e.g., the restaurant reservation time slot that best fits one's schedule; see Section~\ref{subsec:other-applications}). 
The second element, $\tau_g \in \{\uparrow, \downarrow\}$, captures the customer’s \emph{size tendency}, which describes how the customer expands her consideration set when her preferred size is unavailable. Specifically, $\tau_g = \uparrow$ denotes a willingness to consider larger sizes, whereas $\tau_g = \downarrow$ denotes a willingness to consider smaller sizes.  The final element, $\ell_g \in [\ell_{\max}]$ represents the maximum number of size levels by which the customer is willing to deviate from her preferred size $s_g$. The precise interpretation of $\tau_g$ and $\ell_g$ are provided in Section~\ref{subsec:consideration_set}, where we describe the initial consideration set formation process. Throughout the paper, we use the expressions ``a customer of type $g$'' and ``a customer of type $(s_g,\tau_g, \ell_g)$'' interchangeably.

\paragraph{Consider-then-choose framework.} The choice process of a type-$g$ customer unfolds over two sequential stages. In the first stage, the customer observes the set of available products and forms a consideration set consisting of, for each base, the available size that is “closest” to $s_g$, the customer’s ideal size. The notion of “closeness” is governed by the value of $\tau_g$: if $\tau_g = \uparrow$, the consideration set includes only products offered in sizes $s_g$ or larger, whereas if $\tau_g = \downarrow$, only products offered in sizes $s_g$ or smaller are considered. Having formed her consideration set, the customer proceeds to the second stage, in which she makes a purchase decision from among the considered products. This choice follows a standard MNL-type random utility maximization framework, with the key modification that the utility of products whose size differs from $s_g$ is discounted to reflect mismatch in fit. The formal details of these two stages are provided in the following two sections.

\subsection{Stage one: consideration set formation}\label{subsec:consideration_set}

In this section, we formalize the endogenized consideration set formation process for a type-$g$ customer. Specifically, we describe how a customer of type $(s_g,\tau_g, \ell_g)$ constructs her consideration set when faced with a particular assortment.

\paragraph{The size-based consideration set formation process.}  We begin with an informal description for the case $\tau_g = \uparrow$; the case $\tau_g = \downarrow$ is entirely analogous, with the only difference being that the customer’s consideration set is restricted to products with sizes at most $s_g$. For each base $i \in \Ncal$, if the customer’s most preferred size $s_g$ is available—i.e., if $(i,s_g) \in A$—then the corresponding product $(i,s_g)$ is included in the consideration set. If this size is not available, the customer sequentially considers larger adjacent sizes of the same base, as indicated by $\tau_g = \uparrow$. In particular, if $(i,s_g+1) \in A$, then that product is included; otherwise, the customer proceeds to $(i,s_g+2)$, and so on, up to a maximum size deviation of $\ell_g$.  If none of these sizes is available, then no product of base $i$ enters the consideration set. Conceptually, a type-$(s_g,\uparrow)$ customer exhibits the following size preference order for each base $i$:
\[
(i, s_g) \succ (i, s_g + 1) \succ (i, s_g + 2) \succ \cdots \succ (i, s_g + \ell_g),
\]
meaning that the customer first prefers $(i,s_g)$ whenever available, and otherwise considers progressively larger adjacent sizes up to size $s_g + \ell_g$. Formally, for each base $i \in \Ncal$, we define
\begin{equation*}
\label{eq:best-size-in-base-uptype}
s_g^*(i,A) =
\begin{cases}
\min\left\{ s \in [s_g, s_g+\ell_g] : (i,s) \in A \right\} 
& \text{if } \tau_g = \uparrow, \\[0.5em]
\max\left\{ s \in [s_g-\ell_g, s_g] : (i,s) \in A \right\} 
& \text{if } \tau_g = \downarrow,
\end{cases}
\end{equation*}
which represents the best-fitting available size for base $i$ among those that deviate from $s_g$ by at most $\ell_g$. Recalling that we implicitly assume sizes $0$ and $m+1$ are offered for every base $i$, the quantity $s_g^*(i,A)$ is always well defined. 
Accordingly, the consideration set of a type-$g$ customer facing assortment $A$ is defined as
\begin{equation*}
\label{eq:consideration-set}
C_g(A)
=
\left\{ (i, s_g^*(i,A)) : i \in \Ncal,\; s_g^*(i,A) \in [1,m] \right\} \cup \{0\},
\end{equation*}
and thus, if $s_g^*(i,A) \in \{0,m+1\}$, base $i$ is not considered by the customer. 
By construction, observe that at most one size of any base $i \in \Ncal$ can appear in the consideration set, and that customers always consider the no-purchase option.

\begin{example}[Consideration set]
Suppose $\Ncal=\{1,2,3\}$ and consider a customer of type
$(s_g,\tau_g,\ell_g) = (4,\uparrow,2)$. Let
\[
A=\{(1,4),(1,5),(2,6),(3,2),(3,3)\}.
\]
For base $1$, the preferred size is available, so $s_g^*(1,A)=4$. For base $2$, the preferred size and the next larger size are unavailable, but the second larger size is available, so $s_g^*(2,A)=6$. For base $3$, no acceptable size within two levels above the preferred size is available, so base $3$ is excluded. Thus, $
C_g(A)=\{(1,4),(2,6),0\}$.
\end{example}

\subsection{Stage two: purchase decision}\label{subsec:MNL_purchase}

Once a type-$g$ customer has formed her consideration set $C_g(A)$, she proceeds to an MNL-inspired purchase decision. Accordingly, we first formalize the random utility maximization (RUM) specification that governs customer choice and then explicitly derive closed-form expressions for the resulting choice probabilities.

\paragraph{The size-based RUM framework.}  Assume that a type-$g$ customer associates a random utility
\[
U_{i,s,g} = v_{i,g} - \alpha_{|s_g-s|,g} + \xi_{i,s,g}
\]
with product $(i,s) \in \Ucal$. In this specification, $v_{i,g}$ is a base-specific deterministic component, while $\alpha_{|s_g-s|,g}$ is a size-specific deterministic disutility term that captures mismatches in fit between the offered size $s$ and the customer’s ideal size $s_g$. The term $\xi_{i,s,g} \sim \mathrm{Gumbel}(0,1)$ represents an i.i.d.\ random shock. The random utility of the no-purchase option is $U_{0,g} =  \xi_{0,g}$.  We impose the following natural assumption on the disutility parameters, which ensures that, holding all else equal, sizes closer to $s_g$ are weakly preferred to sizes further away.
\begin{assumption}
	\label{assumption:disutility_is_monotonic}
	For each customer type $g \in \Gcal$, we assume that $\alpha_{0,g} \leq \alpha_{1,g} \leq \cdots \leq \alpha_{m-1,g}$ for every $i \in \Ncal$.
\end{assumption}
It is worth noting that our algorithmic approaches will not require non-negativity of the disutility parameters, allowing the model to capture the possibility of a utility boost when a customer finds her ideal size in stock.

\paragraph{Choice probabilities.} A type-$g$ customer facing assortment $A$ will purchase product $(i,s) \in C_g(A)$ with probability
\begin{eqnarray*}
\pi_g\!\left((i,s),A\right)
&=&
\pr{ U_{i,s,g} = \max_{(j,\sigma) \in C_g(A)} U_{j,\sigma,g} } 
=\frac{ e^{v_{i,g} - \alpha_{|s-s_g|,g}} }
{ 1 + \displaystyle \sum_{(j,\sigma) \in C_g(A)} e^{v_{j,g} - \alpha_{|\sigma-s_g|,g}} },
\end{eqnarray*}
where the rational expression above follows directly from the classical MNL model~\citep{train2009discrete}.  Naturally, if $(i,s) \notin C_g(A)$, then we have that $ \pi_g\left(\left(i,g\right),A\right) =0$. For notational convenience moving forward, we define weights $w_{i,g} =e^{v_{i,g}}$ and size-adjusted weights $w_{i,s,g} = w_{i,g}\cdot \beta_{|s-s_g|,g}$ for each base $i \in \Ncal \cup \{0\}$, customer type $g \in \Gcal$, and size  $s \in \Scal$, where  $\beta_{d,g} =e^{-\alpha_{d,g}}$.
Using this notation, the purchase probability of product $(i,s) \in C_g(A)$ can be written compactly as
\begin{equation}\label{eqn:CFTC_cp}
\pi_g\left((i,s),A\right)
=
\frac{ w_{i,s,g} }
{ 1 + w_g\left(C_g(A)\right) },
\end{equation}
where $w_g(A) = \sum_{(i,s) \in A} w_{i,s,g}$ denotes the total size-adjusted weight of the products in $A$ for a customer of type $g$. Although seemingly similar, the rational expression in~\eqref{eqn:CFTC_cp} differs in several important ways from the structure of choice probabilities under the classical MNL model. First, under the standard MNL framework, it is implicitly assumed that all offered products are considered by the customer. By contrast, the CFTC model incorporates a nuanced, endogenized narrowing of the offered assortment to the subset of products that are actually considered. Second, the effective weight $w_{i,g}\cdot \beta_{|s-s_g|,g}$ that a type-$g$ customer associates with base $i$ is itself endogenized through the size-dependent disutility factor, whereas under the traditional MNL model the underlying weights associated with each product are assortment independent. Together, these two features of the CFTC model significantly complicate the operational problems studied in subsequent sections.

\subsection{Additional applications of the CFTC model beyond apparel}
\label{subsec:other-applications}

\begin{figure}
	\centering
	\includegraphics[height=6cm]{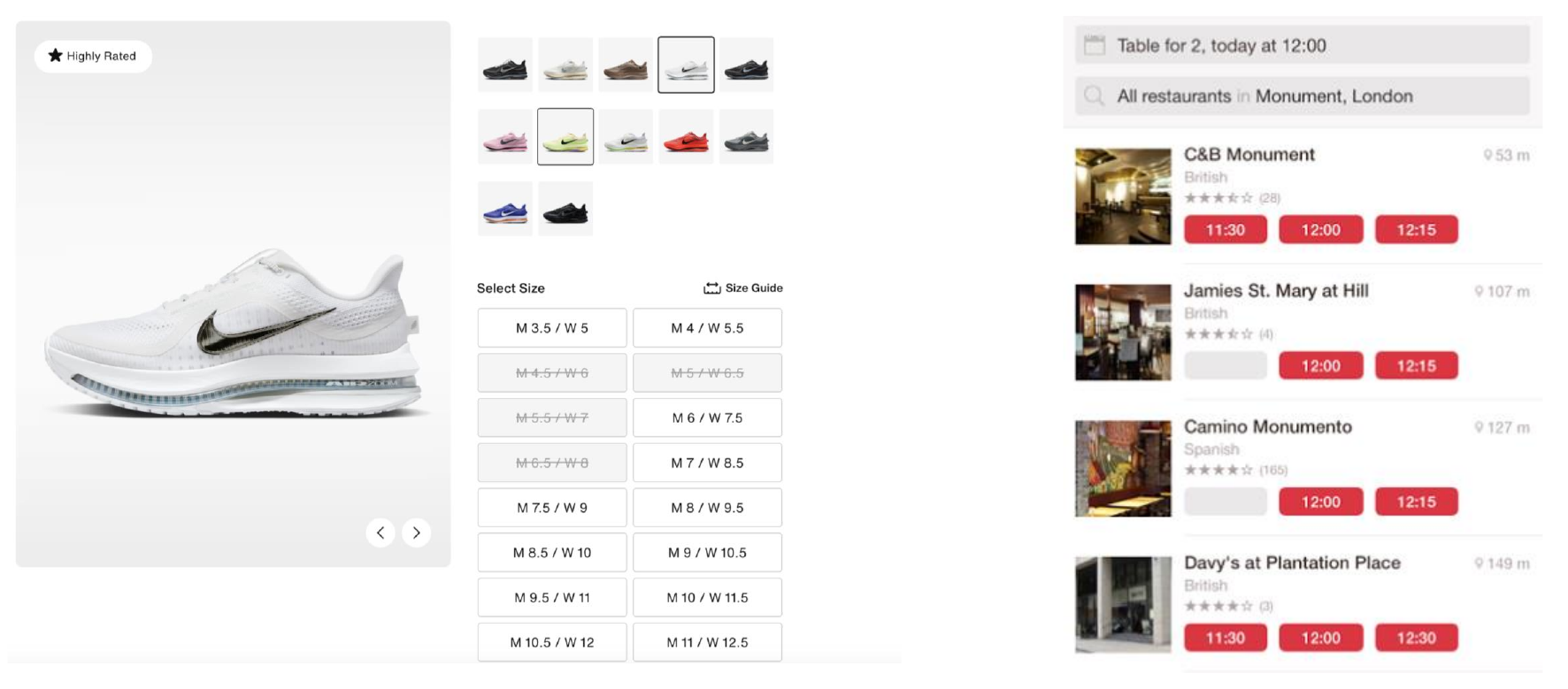}	
	\caption{Examples of products and services that exhibit the consider-fit-then-choose structure.} \label{fig:applications}
\end{figure}

Although we motivate the CFTC model by size substitution in apparel retailing, the framework applies more broadly to settings in which products are characterized by both an intrinsic product attribute and a customer-specific fit attribute. Figure~\ref{fig:applications} illustrates representative applications of this framework. Besides apparel products such as footwear and clothing, where the fit attribute corresponds to shoe or garment size, a similar structure arises on restaurant reservation platforms such as OpenTable. Customers typically have an ideal reservation time but may consider nearby time slots when their preferred time is unavailable. Among the restaurants offering acceptable time slots, they then choose based on attributes such as cuisine, quality, price, and location. 

The framework also applies to the scheduling of fitness classes. A customer may prefer a particular class time but be willing to attend a nearby session when that time is full or unavailable. Among classes offered at acceptable times, the customer may then choose based on the instructor, class format, intensity, studio location, or price. From the fitness provider's perspective, capacity decisions across neighboring class times therefore interact through time-based substitution, much as inventory decisions across neighboring sizes interact in apparel retailing.

More broadly, the fit attribute may represent any one-dimensional characteristic for which customers have an ideal value but are willing to consider nearby alternatives, such as the appointment time for a doctor visit or salon visit, the departure time of a flight or train, or a delivery window.

\subsection{Comparison to the style-size model of \cite{akchen2023size}.} \label{subsec:model_comparison}

The CFTC model extends the style–size choice framework of \citet{akchen2023size} in three distinct ways, which we summarize below. Collectively, these extensions broaden the applicability of the framework, allowing it to capture choice behavior in a wider range of settings beyond apparel and fashion. At the same time, these extensions render the algorithmic insights of \citet{akchen2023size} inapplicable to the classical assortment and inventory selection problems we later study.

\paragraph{Extended consideration set flexibility.} First, the model of \citet{akchen2023size} assumes that $\ell_g = 1$ for all customer types, so that customers who do not find their ideal size expand their consideration sets to include only adjacent sizes. This assumption is natural for apparel and footwear and is also adopted in empirical work such as \citet{li2022estimating}. However, the adjacent-substitution assumption may not be appropriate in all applications. For example, a customer who cannot obtain her preferred restaurant reservation time may be willing to consider several later time slots rather than only the immediately adjacent ones. 

\paragraph{General revenues and weights.} Second, \citet{akchen2023size} assume uniform prices across sizes within each base, i.e., $r_{i,s}=r_i$, and uniform base preference weights across customer types, i.e., $w_{i,g}=w_i$. Both restrictions are natural in apparel settings but need not hold more generally. For example, different sizes of the same mattress model are typically priced differently, while even in apparel and footwear, certain brands may appeal differently to customers with different size preferences.

\paragraph{General size availabilities.} Finally, \citet{akchen2023size} assume that all sizes are available for every base; formally, $\Scal_i = \Scal$ for all $i \in \Ncal$. Together with uniform prices and preference weights, this assumption yields an optimal assortment that offers the same set of bases at every size. The problem of selecting which bases to offer then reduces to a standard unconstrained assortment optimization problem under an MNL model, which is well known to admit an optimal revenue-ordered assortment. This structure aligns well with certain retail planning practices, in which assortments are managed at the style level by aggregating all sizes of the same base. In practice, however, size ranges often vary across bases; for example, some clothing styles may be offered only in S--XL, whereas others span a wider range of sizes. Allowing $\Scal_i$ to vary across bases eliminates this structure and substantially complicates the assortment problem.

\subsection{The show-all inventory selection problem}

In this section, we consider the fundamental problem of determining initial stocking levels for a collection of products that are subsequently consumed over a finite selling horizon. We study the problem of choosing starting inventory levels for each product $(i,s) \in \Ucal$, subject to an upper bound $\Ccal \in \mathbb{Z}_+$ on the total number of units that can be stocked. The selling horizon consists of~$T$ periods, each of which welcomes the arrival of a single customer who selects among all in-stock products according to a  CFTC model. Let
$
\Fcal = \{c \in \mathbb{Z}^{n \times m}_+ : \|c\|_1 \leq  \Ccal\}
$
be the corresponding set of feasible starting inventory vectors.

\paragraph{The stochastic setting.} Let $\rev(c)$ denote the random revenue earned over the $T$ periods when starting from an initial inventory vector $c \in \Fcal$. We consider the following inventory optimization
\begin{align}
	\label{eqn:Show_all}
	\tag{SA}
    \max_{c \in \Fcal} \ex{\Rcal(c)}, 
\end{align}
where the expectation is taken with respect to the random demand induced by the CFTC model. The label \ref{eqn:Show_all} stands for ``show-all.''

\paragraph{The fluid setting.} It is convenient to represent the selling horizon as the continuous interval $[0,T]$, and we refer to any $\tau \in [0,T]$ as a \emph{moment}. Under inventory vector $c \in \Fcal$, the first displayed assortment is
\[
A^{(1)} = \{(i,s) \in \Ucal : c_{i,s} > 0\},
\]
which is offered continuously until the first stockout occurs. Specifically, each product's inventory will be consumed at a rate equal to its choice probability, and thus the first product to stock out is
\[
(i^{(1)}, s^{(1)}) 
=
\argmin_{(i,s) \in A^{(1)}} 
\frac{c_{i,s}}{\pi((i,s),A^{(1)})},
\]
which occurs at moment
\[
\tau^{(1)} 
=
\min_{(i,s) \in A^{(1)}} 
\frac{c_{i,s}}{\pi((i,s),A^{(1)})}.
\]
After time $\tau^{(1)}$, the displayed assortment updates to
\[
A^{(2)} = A^{(1)} \setminus \{(i^{(1)}, s^{(1)})\},
\]
and the process repeats until the end of the horizon or until all products stock out.  We use $x_{(i,s)}(c,\tau)$ to denote the fluid sales of product $(i,s)$ up to moment $\tau$, when the initial stocking levels of each product are given by the vector $c$. As a shorthand, we write $x_{(i,s)}(c) = x_{(i,s)}(c,T)$ to denote the total fluid sales of product $(i,s)$ over the entire selling horizon and, abusing notation, let $\Rcal(c) = \sum_{(i,s) \in \Ucal} r_i \cdot x_{(i,s)}(c)$ denote the total accrued fluid revenue. Accordingly, the fluid version of our problem of interest can be concisely written as
\begin{equation}
    \label{eqn:Show_all_Fluid}
    \tag{SA-FLUID}
    \max_{c \in \Fcal} \Rcal(c).
\end{equation}
We use $\opt$ to denote the optimal objective value of either \ref{eqn:Show_all} or \ref{eqn:Show_all_Fluid}, with its meaning clear from the context.

 We note that even though demand is deterministic in our fluid setting,~\ref{eqn:Show_all} remains highly non-trivial due to the complex form of $\Rcal(c)$, which makes it difficult to extract preliminary insights into what profitable starting inventories should look like.

\section{Assortment Optimization under the CFTC Model}
\label{sec:assortment_optimization}

In this section, we tackle the cardinality-constrained assortment optimization problem under the CFTC model. In this now-classic framework, a retailer seeks to identify the revenue-maximizing subset of products to make available for purchase when the offered assortment can include at most $C \in \mathbb{Z}$ products. In what follows, we formally define this problem and establish that even its most basic variant is NP-hard. We then state our main algorithmic result: an approximation scheme that returns an $\epsilon$-optimal assortment in strongly polynomial time when $\ell_{\max} = O(1)$. After presenting this result as a formal theorem, we outline the two key steps underlying its proof. Appendix~\ref{app-sec:AO} subsequently provides the detailed analysis for each of these steps. We note again that our approximation scheme implies a $(0.272 -\eps)$-guarantee for \ref{eqn:Show_all} via the blackbox approach of~\cite{fu2026joint}.

\subsection{Problem formulation and results}\label{subsec:formulation_hardness}

Under assortment $A \subseteq \Ucal$, let
\[
R_g(A) = \sum_{(i,s) \in C_g(A)} r_{i,s}\cdot \pi_g\bigl((i,s),A\bigr)
\]
denote the expected revenue generated by a type-$g$ arrival, and let $
\Rcal(A) = \sum_{g \in \Gcal} \mu_g \cdot R_g(A)$ denote the total expected revenue obtained from assortment $A$. The assortment optimization problem under the CFTC model can therefore be concisely written as
\begin{equation}
\tag{AO-CFTC}
\label{eqn:AO_CFTC}
\max_{\substack{A \subseteq \Ucal: \\ |A| \leq C}} \, \Rcal(A).
\end{equation}

\paragraph{Hardness.}  The following theorem establishes that~\ref{eqn:AO_CFTC} is NP-Hard when each base comes in at most two size. Its proof is provided in Appendix~\ref{appendix-subsec:proof_of_AO_hardness}.
\begin{theorem}
    \label{thm:AO_hardness}
    The unconstrained variant of \ref{eqn:AO_CFTC} is NP-Hard, even when $m=2$.
\end{theorem}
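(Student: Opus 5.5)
The plan is to reduce from unconstrained assortment optimization under a mixture of multinomial logit models. \citet{rusmevichientong2014assortment} show that this problem is NP-hard already with two customer segments having arbitrary preference weights and common product revenues. The reduction maps each mixed-MNL product to a CFTC base that can be offered in one size only. This forces the CFTC assortment problem to coincide with choosing a subset of bases, with no size decisions left.

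\textbf{Construction.} Take a two-segment mixed-MNL instance with products $\{1,\ldots,n\}$, revenues $r_i$, segment weights $u_{i,1},u_{i,2}>0$ and segment probabilities $\theta_1,\theta_2$. Build a CFTC instance with $m=2$ and bases $\Ncal=\{1,\ldots,n\}$, each with $\Scal_i=\{1\}$ and $r_{i,1}=r_i$. Use two customer types:
\begin{itemize}
\item $g_1=(1,\downarrow,1)$ with $\mu_{g_1}=\theta_1$;
\item $g_2=(2,\downarrow,1)$ with $\mu_{g_2}=\theta_2$.
\end{itemize}
Set $w_{i,g_1}=u_{i,1}$ and $\beta_{0,g_1}=1$. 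Set $\beta_{0,g_2}=\beta_{1,g_2}=1$ (all disutilities zero), which satisfies Assumption~\ref{assumption:disutility_is_monotonic}, and $w_{i,g_2}=u_{i,2}$.

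\textbf{Equivalence.} Any assortment is $A=\{(i,1): i\in S\}$ for some $S\subseteq\Ncal$, since size $2$ is never offered.
\begin{itemize}
\item For type $g_1$, the preferred size $1$ is available exactly for $i\in S$, so $C_{g_1}(A)=\{(i,1):i\in S\}\cup\{0\}$, and each such product has weight $u_{i,1}$.
\item For type $g_2$, size $2$ is never available, so the customer moves down one level to size $1$. Hence $C_{g_2}(A)$ is the same set, and each product has weight $u_{i,2}\beta_{1,g_2}=u_{i,2}$.
\end{itemize}
By~\eqref{eqn:CFTC_cp}, $\Rcal(A)=\theta_1 \frac{\sum_{i\in S} r_i u_{i,1}}{1+\sum_{i\in S}u_{i,1}}+\theta_2\frac{\sum_{i\in S} r_i u_{i,2}}{1+\sum_{i\in S}u_{i,2}}$, which is exactly the mixed-MNL revenue of $S$. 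The map $S\mapsto A$ is a bijection between the two feasible sets, the construction is polynomial, and hence solving the unconstrained \ref{eqn:AO_CFTC} exactly would solve the NP-hard two-segment mixed-MNL problem.

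\textbf{Main point to check.} The one thing to verify is that the cited hardness result holds in the exact form used here: unconstrained, a fixed number of segments, general segment-specific weights, and revenues common across segments. The CFTC model allows $w_{i,g}$ to depend on $g$, so segment-specific weights are available. If a source with type-independent base weights were required, I would instead encode segment differences through the size structure. Specifically, I would place each product in a size and let types with different $\beta$ discounts see different subsets and weightings, then reduce from a partition-type problem in the style of that paper's proof.
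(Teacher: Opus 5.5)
Your reduction is correct, but it takes a different route from the paper's proof.

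\textbf{Your route.} The general CFTC model lets base weights $w_{i,g}$ depend on the customer type. This lets you embed an arbitrary two-segment mixed-MNL instance verbatim, after normalizing each segment's no-purchase weight to one. You then inherit hardness from \citet{rusmevichientong2014assortment}, whose two-segment NP-completeness result is stated in exactly the form you need: unconstrained, segment-specific weights, revenues common across segments.

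\textbf{The paper's route.} The paper reduces directly from Partition and keeps base weights type-independent, with $w_{i,g}=a_i/L$ for the partition bases and $w_{n+1,g}=1$ for an anchor base. Both types $(1,\uparrow)$ and $(2,\downarrow)$ share the discounts $\beta_{0,g}=1$ and $\beta_{1,g}=\frac{1}{2}$. The partition bases are offered only in size 1, and the anchor base (revenue $\frac{371}{145}$) only in size 2. The size structure alone makes the two types weight the partition items by $1$ versus $\frac{1}{2}$. Once the anchor is shown to always be offered, the revenue reaches $\frac{31}{29}$ if and only if $w(x)=\frac{1}{2}$.

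\textbf{What each buys.} Your argument is shorter and shows directly that the CFTC model contains two-segment mixed MNL. However, your instance never uses the size dimension: all bases share $\Scal_i=\{1\}$, and both types could equally sit at size 1. So the hardness you exhibit comes entirely from type-dependent weights. The paper's construction gives a stronger conclusion. The problem is already NP-hard when weights, fit penalties, and $\ell_g$ are as restricted as in \citet{akchen2023size} and only the availability sets $\Scal_i$ vary. The paper emphasizes exactly this point right after the theorem, and your proof does not establish it. Your fallback sketch, which encodes segment differences through sizes and discounts and reduces from a partition-type problem, is essentially the paper's proof.
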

As noted earlier, our model generalizes the framework of \citet{akchen2023size} along three key dimensions: (i) allowing the maximum deviation from the ideal size, $\ell_g$, to exceed one; (ii) permitting product revenues  to vary across sizes and the weights to vary by customer type; and (iii) accommodating heterogeneity in the size-availability sets $\Scal_i$ across bases. In proving Theorem~\ref{thm:AO_hardness}, we relax only the third of these assumptions relative to \citet{akchen2023size}. Remarkably, this single relaxation is sufficient to render the problem NP-hard, whereas under the original model it is solvable in polynomial time and  admits the simple structural characterization of the optimal assortment described in Section~\ref{subsec:model_comparison}.

\paragraph{Main result.} Our main algorithmic result for~\ref{eqn:AO_CFTC} is stated in the following theorem.
\begin{theorem}
    \label{thm:main_theorem_assort}
    For any $\eps>0$, \ref{eqn:AO_CFTC} can be approximated within factor $1-\eps$ of optimal.  The running time of this algorithm is $O((\frac{mn\ell_{\max}}{\eps})^{O(\frac{\ell_{\max}^2}{\eps})})$.
\end{theorem}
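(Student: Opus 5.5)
Our plan has two steps, as the paper's outline suggests: first delete a small number of sizes to split the size line into independent blocks, then solve the resulting blocked problem by dynamic programming over the sizes.

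\textbf{Step 1 (decoupling by excluding a few sizes).} A type-$g$ customer only considers sizes in the window $[s_g-\ell_g, s_g+\ell_g]$. Suppose we forbid every product whose size lies in a \emph{separator} set: a run of $\ell_{\max}$ consecutive sizes. Then no customer's window can contain offered sizes on both sides of the separator. We will choose a period $K=\lceil \ell_{\max}/\eps\rceil$ and an offset $j\in\{0,\dots,K-1\}$, and remove all sizes in the runs starting at $j, j+K, j+2K,\dots$, each run of length $\ell_{\max}$.

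We then average over the $K$ offsets. Any fixed size is removed by at most $\ell_{\max}$ of the $K$ offsets. Next we bound the loss from removing sizes. Write $\Rcal(A)=\sum_g\mu_g R_g(A)$. A customer whose window meets the removed sizes may have her consideration set change: a product she considered may now be replaced by a farther size of the same base, or may disappear. We cannot use a simple monotonicity argument here, because replacing a product changes the weights.

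To handle this, we first preprocess $A^*$ into a normalized optimal assortment: under MNL, offering only products with revenue at least the type's revenue level is fine. More robustly, we charge the loss per customer type. Type $g$ is affected only if her window meets a removed run, which happens for at most $2\ell_{\max}+\ell_{\max}$ offsets out of $K$. For an affected type we crudely lose at most $R_g(A^*)$. Averaging over offsets then gives expected loss at most $O(\ell_{\max}/K)\cdot\opt\le O(\eps)\opt$.

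This works provided that unaffected types are truly unaffected. They are: if $g$'s window avoids the removed runs, then $C_g(A^*\setminus\text{removed})=C_g(A^*)$. Hence some offset $j$ loses at most an $O(\eps)$ fraction. The cardinality constraint is preserved because we only remove products.

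\textbf{Step 2 (DP within blocks).} After removal, the remaining sizes form blocks of at most $K$ consecutive sizes, and each customer type interacts with exactly one block. The revenue is therefore $\sum_{\text{blocks}}\Rcal_B(A_B)$, coupled only through the budget $\sum_B|A_B|\le C$. Within a block, however, choosing an assortment is still hard: Theorem~\ref{thm:AO_hardness} shows hardness already with two sizes. Since $K$ is not constant, we cannot enumerate which bases are offered in which sizes.

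Instead we use the standard MNL-type reformulation. For each type $g$, $R_g(A)\ge\rho$ is equivalent to
\[
\sum_{(i,s)\in C_g(A)} w_{i,s,g}(r_{i,s}-\rho)\ge\rho .
\]
We then approximate. We guess geometric-grid values of the denominators $W_g=1+w_g(C_g(A))$ and numerators for all types, or more precisely for the $O(\ell_{\max})$ "window signatures" relevant at each size position. We then process bases one at a time in a knapsack-style DP. Its state records the rounded accumulated numerator and denominator for the types in the block, together with the number of products used. Each base contributes a choice of subset of sizes in the block, and its effect on type $g$ is determined by the nearest offered size in $g$'s direction within the window.

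The difficulty is the number of types. To keep the state small, we sweep sizes left to right inside the block instead of bases. For each base we track only which of its last $\ell_{\max}$ sizes are offered ($2^{\ell_{\max}}$ patterns per base is too many across $n$ bases). This forces us to aggregate: we keep only rounded totals of weight per distance-class $d\le\ell_{\max}$ for the sizes currently in the sliding window. This yields $O(\ell_{\max}^2)$ rounded quantities, each taking $\mathrm{poly}(mn\ell_{\max}/\eps)$ values raised to $O(1/\eps)$. That matches the claimed running time $O((mn\ell_{\max}/\eps)^{O(\ell_{\max}^2/\eps)})$.

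Finally, we combine the blocks with a budget knapsack over $C$ and take the best offset $j$.

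\textbf{Main obstacle.} We expect the hardest part to be the in-block DP. Specifically, we must show that rounding the sliding-window weight aggregates to a $(1+\eps')$ grid loses only a $(1-\eps)$ factor. This is delicate because terms with $r_{i,s}<R_g$ contribute with negative sign, so relative errors can blow up. We would first try to prune $A$ so that every offered product is "profitable" for all types that consider it, though this may not be exact. If that fails, we would fall back on guessing each type's revenue level $\rho_g$ on a grid and rounding the nonnegative quantities $w$ and $w\cdot r$ separately.
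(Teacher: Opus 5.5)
\textbf{Step 1 is correct, by a different and somewhat more elementary route than the paper's.} The paper splits the sizes into consecutive blocks of $\ell_{\max}/\eps$ sizes. In each block it deletes the length-$\ell_{\max}$ sub-block with the smallest revenue contribution under $A^*$. It then uses regularity of the CFTC model (removing products only raises the choice probabilities of the remaining ones) to bound the loss by $\eps\cdot\opt$, and a DP over the skipped sub-blocks stands in for the unknown $A^*$. Your shifting argument needs no regularity. Unaffected types keep exactly the same consideration set, affected types contribute at least $0$, and each type is affected by at most $3\ell_{\max}$ of the $K$ offsets. So the best offset retains $(1-3\ell_{\max}/K)\opt$, and enumerating $K$ offsets replaces the DP. Your worry that monotonicity fails is unfounded, though: under Assumption~1, replacing a product by a farther size of the same base only lowers the denominator, so the model is regular.

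\textbf{The gap is in Step 2.} Your first sketch is essentially the paper's oracle: process bases one at a time, enumerate for each base which of the $O(\ell_{\max}/\eps)$ block sizes to offer, and keep one rounded accumulated consideration weight per relevant type. It is affordable. A block has only $O(\ell_{\max}^2/\eps)$ relevant types, which is exactly the exponent in the target running time, so the number of types is not the difficulty you take it to be. Processing bases rather than sizes is what makes it work: all within-base coupling (which size of base $i$ each type considers) is resolved inside one transition, and bases interact only through additive per-type weights.

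The size sweep you switch to does not work. Whether offering base $i$ at size $s$ changes type $g$'s consideration set depends on whether base $i$ is already offered at a size closer to $s_g$ in $g$'s direction. For $\downarrow$-types this even depends on larger sizes not yet decided. This is per-base information that aggregated weight totals per distance class cannot recover.

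You also leave the rounding analysis unresolved, and it is the crux; your $\rho$-reformulation creates signed terms. The missing idea is to never track numerators:
\begin{itemize}
\item For each relevant type, guess $\hat w_g$ with $w_g(C_g(A^*))\le\hat w_g\le(1+\eps)\,w_g(C_g(A^*))$. Guess the largest considered weight, then use a $(1+\eps)$-grid over a factor-$n$ range.
\item Maximize the nonnegative surrogate $\sum_g\mu_g\,\rho_g(C_g(A))/(1+(1+\eps)\hat w_g)$, where $\rho_g(C)=\sum_{(i,s)\in C}r_{i,s}w_{i,s,g}$.
\item Track each type's accumulated weight, rounded down to multiples of $\eps\hat w_g/n$, only as a terminal feasibility constraint $\omega_g\le\hat w_g$.
\end{itemize}
The output then has true weight at most $(1+\eps)\hat w_g$, so its revenue dominates the surrogate. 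Meanwhile $A^*$ is feasible with surrogate value at least $(1-\eps)^2\opt$. No relative error on signed quantities ever arises. Adding a product count to this DP and combining blocks by a budget knapsack, as you propose, then handles the cardinality constraint.
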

Observe that when $\ell_{\max} = O(1)$, the running time of our approach is strongly polynomial in the input size, and thus the method constitutes a polynomial-time approximation scheme (PTAS). Below, we outline the two key steps required to prove Theorem~\ref{thm:main_theorem_assort}, which are developed in full in Appendix~\ref{app-sec:AO}. 

\paragraph{Step 1: The size-skipping dynamic program (Appendix~\ref{subsec:step_1_AO}).} In the first step, we assume oracle access to an $\alpha$-approximate solution to~\ref{eqn:AO_CFTC}, for some $\alpha \in [0,1]$, which can be applied only to instances in which $m = O\left(\frac{\ell_{\max}}{\epsilon}\right)$. As discussed shortly, in the second step we show that such an oracle indeed exists with $\alpha = 1-\epsilon$. Given access to this oracle, we reduce an arbitrary instance of~\ref{eqn:AO_CFTC} to a collection of independent sub-instances, each involving $O\left(\frac{\ell_{\max}}{\epsilon}\right)$ sizes that form an uninterrupted interval of consecutive sizes. To accomplish this, we employ a dynamic program that carefully selects blocks of $\ell_{\max}$ consecutive sizes to skip, meaning that no products are offered at sizes within these “skipped” size blocks. Because no customer deviates more than $\ell_{\max}$ sizes from her ideal size, these skipped blocks naturally partition the product universe, ensuring that assortment decisions for products whose sizes lie between skipped blocks can be made independently.   Moreover, the dynamic program is designed so that the skipped size blocks contribute at most an $\epsilon$-fraction of the optimal expected revenue, while leaving at most $\frac{2\ell_{\max}}{\eps}$ sizes between skipped blocks, thereby enabling the oracle to be applied to each resulting sub-instance.

\paragraph{Step 2: The assortment oracle (Appendix~\ref{subsec:oracle}).} In the second step, we formalize the black-box oracle assumed in Step~1. Specifically, we provide a PTAS for instances of~\eqref{eqn:AO_CFTC} in which $m = O\left(\frac{\ell_{\max}}{\epsilon}\right)$ and $\ell_{\max} = O(1)$. This PTAS consists of several sub-steps, which are described in full detail in Appendix~\ref{app:assort_Oracle_thm}. The approach combines efficient enumeration with a dynamic program that processes the product universe base by base, selecting the optimal sizes to offer while approximately tracking the accrued expected revenue after each decision.

\section{The Even/Odd-Size Decomposition} \label{sec:even_odd}

In this section, we develop the demand-decoupling strategy used to tackle both the fluid and stochastic versions of our inventory problem. The key idea is to restrict attention to starting inventory vectors that stock exclusively even- or odd-indexed sizes. This restriction decouples demand across stocked sizes, thereby reducing the CFTC model to the mixed-NP-MNL model. We subsequently show that this special class of mixed-MNL models satisfies the CCD property, which will play a central role in our analysis.

For the remainder of the paper, we assume that customers choose according to a CFTC model with the following modifications, introduced to ensure tractability of the resulting inventory stocking problem:
\begin{itemize}
\item Most notably, we assume that $\ell_{\max}=1$, and thus, for every customer type $g \in \Gcal$, we have $\ell_g \in \{0,1 \}$. Consequently, each customer is willing to deviate by at most one size from her ideal size when forming her consideration set.

\item We assume that the base weights satisfy $w_{i,g}=w_i$ for each $i\in\Ncal$ and $g\in\Gcal$. Thus, customer type does not affect the valuation of a product base, although it may continue to affect the fit-mismatch penalty through the $\beta$-parameters. Throughout, we use $w_{\max}=\max_{i\in\Ncal}w_i$ and $w_{\min}=\min_{i\in\Ncal}w_i$ to denote the maximum and minimum base weights, respectively, and define $\Delta=\frac{w_{\max}}{w_{\min}}$.
\end{itemize}
Additionally, independent of the choice process, we assume that $r_{i,s}=r_i$, so that the same revenue is earned from a given base regardless of its size. All of our results continue to hold under the more general revenue structure considered in Section~\ref{sec:assortment_optimization}; this assumption is therefore made purely for notational convenience.

\paragraph{The even/odd-sized decomposition - stochastic setting.} Let $\Scal_{\even}$ denote the set of even-indexed sizes, and let $\Ucal_{\even}$ denote the corresponding universe of products with even-indexed sizes. We define $\Scal_{\odd}$ and $\Ucal_{\odd}$ analogously for odd-indexed sizes. In this first step, we approximate Problem~\ref{eqn:Show_all} by restricting attention to starting inventory vectors that stock exclusively either even- or odd-indexed sizes. To this end, let
\[
\Fcal_{\even}
=\left\{c \in \mathbb{Z}^{n \times m}_+ :
\sum_{(i,s) \in \Ucal_{\even}} c_{i,s} \leq \Ccal,\;
\sum_{(i,s) \in \Ucal_{\odd}} c_{i,s} = 0
\right\}
\]
denote the set of feasible starting inventory vectors that stock only even-indexed products, and let $\Fcal_{\odd}$ denote the analogous set for odd-indexed products. Moreover, define
\[
c_{\even} = \argmax_{c \in \Fcal_{\even}} \ex{\rev(c)}
\qquad\text{and}\qquad
c_{\odd} = \argmax_{c \in \Fcal_{\odd}} \ex{\rev(c)}
\]
to be the optimal starting inventory vectors restricted to even- and odd-indexed products, respectively. Using the fact that the CFTC model is a regular choice model, meaning that removing products from an assortment can only increase the purchase probabilities of the products that remain (for further discussion, see Appendix~\ref{appendix-sec:regularity_property}), it is straightforward to verify that
$
\max\left\{\ex{\rev(c_{\even})},\ex{\rev(c_{\odd})}\right\} \geq \frac{\opt}{2}.
$
 Thus, a $\frac{1}{2}$-approximation can be achieved provided that the even- and odd-restricted problems can be solved optimally. Moving forward, we assume without loss of generality that
$
\ex{\rev(c_{\even})} \geq \ex{\rev(c_{\odd})},
$
and therefore focus on the problem
\begin{equation}
\label{eqn:opt_inv_even}
\tag{SA-EVEN}
\max_{c \in \Fcal_{\even}} \ex{\rev(c)},
\end{equation}

\paragraph{The even/odd-sized decomposition - fluid setting.} For the fluid setting, we replace the expected revenue with the fluid revenue, noting that via identical logic, the optimal solution to the even-size-based decomposing also provides a  $\frac{1}{2}$-approximation to the original problem.  Formally, we consider the problem
\begin{equation}
\label{eqn:opt_inv_even_fluid}
\tag{SA-FLUID-EVEN}
\max_{c \in \Fcal_{\even}} \rev(c).
\end{equation}
We will similarly refer to the optimal objective values of both problems as $\opt_{\even}$, with the intended problem clear from the context.

In both settings,  a key benefit of this decomposition is that inventory decisions across the retained even-indexed sizes are coupled only through the overarching capacity constraint. Specifically, because $\ell_{\max}=1$, stockouts at one stocked size cannot affect demand at any other stocked size: each retained size is separated from the next by a size that is not stocked, preventing cross-size substitution. Consequently, demand decouples across the even-indexed sizes. As formalized next, the result of this decoupling is that the CFTC reduces to  a special class of mixed-MNL model.

\paragraph{A special class of mixed-MNL model.} By stocking only even-indexed products, we are guaranteed that at any moment, the displayed assortment satisfies $A \subseteq \Ucal_{\even}$. Furthermore, since $\ell_{\max}=1$, the structure of the consideration sets simplifies considerably. For any type $g \in \Gcal$ such that $s_g \in \Scal_{\even}$, we have
$
C_g(A) = A(s_g),
$
where for any size $s \in \Scal$, we define
$
A(s) = \{(i,\sigma) \in A : \sigma = s\}
$
to be the subset of products in $A$ of size $s$. Alternatively, if $s_g \in \Scal_{\odd}$, then
\[
C_g(A) =
\begin{cases}
    A(s_g-1) & \text{if } \tau_g = \downarrow,\\
    A(s_g+1) & \text{if } \tau_g = \uparrow,
\end{cases}
\]
so that each customer type's consideration set consists of products at a single even size. Building off this notion, for each $s \in \Scal_{\even}$, define
$
\Gcal(s) = \{g \in \Gcal : C_g(A) = A(s)\}
$
to be the set of customer types whose consideration set is $A(s)$. For any $(i,s) \in A \subseteq \Ucal_{\even}$, we then have
\begin{equation*}
\pi((i,s),A) =
\sum_{g \in \Gcal(s)} \lambda_g \cdot \frac{w_{i,s,g}}{1 + w_g(A(s))} = 
\sum_{g \in \Gcal(s)} \lambda_g \cdot \frac{w_i}{w_{0,g} + w(A(s))},
\end{equation*}
where $w_{0,g} = \frac{1}{\beta_{|s-s_g|,g}}$ denotes the type-$g$ no-purchase weight and
$
w(A(s)) = \sum_{(i,s) \in A(s)} w_i
$
is the total base weight of assortment $A(s)$. The above expression corresponds to a mixed-MNL model in which customer types differ only in the weight of the no-purchase option. We refer to this special class of models as the \emph{mixed-no-purchase MNL} (mixed-NP-MNL) model.
    
\paragraph{A new CCD choice model.} To conclude this section, we establish that the mixed-NP-MNL choice model is a convex chain decomposable (CCD) choice model. This structural property will prove to be the critical ingredient in both of the algorithms we develop. As the notion of a CCD model was only recently introduced in~\cite{goyal2023pricing}, we restate its definition in a form tailored to our setting before establishing that the mixed-NP-MNL model is indeed CCD. In what follows, we temporarily revert to a standard choice-based setting with products indexed by $\Ncal = \{1,\ldots,n\}$, and we use $\pi(i,A)$ to denote the choice probability of product $i$ under assortment $A \subseteq \Ncal$.
\begin{definition}[CCD choice model]
    Consider an arbitrary scaled convex combination of assortments, specified by weights $\{h(A)\}_{A \subseteq \Ncal}$ satisfying
$
\sum_{A \subseteq \Ncal} h(A) = T.
$
Let $\phi \in \mathbb{R}_+^n$ denote the resulting vector of purchase rates, defined by
$
\phi_i = \sum_{A \subseteq \Ncal} h(A)\cdot \pi(i,A).
$
For each $i \in \Ncal$, let $\tau_i(\phi)$ denote the stockout time of product $i$ under the show-all policy starting from inventory vector $\phi$ over an infinite selling horizon. A choice model is CCD if, for every such vector $\phi$, we have $\tau_i(\phi) \in [0,T]$ for all $i \in \Ncal$; that is, every product stocks out by time $T$.
\end{definition}

    The following lemma shows that the mixed-NP-MNL model is CCD. To the best of our knowledge, it was previously known only that the classical MNL and Markov chain choice models satisfy the CCD property. Thus, the inclusion of the mixed-NP-MNL model is a genuinely new result. Moreover, to cement this notion, in Appendix~\ref{app:MC_not_NP_MNL}, we show that there are instances of the mixed-NP-MNL model that cannot be represented as Markov chain choice models.
\begin{lemma}
    \label{lem:CCD}
    The mixed-NP-MNL model is a CCD choice model.
\end{lemma}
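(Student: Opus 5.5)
The plan is to exploit the fact that under the mixed-NP-MNL model every choice probability factorizes as $\pi(i,A)=w_i\cdot f(w(A))$ for $i\in A$, where
\[
f(x)=\sum_{g}\lambda_g\,\frac{1}{w_{0,g}+x}
\]
is positive and decreasing on $[0,\infty)$. The first step is to pass to \emph{normalized inventories}. Given $\{h(A)\}$ with $\sum_A h(A)=T$, we have $\phi_i=w_i y_i$, where $y_i=\sum_{A\ni i}h(A)\,f(w(A))$. Under the show-all policy with current in-stock set $S$, every in-stock product depletes its normalized inventory $c_i/w_i$ at the common rate $f(w(S))$. Hence products stock out in increasing order of $y_i$.

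Relabel so that $y_1\le\cdots\le y_n$, and set $W_j=\sum_{k\ge j}w_k$, the weight of the in-stock set after the first $j-1$ stockouts. Write $G=1/f$, which is increasing and positive. The stockout time of the last product is then
\[
\tau_n(\phi)=\sum_{j=1}^n (y_j-y_{j-1})\,G(W_j),\qquad y_0=0,
\]
and all other $\tau_i(\phi)\le\tau_n(\phi)$. So it suffices to show $\tau_n(\phi)\le T=\sum_A h(A)$.

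The second step is an Abel summation that makes the expression linear in $h$ with nonnegative coefficients. Setting $G(W_{n+1}):=0$,
\[
\tau_n(\phi)=\sum_{j=1}^n y_j\bigl(G(W_j)-G(W_{j+1})\bigr),
\]
and each coefficient is nonnegative since $G$ is increasing and positive. Substituting the definition of $y_j$ and exchanging sums reduces the claim to a per-assortment inequality: for every $A$,
\[
f(w(A))\sum_{j\in A}\bigl(G(W_j)-G(W_{j+1})\bigr)\le 1,
\qquad\text{i.e.}\qquad
\sum_{j\in A}\bigl(G(W_j)-G(W_{j+1})\bigr)\le G(w(A)).
\]
The ordering is fixed by the global profile $y$, but the inequality will be shown for an arbitrary ordering, so this dependence is harmless.

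The third step, and the main obstacle, is this per-assortment inequality. Here the key structural fact is that $G=1/f$ is \emph{concave}. Indeed, $1/f$ is a weighted harmonic mean of the affine functions $x\mapsto w_{0,g}+x$, and the harmonic mean is concave and monotone, so composing it with affine maps preserves concavity. For $j<n$, the term $G(W_j)-G(W_{j+1})$ is the increment of $G$ over an interval of length $w_j$ starting at $W_{j+1}\ge 0$, and these intervals are disjoint for distinct $j$. By concavity, increments over disjoint intervals can only grow when the intervals are slid leftward and packed contiguously starting at $0$. The increments for $j\in A\setminus\{n\}$ therefore sum to at most $G(w(A\setminus\{n\}))-G(0)$. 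If $n\in A$, its term is $G(w_n)-0=\bigl(G(w_n)-G(0)\bigr)+G(0)$, which can be absorbed as the first packed interval plus the extra $G(0)$. In either case the total is at most $G(w(A))-G(0)+G(0)=G(w(A))$, using $G(0)>0$ when $n\notin A$.

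Summing the per-assortment inequality over $A$ weighted by $h(A)$ gives $\tau_n(\phi)\le\sum_A h(A)=T$, which proves Lemma~\ref{lem:CCD}. The remaining care points are routine: ties in $y$, products with $y_i=0$ (these stock out at time $0$), and checking that the single-type case recovers the known CCD property of MNL. In that case $G$ is affine and the inequality holds with equality for nested chains.
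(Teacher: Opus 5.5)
Your proof is correct, and it takes a genuinely different route from the paper's. The paper does not argue from the CCD definition directly. It invokes Lemma~6 of \cite{goyal2023pricing}, which reduces CCD to a check over every indexing: the revenues $\theta$ solving the triangular system \eqref{eqn:CCD_linear_system} must give every assortment expected revenue at most $1$. The paper then shows, by a second-derivative computation, that $x\mapsto\bigl(\sum_g\lambda_g/(w_{0,g}+x)\bigr)^{-1}$ is concave, and deduces that $\theta$ is non-increasing. Finally it uses the optimality of revenue-ordered assortments (Theorem~\ref{thm:rev_ordered}, due to \cite{rusmevichientong2014assortment}) to conclude that the maximum is attained on the chain and equals $1$.

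Your argument is self-contained. Your Abel coefficients $G(W_j)-G(W_{j+1})$ are exactly $\theta_j w_j$ for the triangular system built on your stockout chain $\{j,\ldots,n\}$. So your second step is in effect a direct proof of the sufficiency half of the Goyal et al.\ characterization. Your concavity packing argument then verifies $\sum_{j\in A}\theta_j\pi(j,A)\le 1$ for every $A$ directly, without needing monotonicity of $\theta$ or revenue-ordered optimality. Your handling of the end-of-chain term through $G(0)>0$ also makes explicit a boundary case that the paper's telescoping passes over. There $T_0=0$ rather than $H(0)$, which is harmless since $H(0)>0$ only helps.

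Both proofs rest on the same key fact, the concavity of $1/f$, which you obtain more quickly via the harmonic-mean observation. The paper's route is shorter given the two cited results, and it connects to the revenue-ordered structure the paper reuses elsewhere: the restriction to $\Acal_{\mathrm{RO}}$ in \ref{eqn:fluid_LP} and the separation oracle for \ref{eqn:asymptotic-joint-LP}. Yours is elementary and shows transparently why that concavity is exactly what is needed.
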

We prove the lemma above in Appendix~\ref{app:proof_CCD} by invoking the necessary and sufficient condition for CCD established in~\cite{goyal2023pricing}. Importantly, even with this characterization in hand, verifying whether a given choice model satisfies the condition remains highly non-trivial.

\section{The Fluid Show-All Inventory Stocking Problem}\label{sec:fluid_setting}

In this section, we consider~\ref{eqn:Show_all_Fluid}, wherein at all moments throughout the selling horizon, the inventory of each products is consumed at an instantaneous rates equal to its induced choice probability. Our main result for this setting in formalized in the following theorem.  
\begin{theorem}
    \label{thm:one_half_fluid}
    For any $\eps>0$, \ref{eqn:Show_all_Fluid} can be approximated within a factor of $\frac{1}{2}-\eps$ of optimal. The running time of this algorithm is $O(m\cdot (\frac{n\Ccal \Delta}{\eps^2})^{O(\frac{1}{\eps^2}\cdot \log \Delta)})$, where we recall that $\Delta = \frac{w_{\max}}{w_{\min}}$.
\end{theorem}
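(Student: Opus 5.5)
The plan is to combine the even/odd decomposition of Section~\ref{sec:even_odd} with a $(1-\eps)$-approximation for \ref{eqn:opt_inv_even_fluid}. Since $\max\{\opt_{\even},\opt_{\odd}\}\ge \opt/2$, it suffices to solve both restricted problems to within $1-O(\eps)$ and output the better solution. Under the even restriction, the problem becomes a show-all fluid inventory problem under independent mixed-NP-MNL models, one for each even size $s$. These models are coupled only through the shared budget $\Ccal$. I will therefore first approximately solve each size-$s$ subproblem for every budget level $b\in\{0,\dots,\Ccal\}$, giving a value table $V_s(b)$. I will then combine the tables by a knapsack-style dynamic program over sizes, which costs $O(m\Ccal^2)$ and accounts for the factor $m$ in the running time.

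For a single size $s$, the first step is to bucket the bases into geometric weight classes $[w_{\min}(1+\eps)^k, w_{\min}(1+\eps)^{k+1})$ and round each weight down to its class representative. This produces $O(\frac{1}{\eps}\log\Delta)$ classes and loses only a $1-O(\eps)$ factor; I will verify the loss by a sandwich argument on the fluid sales using regularity (Appendix~\ref{appendix-sec:regularity_property}). Within a class, all bases have identical weight, so they deplete at identical rates while in stock. An exchange argument should then show that some near-optimal inventory vector gives weakly more inventory to higher-revenue bases. Next I group each class into $O(1/\eps)$ stocking groups of consecutive revenue rank with approximately equal inventory levels. This reduces the structure to guessing the group boundaries and the inventory of low-stock products, for which units at scale $O(\Ccal\Delta/\eps^2)$ should suffice. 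Across classes, the number of guesses is $(n\Ccal\Delta/\eps^2)^{O(\eps^{-2}\log\Delta)}$, which matches the claimed bound.

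Once the guesses are fixed, the remaining high-stock inventory levels are chosen by a fluid choice-based LP. Its variables are $h(A)\ge 0$ with $\sum_A h(A)=T$, and its constraints say that sales $\sum_A h(A)\pi((i,s),A)$ are at most $c_{i,s}$, together with the guessed structure and the budget. The LP's support can be restricted to nested revenue-ordered sets consistent with the groups, so it has polynomial size. Setting $c=\phi$, the purchase-rate vector, Lemma~\ref{lem:CCD} shows that under the show-all policy starting from $c$ every product stocks out by time $T$. Hence the show-all policy sells all of $c$ and earns exactly the LP revenue, so the LP solution is implementable. Rounding $c$ to integers, guided by the guessed low-stock levels and by pushing fractional mass onto the high-stock groups, should cost only a $1-\eps$ factor.

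I expect the main obstacle to be the structural step. It must show that the optimal $c$ has the monotone grouped form, so that the LP over the guessed structure is a valid relaxation whose value is at least $(1-O(\eps))\opt_{\even}$. It must also handle the rounding error for low-stock products, whose relative error is not small. My first attempt would be to guess such products exactly, bounding their count per class by $O(1/\eps^2)$. For high-stock products I would argue that losing an $\eps$ fraction of units costs at most an $\eps$ fraction of revenue, since the fluid revenue is monotone in inventory and each unit contributes at most $\max_i r_i$.
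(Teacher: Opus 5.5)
Your outline follows the paper's (even/odd restriction, a knapsack-style stitching DP over even sizes, weight classes with revenue-monotone stocking within each class, guessed stocking groups, and a choice-based LP made implementable through Lemma~\ref{lem:CCD}). The gap is in the step you defer as ``rounding'', which is where the real work is, and your plan for it fails.

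Setting $c=\phi$ is fine for fractional vectors, and flooring a product with $\phi_i\ge 1/\eps$ is harmless by Claim~\ref{claim:fluid_regular}: lowering $c_i$ by $\delta\le 1$ costs product $i$ at most one unit and only helps the others. Note that ``fluid revenue is monotone in inventory'' is false, because extra units of a cheap product cannibalize expensive ones. The real obstruction is the light products that do \emph{not} stock out under the proxy optimum $\hat c$. Their sales $\hat x_i(\hat c)$ lie strictly below the integer $\hat c_i$ and can be fractional and small, and each option you suggest breaks:
\begin{itemize}
\item If the LP pins their sales to the guessed integer level, it can be infeasible: take a lone product whose sales over $[0,T]$, offered alone, are $0.5$, with $\hat c_i=1$.
\item If you keep $c=\phi$ and floor, you can lose all of their sales.
\item If you stock $\hat c_i>\phi_i$, or ``push fractional mass'' onto other products, CCD no longer applies. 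The extra units keep a product displayed longer and can stop other products, heavy ones in particular, from reaching their LP targets. Claim~\ref{claim:fluid_regular} only controls \emph{decreases} in inventory.
\end{itemize}
Separately, the number of low-stock products in a class is not $O(1/\eps^2)$; only the number of distinct levels is small. This is why the paper guesses $O(1/\eps)$ revenue-rank endpoints per class.

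What you are missing is the machinery behind Lemma~\ref{lem:final_guarantee}:
\begin{itemize}
\item Claim~\ref{claim:stock_out_group}: in a light class, only the top stocking group can fail to stock out.
\item A guess $\ubar{x}_i$ of those products' sales to within a $1+\eps$ factor. It is used as their LP right-hand side, while $\hat c_i$ units are actually stocked.
\item The two-sided constraint $\frac{1}{\eps}+1\le u_i\le \hat c_{q,\max}$ on heavy products.
\item The stopping time $\tau^*$ at which some non-stockout light product first reaches $\ubar{x}_i$.
\end{itemize}
Up to $\tau^*$, the show-all process from the unfloored vector $u^{\mathrm{LP}}$ coincides with the CCD process started at the LP sales vector, so $\tau^*\le T$ (Claim~\ref{claim:tau_star}). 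The proportional-sales identity of Claim~\ref{claim:fluid_IIA} then shows that by $\tau^*$ every product has sold at least a $(1-\eps)^2$ fraction of its target. The upper bound $u_i\le\hat c_{q,\max}$ is exactly what makes this hold for heavy products. The lower bound makes the one-unit additive losses, including the final floor, negligible.

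There is also a second gap: your ``sandwich argument'' for weight rounding does not hold for arbitrary $c$. Rounding can delay a large stockout to the end of the horizon and erase most sales of a product whose demand is concentrated in the final sliver of time. Lemma~\ref{lem:proxy_near_op} gets stability only for near-optimal vectors. It combines the time-dilation comparison of Claim~\ref{claim:cut_off_rev} with a contradiction argument (Claim~\ref{claim:rev_last_terms}) showing that a near-optimal vector earns only $O(\eps)$ of its revenue in that sliver.
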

As discussed in Section~\ref{sec:even_odd}, the factor-$\frac{1}{2}$ loss in Theorem~\ref{thm:one_half_fluid} arises from restricting attention to even sizes; for the resulting problem~\ref{eqn:opt_inv_even_fluid}, our algorithm achieves a $(1-\eps)$-approximation. 
Moreover, a close inspection of the running time reveals that our algorithm runs in polynomial time when $\Delta = O(1)$, and in quasi-polynomial time when $\Delta = O(\mathrm{poly}(n,m,\frac{1}{\eps}))$. We note that this same dependence on $\Delta$ appears in several related works, including~\cite{aouad2023stability} and~\cite{derakhshan2022product}. Moreover, in the experiments presented in Section~\ref{sec:numerical_experiments}, where we fit the CFTC model to real shoe sales data, we find that $\Delta$ ranges from 1.99 to 3.98, across all fitted instances, suggesting that this assumption is reasonable in practice.

\subsection{Technical Overview}\label{subsec:technical_overview_opt_inv}

Our approach to solving~\ref{eqn:opt_inv_even_fluid} involves several interdependent components that must be carefully aligned. For this reason, we begin with an overview of the four key steps required to establish Theorem~\ref{thm:one_half_fluid}. Each of these steps is developed in its own subsection. 

\paragraph{Step 1: The stitching dynamic program (Section~\ref{subsec:step_1_INV}). } For~\ref{eqn:opt_inv_even_fluid}, the stocking decisions across the even sizes become effectively decoupled, aside from the global capacity constraint.  This observation leads to a stitching dynamic program that proceeds sequentially over the retained sizes. For each size, the program allocates a portion of the total capacity $\Ccal$, then optimally distributes this allocation across the bases at that size. As noted above, this base-specific allocation decision reduces to a special case of~\ref{eqn:Show_all_Fluid} in which only a single size is stocked and where customer choose according to the mixed-NP-MNL model. In the next three steps, we develop an approximation scheme for this special case.

\paragraph{Step 2: Rounded weights and the proxy problem (Section~\ref{subsec:step_2_INV}).} In the second step, we begin by partitioning the products into $O(\log \Delta)$ so-called \emph{weight classes}, such that the base weights of any two products within the same class differ by at most a $(1+\eps^2)$-factor. We then formulate a proxy version of~\ref{eqn:Show_all_Fluid} in which the base weight of each product is rounded down to the minimum weight within its corresponding class. The main result of this step shows that this rounding incurs only a negligible loss: specifically, the optimal initial inventory vector for the proxy problem achieves at least $(1-O(\eps))\cdot \opt$ in the original problem. Consequently, to establish Theorem~\ref{thm:one_half_fluid}, it suffices to focus on solving the proxy problem. Steps 3 and 4 are devoted to this goal, where we show that the proxy problem admits a $(1-O(\eps))$-approximation scheme.

\paragraph{Step 3: Stocking groups (Section~\ref{subsec:step_3_INV}).} To begin, for each weight class, we introduce the notion of its associated \emph{stocking groups}, which correspond to subsets of products within the class that are initially stocked at each level between $1$ and $\frac{1}{\eps}$ under $\hat{c}$, where $\hat{c}$ denotes the optimal solution to the proxy problem introduced in Step~2. Because, within each weight class, products differ only in their revenues after rounding the base weights, it is straightforward to show that optimal stocking levels are non-decreasing in revenue. Leveraging this structural property, we can efficiently guess via complete enumeration the composition of each stocking group, along with a final catch-all group that contains all so-called \emph{heavy} products whose initial stocking levels exceed $\frac{1}{\eps}$ under $\hat{c}$.

\paragraph{Step 4: The fluid linear program (Section~\ref{subsec:step_4_INV}).} After guessing the stocking groups for each weight class in Step~3, we fully recover the products that are stocked at levels of at most $\frac{1}{\eps}$ under $\hat{c}$, and we also identify the set of products whose initial inventory exceeds $\frac{1}{\eps}$. For this latter class of heavy products, however, we do not know their exact stocking levels under $\hat{c}$; we only know that they exceed $\frac{1}{\eps}$. To determine the initial inventory levels for these heavy products, we formulate a carefully designed fluid linear program (LP) that treats their inventory levels as decision variables. The key challenge is to ensure that this LP accurately captures the assortment dynamics induced by the show-all policy, under which products are removed from the assortment only upon stockout. To address this, we exploit the fact that the special class of mixed-MNL models under consideration satisfies the \emph{convex chain decomposition} (CCD) property.  This result, as well as an overview of the CCD property is established in Section~\ref{sec:even_odd}, which serves as precursor to these four algorithmic steps.  In a nutshell, this property implies that the choice probabilities generated by any convex combination of assortments can be equivalently represented by a nested sequence of assortments. Leveraging the CCD structure, we are able to align stockout events with product removals in this nested sequence within the optimal solution of our fluid LP.

\subsection{Step 1: The stitching dynamic program}\label{subsec:step_1_INV}

Our dynamic program will make use of the following oracle, which is given shape in Steps 2-4.

\paragraph{The black-box oracle.} Within our dynamic program, we make stocking decisions sequentially across the products available at each even size. Specifically, for each even size $s \in \Scal_{\even}$ and integer $k \in [\Ccal]_0$, we consider the inventory problem restricted to the products
$
\Ucal(s) = \{(i,\sigma) \in \Ucal : \sigma = s\},
$
which denotes the collection of base products offered in size $s$. The corresponding optimization problem is
\begin{equation}
    \label{eqn:opt_inv_oracle}
    \tag{SA-FLUID$(s,k)$}
    \opt_{s,k} = \max_{c \in \Fcal(s,k)} \rev(c),
\end{equation}
where
\[
\Fcal(s,k)
=
\left\{
c \in \mathbb{Z}^{n \times m}_+ :
\sum_{(i,s) \in \Ucal(s)} c_{i,s} \leq k,\;
\sum_{(i,\sigma) \in \Ucal \setminus \Ucal(s)} c_{i,\sigma} = 0
\right\}
\]
denotes the set of starting inventory vectors that allocate at most $k$ units exclusively to products of size $s$. In this first step, we assume oracle access to an inventory vector $\hat{c}_{s,k} \in \Fcal(s,k)$ whose fluid revenue satisfies
$
\rev(\hat{c}_{s,k}) \geq \alpha \cdot \opt_{s,k}
$
for some $\alpha \in (0,1]$. In Steps 2--4, we construct such an oracle with $\alpha = 1-O(\eps)$.

\paragraph{State description.} Each state $(s,\kappa)$ of our dynamic program consists of two components:
\begin{itemize}
    \item An even size $s \in \Scal_{\even}$ for which stocking decisions for products in $\Ucal(s)$ are to be made.
    \item An integer $\kappa \in [\Ccal]_0$ denoting the number of units of the initial capacity $\Ccal$ that remain to be allocated; ; thus, $\Ccal-\kappa$ units have been stocked across sizes $2,4,\ldots,s-2$.
\end{itemize}
Consequently, the dynamic program has $O(m\Ccal)$ states in total.

\paragraph{Value function.} For an oracle with $\alpha=1$, the value function $\Vcal(s,\kappa)$ represents the maximum fluid revenue that can be accrued from products with even sizes $s, s+2, \ldots, m$ by allocating at most $\kappa$ units across these sizes. Formally, we define
\begin{equation}
\label{eqn:DP_even}
\Vcal(s,\kappa)
=
\max_{k \in [\kappa]_0}
\left\{
\rev(\hat{c} 
_{s,k})
+
\Vcal(s+2, \kappa-k)
\right\},
\end{equation}
The base cases of the dynamic program are given by
\[
\Vcal(m+2, \kappa) =
\begin{cases}
0 & \text{if } \kappa \geq 0,\\
-\infty & \text{otherwise},
\end{cases}
\]
which ensures that the total allocated inventory does not exceed the capacity $\Ccal$.

\paragraph{Analysis.} Let
\[
(2,\Ccal)
\quad \xrightarrow{(k^{(2)}, c^{(2)})} \quad
(4,\Ccal-k^{(2)})
\quad \xrightarrow{(k^{(4)}, c^{(4)})} \quad
\cdots
\quad \xrightarrow{(k^{(m)}, c^{(m)})} \quad
(m,\Ccal-\sum_{s \in \Scal_{\even}} k^{(s)})
\]
denote the sequence of states and actions obtained by following the dynamic program in~\eqref{eqn:DP_even} starting from the initial state $(2,\Ccal)$, where for each $s \in \Scal_{\even}$, the vector $c^{(s)} = \hat{c}_{s,k^{(s)}}$ is the inventory decision returned by the oracle for size $s$ and capacity level $k^{(s)}$. Let
$
\hat{c}_{\even} = \sum_{s \in \Scal_{\even}} c^{(s)}
$
denote the starting inventory vector obtained by aggregating the size-specific stocking decisions. It is immediate that $\hat{c}_{\even} \in \Fcal_{\even}$, and thus the following claim, whose proof appears in Appendix~\ref{app:proof_dp_even_works}, establishes that $\hat{c}_{\even}$ is $\alpha$-optimal for~\ref{eqn:opt_inv_even_fluid}.
\begin{claim}
    \label{claim:dp_even_works}
    $
     \rev(\hat{c}_{\even}) \geq \alpha \cdot \opt_{\even}.
    $
\end{claim}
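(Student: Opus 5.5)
The argument has two parts. First, the fluid revenue is additive across even sizes. Second, the dynamic program in~\eqref{eqn:DP_even} picks the best split of the capacity, so it does at least as well as the split used by an optimal solution.

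\textbf{Additivity.} Fix any $c \in \Fcal_{\even}$ and write $c = \sum_{s\in\Scal_{\even}} c|_s$, where $c|_s$ keeps only the coordinates of $c$ in $\Ucal(s)$. As shown in Section~\ref{sec:even_odd}, whenever the displayed assortment satisfies $A\subseteq\Ucal_{\even}$, the choice probability $\pi((i,s),A)$ depends only on $A(s)$. It is a mixed-NP-MNL expression over the types in $\Gcal(s)$. So in the fluid process started from $c$, the depletion rates of the products in $\Ucal(s)$ depend only on which products of size $s$ are still in stock. Stockouts at other even sizes leave them unchanged.

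I would make this precise by induction over the sequence of stockout moments $\tau^{(1)}<\tau^{(2)}<\cdots$ of the process started from $c$. Between consecutive moments, the size-$s$ trajectory evolves exactly as in the process started from $c|_s$. A stockout at some size $s'\neq s$ does not change $A(s)$, so the size-$s$ dynamics continue uninterrupted. Hence $x_{(i,s)}(c,\tau)=x_{(i,s)}(c|_s,\tau)$ for all $\tau\in[0,T]$, and in particular $\rev(c)=\sum_{s\in\Scal_{\even}}\rev(c|_s)$. I expect this step to be the main obstacle, though it is a bookkeeping one rather than a real difficulty. The only care needed is handling ties in stockout times and checking that the merged sequence of stockout events is the union of the per-size sequences.

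\textbf{Comparison with an optimal solution.} Let $c^*$ be optimal for~\ref{eqn:opt_inv_even_fluid} and set $k^*_s=\|c^*|_s\|_1$, so that $\sum_s k^*_s\le \Ccal$. Then $c^*|_s\in\Fcal(s,k^*_s)$, which gives $\rev(c^*|_s)\le \opt_{s,k^*_s}$. By the oracle guarantee, $\rev(\hat c_{s,k^*_s})\ge \alpha\,\opt_{s,k^*_s}$.

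Next, a backward induction on $s$ shows that for every $\kappa$,
\[
\Vcal(s,\kappa)\ \ge\ \max\Bigl\{\textstyle\sum_{\sigma\ge s,\,\sigma\in\Scal_{\even}}\rev(\hat c_{\sigma,k_\sigma}) : \sum_{\sigma\ge s} k_\sigma\le\kappa\Bigr\}.
\]
In fact equality holds. Evaluating this at $(2,\Ccal)$ with the choice $k_\sigma=k^*_\sigma$ gives
\[
\Vcal(2,\Ccal)\ \ge\ \alpha\sum_s \rev(c^*|_s)\ =\ \alpha\,\opt_{\even},
\]
where the last equality uses additivity.

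\textbf{Recovering $\hat c_{\even}$.} The traced policy attains $\Vcal(2,\Ccal)=\sum_s\rev(c^{(s)})$, and the base case guarantees $\sum_s k^{(s)}\le\Ccal$. Since the $c^{(s)}$ have disjoint supports on distinct even sizes, $\hat c_{\even}|_s=c^{(s)}$. Applying additivity once more yields $\rev(\hat c_{\even})=\sum_s\rev(c^{(s)})=\Vcal(2,\Ccal)\ge\alpha\,\opt_{\even}$.
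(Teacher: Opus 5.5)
Your proof is correct and takes the same approach as the paper, which simply observes that $\ell_{\max}=1$ makes the fluid revenue additive across even sizes, $\rev(c)=\sum_{s\in\Scal_{\even}}\rev(c(s))$ for $c\in\Fcal_{\even}$, and leaves the dynamic-programming comparison implicit. You additionally spell out why the per-size fluid trajectories decouple and compare the dynamic program against the optimal capacity split $k^*_s=\|c^*|_s\|_1$; both details are sound.
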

To prove the claim above, we exploit the assumption that $\ell_{\max}=1$ together with the fact that $\hat{c}_{\even}$ skips every other size. As a result, demand and revenue decompose across the even sizes that are stocked. Therefore, the remaining---and highly non-trivial---question in proving Theorem~\ref{thm:one_half_fluid} is whether a near-optimal oracle for \ref{eqn:opt_inv_oracle} exists. We now turn to this subproblem and, in particular, to its connection with the mixed-NP-MNL model.

\paragraph{The mixed-NP-MNL subproblem.}  The maximization in~\eqref{eqn:DP_even} can be solved via complete enumeration, and thus our attention turns to giving shape to the black-box oracle. Since we restrict attention to a single size, it follows from the discussion in Section~\ref{sec:even_odd} that the CFTC model reduces to an instance of the mixed-NP-MNL model. Accordingly, in the remaining steps, we develop an approximation scheme for~\ref{eqn:Show_all_Fluid} under a general mixed-NP-MNL model, which generalizes~\ref{eqn:opt_inv_oracle}. To streamline the exposition, we suppress the size index and focus on a mixed-NP-MNL model defined on $n$ products with customer types $\Gcal$, arrival probabilities $\{\lambda_g\}_{g \in \Gcal}$, and no-purchase weights $\{w_{0,g}\}_{g \in \Gcal}$. For any assortment $A \subseteq \Ncal$, we denote the choice probability of product $i \in A$ by $\pi(i,A)$. Accordingly, for the remainder of this section, we focus on the problem
\begin{equation}
    \label{eqn:opt_inv_np}
    \tag{SA-FLUID-NP}
    \opt_{w} = \max_{c \in \Fcal} \rev_w(c),
\end{equation}
where, with a slight abuse of notation, we recycle
$
\Fcal = \{c \in \mathbb{Z}^n_+ : \|c\|_1 \leq \Ccal\},
$
and let $\rev_w(c)$ denote the fluid revenue of the show-all process under the mixed-NP-MNL model with product weights $w = (w_1,\ldots,w_n)$, which correspond to the base weights in the original CFTC model. We use $c^*$ to denote an optimal solution to~\ref{eqn:opt_inv_np}.

\subsection{Step 2: Rounded weights and the proxy problem}\label{subsec:step_2_INV}

This second step begins by partitioning the products into so-called weight classes, which in turn leads to the formulation of a proxy version of~\ref{eqn:opt_inv_np}. In this proxy problem, the weights of all products within the same weight class are rounded down to a common value, which endows the problem with critical structure that will be exploited in Step 3.

\paragraph{Weight classes.} In what follows, we partition the products into 
$Q = O\!\left(\frac{1}{\eps^2}\log \Delta\right)$ weight classes, where 
$\Delta = \frac{w_{\max}}{w_{\min}}$ denotes the ratio between the maximum and minimum weights. Specifically, for each $q \in [Q]$, define
\[
\Ncal_q = \left\{ i \in \Ncal : 
w_i \in \left[ w_{\min}\cdot (1+\eps^2)^{q-1},\; w_{\min}\cdot (1+\eps^2)^q \right) \right\},
\]
so that the weights of any two products within the same class differ by at most a factor of $1+\eps^2$. Moreover, for each class $q \in [Q]$, we relabel the products as $\Ncal_q = \{1,\ldots,n_q\}$, where $n_q = |\Ncal_q|$, and index them in non-increasing order of revenue. Although each class contains a product indexed by $1$, this relabeling is local to each class and does not introduce ambiguity in the subsequent analysis. Finally, as a shorthand, we will often refer to $\Ncal_q$ as the ``class-$q$'' products.

\paragraph{The rounded proxy problem.}  From here, we construct a ``rounded'' instance in which the weight of each product $i \in \Ncal_q$, $q \in [Q]$, is rounded down to
$
\hat{w}_i = w_{\min}\cdot (1+\eps^2)^{q-1},
$
and for $g \in \Gcal$, we set $\hat{w}_{0,g} = \frac{1}{1+\eps^2}\cdot w_{0,g}$. Let $\hat{\pi}(i,A)$, $\hat{x}_i(c)$ and $\rev_{\hat{w}}(c)$ denote the choice probabilities, fluid sales and total revenue under the rounded weights $\hat{w}$, respectively. This leads to the following proxy problem:
\begin{equation}
    \label{eqn:opt_inv_rounded}
    \tag{SA-FLUID-PROXY}
      \opt_{\hat{w}} = \max_{c \in \Fcal} \rev_{\hat{w}}(c),
\end{equation}
and let $\hat{c}$ denote an optimal solution. Among all optimal solutions, we select $\hat{c}$ to be one that minimizes $\|\hat{c}\|_1$. This choice implies that the induced fluid sales satisfy
$
\hat{x}_i(\hat{c}) \in [\hat{c}_i - 1, \hat{c}_i]
$
for each $i \in \Ncal$. The following lemma, whose proof appears in Appendix~\ref{app:proof_proxy_near_op} shows that $O(\eps)$-optimal solutions to the proxy problem imply $O(\eps)$-optimal solutions to the original problem.
\begin{lemma}
    \label{lem:proxy_near_op}
    For any  $c \in \Fcal$ and $\eps \in (0,\frac{1}{10}]$ such that $\rev_{\hat{w}}(c) \geq (1-5\eps)\cdot \opt_{\hat{w}}$, we have that $$\rev_w(c) \geq (1 - 15\eps)\cdot \opt_{w}.$$
\end{lemma}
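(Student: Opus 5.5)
The plan is to compare the two fluid systems $\rev_w(\cdot)$ and $\rev_{\hat w}(\cdot)$ through a two-sided sandwich. Let $\delta=\eps^2$. First I establish a pointwise comparison of choice probabilities. Since $\hat w_{0,g}=w_{0,g}/(1+\delta)$, multiplying all rounded weights by $1+\delta$ leaves $\hat\pi$ unchanged. This gives an equivalent model with no-purchase weights $w_{0,g}$ and product weights $w'_i=(1+\delta)\hat w_i\in[w_i,(1+\delta)w_i]$. Hence, for every assortment $A$ and every $i\in A$, each mixture component satisfies
\[
\frac{1}{1+\delta}\,\pi(i,A)\;\le\;\hat\pi(i,A)\;\le\;(1+\delta)\,\pi(i,A).
\]
The same bounds hold for the aggregate no-purchase probability. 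Summing over types gives the inequality for the mixed-NP-MNL model directly. The lemma then follows from two inequalities:
(i) $\opt_{\hat w}\ge(1-O(\eps))\opt_w$;
(ii) $\rev_w(c)\ge(1-O(\eps))\rev_{\hat w}(c)$ for every $c\in\Fcal$.
Chaining (ii), the hypothesis $\rev_{\hat w}(c)\ge(1-5\eps)\opt_{\hat w}$, and (i) yields $(1-15\eps)$ once the constants are tracked, using $\eps\le\frac1{10}$.

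Both (i) and (ii) reduce to one perturbation statement. Suppose two choice models have rates within a factor $1+\delta$ of each other at every assortment. Then, for a fixed inventory vector, their show-all fluid revenues differ by a factor $1-O(\eps)$. For (i), I apply it with the roles swapped at $c=c^*$. For (ii), I apply it directly.

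To prove this statement, I would not track stockout orders exactly, since these can differ between the two processes. Instead I use the CCD machinery of Lemma~\ref{lem:CCD}, which applies because both models are mixed-NP-MNL. The show-all trajectory under model $A$ defines a nested sequence of assortments $A^{(1)}\supseteq A^{(2)}\supseteq\cdots$ with durations $h(A^{(k)})$ summing to at most $T$. I evaluate model $B$ on the same scaled combination, shrunk by a factor $1/(1+\delta)$ and padded with the empty assortment. The resulting purchase vector $\phi^B$ then satisfies $\phi^B\le c$ coordinatewise, and its revenue is at least $(1+\delta)^{-2}$ times the model-$A$ revenue. By the CCD property, running show-all under $B$ from inventory $\phi^B$ depletes everything by $T$. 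So show-all under $B$ with starting inventory $\phi^B$ earns at least $(1-O(\delta))\rev_A(c)$.

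The remaining gap is that the actual inventory is the integer vector $c\ge\phi^B$, not $\phi^B$. Extra inventory could, in principle, keep low-revenue products on display and cannibalize demand. I expect this to be the main obstacle. My first attempt would use regularity (Appendix~\ref{appendix-sec:regularity_property}) together with the chain structure. Products whose $B$-inventory $\phi^B_i$ is close to $c_i$, say $c_i-\phi^B_i\le\eps c_i$, cause only an $O(\eps)$ distortion of their stockout times, because the sales rates are within $1\pm\delta$. For products with small inventory, the minimality of $\hat c$, namely $\hat x_i(\hat c)\in[\hat c_i-1,\hat c_i]$, controls the slack, so at most one leftover unit per product is unsold. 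The slack $\delta=\eps^2\ll\eps$ leaves room to absorb a $1/\eps$-fold amplification in these stockout-time distortions, which is why an $\eps^2$ rounding is enough to give an $O(\eps)$ loss.

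If this direct argument fails, the fallback is to compare both fluid revenues to the choice-based LP with inventory constraints $\sum_A h(A)\pi(i,A)\le c_i$. I would use CCD to show that the show-all value equals that LP's value up to $O(\eps)$, and then use the fact that LP values scale by at most $(1+\delta)^{\pm2}$ under the pointwise rate bounds.
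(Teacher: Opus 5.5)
\textbf{Verdict.} The outer chaining has the same shape as the paper's: one comparison applied at $c^*$, the other at the given $c$. The gap is in the comparison itself. You set out to prove (ii) for \emph{every} $c\in\Fcal$, and more generally that rates within a factor $1+\eps^2$ give show-all fluid revenues within $1-O(\eps)$ for a fixed inventory vector. That statement is false, and it fails exactly at the step you flag as the main obstacle.

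\textbf{Counterexample to (ii).} Take two equally likely types with $w_{0,1}=\eta$ and $w_{0,2}=1$. Let $L$ have $w_L=1$ and $r_L=0$, and let $H$ have $w_H=x=w_{\min}$ and $r_H=1$, where $1/x$ is an integer power of $1+\eps^2$ and $\eta\le x\ll\eps^2$. Then $\hat{w}_L=w_L$ and $\hat{w}_H=w_H$, so the rounding only shrinks the no-purchase weights. Type~2's smaller denominator gives $\hat{\pi}(L,\{L,H\})=(1+\Theta(\eps^2))\,\pi(L,\{L,H\})$. Stock $c_L=\lceil T\pi(L,\{L,H\})\rceil$ and $c_H=T$.
\begin{itemize}
\item Under $w$, $L$ never stocks out, so $\rev_w(c)=T\pi(H,\{L,H\})\le xT$.
\item Under $\hat{w}$, $L$ stocks out at time $T(1-\Theta(\eps^2))$. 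After that, $H$ sells at rate at least $\frac12\cdot\frac{x}{\eta+x}\ge\frac14$, so $\rev_{\hat{w}}(c)=\Omega(\eps^2T)$ for large $T$.
\end{itemize}
Letting $x\to0$ makes $\rev_w(c)/\rev_{\hat{w}}(c)$ arbitrarily small.

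\textbf{Where your argument breaks.} Your CCD step is correct here: with $A=\hat{w}$ and $B=w$ it gives $\rev_w(\phi^B)\ge(1+\eps^2)^{-2}\rev_{\hat{w}}(c)$. However, $c_L-\phi^B_L=O(\eps^2)\,c_L$, and those few extra units keep $L$ on display until $T$ and wipe out almost all revenue. A relative stockout-time distortion of order $\eps^2$ can therefore cost an unbounded fraction of revenue, because revenue can be concentrated at the very end of the horizon. So:
\begin{itemize}
\item the heuristic that small stockout-time distortions cost only $O(\eps)$ cannot be made to work;
\item minimality of $\hat{c}$ is irrelevant, since the lemma concerns an arbitrary $c$ such as $c^{\mathrm{LP}}$;
\item the LP fallback also fails, because an LP with constraints $\sum_A h(A)\pi(i,A)\le c_i$ can simply offer $\{H\}$ and earn $\Omega(T)$ here.
\end{itemize}

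\textbf{The missing idea.} You must use near-optimality inside the comparison, not only in the final chaining. The paper remarks that stability does not hold uniformly, and proves each direction only for $\frac12$-optimal vectors. That suffices, since $c^*$ is optimal for $w$ and $c$ is $(1-5\eps)$-optimal for $\hat{w}$. The paper's route is as follows.
\begin{itemize}
\item Split the horizon into $\Ical_{\mathrm{prefix}}=[0,(1-\eps^2)T)$ and $\Ical_{\mathrm{suffix}}$.
\item A time-dilation induction over the $w$-stockout times gives $\rev_{\hat{w}}(c)\ge\rev_w(c,\Ical_{\mathrm{prefix}})$ for every $c$. It uses only regularity and $(1+\eps^2)\hat{\pi}\ge\pi$, with no CCD.
\item When $\rev_w(c)\ge\frac12\opt_w$, the suffix revenue is $O(\eps)\cdot\rev_w(c)$ by exchange arguments:
\begin{itemize}
\item products selling at least one unit but at most $\eps$ in the suffix lose at most an $\eps$-fraction;
\item if products selling under one unit carried more than $2\eps\rev_w(c)$ of suffix revenue, stocking one unit of each would beat $\opt_w$;
\item if products selling more than $\eps$ in the suffix carried more than $2\eps\rev_w(c)$, stocking $\lfloor x_i(c,\Ical_{\mathrm{suffix}})/\eps^2\rfloor$ units of each would beat $\opt_w$. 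This is feasible because total suffix sales are at most $\eps^2\Ccal$, and a compressed-time induction shows every such product stocks out.
\end{itemize}
\item The reverse direction follows by treating $w$ as a rounding of $(1+\eps^2)\hat{w}$ and applying the same argument to the vector $c$, which is $\frac12$-optimal for $\hat{w}$.
\end{itemize}
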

To prove the lemma, we show that $\rev_{\hat{w}}(c)$ provides a good approximation to $\rev_w(c)$ for inventory vectors that are already $O(1)$-optimal for~\ref{eqn:opt_inv_np}. Notably, such stability does not hold uniformly over all feasible inventory vectors. Establishing this approximation is non-trivial, as the transition from $w$ to $\hat{w}$ perturbs choice probabilities in a non-uniform manner, and consequently the fluid sales of individual products may either increase or decrease under rounding.

\subsection{Step 3: Stocking groups}\label{subsec:step_3_INV}

Given Lemma~\ref{lem:proxy_near_op}, our focus in the remaining two steps is to develop an approximation scheme for~\ref{eqn:opt_inv_rounded}. To this end, we introduce the notion of \emph{stocking groups}, which partition the products within each weight class according to their initial stocking levels under $\hat{c}$, the optimal solution to~\ref{eqn:opt_inv_rounded}. We then establish a simple structural property of these stocking groups that allows them to be recovered exactly via guessing. These groups will play a central role in the fluid LP formulated in Step~4.

\paragraph{Stocking groups.} For each stocking level $k \in [\frac{1}{\eps}]$, let $\Ncal_q(k) = \{i \in \Ncal_q:\hat{c}_i=k\}$ denote the class-$q$ products whose initial inventory is $k$ under $\hat{c}$, and let $k(q) = \{k \in [\frac{1}{\eps}]: \Ncal_q(k) \neq \emptyset\}$ denote the set of distinct stocking levels not exceeding $\frac{1}{\eps}$ used within class-$q$ under $\hat{c}$. For each $q \in [Q]$, we assume that $k(q) = \{k^{(1)}_q, \ldots, k^{(m_q)}_q\}$, where $m_q = |k(q)|$, and where lower superscripts correspond to larger stocking levels, meaning that $k^{(1)}_q > k^{(2)}_q > \cdots > k^{(m_q)}_q$. Finally, let $\Ncal_{q,H} = \{i \in \Ncal_q: \hat{c}_i > \frac{1}{\eps}\}$ denote the so-called \emph{heavy} products, whose initial stocking levels all exceed $\frac{1}{\eps}$. The following claim shows that within each weight class, the optimal stocking levels must be non-decreasing in revenue. The proof of this result appears in Appendix~\ref{app:proof_stock_non_dec_rev}.
\begin{claim}
    \label{claim:stock_non_dec_rev}
    For any $q \in [Q]$, consider distinct products $i,j \in \Ncal_q$ such that $r_i>r_j$. We have $\hat{c}_i \geq \hat{c}_j$.
\end{claim}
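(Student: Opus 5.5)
We argue by an exchange argument. Suppose, for contradiction, that $r_i > r_j$ but $\hat{c}_i < \hat{c}_j$ for two products $i,j \in \Ncal_q$. Let $c'$ be the vector obtained from $\hat{c}$ by swapping the inventories of $i$ and $j$, that is, $c'_i = \hat{c}_j$, $c'_j = \hat{c}_i$, and $c'_k = \hat{c}_k$ for all $k \neq i,j$. Clearly $\|c'\|_1 = \|\hat{c}\|_1$, so $c' \in \Fcal$. The goal is to show $\rev_{\hat{w}}(c') > \rev_{\hat{w}}(\hat{c})$, contradicting the optimality of $\hat{c}$. (If the strict inequality failed only in degenerate cases, the minimality of $\|\hat{c}\|_1$ would be used to rule them out.)

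\textbf{Key symmetry.} Under the rounded weights, $\hat{w}_i = \hat{w}_j$, and the mixed-NP-MNL choice probabilities depend on a product only through its weight and the total weight of the assortment. Hence $i$ and $j$ are interchangeable in the choice model: for every assortment $A$, swapping the labels $i$ and $j$ in $A$ leaves the choice probabilities of all other products unchanged and exchanges those of $i$ and $j$. It follows by induction over the stockout events of the fluid show-all process that the trajectory under $c'$ is exactly the trajectory under $\hat{c}$ with the roles of $i$ and $j$ relabeled. The stockout times coincide, except that the time at which $i$ stocked out under $\hat{c}$ is now the time at which $j$ stocks out, and vice versa. The tie-breaking in the $\argmin$ defining the next stockout should be handled consistently, which is harmless since it does not change sales. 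Consequently $\hat{x}_k(c') = \hat{x}_k(\hat{c})$ for $k \neq i,j$, and $\hat{x}_i(c') = \hat{x}_j(\hat{c})$, $\hat{x}_j(c') = \hat{x}_i(\hat{c})$.

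\textbf{Revenue comparison.} The previous step gives
\[
\rev_{\hat{w}}(c') - \rev_{\hat{w}}(\hat{c}) = (r_i - r_j)\bigl(\hat{x}_j(\hat{c}) - \hat{x}_i(\hat{c})\bigr).
\]
It therefore remains to show $\hat{x}_j(\hat{c}) > \hat{x}_i(\hat{c})$. Since $i$ and $j$ have equal weight, while both are in stock they are depleted at the same instantaneous rate. So $\hat{x}_i(\hat{c},\tau) = \hat{x}_j(\hat{c},\tau)$ up to the first moment at which either stocks out, and product $i$, having fewer units, stocks out no later than $j$. Thus $\hat{x}_i(\hat{c}) = \min\{\hat{c}_i,\ \text{sales by } T\} \le \hat{x}_j(\hat{c})$, giving weak improvement.

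\textbf{Main obstacle: strictness.} This is where I expect the real work. I would invoke the property $\hat{x}_k(\hat{c}) \in [\hat{c}_k - 1, \hat{c}_k]$ that comes from choosing $\hat{c}$ with minimal $\|\hat{c}\|_1$. Since $\hat{c}_i < \hat{c}_j$, we have $\hat{x}_j(\hat{c}) \ge \hat{c}_j - 1 \ge \hat{c}_i$. If $i$ sells out, then $\hat{x}_j \ge \hat{x}_i = \hat{c}_i$, and equality would force $\hat{x}_j = \hat{c}_j - 1$. That would mean $j$ retains a unit of inventory, so $i$ selling out means $i$ stopped while $j$ continued. Because $j$ continues selling at a positive rate after $i$'s stockout (if any time remains), we get $\hat{x}_j > \hat{c}_i = \hat{x}_i$. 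If no time remains after $i$'s stockout, then $j$ has unsold inventory at $T$, and reducing $\hat{c}_j$ by one would not change sales, contradicting minimality of $\|\hat{c}\|_1$ (and $\hat{x}_j = \hat{x}_i = \hat{c}_i \le \hat{c}_j - 1$ means $j$ ends with leftover stock). If $i$ does not sell out, then $i$ has leftover stock of at least $1$ (integrality is needed here), again contradicting minimality. In every case, either strict improvement holds or minimality is contradicted, which proves $\hat{c}_i \ge \hat{c}_j$.
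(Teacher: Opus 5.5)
Your proof is correct and follows the paper's exchange argument: swap the inventories of $i$ and $j$, use $\hat{w}_i=\hat{w}_j$ to conclude that the fluid sales of $i$ and $j$ simply swap, and get strictness from the minimal-$\|\hat{c}\|_1$ choice. The paper compresses your case analysis into the single chain $\hat{x}_j(\hat{c}) > \hat{c}_j - 1 \geq \hat{c}_i \geq \hat{x}_i(\hat{c})$, where the strict first inequality is exactly what your minimality argument establishes. One small slip: in your last case it is $j$, not $i$, that would carry more than one unit of leftover stock, although that case already follows from your chain $\hat{x}_j(\hat{c}) \geq \hat{c}_j - 1 \geq \hat{c}_i > \hat{x}_i(\hat{c})$.
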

The above claim will prove critical in the guessing steps to come. Moreover, it provides further justification for indexing the products within each weight class in non-increasing order of revenue.

\paragraph{Heavy/light weight classes and guessing.} Let $Q_H = \{q \in [Q] : \Ncal_{q,H} \neq \emptyset\}$ denote the \emph{heavy} weight classes, namely those that contain at least one heavy product. Moreover, let $Q_L = [Q] \setminus Q_H$ denote the \emph{light} weight classes, for which every constituent product is stocked at a level of at most $\frac{1}{\eps}$ under $\hat{c}$. By Claim~\ref{claim:stock_non_dec_rev}, for any heavy class $q \in Q_H$, there exists an increasing sequence of class-$q$ product indices $i_q^{(0)}, i_q^{(1)}, \ldots, i_q^{(m_q)}$ such that $\Ncal_{q,H} = \{1,\ldots,i_q^{(0)}\}$ and, for each $\ell \in [m_q]$, we have
$
\Ncal_q(k_q^{(\ell)}) = \{i_q^{(\ell-1)}+1,\ldots,i_q^{(\ell)}\}.
$
Similarly, for any light class $q \in Q_L$, there exists an increasing sequence of product indices such that
$
\Ncal_q(k_q^{(\ell)}) = \{i_q^{(\ell-1)}+1,\ldots,i_q^{(\ell)}\}
$
for each $\ell \in [m_q]$, where in this case we take $i_q^{(0)}=0$. In this way, the stocking groups within any weight class are fully characterized by their $m_q+1$ product endpoints.

In what follows, we use guessing to recover the identity of all stocking groups, as well as the maximum stocking level in each heavy class. These guesses will play a critical role in establishing the correctness of the fluid LP formulated in Step~4.

\begin{itemize}
    \item \emph{Guessing heavy/light classes.} We begin by guessing whether each class $q \in [Q]$ is heavy or light. This can be done by enumerating over all possible subsets of $[Q]$, which requires time
    $
    O(2^Q) = O\!\left(\Delta^{\,O(\frac{1}{\eps^2})}\right).
    $

    \item \emph{Guessing stocking groups.} For each class $q \in [Q]$, we guess the identities of the endpoints $i_q^{(0)}, i_q^{(1)}, \ldots, i_q^{(m_q)}$. Since each endpoint has at most $n$ possible values and $m_q \le \frac{1}{\eps}$, this can be done by complete enumeration in time
    $
    O\!\left(n^{\,O(\frac{1}{\eps}\cdot Q)}\right)
    =
    O\!\left(n^{\,O(\frac{1}{\eps^3}\log \Delta)}\right).
    $

    \item \emph{Guessing maximum stocking levels.} For each heavy class $q \in Q_H$, we guess
    $
    \hat{c}_{q,\max} = \max_{i \in \Ncal_q} \hat{c}_i,
    $
    the maximum stocking level of any class-$q$ product under $\hat{c}$. Since $\hat{c}_{q,\max} \in [\Ccal]$, this can be done in time
    $
    O(\Ccal^Q)
    =
    O\!\left(\Ccal^{\,O(\frac{1}{\eps^2}\log \Delta)}\right).
    $
\end{itemize}

At this point, it is worth noting that if $Q_H = \emptyset$, then the first two guessing steps described above exactly recover $\hat{c}$, and the proof is complete. However, if $Q_H \neq \emptyset$, then we must still determine the initial inventories of the heavy products, knowing only that under $\hat{c}$ they lie somewhere between $\frac{1}{\eps}+1$ and $\hat{c}_{q,\max}$. The fluid LP introduced in Step~4 is designed precisely for this purpose.

\subsection{Step 4: The fluid linear program}\label{subsec:step_4_INV}

Before formulating the LP, we introduce a further partition of the light classes into those for which all constituent products stock out and those that contain at least one product that does not stock out. This leads to one final guessing step, in which we approximately guess the fluid sales of the light products that do not stock out.

\paragraph{Light stock-out products and guessing.}
Define
$
Q_{L,SO} = \{q \in Q_L : \hat{x}_i(\hat{c}) = \hat{c}_i \ \forall i \in \Ncal_q\}
$
to be the set of light classes for which every product stocks out, and define
$
Q_{L,<} = Q_L \setminus Q_{L,SO}
$
to be the set of light classes for which at least one product does not stock out. The following claim shows that if a product in a light class does not stock out, then it must belong to the highest-stocked group. The proof appears in Appendix~\ref{app:proof_stock_out_group}.
\begin{claim}
    \label{claim:stock_out_group}
    For each $q \in Q_L$ and each class-$q$ product $i > i_q^{(1)}$, we have $\hat{x}_i(\hat{c}) = \hat{c}_i$.
\end{claim}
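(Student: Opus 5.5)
The plan is to argue by contradiction and exploit the symmetry of products within a weight class under the rounded weights $\hat w$. Fix a light class $q \in Q_L$ and suppose some class-$q$ product $i > i_q^{(1)}$ does not stock out, so $\hat{x}_i(\hat{c}) < \hat{c}_i$. By the stocking-group structure, $\hat{c}_i = k_q^{(\ell)}$ for some $\ell \ge 2$. Since the class is light, it also contains a product $j \le i_q^{(1)}$ with $\hat{c}_j = k_q^{(1)} > \hat{c}_i$. Under $\hat w$, products $i$ and $j$ have identical weights. Therefore, at every moment at which both are in the displayed assortment, their choice probabilities $\hat\pi(\cdot,A)$ coincide: the mixed-NP-MNL probability depends on a product only through its weight. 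Both start in stock, and $i$ never stocks out. Product $j$ has strictly more inventory and depletes at the same rate as $i$ whenever both are displayed, so $j$ cannot stock out before $i$ does. Hence $j$ is displayed throughout $[0,T]$, and $\hat{x}_j(\hat c) = \hat{x}_i(\hat c) < \hat{c}_i < \hat{c}_j$.

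With this in hand, the key step uses the minimality of $\|\hat c\|_1$. Every class-$q$ product in group $1$ has identical weight and is never exhausted, so each ends with fluid sales $\hat{x}_i(\hat c)$, which is strictly less than $k_q^{(1)} - 1$, or at least less than $\hat{c}_j$. Consider lowering $\hat c_j$ by one unit. Since $\hat{x}_j(\hat c) \le \hat{c}_i \le \hat{c}_j - 1$, the new inventory still covers the sales, so $j$ still never stocks out. The entire trajectory of displayed assortments is therefore unchanged, and so is the revenue, while the $\ell_1$-norm drops. This contradicts the choice of $\hat c$ as an optimal solution of minimum norm. The same conclusion can also be read off the stated property $\hat{x}_j(\hat c) \in [\hat{c}_j - 1, \hat{c}_j]$ combined with $\hat{x}_j = \hat{x}_i < \hat{c}_i \le \hat{c}_j - 1$, which is directly contradictory.

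The main obstacle is the coupling claim that $i$ and $j$ have equal sales when neither stocks out. This must be checked over the whole piecewise trajectory: at each stockout epoch the assortment changes, but both products remain present and symmetric, so their depletion rates agree on every interval. Ties in the argmin stockout rule do not matter here, because neither product ever reaches zero inventory. One further point needs care: the inequality $\hat{c}_i \le \hat{c}_j - 1$ requires integrality of the stocking levels, which holds since $k_q^{(1)} > k_q^{(\ell)}$ are distinct integers.
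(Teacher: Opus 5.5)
Your proposal is correct and follows essentially the same argument as the paper's proof. You take a group-one product $j$ with the same rounded weight as $i$, argue that $j$ never stocks out and that $\hat{x}_j(\hat{c}) = \hat{x}_i(\hat{c}) < \hat{c}_i \le \hat{c}_j - 1$, and conclude that lowering $\hat{c}_j$ by one unit leaves revenue unchanged, which contradicts the choice of $\hat{c}$ as an optimal solution minimizing $\|\hat{c}\|_1$. You also spell out, more carefully than the paper does, why $j$ cannot stock out before $i$ and why the trajectory is unaffected by removing that unit.
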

The key implication of the claim above is that for any light class $q \in Q_{L,<}$, every product $i \in \Ncal_q(k_q^{(1)}) = \{1,\ldots,i_q^{(1)}\}$ must not stock out under $\hat{c}$ and is therefore offered over the entire horizon. Moreover, because all of these products belong to the same weight class, they share the same rounded weight, and hence each experiences the same total fluid demand. We now describe the final two guessing steps.

\begin{itemize}
    \item \emph{Guessing light stock-out classes.} For each $q \in Q_L$, we guess whether all products in class $q$ stock out. This can be done by enumerating over all subsets of $Q_L$, which requires time
    $
    O(2^Q) = O\!\left(\Delta^{\,O(\frac{1}{\eps^2})}\right).
    $
    After this step, the sets $Q_{L,SO}$ and $Q_{L,<}$ are fully determined.

    \item \emph{Guessing non-stock-out fluid sales.} For each light class $q \in Q_{L,<}$ and each product $i \in \Ncal_q(k_q^{(1)})$, we guess an estimate $\ubar{x}_i$ of $\hat{x}_i(\hat{c})$ satisfying
    $
    \ubar{x}_i \leq \hat{x}_i(\hat{c}) \leq (1+\eps)\cdot \ubar{x}_i.
    $
    To obtain such a guess, we use the fact that product $i$ is offered throughout the entire horizon, since it does not stock out. Therefore, %
    \[
    \hat{x}_i(\hat{c}) \in \left[ \sum_{g \in \Gcal} \lambda_g \cdot \frac{T w_{\min}}{ w_{0,g} +n w_{\max}},\; \sum_{g \in \Gcal} \lambda_g \cdot \frac{T w_{\max}}{ w_{0,g} +w_{\max}}\right].
    \]
    Notice that the ratio between the upper and the lower bounds is at most $n \Delta$, since $\left. \frac{w_{\max}}{ w_{0,g} + w_{\max} } \right/  \frac{w_{\min}}{ w_{0,g} + n \cdot w_{\max} } \leq n \Delta $ for each $g \in \Gcal$. It therefore suffices to enumerate over guesses of the form
    $
    \left( \sum_{g \in \Gcal}  \frac{ \lambda_g \cdot T w_{\min}}{ w_{0,g} +n w_{\max}} \right) \cdot (1+\eps)^q$, for
    $q = 0,1,\ldots,
    \left\lceil
    \log_{1+\eps}\!\left(
    n \Delta
    \right)
    \right\rceil.
    $
    We also add  $k^{(2)}_q$ to this list of guesses so that the guessed fluid sales is not below the fluid sales of any other product in the same class that stocks out. Consequently, guessing these fluid-sales estimates across all classes in $Q_{L,<}$ requires time
    $
    O\!\left((\frac{\log n\Delta}{\eps^2})^Q\right)
    =
    O\!\left((\frac{\log n\Delta}{\eps^2})^{\,O(\frac{1}{\eps^2}\log \Delta)}\right).
    $ 
\end{itemize}

\paragraph{The fluid LP.} The fluid LP that we present below is very close in spirit to the classical choice-based deterministic LP~\citep{gallego2004managing, liu2008choice}, with a few important modifications. Along this line, let $\Acal_{\mathrm{RO}}$ denote the set of within-class revenue-ordered assortments, meaning that for any assortment $A \in \Acal_{\mathrm{RO}}$ and class $q \in [Q]$, there exists a product $j \in \Ncal_q$ such that $A \cap \Ncal_q = \{1,\ldots,j\}$ corresponds to a class-$q$ revenue-ordered assortment. Our fluid LP uses decision variables $\{h(A)\}_{A \in \Acal_{\mathrm{RO}}}$, where $h(A)$ represents the fraction of time over the horizon that assortment $A$ is offered. Additionally, for each heavy product $i \in \biguplus_{q \in Q_H}\Ncal_{q,H}$, we introduce a decision variable $u_i$ to represent its fluid sales. These $u$-variables are auxiliary, but they lead to a more interpretable and compact formulation.  In the formulation below, recall that from the preceding guessing steps, we have recovered $\hat{c}_i$ for each $i \in \biguplus_{q \in [Q]\setminus Q_{L,<}} \biguplus_{k \in k(q)}\Ncal_q(k)$ and $  i \in \biguplus_{q \in Q_{L,<}} \biguplus_{ 2 \leq \ell \leq m_q } \Ncal_q( k_q^{(\ell)} ) $, as well as estimates $\ubar{x}_i$ for each $i \in \biguplus_{q \in Q_{L,<}} \Ncal_q(k_q^{(1)})$ satisfying $\ubar{x}_i \leq \hat{x}_i(\hat{c}) \leq (1+\eps)\cdot \ubar{x}_i$. 
\begin{align*}
\label{eqn:fluid_LP}
\tag{FLUID-LP}
Z^*_{\mathrm{LP}} \
= \ \max_{h,u} \quad 
& \sum_{A \in \Acal_{\mathrm{RO}}} h(A)\; \sum_{i \in A} r_i \cdot \hat{\pi}(i,A) \\[0.3em]
\text{s.t.} \quad
& (1)~~ \sum_{A \in \Acal_{\mathrm{RO}}} h(A) = T & \\[0.3em]
& (2)~~ \sum_{\substack{A \in \Acal_{\mathrm{RO}} :\\ i \in A}} h(A)\, \hat{\pi}(i,A)
=
\begin{cases}
\hat{c}_i 
& \text{if } \displaystyle i \in \biguplus_{q \in [Q] \setminus Q_{L,<}} \biguplus_{k \in k(q)} \Ncal_q(k) \\[0.5em]
\ubar{x}_i 
& \text{if }\displaystyle  i \in \biguplus_{q \in Q_{L,<}} \Ncal_q(k_q^{(1)}) \\[0.5em]
\hat{c}_i
& \text{if }\displaystyle  i \in \biguplus_{q \in Q_{L,<}} \biguplus_{ 2 \leq \ell \leq m_q } \Ncal_q( k_q^{(\ell)} ) \\[0.5em]
u_i 
& \text{if } \displaystyle i \in \biguplus_{q \in Q_H} \Ncal_{q,H}
\end{cases}
& \forall i \in \Ncal, \\[0.3em]
& (3)~~ \frac{1}{\eps}+1 \leq u_i \leq \hat{c}_{q,\max} 
& \forall i \in \biguplus_{q \in Q_H} \Ncal_{q,H}, \\[0.3em]
& (4)~~ \sum_{q \in Q_H} \sum_{i \in \Ncal_{q,H}} u_i
\;\leq\; \Ccal - \sum_{q \in [Q]} \sum_{k \in k(q)} \sum_{i \in \Ncal_q(k)} \hat{c}_i & \\[0.3em]
& (5)~~ h(A), u_i \ge 0. &
\end{align*}
Constraint~(1) ensures that $\{h(A)\}_{A \in \Acal_{\mathrm{RO}}}$ exactly spans the entire selling horizon. The constraints in~(2) and~(3) are used to align the chosen starting inventory vector with $\hat{c}$ as closely as possible. More specifically, constraint~(2) requires that the fluid sales of each non-heavy product exactly match the corresponding guessed quantity. For heavy products, constraint~(3) imposes the weaker requirement that their fluid sales lie in the interval $[\frac{1}{\eps}+1,\hat{c}_{q,\max}]$. Finally, constraint~(4) ensures that the capacity constraint is satisfied. The following lemma shows that~\ref{eqn:fluid_LP} is feasible and that its optimal objective value is at least a $(1-\eps)$-fraction of $\opt_{\hat{w}}$.
\begin{lemma}
    \label{lem:fluid_LP_lb}
    $Z^*_{\mathrm{LP}} \geq (1-\eps)\cdot \opt_{\hat{w}}.$
\end{lemma}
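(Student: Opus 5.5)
The plan is to build a feasible solution to \ref{eqn:fluid_LP} directly from the show-all trajectory induced by $\hat{c}$ under the rounded weights, and then lose at most an $\eps$-fraction when it is adjusted to meet the guessed sales $\ubar{x}_i$. Run the fluid show-all process from $\hat{c}$ over $[0,T]$ and let $A^{(1)}\supseteq A^{(2)}\supseteq\cdots$ be the displayed assortments, with durations $\tau^{(1)},\tau^{(2)}-\tau^{(1)},\ldots$ (together with the empty assortment after all products stock out). Set $h(A^{(t)})$ equal to these durations, so constraint (1) holds. The objective then equals $\rev_{\hat w}(\hat c)=\opt_{\hat w}$. Set $u_i=\hat{x}_i(\hat{c})$ for heavy products.

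The first check is that every $A^{(t)}$ lies in $\Acal_{\mathrm{RO}}$. Within a weight class all rounded weights coincide, so at any moment all in-stock class-$q$ products are consumed at the same rate. Hence, within a class, products stock out in increasing order of initial inventory. By Claim~\ref{claim:stock_non_dec_rev}, inventory is non-decreasing in revenue, so the remaining class-$q$ products always form a revenue-ordered prefix $\{1,\ldots,j\}$. Ties in stocking level are harmless, because equal inventories stock out simultaneously.

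Constraint (2) is checked case by case. For classes in $[Q]\setminus Q_{L,<}$ with a level $k\le 1/\eps$, there are two situations. In light stock-out classes every product sells $\hat c_i$ by definition. In heavy classes, a product with $\hat c_i\le 1/\eps$ is stocked strictly below some heavy product in the same class; since equal rates apply, it must stock out, because otherwise the heavy product would have unsold inventory beyond the minimal-$\|\hat c\|_1$ slack. For $Q_{L,<}$, levels $\ell\ge2$ stock out by Claim~\ref{claim:stock_out_group}. Heavy products satisfy $\hat{x}_i(\hat c)\ge \hat c_i-1\ge 1/\eps$, which is slightly short of the required lower bound $1/\eps+1$. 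This needs care: use the definition $\hat c_i>1/\eps$ together with the $[\hat c_i-1,\hat c_i]$ sales bound, or else argue that heavy products in a class with a stocked-out lower level must themselves satisfy the equality. The upper bound holds since $\hat{x}_i\le\hat c_i\le \hat c_{q,\max}$. Constraint (4) follows from $u_i\le\hat c_i$ and $\|\hat c\|_1\le\Ccal$.

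The main obstacle is the products in $\Ncal_q(k_q^{(1)})$ for $q\in Q_{L,<}$. Here the trajectory yields sales $\hat x_i(\hat c)\in[\ubar x_i,(1+\eps)\ubar x_i]$ rather than exactly $\ubar x_i$. I would fix this by scaling: replace $h$ by $h'(A)=\frac{1}{1+\eps}h(A)$ on all assortments and put the leftover time $\frac{\eps}{1+\eps}T$ on the empty assortment. This keeps constraint (1) but scales every equality in (2) by $1/(1+\eps)$, which breaks the exact equalities for the other products. The repair is instead to shorten, within each such class, the time the top group is displayed. Since these products are never removed by stockout, one can replace part of the time on $A^{(t)}$ by $A^{(t)}\setminus(\text{top group}\cup\text{lower prefix})$ mixtures. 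Regularity and the CCD structure (Lemma~\ref{lem:CCD}) suggest that the sales of other products only rise under such removal, and those increases would in turn need compensation. This is where I expect the real work.

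A cleaner route is to view the LP's constraint family as convex in $h$. Take a convex combination $\frac{1}{1+\eps}h+\frac{\eps}{1+\eps}h''$, with $h''$ chosen to supply exactly the missing sales, and bound the revenue by $(1-\eps)\opt_{\hat w}$. I would try this first, possibly by exploiting the guessed value $k^{(2)}_q$ added to the list so that $\ubar x_i$ never drops below the lower levels.
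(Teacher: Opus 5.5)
\textbf{There is a genuine gap, at exactly the step you defer.} You never construct an $h$ that meets constraint~(2) with value $\ubar{x}_i$ on the top groups $\Ncal_q(k_q^{(1)})$, $q\in Q_{L,<}$, while keeping every other equality in~(2). That construction is the whole content of the lemma, and none of your routes supplies it:
\begin{itemize}
\item Scaling $h$ by $\frac{1}{1+\eps}$ breaks the equalities $\hat{c}_i$ for the stock-out products, as you note.
\item Removing the top groups for part of the time raises the sales of every other product, including the top groups of other classes. The compensation is left unspecified.
\item The convex-combination route is circular. If $y$ is the sales vector generated by $h''$, you need $y_i=\hat{c}_i$ exactly for every light stock-out product, and $y_i=\bigl((1+\eps)\ubar{x}_i-\hat{x}_i(\hat{c})\bigr)/\eps\le\ubar{x}_i$ on the top groups. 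So $h''$ must realize, over total time $T$ with assortments in $\Acal_{\mathrm{RO}}$, a sales vector that agrees with $\hat{x}(\hat{c})$ off the top groups and lies below it on them. That is the original difficulty.
\end{itemize}

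\textbf{The missing idea is to run the show-all process from the target vector itself rather than from $\hat{c}$.} Let $\hat{v}_i=\hat{x}_i(\hat{c})$. Let $\hat{u}$ agree with $\hat{v}$ except that $\hat{u}_i=\ubar{x}_i$ on the top groups of $Q_{L,<}$. Your argument that light-level products in heavy classes stock out, together with Claim~\ref{claim:stock_out_group} and the definition of $Q_{L,SO}$, shows $\hat{v}_i=\hat{c}_i$ on all other light products, which is what the LP requires there.

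Starting from $\hat{v}$ reproduces the trajectory of $\hat{c}$, with every product selling out by $T$. Since $\hat{u}\le\hat{v}$ componentwise, fluid regularity (Claim~\ref{claim:fluid_regular}, in the direction its proof establishes: lowering one inventory weakly raises every other product's sales) shows that every product still sells out from $\hat{u}$. So the show-all run from $\hat{u}$ sells exactly $\hat{u}_i$ of each product.

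Take its assortments and durations, put the leftover time on $\emptyset$, and set $u_i=\hat{u}_i$ on heavy products. This satisfies~(2) with equality in all cases simultaneously. The objective is $\sum_i r_i\hat{u}_i\ge\frac{1}{1+\eps}\sum_i r_i\hat{v}_i\ge(1-\eps)\opt_{\hat{w}}$.

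Membership in $\Acal_{\mathrm{RO}}$ now needs $\hat{u}$, not $\hat{c}$, to be non-decreasing in revenue within each class. The extra guess $k_q^{(2)}$ secures this by forcing $\ubar{x}_i\ge k_q^{(2)}$.

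\textbf{On constraint~(3), your worry is justified.} A heavy product that does not stock out only guarantees $\hat{x}_i(\hat{c})>\frac{1}{\eps}$, not $\ge\frac{1}{\eps}+1$. Neither of your suggested fixes works, since heavy products need not stock out. The paper's proof calls this constraint immediate without comment. Because Lemma~\ref{lem:final_guarantee} only uses $u_i^*\ge\frac{1}{\eps}$, relaxing the lower bound in~(3) to $\frac{1}{\eps}$ repairs it.
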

The proof of the lemma above appears in Appendix~\ref{app:proof_fluid_LP_lb} and it proceeds by constructing a feasible solution to \ref{eqn:fluid_LP} using $\hat{c}$ and the displayed assortment induced by the show-all policy under $\hat{c}$ that achieves an objective of at least $(1-\eps)\cdot \opt_{\hat{w}}$.

\paragraph{Remarks.} Before describing how we use~\ref{eqn:fluid_LP} to select our output initial inventory vector, it is important to note that this LP contains no explicit constraints guaranteeing that a valid show-all policy can be derived from an optimal solution. This is precisely where we exploit the CCD property of the mixed-NP-MNL model. We also note that~\ref{eqn:fluid_LP} has only $|\Acal_{\mathrm{RO}}| = O(n^Q) = O\bigl(n^{O(\frac{1}{\eps^2}\cdot \log \Delta)}\bigr)$ decision variables. Hence, when $\Delta = O(1)$, this LP can be solved optimally in polynomial time. In what follows, we construct our output initial inventory vector $c^{\mathrm{LP}}$ from an optimal solution to~\ref{eqn:fluid_LP}, denoted by $\{h^*(A)\}_{A \in \Acal_{\mathrm{RO}}}$ and $\{u_i^*\}_{i \in \biguplus_{q \in Q_H} \Ncal_{q,H}}$, and show that its fluid revenue is $(1-O(\eps))$-optimal for~\ref{eqn:opt_inv_rounded}.

\paragraph{Choosing starting inventories.} We construct $c^{\mathrm{LP}}$ as
\[
c^{\mathrm{LP}}_i
=
\begin{cases}
\hat{c}_i 
& \text{if } \displaystyle i \in \biguplus_{q \in [Q]} \biguplus_{k \in k(q)} \Ncal_q(k), \\[0.75em]
\lfloor u_i^* \rfloor 
& \text{if } \displaystyle i \in \biguplus_{q \in Q_H} \Ncal_{q,H},
\end{cases}
\]
which is clearly feasible by constraint~(4) of~\ref{eqn:fluid_LP}. The following lemma, whose proof can be found in Appendix~\ref{app:proof_final_guarantee}, analyzes the fluid revenue of $c^{\mathrm{LP}}$ and shows that it differs from $Z_{\mathrm{LP}}^*$ by at most an $O(\eps)$-factor.
\begin{lemma}
    \label{lem:final_guarantee}
    For any $\eps \in [0,\frac{1}{4}]$, we have that $\rev_{\hat{w}}(c^{\mathrm{LP}}) \geq (1-4\eps)\cdot Z_{\mathrm{LP}}^*$.
\end{lemma}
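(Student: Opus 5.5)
The plan is to pass from the LP solution to a show-all trajectory through the CCD property (Lemma~\ref{lem:CCD}), and then absorb the rounding losses. Let $\phi_i = \sum_{A \ni i} h^*(A)\,\hat{\pi}(i,A)$ be the LP purchase rates. By constraint~(1), $\{h^*(A)\}$ is a scaled convex combination of assortments with total mass $T$. Lemma~\ref{lem:CCD} therefore says that the show-all process started from the (fractional) inventory vector $\phi$ stocks out every product by time $T$. In that process each product $i$ sells exactly $\phi_i$. Since the rounded model is still mixed-NP-MNL, the show-all revenue from $\phi$ equals $\sum_i r_i \phi_i = Z^*_{\mathrm{LP}}$. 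So we have a fractional show-all inventory vector whose fluid revenue is exactly $Z^*_{\mathrm{LP}}$.

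The next step is to compare $\phi$ with $c^{\mathrm{LP}}$. There are four kinds of coordinates.
\begin{itemize}
\item For products with recovered $\hat{c}_i$, constraint~(2) gives $c^{\mathrm{LP}}_i = \phi_i$.
\item For heavy products, $c^{\mathrm{LP}}_i = \lfloor u^*_i \rfloor \ge (1-\eps)\phi_i$, because $u^*_i \ge \frac1\eps + 1$.
\item For light non-stockout products in $\Ncal_q(k_q^{(1)})$, $c^{\mathrm{LP}}_i = \hat{c}_i \ge \hat{x}_i(\hat{c}) \ge \ubar{x}_i = \phi_i$.
\end{itemize}
So $c^{\mathrm{LP}} \ge (1-\eps)\phi$ coordinatewise, and $c^{\mathrm{LP}}$ may exceed $\phi$ on some coordinates. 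Raising inventory above $\phi$ is harmless for the products being raised, but under regularity it can delay stockouts and thereby reduce sales of other products. I would therefore work with the intermediate vector $\tilde{c} = \min(c^{\mathrm{LP}}, \phi)$ and argue in two stages.

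The first stage compares $\tilde{c}$ with $(1-\eps)\phi$. Scaling inventories down by $(1-\eps)$ in a fluid show-all process gives revenue at least $(1-\eps)$ times the original. Over horizon $T$, the process from $(1-\eps)\phi$ follows the same stockout order, with stockout times scaled by $(1-\eps)$. It therefore earns at least $(1-\eps)Z^*_{\mathrm{LP}}$, since all of its inventory sells by time $(1-\eps)\tau \le T$. Going from $(1-\eps)\phi$ up to $\tilde{c} \le \phi$, I would use a monotonicity statement: any inventory vector $c'$ with $(1-\eps)\phi \le c' \le \phi$ sells everything by time $T$. Under CCD, each product's stockout time from $\phi$ is at most $T$, and lowering inventories only hastens stockouts. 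Hence the revenue from $c'$ is at least $\sum_i r_i c'_i \ge (1-\eps)Z^*_{\mathrm{LP}}$. The claim that lowering inventories preserves stockout by $T$ is plausible via regularity: sales rates of the remaining products only increase as others stock out.

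The second stage, which I expect to be the main obstacle, handles the coordinates where $c^{\mathrm{LP}}_i > \phi_i$. These are mainly the light non-stockout products, and possibly heavy products where rounding did not lower the level. Extra inventory of such a product keeps it in the assortment longer and suppresses demand for others. I would exploit the guessing structure here.
\begin{itemize}
\item For the light non-stockout products, $\hat{c}_i \ge \hat{x}_i(\hat{c})$, so they did not stock out under $\hat{c}$. The guess satisfies $\hat{x}_i(\hat{c}) \le (1+\eps)\phi_i$. Products in the same class share the same weight, and the guessed value is at least $k_q^{(2)}$. So in the $\phi$-process these products remain in stock nearly the whole horizon: they stock out no earlier than every other class-$q$ product and close to $T$.
\item Keeping them until $T$ therefore changes the trajectory only on a window whose sales are an $O(\eps)$ fraction. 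I would bound the loss by comparing sales rates, giving at most a $(1+\eps)$-factor dilation of everyone else's stockout times. This costs another factor of about $(1-2\eps)$.
\end{itemize}
Combining the two stages yields $(1-\eps)(1-3\eps) \ge 1-4\eps$, as required. If this second stage fails as stated, the fallback is to change the construction of $c^{\mathrm{LP}}$ analytically, capping those coordinates at $\lceil \phi_i \rceil$ with loss absorbed as in the heavy case.
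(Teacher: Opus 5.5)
Your first stage is sound: applying Lemma~\ref{lem:CCD} to the LP rates $\phi=\mathrm{RHS}$ and then Claim~\ref{claim:fluid_regular} shows that every $c'\le\phi$ sells out by $T$. Hence $\min(c^{\mathrm{LP}},\phi)$ earns at least $(1-\eps)Z^*_{\mathrm{LP}}$; the scaling detour is unnecessary. Heavy coordinates never exceed $\phi$, since $\lfloor u_i^*\rfloor\le u_i^*$. The only excess is on the top groups $\Ncal_q(k_q^{(1)})$ with $q\in Q_{L,<}$.

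The gap is in the second stage, where all the difficulty lies, and neither of its two supporting assertions holds. First, nothing forces these top-group products to stock out near $T$ in the $\phi$-process. The LP may set some $u_i^*$ below the corresponding sales under $\hat c$, so the $\phi$-trajectory can lose products earlier than the $\hat c$-trajectory, and the first top-group product $f$ can reach $\ubar{x}_f$ long before $T$. Second, a $(1+\eps)$-dilation of other products' stockout times cannot hold in general. Keeping a whole top group in stock adds its total weight to every type's denominator, which can depress the remaining products' sales rates by a factor far from one; that factor grows with the group's weight. So no argument that compares rates or times will bound the loss.

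The missing idea is to compare \emph{cumulative sales at a single moment}, using the structure of $\hat c$. Let $\tau^*$ be the first time some top-group product $f$ reaches $\ubar{x}_f$. Up to $\tau^*$, the process from $c^{\mathrm{LP}}$ before flooring coincides with the $\phi$-process, so CCD gives $\tau^*\le T$.

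All types share the product weights, so any two in-stock products sell in the fixed ratio $\hat w_j/\hat w_f$ (Claim~\ref{claim:fluid_IIA}). At $\tau^*$, product $j$ has therefore sold $\min\{c_j,\ubar{x}_f\hat w_j/\hat w_f\}$. Since $f$ never stocks out under $\hat c$, we also have $\hat x_j(\hat c)=\min\{\hat c_j,\hat x_f(\hat c)\hat w_j/\hat w_f\}$, and the guess gives $\ubar{x}_f\ge(1-\eps)\hat x_f(\hat c)$. So every light product has already sold $(1-\eps)\mathrm{RHS}_j$ by $\tau^*$, and what the surplus stock does afterward is irrelevant.

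For heavy products the same step needs the \emph{upper} bound $u_i\le\hat c_{q,\max}$ from constraint~(3), together with $\hat c_{q,\max}-1\le\hat x_f(\hat c)\hat w_1/\hat w_f$. These give sales of at least $(1-\eps)(u_i-1)$ by $\tau^*$. Your plan never uses this bound. Without it, a heavy product could still have to sell a constant fraction of $u_i^*$ after $\tau^*$, at a rate depressed by the surplus stock, and no $O(\eps)$ loss follows. Only after this step is flooring handled, via $u_i^*\ge\frac1\eps+1$ and fluid regularity.

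Your fallback of capping at $\lceil\phi_i\rceil$ changes $c^{\mathrm{LP}}$, so it would not prove the lemma as stated. Rounding up also still leaves surplus stock that keeps products in the assortment. That is the externality on other products, not the own-sales loss you absorbed in the heavy case.
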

The proof of this lemma relies crucially on the CCD property of the mixed-NP-MNL model to show that the distribution over assortments induced by $\{h^*(A)\}_{A \in \Acal_{\mathrm{RO}}}$ can be represented as a simple nested sequence of assortments that corresponds almost exactly to a valid show-all policy. Moreover, the proof reveals why constraint~(3) of~\ref{eqn:fluid_LP} is critical.

\paragraph{The final guarantee.} In what follows, we analyze the fluid revenue of $c^{\mathrm{LP}}$, the starting inventory vector outputted by our approach, under the true $w$-weights. We then summarize the running time required to compute this starting inventory vector. To begin, observe that by combining Lemmas~\ref{lem:fluid_LP_lb} and~\ref{lem:final_guarantee}, we obtain
\[
\rev_{\hat{w}}(c^{\mathrm{LP}}) \geq (1-5\eps)\cdot \opt_{\hat{w}}.
\]
It then follows from Lemma~\ref{lem:proxy_near_op} that
\[
\rev_w(c^{\mathrm{LP}}) \geq (1-15\eps) \cdot \opt_w,
\]
showing that we indeed constructed an oracle for~\ref{eqn:opt_inv_oracle} with approximation factor $\alpha = 1 - O(\eps)$. The running time required to compute $c^{\mathrm{LP}}$ is dominated by the various guessing steps, which together yield an overall running time of
\[
O\left(\left(\frac{n\Ccal\Delta}{\eps^2}\right)^{O\left(\frac{1}{\eps^2} \cdot \log \Delta\right)}\right).
\]
The factor of $m$ stated in the final running time of Theorem~\ref{thm:one_half_fluid} comes from the running time of the even/odd-sized dynamic program.

\section{The Stochastic Show-All Inventory Stocking Problem}\label{sec:asym_one_half}

In this section, we consider a version of~\ref{eqn:Show_all} with scaling parameter $\theta \in \mathbb{Z}_+$, which enables our asymptotic analysis.  Specifically,  define $\Rcal_{\theta}(c)$ to be the random revenue earned over discrete periods $\{ 1,2,\ldots, \theta T \}$ when starting from an initial inventory vector $c \in \Fcal_{\theta} \equiv \{ c \in \mathbb{Z}_+^{n \times m} \, : \, || c ||_1 \leq \theta \cdot \Ccal \}$. We consider the problem
\begin{align}
	\label{eqn:INV-ST-theta}
	\tag{SA-$\theta$}
	\opt_{\theta} = \max_{c \in \Fcal_{\theta}} \ex{\Rcal_{\theta}(c)} 
\end{align}
 Our main result, formally stated below, is to show that we can construct a parameterized starting inventory vector that is asymptotically $\frac{1}{2}$-optimal as $\theta \to \infty$. 
\begin{theorem}
    \label{thm:one_half_stochastic}
    For every integer $\theta \geq 1$, we can find $\hat{c}_{\theta} \in \Fcal_{\theta}$ such that
    \[
    \lim_{\theta \to \infty} \frac{\ex{\rev_{\theta}(\hat{c}_{\theta})}}{\opt_{\theta}} = \frac{1}{2}.
    \]
\end{theorem}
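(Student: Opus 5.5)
The plan is to produce, for every $\theta$, an inventory vector whose scaled revenue can be tuned continuously, and then tune it so the ratio is pinned to $\frac12$. I work with a \emph{lower-bound construction} and a matching \emph{upper benchmark}. Throughout I assume WLOG, as in Section~\ref{sec:even_odd}, that the even side is the better one at the LP level below.

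\textbf{Upper benchmark.} Let $Z(\gamma)$ denote the optimal value of the choice-based deterministic LP (variables $h(A)\ge 0$ with $\sum_A h(A)=T$ and purchase-rate vector $\phi$) with capacity $\gamma\Ccal$ on the full CFTC universe. Let $Z_{\even}(\gamma)$ and $Z_{\odd}(\gamma)$ be the same LPs restricted to assortments inside $\Ucal_{\even}$ and $\Ucal_{\odd}$.
\begin{itemize}
\item By the standard Jensen/DLP argument, every $c\in\Fcal_\theta$ satisfies $\ex{\rev_\theta(c)}\le \theta Z(1)$, so $\opt_\theta\le\theta Z(1)$.
\item Take an optimal full-LP solution and intersect every assortment with $\Ucal_{\even}$ (respectively $\Ucal_{\odd}$). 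Regularity (Appendix~\ref{appendix-sec:regularity_property}) means remaining products' purchase rates only increase. Since $\ell_{\max}=1$, even-size choice probabilities depend only on even products, so the even part of the solution is feasible for the even LP with its own share of capacity. This gives $Z(1)\le Z_{\even}(1)+Z_{\odd}(1)\le 2Z_{\even}(1)$.
\end{itemize}
I also need a matching lower benchmark. The even-restricted construction below, at $\gamma=1$, yields $\opt_\theta\ge \theta Z_{\even}(1)-o(\theta)$. Hence $\opt_\theta/\theta$ stays in $[Z_{\even}(1)-o(1),\,2Z_{\even}(1)]$, bounded away from zero whenever revenue is nontrivial.

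\textbf{Tunable construction.} For each $\gamma\in[0,1]$ I follow the approach sketched in the introduction:
\begin{itemize}
\item Solve the joint even LP with capacity $\gamma\Ccal$. It decomposes across even sizes into independent mixed-NP-MNL LPs.
\item By Lemma~\ref{lem:CCD}, each size's optimal $\phi$ is realized by a nested show-all stockout trajectory ending by time $T$.
\item Set $\hat c_\theta(\gamma)=\lfloor\theta\phi\rfloor$.
\end{itemize}
The fluid show-all process from $\theta\phi$ reproduces the LP value exactly by CCD. I then build the coupling from the introduction: an auxiliary stochastic process that offers the fluid trajectory's assortments at the fluid stockout times, with customer choices kept random. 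I show (i) that its expected revenue is at most $\ex{\rev_\theta(\hat c_\theta(\gamma))}$, because regularity lets the true show-all process dominate it, and (ii) that it is at least $\theta Z_{\even}(\gamma)-E_\theta$. Here the error $E_\theta=O(\sqrt{\theta\log\theta})$ comes from concentration of sales within each of the at most $n$ phases, plus the $O(n)$ rounding loss.

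The key point is that $E_\theta$ depends only on $n$, the $r_i$, and $\theta$, and not on $\gamma$. This makes the bound \emph{uniform in $\gamma$}. A matching upper bound $\ex{\rev_\theta(\hat c_\theta(\gamma))}\le\theta Z_{\even}(\gamma)$ follows from the DLP bound applied within $\Fcal_{\even}$.

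\textbf{Pinning the ratio.} The function $Z_{\even}(\gamma)$ is concave and nondecreasing in $\gamma$, hence continuous on $(0,1]$, with $Z_{\even}(0)=0$; it is also continuous at $0$ since the LP value is bounded by the capacity times $\max_i r_i$. By the benchmarks, $\opt_\theta/(2\theta)\le Z_{\even}(1)$. So for each $\theta$ I choose $\gamma_\theta$ with $\theta Z_{\even}(\gamma_\theta)=\opt_\theta/2$ and output $\hat c_\theta=\hat c_\theta(\gamma_\theta)\in\Fcal_\theta$. The two-sided uniform estimates then give
\[
\frac{\ex{\rev_\theta(\hat c_\theta)}}{\opt_\theta}=\frac12+O\!\left(\frac{E_\theta}{\opt_\theta}\right)\longrightarrow\frac12 ,
\]
using $\opt_\theta=\Theta(\theta)$.

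\textbf{Main obstacle.} I expect the hardest step to be the coupling lower bound (ii) with an error that is \emph{uniform in $\gamma$}. The stockout times of the stochastic process deviate from the fluid ones, and I must show that the auxiliary process is dominated by the true one. I would first try the phase-by-phase argument: in each phase, the auxiliary process withholds a product only when the fluid process has depleted it. The true process's extra availability then only helps, by regularity and a sample-path monotonicity of sales. Sales within a phase are sums of independent Bernoullis, which concentrate at rate $\sqrt{\theta}$ regardless of the LP solution.
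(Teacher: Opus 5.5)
Your core machinery is the paper's: the even JLP decoupled by size, per-size CCD giving full fluid depletion of $\lfloor\theta\phi\rfloor$, and a coupling with a fluid-mirrored auxiliary process. Your observation that the total error is additive, $O(\sqrt{\theta})$, and uniform in the capacity level is correct and cleaner than the paper's per-product ratio bound. However, the step you flag as the main obstacle does not go through as you justify it, and there is a smaller gap in your LP-splitting bound.

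\textbf{Main gap: step (i), dominance of the auxiliary process.} Regularity does not make the true process dominate an auxiliary process that follows the fluid schedule and earns revenue only from purchases it can fill. When the true process still holds a product past its fluid stockout time, that ``extra availability'' can divert customers to a low-revenue product. High-revenue units are then left unsold at the end, while the auxiliary process sells them. Moreover, when the true process stocks out before the fluid time, it offers \emph{less} than the auxiliary process, so your phase-by-phase monotonicity argument does not cover all sample paths.

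The paper repairs this with two accounting choices. Purchases of products that are out of stock in the auxiliary inventory are virtual and earn nothing. Every unit still unsold at the end of the horizon is charged $r_{\max}$. It then proves $|\Omega|\ge|\Omega_\downarrow|$ almost surely, using an intermediate process that offers only the in-stock products of $\Acal_{(t)}$ and two inventory-domination inductions. This yields $\REV_{\Pcal}\ge\REV_{\Pcal_\downarrow}$ pathwise. The penalty costs only $r_{\max}\Ebb[(c_i-Y_i)^+]=O(\sqrt{\theta})$ per product, so your uniformity in $\gamma$ survives.

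\textbf{Smaller gap: the bound $Z(1)\le Z_{\even}(1)+Z_{\odd}(1)$.} Your primal argument fails because intersecting each assortment with $\Ucal_{\even}$ raises the even purchase rates, and hence the capacity consumed, so the even part need not fit within $\Ccal$. The inequality is nevertheless true by LP duality. For each multiplier $\mu\ge 0$, apply regularity to $A\cap\Ucal_{\even}\cap\{(i,s):r_i\ge\mu\}$ (and its odd analogue); this splits $\max_A\sum_{i\in A}(r_i-\mu)\pi(i,A)$ into an even maximum plus an odd maximum. Then take $\mu=\max\{\mu^*_{\even},\mu^*_{\odd}\}$. Alternatively, bypass the bound using $\opt_{\even,\theta}\ge\opt_\theta/2$ together with $Z^{\JLP}_{\even}(\theta)\ge\opt_{\even,\theta}$.

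\textbf{Comparison with the paper: the pinning step.} The genuinely different part of your proof is tuning the capacity to $\gamma_\theta$. The paper stops at full capacity and shows $\ex{\rev_\theta(\hat c_\theta)}/\opt_{\even,\theta}\to 1$. This gives $\liminf_\theta \ex{\rev_\theta(\hat c_\theta)}/\opt_\theta\ge\frac12$, which is the intended content: asymptotic $\frac12$-optimality. The paper's $\hat c_\theta$ does not satisfy the literal equality in general; if every $\Scal_i\subseteq\Scal_{\even}$, its ratio tends to $1$.

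Your pinning does deliver the literal limit, but only by deliberately discarding revenue. Also, $\gamma_\theta$ is defined through $\opt_\theta$, which you cannot compute, so ``we can find'' becomes purely existential. The paper's vector, by contrast, comes from an LP solvable by constraint generation. For the guarantee that actually matters, take $\gamma=1$ and drop the pinning.
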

The proof unfolds over the subsections that follow, beginning with a description of the fluid linear program that is used to select our starting inventory vector.

\subsection{The Fluid-LP based approach}
\label{subsec:asymptotic-fluid-LP}

Our construction of the asymptotic $1/2$-optimal solution relies on developing an asymptotically optimal solution to the parameterized version of~\ref{eqn:opt_inv_even}.  Namely, let 
\[
\Fcal_{\even, \theta}
=
\left\{
c \in \mathbb{Z}^{n \times m}_+ :
\sum_{(i,s) \in \Ucal_{\even}} c_{i,s} \leq \theta\cdot \Ccal,\;
\sum_{(i,s) \in \Ucal_{\odd}} c_{i,s} = 0
\right\}
\]
and consider the problem 
\begin{align}
	\label{eqn:INV-even-ST}
	\tag{SA-EVEN-$\theta$}
	\opt_{\even,\theta} = \max_{c \in \Fcal_{\even, \theta}} \ex{\Rcal_{\theta}(c)}.
\end{align}
Defining $\opt_{\odd,\theta}$ analogously, it is straightforward to show that
$
\max\{\opt_{\even,\theta},\opt_{\odd,\theta}\}
\geq \frac{\opt_{\theta}}{2}.
$
Thus, without loss of generality, we assume that $\opt_{\even,\theta}\geq \opt_{\odd,\theta}$, and consequently $\opt_{\even,\theta}\geq \frac{\opt_{\theta}}{2}$. To approximate \ref{eqn:INV-even-ST}, we rely on a fluid linear program that jointly determines inventory levels and assortment offerings. Recalling that $\Ucal_{\even}$ denotes the universe of products with even-indexed sizes, consider
\begin{equation*}
	\label{eqn:asymptotic-joint-LP}
	\tag{JLP-EVEN}
	\begin{aligned}
		Z^{\JLP}_{\even}(\theta) \quad = \quad   \underset{ h \geq 0, c \geq 0 }{\text{max}} \quad
		&    \sum_{(i,s) \in \Ucal_{\even} }  r_i \cdot c_{i,s} \\[0.3em]
		\text{s.t.} \quad
		& \sum_{A \subseteq {\Ucal}_{\even} } h(A) =\theta \cdot T, & \\[0.3em]
		&  \sum_{A \subseteq {\Ucal_{\even}}} \pi((i,s), A) \cdot h(A)  = c_{i,s} , & \forall (i,s) \in {\Ucal_{\even}}, \\[0.3em]
		& \sum_{(i,s) \in {\Ucal_{\even}} } c_{i,s} \leq \theta \cdot \Ccal. & %
	\end{aligned}
\end{equation*}
Although this LP has an exponential numbers of variables, in Appendix~\ref{subsec:app-asymptotic-constraint-generation} we show that it can be solved optimally via constraint generation with a $O(n)$-time separation oracle.

\paragraph{Constructing the asymptotically $1/2$-optimal solution.} Let $\bar{c}_\theta$ represent the optimal $c$-variables corresponding to $Z^{\JLP}_{\even}(\theta)$ and define
\[
\hat{c}_\theta \equiv \floor{ \bar{c}_\theta },
\]
where we take the integer part of the starting inventory level of each product $(i,s) \in \Ucal_{\even}$ under $\bar{c}_\theta$ while stocking nothing for products in $\Ucal_{\odd}$. In the remainder of the section, we will prove that
\begin{align}
	\label{eq:performance-of-rounded-CDLP-sol}
	\lim_{\theta \rightarrow \infty} \frac{  \Ebb \left[ \Rcal_\theta \left(  \hat{c}_\theta \right) \right]   }{ \text{OPT}_{\even,\theta}  } =1,
\end{align}
which, together with $\opt_{\even,\theta}\geq \frac{1}{2}\opt_{\theta}$, establishes that $\hat{c}_{\theta}$ is asymptotically $1/2$-optimal and proves Theorem~\ref{thm:one_half_stochastic}.

\subsection{Proof overview}
\label{appendix-subsec:asymptotic-proof-overview}

Our proof of the asymptotic performance guarantee proceeds in two steps and establishes a sequence of comparisons among three problems: (i) the joint LP, \ref{eqn:asymptotic-joint-LP}, (ii) the inventory problem under the fluid show-all demand process, and (iii) the inventory problem under the stochastic show-all demand process. %

\begin{itemize}
	\item {\bfseries Step 1: Bridging the joint LP and the fluid show-all process.} 
	The first step establishes that, under the fluid show-all process, the revenue generated by $\hat{c}_\theta$ asymptotically matches the objective value of \ref{eqn:asymptotic-joint-LP}. Conceptually, this step transfers the performance guarantee of $\hat{c}_\theta$ from a setting that allows dynamic assortment adjustments (as in~\ref{eqn:asymptotic-joint-LP}) to the more restrictive show-all setting in which no such flexibility is available. 

    Particularly, we show that, under the fluid show-all process, the revenue generated by $\hat{c}_\theta$ converges to $Z^{\JLP}_{\even}(\theta)$, the optimal value of \ref{eqn:asymptotic-joint-LP}. This convergence relies on two key properties: (i) the rounding from $\bar{c}_\theta$ to $\hat{c}_\theta$ incurs negligible loss in the fluid limit, and (ii) under $\hat{c}_\theta$, demand decomposes across sizes and follows the mixed-NP-MNL model. The CCD property of this model then implies that the fluid show-all revenue coincides with the joint LP objective, thereby completing the bridge between the two formulations.

	\item {\bfseries Step 2: Bridging the fluid and stochastic show-all processes.} 
	The second step shows that the expected revenue in the stochastic show-all process starting from the initial inventory vector $\hat{c}_\theta$ converges to its fluid counterpart. The main challenge is that, in the stochastic setting, random customer choices induce random stockout events, leading to random assortment sequences and consumption patterns that may differ substantially from the deterministic depletion trajectory of the fluid process.

	To address this issue, we construct an auxiliary stochastic process that mirrors the fluid trajectory by fixing the sequence of assortments according to the fluid stockout order, while keeping customer choices stochastic. This construction decouples assortment decisions from inventory feasibility: customers are allowed to select out-of-stock products (yielding zero revenue), and any remaining inventory at the end of the horizon is penalized. We show, via a coupling argument, that the expected revenue of this penalized process is dominated by that of the true stochastic process, while at the same time it converges to the fluid revenue. This establishes that the stochastic revenue of $\hat{c}_\theta$ asymptotically matches its fluid counterpart.
	
\end{itemize}

Combining the two steps, we establish the asymptotic limit \eqref{eq:performance-of-rounded-CDLP-sol} and conclude that $\hat{c}_\theta$ achieves $\frac{1}{2}$-asymptotic optimality. The detailed arguments for each step are presented in the following subsections.

\subsection{Step 1: Bridging the Joint LP with the Fluid Show-All Processes}

In this step, we establish a connection between \ref{eqn:asymptotic-joint-LP}, which constructs the inventory vector $\hat{c}_\theta$, and the show-all demand process under the fluid setting. This connection serves as the foundation for deriving the asymptotic performance guarantee.

\paragraph{The joint LP as an upper bound.} 
We begin by relating \ref{eqn:asymptotic-joint-LP} to~\ref{eqn:INV-even-ST}. We establish that \ref{eqn:asymptotic-joint-LP} provides an upper bound on $\text{OPT}_{\even,\theta}$.%

\begin{claim}
	\label{claim:asymptotic-joint-LP-upperbound}
	For all $\theta > 0$, we have $Z^{\JLP}_{\even}(\theta) \geq \opt_{\even,\theta}$.
\end{claim}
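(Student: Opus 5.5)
We take an optimal starting vector $c^* \in \Fcal_{\even,\theta}$ for \ref{eqn:INV-even-ST}, average the stochastic show-all process it induces, and read off a feasible solution to \ref{eqn:asymptotic-joint-LP} whose objective equals $\opt_{\even,\theta}$.

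\textbf{Construction.} Run the stochastic show-all process from $c^*$ over the periods $t=1,\ldots,\theta T$, and let $A_t\subseteq\Ucal_{\even}$ denote the (random) set of in-stock products at the start of period $t$. Define
\[
h(A)=\sum_{t=1}^{\theta T}\Pbb(A_t=A), \qquad c_{i,s}=\sum_{A\subseteq \Ucal_{\even}}\pi((i,s),A)\, h(A).
\]
Here $A_t$ may be empty, which simply corresponds to offering only the no-purchase option. Then $h\ge 0$ and $\sum_A h(A)=\theta T$, so the first two constraints hold by definition.

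\textbf{Key identity.} Let $D_{i,s}$ be the number of units of $(i,s)$ sold over the horizon. In period $t$, given $A_t$, the arriving customer buys $(i,s)$ with probability $\pi((i,s),A_t)$, and this is zero when $(i,s)\notin A_t$. By the tower property,
\[
\ex{D_{i,s}}=\sum_{t}\ex{\pi((i,s),A_t)}=\sum_A \pi((i,s),A)\,h(A)=c_{i,s}.
\]

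\textbf{Capacity constraint.} Each sale consumes a unit of stock, so $D_{i,s}\le c^*_{i,s}$ almost surely. Taking expectations and summing gives $\sum_{(i,s)} c_{i,s}\le \|c^*\|_1\le \theta\Ccal$.

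\textbf{Objective.} The LP objective at this solution is $\sum r_i c_{i,s}=\ex{\sum r_i D_{i,s}}=\ex{\Rcal_\theta(c^*)}=\opt_{\even,\theta}$. Hence $Z^{\JLP}_{\even}(\theta)\ge\opt_{\even,\theta}$.

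The only point needing care is the identity $\ex{D_{i,s}}=\sum_A\pi((i,s),A)h(A)$. It requires the purchase in period $t$ to be drawn from the choice model $\pi(\cdot,A_t)$ conditional on the history. This holds because each period's arrival is independent of the past given the current in-stock set, with the customer's type drawn afresh. The rest is bookkeeping, and no CCD property is needed for this upper bound.
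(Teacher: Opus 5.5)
Your proof is correct and is essentially the standard argument the paper invokes when it omits this proof and cites Proposition~1 of \cite{sun2024unified}. You take $h(A)$ as the expected number of periods in which the show-all process started from an optimal $c^*$ displays $A$, and you set $c_{i,s}$ to expected sales rather than to $c^*_{i,s}$; this choice makes the equality constraint hold exactly, while $D_{i,s}\le c^*_{i,s}$ gives the capacity constraint and, as you note, no CCD property is needed.
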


We omit the proof if this result  as it follows from standard arguments (see, e.g., Proposition 1 of \cite{sun2024unified}) that the stochastic show-all inventory problem is upper bounded by its fluid counterpart when both the initial inventory and subsequent assortment decisions are jointly optimized.

\paragraph{Bridging to the fluid process via CCD.} 
We next connect the joint LP, \ref{eqn:asymptotic-joint-LP}, to the fluid show-all demand process. To this end, let $\Rcal_\theta^{\FP}(c)$ denote the total accrued fluid revenue over the continuous time interval $[0, \theta T]$. Here, we will concurrently consider fluid and stochastic revenues, so it is important to introduce notation to differentiate the two. The following claim shows that the fluid revenue obtained from the constructed solution $\hat{c}_\theta$ converges to the optimal objective value of \ref{eqn:asymptotic-joint-LP}.

\begin{claim}
	\label{claim:asymptotic-fluid-and-joint-LP}
	$
	\lim_{\theta \rightarrow \infty}  \Rcal^{\FP}_\theta(\hat{c}_\theta) \big/ Z^{\JLP}_{\even}(\theta) = 1$.
\end{claim}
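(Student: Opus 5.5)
Since $\hat{c}_\theta \le \bar{c}_\theta$ componentwise and fluid sales never exceed starting inventory, $\Rcal^{\FP}_\theta(\hat{c}_\theta) \le \sum_{(i,s)} r_i \hat{c}_{\theta,(i,s)} \le Z^{\JLP}_{\even}(\theta)$. It therefore suffices to prove the matching lower bound
\[
\Rcal^{\FP}_\theta(\hat{c}_\theta) \ge Z^{\JLP}_{\even}(\theta) - O(1),
\]
together with $Z^{\JLP}_{\even}(\theta) = \Theta(\theta)$. The second fact is immediate. Scaling a feasible solution for $\theta=1$ by $\theta$ is feasible, and conversely, so $Z^{\JLP}_{\even}(\theta) = \theta\, Z^{\JLP}_{\even}(1)$. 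If $Z^{\JLP}_{\even}(1)=0$, the ratio claim is degenerate, and we set that case aside.

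\textbf{Step 1: show-all realizes $\bar{c}_\theta$.} We first argue that under the fluid show-all process started from $\bar{c}_\theta$ itself, every product stocks out by time $\theta T$. Consequently $\Rcal^{\FP}_\theta(\bar{c}_\theta) = \sum r_i \bar{c}_{\theta,(i,s)} = Z^{\JLP}_{\even}(\theta)$.

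Because only even sizes are stocked and $\ell_{\max}=1$, both the LP and the show-all process decompose across sizes $s \in \Scal_{\even}$. The choice probability $\pi((i,s),A)$ depends only on $A(s)$ and is given by the mixed-NP-MNL formula of Section~\ref{sec:even_odd}. Hence, for each fixed $s$, the vector $(\bar{c}_{\theta,(i,s)})_i$ equals $\sum_{A} h(A)\,\pi((i,s),A(s))$. This is a purchase-rate vector $\phi$ generated by weights $h'(B)=\sum_{A:A(s)=B} h(A)$, which sum to $\theta T$, over assortments $B \subseteq \Ucal(s)$. Lemma~\ref{lem:CCD}, applied to the mixed-NP-MNL model at size $s$ with horizon $\theta T$, then gives that every product of size $s$ stocks out by $\theta T$ under the show-all policy started from $\phi$. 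Since the per-size fluid processes run independently, Step~1 follows.

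\textbf{Step 2: rounding loses $O(1)$ (main obstacle).} We must show that replacing $\bar{c}_\theta$ by $\hat{c}_\theta=\lfloor \bar{c}_\theta\rfloor$ loses only $O(1)$ revenue, uniformly in $\theta$. This is delicate because lowering one product's inventory makes it stock out earlier, and that shifts the stockout order of all the others.

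The approach is a monotonicity/regularity argument per size $s$, with at most $n$ products. Compare the two fluid trajectories. Removing a product only increases the consumption rates of the remaining ones (regularity), so under $\hat{c}_\theta$ each product still stocks out, and no later than under $\bar{c}_\theta$. One would first try an inductive comparison over stockout events to establish this. Given that, every product sells exactly its full inventory $\hat{c}_{\theta,(i,s)} \ge \bar{c}_{\theta,(i,s)}-1$. The revenue loss is then at most $\sum_{(i,s)} r_i \le n m\, r_{\max}$, which is a constant independent of $\theta$.

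If the "still stocks out" comparison proves awkward, the fallback is a more direct argument. Let $x_i(t)$ be the sales under $\hat{c}$ and $y_i(t)$ under $\bar{c}$. We would show $\sum_i x_i(\theta T) \ge \sum_i y_i(\theta T) - n$ by tracking the set of in-stock products and using that the total purchase rate for each customer type, $\frac{w(A)}{w_{0,g}+w(A)}$, is monotone in the assortment.

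Dividing the resulting bound $Z^{\JLP}_{\even}(\theta)-O(1) \le \Rcal^{\FP}_\theta(\hat{c}_\theta) \le Z^{\JLP}_{\even}(\theta)$ by $Z^{\JLP}_{\even}(\theta)=\theta Z^{\JLP}_{\even}(1)$ and letting $\theta\to\infty$ yields the claim.
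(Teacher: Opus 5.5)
Your proof is correct and follows essentially the same route as the paper's sketch. The CCD property of the per-size mixed-NP-MNL model (Lemma~\ref{lem:CCD}) gives $\Rcal^{\FP}_\theta(\bar c_\theta) = Z^{\JLP}_{\even}(\theta)$, and fluid regularity (Claim~\ref{claim:fluid_regular}, whose proof is exactly the inductive stockout-event comparison you describe) shows that $\lfloor \bar c_\theta \rfloor$ is still fully depleted, so rounding costs only a $\theta$-independent constant. Your write-up is slightly tighter than the paper's: dividing a constant loss by $Z^{\JLP}_{\even}(\theta)=\theta\, Z^{\JLP}_{\even}(1)$ avoids needing the selected optimal vector $\bar c_\theta$ itself to scale linearly in $\theta$, and your fallback argument is unnecessary.
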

For brevity, we omit the proof of Claim~\ref{claim:asymptotic-fluid-and-joint-LP} and instead outline the main. First, by the fluid demand regularity (Claim~\ref{claim:fluid_regular}), it is straightforward to observe that the sales loss incurred for each product when rounding $\bar{c}_\theta$ to $\hat{c}_\theta$ is negligible in the fluid limit, since $\bar{c}_\theta$ scales linearly with $\theta$. Consequently,
\[
\Rcal^{\FP}_\theta(\hat{c}_\theta) \rightarrow \Rcal^{\FP}_\theta(\bar{c}_\theta) \quad \text{as} \quad \theta \rightarrow \infty.
\]
Second, under the inventory vector $\bar{c}_\theta$, which corresponds to the optimal $c$-variables of \ref{eqn:asymptotic-joint-LP}, demands for stocked products with distinct sizes are independent. Therefore,  we know that for each size $s \in \Scal_{\even}$, the associated demand process is characterized by the mixed-NP-MNL model, which satisfies the CCD property. This property implies that, when starting with the inventory vector $\bar{c}_\theta$, the fluid revenue coincides with the optimal objective value of \ref{eqn:asymptotic-joint-LP}, i.e.,
\[
\Rcal^{\FP}_\theta(\bar{c}_\theta) = Z^{\JLP}_{\even}(\theta).
\]
Combining these observations yields the desired convergence.

\subsection{Step 2: Bridging the Fluid and Stochastic show-all processes.}

We now turn to the relationship between the stochastic and fluid demand processes. The following claim establishes that, in expectation, the stochastic revenue converges to its fluid counterpart asymptotically.

\begin{claim}
	\label{claim:asymptotic-limit-of-stochastic-and-fluid}
	$\lim_{\theta \rightarrow \infty} \ex{\Rcal_{\theta}(\hat{c}_\theta)} \big/  \Rcal^{\FP}_\theta(\hat{c}_\theta) =  1$.
\end{claim}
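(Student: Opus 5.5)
Following the proof overview, I would build an auxiliary \emph{penalized} stochastic process that is driven by the fluid trajectory. I would then show two things: its expected revenue is at most $\ex{\Rcal_\theta(\hat{c}_\theta)}$, and it is at least $\Rcal^{\FP}_\theta(\hat{c}_\theta) - o(\theta)$. The upper bound $\ex{\Rcal_\theta(\hat{c}_\theta)} \le \Rcal^{\FP}_\theta(\hat{c}_\theta) + o(\theta)$ would come from the LP side. We have $\ex{\Rcal_\theta(\hat{c}_\theta)} \le \opt_{\even,\theta} \le Z^{\JLP}_{\even}(\theta)$ by Claim~\ref{claim:asymptotic-joint-LP-upperbound}, and $Z^{\JLP}_{\even}(\theta) = \Rcal^{\FP}_\theta(\hat{c}_\theta)(1+o(1))$ by Claim~\ref{claim:asymptotic-fluid-and-joint-LP}. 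Since $Z^{\JLP}_{\even}(\theta) = \theta Z^{\JLP}_{\even}(1)$ scales linearly in $\theta$, all error terms of order $o(\theta)$ vanish in the ratio, provided $Z^{\JLP}_{\even}(1)>0$ (otherwise the statement is trivial).

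\textbf{The auxiliary process.} The fluid show-all process started from $\hat{c}_\theta$ yields deterministic stockout moments $0 \le \tau^{(1)}_\theta \le \tau^{(2)}_\theta \le \cdots$, together with nested assortments $A^{(1)} \supseteq A^{(2)} \supseteq \cdots$. By linear scaling these satisfy $\tau^{(k)}_\theta = \theta \tau^{(k)} + O(1)$, where the $O(1)$ absorbs the floor rounding.

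In the auxiliary process, the customer in period $t$ faces the fixed assortment $A(t) = A^{(k)}$ whenever $t \in (\tau^{(k-1)}_\theta, \tau^{(k)}_\theta]$, and chooses according to $\pi(\cdot, A(t))$. Revenue $r_i$ is collected only if product $i$ still has inventory; choices of depleted products earn $0$. At the end, we subtract $r_{\max}$ times the total leftover inventory of every product that the fluid process depletes by time $\theta T$.

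Let $D_i$ denote the random number of choices of product $i$. Then $\ex{D_i} = x_i^{\FP} + O(1)$ and $\mathrm{Var}(D_i) = O(\theta)$, since choices are independent across periods. The penalized revenue of product $i$ is at least $r_i \min(D_i,\hat c_{\theta,i}) - r_{\max}(\hat c_{\theta,i}-D_i)^+$. For depleted products, $\hat c_{\theta,i} = x_i^{\FP}$ up to rounding. For non-depleted products, $D_i \le \hat c_{\theta,i}$ except with small probability when there is slack of order $\theta$; if the slack is $o(\theta)$, I would treat the product like a depleted one. Combining these cases with Jensen's inequality gives expected penalized revenue at least $\Rcal^{\FP}_\theta(\hat c_\theta) - O(\sqrt{\theta})$.

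\textbf{The coupling, which is the main obstacle.} I must show that the real show-all process earns at least the penalized process in expectation. I would couple the two processes through common uniforms per period, constructing choices via the RUM representation with shared Gumbel shocks (restricted to the single relevant size, so the choice is MNL with a type-dependent no-purchase weight).

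Under shared utilities, regularity holds pathwise: when the offered set shrinks, a customer whose auxiliary choice $i$ remains offered in the real process chooses $i$ in both. Discrepancies arise only when the two offered sets differ. This happens in two ways. The real process has stocked out a product that the auxiliary process still offers; the auxiliary process then earns $0$ or the real process earns revenue elsewhere, and each extra unit the real process sells is matched by a unit of leftover penalty. Or the auxiliary process has removed a product that the real process still stocks; that product then ends with leftover inventory, and the penalty $r_{\max}$ per unit dominates the revenue diverted away from it.

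To make this rigorous, I would argue by a unit-by-unit accounting (or induction on periods). The potential function would be real revenue so far, minus auxiliary revenue so far, plus $r_{\max}$ times the difference in remaining inventories on depleted products. I would show it is nondecreasing in expectation. The penalty on leftover inventory is precisely what pays for every mismatch, and the mixed-NP-MNL structure at a single size keeps the per-period substitution bounded by one unit. Sandwiching the three quantities then yields the limit equal to $1$.
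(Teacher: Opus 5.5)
Your architecture matches the paper's: a penalized process that replays the fluid stockout sequence, coupled to the true process through shared types and Gumbel shocks, plus a $\sqrt{\mathrm{Var}}=O(\sqrt{\theta})$ concentration bound. Your LP-based upper bound $\Ebb[\Rcal_\theta(\hat c_\theta)]\le\opt_{\even,\theta}\le Z^{\JLP}_{\even}(\theta)$ is also correct. It supplies the $\limsup\le 1$ half, which the paper's two-part lemma does not give by itself.

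\textbf{The gap.} The problem is the dominance step $\Ebb[\REV_{\Pcal}]\ge\Ebb[\REV_{\Pcal_\downarrow}]$: your potential cannot deliver it. Read it as $\Phi_t=R^{\mathrm{true}}_t-R^{\mathrm{aux}}_t+r_{\max}(I^{\mathrm{aux}}_t-I^{\mathrm{true}}_t)$. A period raises $\Phi$ by $r_j+r_{\max}$ when the true process sells $j$, and lowers it by $r_{j^*}+r_{\max}$ when the auxiliary process makes a real purchase of $j^*$.
\begin{itemize}
\item It is not a submartingale. The true process can run ahead: once both have exhausted some $k$, a customer whose top choice is $k$ makes a virtual purchase in $\Pcal_\downarrow$ but substitutes to another in-stock product in $\Pcal$. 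In a reachable state where $\Pcal$ has sold out entirely while $\Pcal_\downarrow$ still holds stock in the current fluid assortment, the conditional drift of $\Phi$ is strictly negative.
\item Even $\Phi_{\mathrm{end}}\ge 0$ fails pathwise. Take two products with $r_1<r_2$, the fluid exhausting product~1 first, and both processes coinciding until that moment with $m\ge 1$ units of product~1 left. Afterwards $\Pcal$ offers $\{1,2\}$ and $\Pcal_\downarrow$ offers $\{2\}$. If each of the $m$ customers who then buy product~1 in $\Pcal$ has $U_2>U_0$, each buys product~2 in $\Pcal_\downarrow$, and all other customers act identically. If product~2 is not exhausted by $\theta T$, both processes sell the same number of units, and $\Phi_{\mathrm{end}}=(r_1-r_2)m<0$.
\end{itemize}
The term $-r_{\max}I^{\mathrm{true}}$ cannot be afforded. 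The inequality actually needed, $R^{\mathrm{true}}\ge R^{\mathrm{aux}}-r_{\max}I^{\mathrm{aux}}_{\mathrm{end}}$, is global and does not follow from any period-by-period comparison.

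\textbf{The missing idea.} The paper uses the count $|\Omega|\ge|\Omega_\downarrow|$: the true process sells at least as many units as $\Pcal_\downarrow$ makes real purchases. It is proved through an intermediate process $\bar\Pcal$ that offers only the in-stock part of $\Acal_{(t)}$, via two inductions.
\begin{itemize}
\item The true inventory dominates $\bar\Pcal$'s on $\Acal_{(t)}$. This uses the nesting $\Acal_{(t+1)}\subseteq\Acal_{(t)}$, and it means every sale of $\bar\Pcal$ is matched by a sale of $\Pcal$.
\item $\bar\Pcal$'s inventory is componentwise below $\Pcal_\downarrow$'s.
\end{itemize}
Then every unit in $\Omega\setminus\Omega_\downarrow$ is leftover in $\Pcal_\downarrow$ and earns an $r_{\max}$ penalty credit. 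There are no more units in $\Omega_\downarrow\setminus\Omega$, each worth at most $r_{\max}$, so $\REV_\Pcal\ge\REV_{\Pcal_\downarrow}$ pathwise.

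This matching requires every leftover unit to be penalized. Your restriction of the penalty to fluid-depleted products would break it if a stocked product were not depleted. That case is in fact vacuous: CCD makes the optimal $\bar c_\theta$ of \ref{eqn:asymptotic-joint-LP} fully depleted by $\theta T$, and Claim~\ref{claim:fluid_regular} transfers this to $\lfloor\bar c_\theta\rfloor$. You should therefore penalize all leftover inventory and drop the slack case analysis.
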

This convergence result is the final ingredient needed to establish the asymptotic performance guarantee. Indeed, combining Claims~\ref{claim:asymptotic-joint-LP-upperbound}, \ref{claim:asymptotic-fluid-and-joint-LP}, and~\ref{claim:asymptotic-limit-of-stochastic-and-fluid}, we obtain
\[
\frac{\ex{\Rcal_{\theta}(\hat{c}_\theta)}}{\opt_{\even,\theta}}
\;\geq\;
\frac{\ex{\Rcal_{\theta}(\hat{c}_\theta)}}{Z^{\JLP}_{\even}(\theta)}
\;\rightarrow\; 1
\quad \text{as} \quad \theta \rightarrow \infty,
\]
which establishes the $\frac{1}{2}$-asymptotic optimality of $\hat{c}_\theta$ since $\opt_{\even,\theta}\geq \frac{1}{2}\opt_{\theta}$. We note that it suffices to prove Claim~\ref{claim:asymptotic-limit-of-stochastic-and-fluid} separately for each size $s \in \Scal_{\even}$ under $\hat{c}_\theta$. This follows from the independence of demand across different sizes in $\Scal_{\even}$ under the constructed inventory vector $\hat{c}_{\theta}$. Consequently, convergence at the level of individual sizes implies convergence for the aggregate system. 

\paragraph{Proving Claim~\ref{claim:asymptotic-limit-of-stochastic-and-fluid} via a probabilistic coupling}.  We fix a size $s \in \Scal_{\even}$ throughout. To simplify notation, we denote the inventories of the size-$s$ products $\left(\hat{c}_\theta\right)_{(i,s) \in \Ucal(s)}$ by $\hat{c}_\theta$, where $\Ucal(s)$ is the set of products of size $s$. Under this convention, $\hat{c}_\theta$ can be viewed as an $n$-dimensional inventory vector indexed by products in $\Ucal(s)$.  For ease of notation, since the size $s$ is fixed, we drop this subscript and assume that $\hat{c}_{\theta} = (c_1, \ldots c_n)$ and that $c_i >0$ for each $i \in [n]$. As discussed at the end of Step~1, the independence of demand across sizes in $\Scal_{\even}$ implies that the choice probabilities follows the mixed-NP-MNL model.

Our goal is to establish the convergence of $\Ebb\left[\Rcal_\theta(\hat{c}_\theta)\right]$ to $\Rcal^{\FP}_\theta(\hat{c}_\theta)$ as $\theta$ tends to infinity. To this end, we develop a coupling argument that relates the random revenue earned under the true stochastic process to the random revenue earned by an assortment policy that mirrors the assortment trajectory of the fluid show-all process, but is penalized for deviations from the required show-all policy that must be implemented.  Next, we formally define these stochastic processes. Since they all share the same initial inventory vector $\hat{c}_\theta$, we suppress the dependence on $\hat{c}_\theta$ in the notation. 

\paragraph{The true stochastic process.}
We begin by recalling the true stochastic process, denoted by $\Pcal$. Let $A_t$ denote the set of products with strictly positive inventory at time $t$. In each period $t \in \{1,2,\ldots,\theta T\}$, a customer arrives and selects either a product $i_t \in A_t$ or the no-purchase option $i_t = 0$, according to the mixed-NP-MNL model. If $i_t \in A_t$, then the inventory level of product $i_t$ decreases by one. The total revenue collected under this process is given by $\REV_\Pcal = \sum_{t=1}^{\theta T} r_{i_t}$. By definition, $\Ebb\left[\Rcal_\theta(\hat{c}_\theta)\right] = \Ebb\left[\REV_\Pcal\right]$.

\paragraph{The penalized fluid-mirrored process.}
We next introduce an auxiliary stochastic process, referred to as the penalized fluid-mirrored process, which is denoted by $\Pcal_{\downarrow}$. This process is designed to mirror the sequence of assortment offerings induced by the fluid solution while allowing customers to choose products without regard to inventory feasibility. $\Pcal_{\downarrow}$ serves as an intermediate process that remains analytically tractable, closely tracks the fluid dynamics, and is dominated by the true stochastic process due to explicit penalties introduced in its revenue function.

To define $\Pcal_{\downarrow}$, we first introduce several key definitions and observations associated with the fluid process under the initial inventory $\hat{c}_\theta$:
\begin{itemize}
	\item \emph{Stockout times.} Let $\tau_i$ denote the stockout time of product $i$ in the fluid process. Without loss of generality, we index products in nondecreasing order of their stockout times, so that
	\[
	0 \equiv \tau_0 \leq \tau_1 \leq \tau_2 \leq \cdots \leq \tau_n \leq \theta T.
	\]
    Note that $\tau_n \leq \theta T$ always holds. Since $\bar{c}_\theta$ is the optimal-$c$ variable to \ref{eqn:asymptotic-joint-LP}, the CCD property of the mixed-NP-MNL model implies that $\bar{c}_\theta$ is fully depleted under the fluid process. It then follows by Claim~\ref{claim:fluid_regular} that $\hat{c}_\theta = \floor{\bar{c}_\theta}$ is also fully depleted by time $\theta T$ in the fluid process.

	\item \emph{Predetermined assortment offerings.} Based on the stockout times, we define a sequence of assortments $\Acal_\ell \equiv \{\ell, \ell+1, \ldots, n\}$ 
	for each $\ell \in [n]$. In process $\Pcal_{\downarrow}$, assortment $\Acal_\ell$ is offered during periods
	\[
	\left\{ \sum_{k=1}^{\ell-1} \floor{\tau_k - \tau_{k-1}} + 1, \ldots, \sum_{k=1}^{\ell} \floor{\tau_k - \tau_{k-1}} \right\}.
	\]
	Thus, the sequence of assortments is fixed in advance so as to replicate the stockout order in the fluid process.
	
	\item \emph{Decoupling from inventory feasibility.} In contrast to the true stochastic process, the offered assortment at any time does not depend on the remaining inventory. Consequently, a customer may select a product that is already out of stock, or products with positive inventory may be excluded from the offered assortment. The former leads to lost sales opportunities, while the latter results in unsold inventory at the end of the horizon.

	\item \emph{Real versus virtual purchases.} A purchase is called \emph{real} if the selected product has positive inventory at the time of purchase, and \emph{virtual} otherwise. Only real purchases reduce inventory levels, whereas virtual purchases do not affect inventory and do not generate revenue.
	
	\item \emph{Penalty.} The discrepancies introduced above are incorporated through two types of penalties. First, a virtual purchase represents a lost sales opportunity and contributes zero revenue. Second, any unit that remains unsold at the end of the horizon incurs a penalty of $r_{\max} = \max_{i \in [n]} r_i$. These penalties ensure $\Pcal_{\downarrow}$ is pessimistic relative to the true stochastic process.
	
\end{itemize}

The process $\Pcal_{\downarrow}$ does not correspond to a feasible implementation of the show-all policy; rather, it is introduced as an analytical device that decouples assortment decisions from inventory dynamics. The penalty structure ensures that the resulting process remains conservative relative to the true stochastic process. Under $\Pcal_{\downarrow}$, each arriving customer chooses from the offered assortment according to the mixed-NP-MNL model, independently of the current inventory levels. In particular, customer choices depend only on the predetermined assortment sequence and not on inventory availability.

\paragraph{Revenue representation of the penalized process.}
We now formalize the revenue function of $\Pcal_{\downarrow}$. For each pair $(i,\ell)$, let $Y_{i,\ell}$ denote the number of times product $i$ is selected during the periods in which assortment $\Acal_\ell$ is offered. Then
\[
Y_{i,\ell} \sim \text{Binomial}\left(\floor{\tau_\ell - \tau_{\ell-1}}, \pi(i,\Acal_\ell)\right).
\]
Let $Y_i = \sum_{\ell \in [n]} Y_{i,\ell}$ denote the total number of selections of product $i$ over the entire horizon. The contribution of product $i$ to the total revenue is given by
\[
\REV_{\Pcal_{\downarrow},i} \, \equiv \, r_i \cdot \min\left\{ \hat{c}_{\theta,i}, Y_i\right\}
- r_{\max} \cdot \left( \hat{c}_{\theta,i} - Y_i\right)^+,
\]
where $(a)^+ = \max\{a,0\}$. The first term captures revenue generated by real purchases, while the second term reflects the penalty from unsold inventory. Summing over all products, the revenue of the penalized process is $
\REV_{\Pcal_{\downarrow}} = \sum_{i \in [n]} \REV_{\Pcal_{\downarrow},i}$. 
An important feature of $\Pcal_{\downarrow}$ is that stockout events do not influence future customer choices, as the assortment sequence is predetermined; their impact is reflected solely through the realized revenue via virtual purchases and end-of-horizon penalties.

\paragraph{The fluid benchmark process.}
Finally, let $\Pcal_F$ denote the fluid process under the show-all policy. It is precisely the fluid process with initial inventory vector $\hat{c}_\theta$, time horizon $\theta T$, and is deterministic. Using the stockout times defined above, the total fluid revenue can be expressed as
\[
\REV_{\Pcal_F} \equiv \Rcal^{\FP}_\theta(\hat{c}_\theta)
= \sum_{\ell \in [n]} (\tau_\ell - \tau_{\ell-1}) \cdot \sum_{i \in \Acal_\ell} r_i \cdot \pi(i,\Acal_\ell).
\]
This representation highlights that the fluid process induces the same sequence of assortments as $\Pcal_{\downarrow}$, but without stochastic fluctuations or feasibility violations.

\paragraph{Bridging the stochastic and fluid processes.} In the subsequent analysis, we bridge the true stochastic process $\Pcal$ and the fluid benchmark $\Pcal_F$ through intermediate process $\Pcal_\downarrow$. Specifically, we first couple $\Pcal$ and $\Pcal_\downarrow$ through the same customer types and utility realizations for all products, and then compare $\Pcal_\downarrow$ with $\Pcal_F$.

\begin{lemma}
	\label{lemma:asymptotic-performance-comparison}
	We have (i) $\Ebb \left[  \REV_\Pcal  \right] \geq \Ebb \left[  \REV_{\Pcal_{\downarrow}}  \right] $ for all $\theta > 0$; and (ii) $\lim_{\theta \rightarrow \infty}  \Ebb \left[  \REV_{\Pcal_{\downarrow}}  \right]  = \REV_{\Pcal_F}$.
\end{lemma}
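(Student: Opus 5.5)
The plan is to prove the two parts separately. Part (ii) is a concentration argument that rests on the full-depletion property of the fluid process. Part (i) is a pathwise coupling argument.

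\emph{Part (ii).} Since $\hat{c}_\theta$ is fully depleted in the fluid process by time $\theta T$ (the CCD observation above combined with Claim~\ref{claim:fluid_regular}), every product $i$ satisfies
\[
c_i=\sum_{\ell \le i}(\tau_\ell-\tau_{\ell-1})\,\pi(i,\Acal_\ell).
\]
This has two consequences. First, $\REV_{\Pcal_F}=\sum_i r_i c_i$. Second, because $\Ebb[Y_i]=\sum_{\ell}\floor{\tau_\ell-\tau_{\ell-1}}\pi(i,\Acal_\ell)$, we get $|\Ebb[Y_i]-c_i|\le n$.

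For the upper bound, note that $\REV_{\Pcal_{\downarrow},i}\le r_i c_i$ pathwise, so $\Ebb[\REV_{\Pcal_\downarrow}]\le \REV_{\Pcal_F}$.

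For the lower bound, use $\min\{c_i,Y_i\}=c_i-(c_i-Y_i)^+$ to write
\[
\REV_{\Pcal_\downarrow,i}\ge r_i c_i-2r_{\max}\,|c_i-Y_i|.
\]
Then bound $\Ebb|c_i-Y_i|\le n+\sqrt{\mathrm{Var}(Y_i)}\le n+\sqrt{\theta T}$, using that $Y_i$ is a sum of independent binomials over at most $\theta T$ trials. This gives
\[
\Ebb[\REV_{\Pcal_\downarrow}]\ge \REV_{\Pcal_F}-O\bigl(n r_{\max}(n+\sqrt{\theta T})\bigr).
\]
Finally, $\REV_{\Pcal_F}=\sum_i r_ic_i$ grows linearly in $\theta$, since $\bar c_\theta$ scales with $\theta$ and the rounding loss is $O(n)$. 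Dividing by $\REV_{\Pcal_F}$ therefore gives the limit; I read (ii) as ratio convergence, as in Claim~\ref{claim:asymptotic-limit-of-stochastic-and-fluid}.

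\emph{Part (i).} Couple $\Pcal$ and $\Pcal_\downarrow$ by giving the period-$t$ customer in both processes the same type and the same Gumbel shocks, so each process simply takes the utility maximizer over its own offered set. Write $U_i^{\Pcal}$ for the unsold units of product $i$ in $\Pcal$ at the end of the horizon. Then $\REV_\Pcal=\sum_i r_i(c_i-U_i^{\Pcal})$ and $\REV_{\Pcal_\downarrow}=\sum_i\bigl[r_ic_i-(r_i+r_{\max})(c_i-Y_i)^+\bigr]$. It therefore suffices to prove the pathwise inequality
\[
\sum_i r_i U_i^{\Pcal}\le \sum_i (r_i+r_{\max})(c_i-Y_i)^+ ,
\]
and for this it is enough to show $\sum_i U_i^{\Pcal}\le \sum_i (c_i-Y_i)^+$.

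I will attack this by a charging argument. Suppose $U_j^{\Pcal}>0$. Then product $j$ was in $A_t$ throughout the horizon, and it sold only $c_j-U_j^{\Pcal}$ units. Now take any period in which $\Pcal_\downarrow$ records a choice of $j$. In that period $\Pcal$'s customer did not pick $j$ only if some $k\in A_t\setminus\Acal_{\ell(t)}$ had a larger utility. Such a $k$ is a product that $\Pcal_\downarrow$ has already retired while it is still in stock in $\Pcal$. I will show by induction over periods that these "diversions" can be matched injectively to units that end up counted in some $(c_k-Y_k)^+$ term or in $(c_j-Y_j)^+$ itself. The mechanism is regularity: a product that stays in stock in $\Pcal$ while $\Pcal_\downarrow$ has retired it is necessarily one that $\Pcal_\downarrow$ under-sold. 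If an exact pathwise match fails, the fallback is to prove the inequality in expectation instead, using $\Ebb[Y_i]\approx c_i$.

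I expect this matching step in part (i) to be the main obstacle. The two processes' offered sets are not nested in either direction, so the pathwise comparison of unsold inventory requires careful bookkeeping of which retired-but-in-stock product absorbs each diverted customer.
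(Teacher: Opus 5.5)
\textbf{Part (ii) is fine; part (i) has a genuine gap.}

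Your part (ii) is correct and follows the paper's own argument. Full depletion gives $|\Ebb[Y_i]-c_i|\le n$, and Cauchy--Schwarz with $\mathrm{Var}(Y_i)\le\theta T$ is then weighed against linear growth in $\theta$. Your pathwise upper bound $\REV_{\Pcal_{\downarrow},i}\le r_ic_i$ is a small addition that the paper leaves implicit.

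Your reduction in part (i) is also right. Since $\sum_i(c_i-Y_i)^+=\|\hat{c}_\theta\|_1-|\Omega_{\downarrow}|$ and $\sum_i U_i^{\Pcal}=\|\hat{c}_\theta\|_1-|\Omega|$, what you need is exactly the paper's claim that $|\Omega|\ge|\Omega_{\downarrow}|$ almost surely. That inequality is the whole content of part (i), and your proposal does not prove it.

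The mechanism you propose is false. It is not true that a product retired by $\Pcal_{\downarrow}$ but still in stock in $\Pcal$ must have been under-selected by $\Pcal_{\downarrow}$. Take three products with fluid stockout order $1,2,3$:
\begin{itemize}
\item On a path with $Y_1<c_1$, $\Pcal$ still offers product $1$ during block $2$. Customers who select $2$ in $\Pcal_{\downarrow}$ can therefore buy $1$ in $\Pcal$.
\item With positive probability this leaves product $2$ in stock in $\Pcal$ at the end of block $2$, even though $Y_2\ge c_2$, i.e.\ $(c_2-Y_2)^+=0$.
\item During block $3$, customers who select $3$ in $\Pcal_{\downarrow}$ can then buy $2$ in $\Pcal$. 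These diversions cannot be charged to product $2$'s term. If also $Y_3\ge c_3$, they cannot be charged to product $3$'s term either.
\end{itemize}
Such diversions can only be paid for by routing them back along the chain to product $1$'s deficit. No local injective matching of the kind you describe exists, and you do not construct a chain-based one.

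Your fallback does not help. The approximation $\Ebb[Y_i]\approx c_i$ concerns $\Pcal_{\downarrow}$ alone and gives no control over the unsold inventory of the true process $\Pcal$, which is precisely the hard part. In any case, approximate identities could only give an inequality up to $o(\theta)$, not the exact inequality for every $\theta$ asserted in (i).

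\textbf{The missing idea.} Insert an intermediate process $\bar{\Pcal}$, driven by the same types and Gumbel shocks. In period $t$ it offers only those products of $\Acal_{(t)}$ that are still in stock under $\bar{\Pcal}$. Its offered set $\bar{A}_t$ is nested inside both other offered sets, which is exactly the nesting you correctly observed is missing between $\Pcal$ and $\Pcal_{\downarrow}$. Two short inductions then finish the proof.
\begin{enumerate}
\item[(a)] Show $c_i(t)\ge\bar{c}_i(t)$ for all $i\in\Acal_{(t)}$. If $\Pcal$ buys some $j\in\Acal_{(t)}$ and $\bar{\Pcal}$ does not, then utility maximization over $\bar{A}_t\subseteq A_t$ forces $\bar{c}_j(t)=0$. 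The inequality propagates because $\Acal_{(t+1)}\subseteq\Acal_{(t)}$. Hence $\bar{A}_t\subseteq A_t$, every purchase in $\bar{\Pcal}$ forces a purchase in $\Pcal$, and $|\Omega|\ge|\bar{\Omega}|$.
\item[(b)] Show $\bar{c}_i(t)\le c^{\downarrow}_i(t)$ for all $i$. If $\Pcal_{\downarrow}$ makes a real purchase of $j^*$, the maximizer over $\Acal_{(t)}\cup\{0\}$, and $\bar{\Pcal}$ does not buy $j^*$, then $\bar{c}_{j^*}(t)=0$. Hence $|\bar{\Omega}|\ge|\Omega_{\downarrow}|$.
\end{enumerate}
Chaining gives $|\Omega|\ge|\Omega_{\downarrow}|$ pathwise, and your accounting then completes part (i).
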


We provide the detailed proof in Appendix~\ref{appendix-subsubsec:asymptotic-proof-coupling}. The first statement follows from the aforementioned coupling between $\Pcal$ and $\Pcal_{\downarrow}$, under which we show that $\REV_\Pcal \geq \REV_{\Pcal_{\downarrow}}$ almost surely. The second statement follows by showing that $\REV_{\Pcal_{\downarrow}}$ concentrates around $\REV_{\Pcal_F}$ as $\theta \to \infty$. Together, these two statements establish Claim~\ref{claim:asymptotic-limit-of-stochastic-and-fluid}, thereby completing the second step of our analysis.

Combining this result with Step~1 completes the proof of Theorem~\ref{thm:one_half_stochastic}. In particular, Step~1 shows that the fluid revenue generated by $\hat{c}_\theta$ asymptotically matches the objective value of the joint LP, which is at least one half of the optimal stochastic benchmark $\opt_\theta$. Step~2 then transfers this performance guarantee from the fluid process to the stochastic show-all process through the coupling developed above. Consequently, the expected revenue generated by $\hat{c}_\theta$ is asymptotically at least $\frac{1}{2} \opt_\theta$, establishing the desired $\frac{1}{2}$-asymptotic optimality. The auxiliary process $\Pcal_{\downarrow}$ is central to this transfer: by fixing the assortment trajectory according to the fluid process while retaining stochastic inventory consumption, it provides a common basis for comparing the otherwise distinct stochastic and fluid dynamics.

\paragraph{Extension: Ordering/stocking costs.}
Our results extend naturally to settings with per-unit ordering costs. Suppose that ordering one unit of product $(i,s)\in\Ucal_{\even}$ incurs a cost $o_{i,s}$. In the fluid setting, it is without loss of optimality to order exactly $c_{i,s}$ units, since any inventory remaining at the end of the horizon can simply be removed from the initial inventory vector without affecting sales. Consequently, incorporating ordering costs into~\ref{eqn:asymptotic-joint-LP} requires only replacing the objective coefficient $r_i$ of each $c_{i,s}$ variable by its unit margin $r_i-o_{i,s}$. The remainder of our analysis then carries through unchanged, and the asymptotic $\frac{1}{2}$-approximation guarantee continues to hold.

\section{Numerical Experiments} \label{sec:numerical_experiments}

In this section, we evaluate the performance of practical adaptations of the proposed inventory policies using real-world sales data from a large footwear retailer. We first calibrate the CFTC model following \cite{akchen2023size} and then study the resulting inventory planning problem in the fluid setting under different levels of size substitution and problem scales.

\subsection{Data and model calibration}

We estimate the CFTC model using the same real-world sales dataset analyzed in \cite{akchen2023size}. Below, we provide a high-level overview of the data and calibration procedure; additional details can be found in Section~4 of \cite{akchen2023size}.

\paragraph{Data.} The dataset contains transaction records from a large footwear retailer operating hundreds of brick-and-mortar stores across North America\footnotemark{}. The data span 33 weeks during the 2019--2020 season, from late July 2019 to mid-March 2020, and focus on women's casual ankle boots.\footnotetext{Owing to a non-disclosure agreement, the retailer's identity remains confidential.}

The category contains sales records over a 33-week horizon for 51 shoe bases (i.e., shoe styles), each offered in nine sizes ranging from size 6 to size 10, including half sizes. To maintain computational tractability, we restrict attention to the $n$ most popular bases, where $n \in \{ 4,8,16 \}$, and aggregate the remaining bases and their associated sizes into the outside option. This preprocessing yields instances with $n \times m = 36$, $72$, and $144$ products, where $m=9$ is the number of sizes. These instance sizes are comparable to, or larger than, those commonly used in the assortment and inventory optimization literature \citep{berbeglia2022comparative,zhang2025leveraging}.

\paragraph{Model calibration.} We transform the transaction data into assortment--choice pairs. For each store and week, the dataset records the set of products with positive inventory at the beginning of the week. Following \cite{akchen2023size}, we assume that this assortment remains unchanged throughout the week so that all customers visiting a given store during the same week observe the same assortment. Each observed purchase is therefore interpreted as a customer choice from the corresponding weekly assortment. This procedure converts the transaction data into assortment--choice pairs, from which we estimate the CFTC model using the expectation--maximization (EM) algorithm proposed in \cite{akchen2023size}.

The calibrated model directly satisfies the assumptions introduced at the beginning of Section~\ref{sec:fluid_setting}. Moreover, the EM procedure assumes that the size-based discount factors $\beta_{0,g}$ and $\beta_{1,g}$ are homogeneous across customer types. Without loss of generality, we normalize $\beta_{0,g}=1$ for all $g \in \Gcal$ and define $\beta_1 \equiv \beta_{1,g}$ throughout this section.

\paragraph{Model summary.} The calibrated model consists of 18 customer types, each characterized by a pair $(s_g,\tau_g)$, where $s_g \in [m]$ denotes the customer's most preferred size and $\tau_g \in \{\uparrow,\downarrow\}$ captures the direction of size substitution. The estimated base preference weights $(w_i)$ range from 0.044 to 0.175, while the customer-type population $(\mu_g)$ shares range from 3.6\% to 8.8\%. Consistent with intuition, customer types associated with middle sizes constitute a larger proportion of the population, whereas customers with more extreme size preferences are less prevalent. In the footwear context, prices are generally associated with bases and do not vary across sizes. We estimate the price $r_i$ of each base $i \in \Ncal$ by averaging its observed transaction prices. The resulting base prices range from $49.99$ to $131.06$. All but two bases offer the complete size range; the remaining two offer only odd-indexed sizes. We therefore set the unit revenues of their even-indexed sizes to zero.

In all subsequent experiments, we fix the estimated model parameters and vary only the size-based discount factor $\beta_1$ to evaluate the performance of inventory policies under different degrees of size substitution. Specifically, we consider $\beta_1 \in \{0.05, 0.2, 0.4, 0.6, 0.8, 0.95 \}$. Larger values of $\beta_1$ correspond to stronger size substitution, indicating that customers are more willing to switch to their next preferred size when their most preferred size is unavailable. Conversely, smaller values imply weaker substitution behavior. For reference, \cite{akchen2023size} reports an estimated value of approximately $\beta_1 = 0.25$.

\subsection{Experimental setting and implemented policies}

\paragraph{Experimental setup.} We restrict our analysis to the fluid inventory setting for the asymptotic region described in Section~\ref{sec:asym_one_half}. To evaluate the performance of different inventory policies, for each base $n \in \{4,8,16 \}$, we scale both the horizon length $T$ and the inventory capacity $\Ccal$ while keeping their ratio fixed. Specifically, we consider $T \in \{400,800,1200,1600 \}$. We choose $\Ccal$ to ensure that the inventory constraint plays a meaningful role in the optimization problem. If $\Ccal$ is too large relative to $T$, inventory is abundant and the problem essentially reduces to identifying and stocking the products in the optimal assortment. Conversely, if $\Ccal$ is too small, the firm can stock only a few products, making it sufficient to concentrate inventory on the most profitable ones. For $T=400$, we set $\Ccal=50$, $75$, and $100$ for $n=4$, $8$, and $16$, respectively, and scale $\Ccal$ proportionally with $T$ for the remaining instances.

\paragraph{A Greedy benchmark.} We benchmark our CDLP-based approached against a greedy heuristic, denoted by \texttt{Greedy}. Starting from the zero inventory vector $c=0$, the algorithm iteratively adds one unit of inventory to the product that yields the largest marginal increase in revenue. Specifically, at each iteration, the policy adds one unit of inventory to the product $(i,s)$ that maximizes $\Rcal( c+\mathbb{I}_{(i,s)})-\Rcal(c)$ over all $(i,s) \in \Ucal$, provided that the maximum marginal increase is positive. The procedure terminates when no further one-unit inventory addition increases the objective value or when the capacity constraint becomes tight.

\paragraph{Inventory policies.} We evaluate three inventory policies, two of which are inspired by the CDLP framework developed in Section~\ref{sec:fluid_setting}. Recall that our nearly $1/2$-optimal approximation algorithm combines four key components: a dynamic program for allocating capacity across the even- and odd-size partitions, a preference-weight rounding procedure, a guessing procedure for low-stock products, and a CDLP-based optimization routine for highly stocked products.

Because the goal of our numerical study is to evaluate practically implementable inventory policies rather than to reproduce the full approximation algorithm, we consider two simple CDLP-inspired policies. \emph{Our theoretical results provide a foundation for these heuristics, as the central component of our algorithms is the use of the fluid linear program to determine initial inventory levels.} The two policies retain this core optimization structure while omitting technical procedures introduced primarily to establish worst-case approximation guarantees. In particular, they preserve the capacity-allocation and linear-programming components but exclude the preference-weight rounding and low-stock guessing procedures, which are needed mainly to guarantee performance in the small-$T$ regime.

\paragraph{CDLP-H.} Our first CDLP-inspired policy, denoted by \texttt{CDLP-H} (where ``H'' stands for ``half of the sizes''), exploits the decomposition induced by the even- and odd-size partitions. For a fixed horizon $T$ and an auxiliary capacity level $\Ccal' \leq \Ccal$, we solve the joint linear program \ref{eqn:asymptotic-joint-LP} over the even-size product set $\Ucal_{\mathrm{even}}$, using horizon $T$ and capacity $\Ccal'$. Let $c'$ denote the resulting optimal inventory vector. Since $c'$ is fractional, we construct an integral solution by rounding each component down to obtain $\lfloor c' \rfloor$. Because rounding may leave residual capacity, i.e., $|| \lfloor c' \rfloor||_1 < \Ccal$, we further augment the solution using the greedy procedure described above until no further improvement in the objective value is possible or the capacity constraint becomes tight. We repeat the same procedure for the odd-size product set $\Ucal_{\mathrm{odd}}$. To account for different allocations of inventory across the two partitions, we consider $\Ccal' \in \{0.1 \cdot \Ccal, 0.2 \cdot \Ccal, \ldots, \Ccal\}$. This generates twenty candidate inventory vectors in total, and we select the one with the highest revenue.

The \texttt{CDLP-H} policy closely mirrors the approximation framework developed in Section~\ref{subsec:asymptotic-fluid-LP}. It also resembles Steps~1 and~4 of the approximation procedure in Section~\ref{subsec:technical_overview_opt_inv} while omitting the guessing procedure in Steps~2 and~3, which are mainly required to establish worst-case performance guarantees in finite-capacity settings.

\paragraph{CDLP-F.} Our second CDLP-inspired policy, denoted by \texttt{CDLP-F}, utilizes the full product set $\Ucal$. Specifically, we solve the full-size counterpart of \ref{eqn:asymptotic-joint-LP} by replacing $\Ucal_{\mathrm{even}}$ with $\Ucal$ and setting the horizon and capacity parameters to $T$ and $\Ccal$, respectively. After obtaining the optimal fractional inventory vector, we round each component down and subsequently apply the greedy augmentation procedure.

We solve the resulting joint linear program using the constraint generation framework described in Appendix~\ref{subsec:app-asymptotic-constraint-generation}. Unlike the joint linear program defined over $\Ucal_{\mathrm{even}}$, the formulation based on the full product set $\Ucal$ gives rise to a separation subproblem that coincides with the single-period assortment optimization problem in \ref{eqn:AO_CFTC}, which is NP-hard. For large-scale instances, the PTAS developed in Appendix~\ref{app-sec:AO} can be integrated into the constraint generation procedure to obtain a $(1-\epsilon)$-approximate solution to the joint linear program in polynomial time. For simplicity, we solve the assortment optimization subproblem exactly using integer programming, which is straightforward to implement.

\paragraph{Measure of Optimality Gap.} To benchmark the performance of the proposed inventory policies, we first use the optimal objective value of the joint linear program over the full product set $\Ucal$ as an upper bound. This follows from the standard observation that the fluid relaxation of the joint inventory and assortment offering problem establishes an upper bound for the optimal value of the stochastic show-all inventory problem; see Proposition~1 of \cite{sun2024unified}. We denote this upper bound by $Z_{\mathrm{Upper}}$. Since \texttt{CDLP-F} requires solving the full joint linear program, $Z_{\mathrm{Upper}}$ is obtained as a byproduct of computing the \texttt{CDLP-F} inventory vector. For any feasible inventory vector $c$, we define its optimality gap as $\texttt{G}(c) \equiv 1 - \frac{\Rcal(c)}{Z_{\mathrm{Upper}}}$.

\subsection{Results}

Table~\ref{table:experiment_optimality_gap} reports the optimality gaps of the three inventory policies across different values of the size-substitution parameter $\beta_1$, horizon length $T$, and number of bases $n$, with nine sizes throughout. We first focus on the sixteen-base instances ($n=16$), with similar patterns generally observed for four- and eight-base instances.

\begin{table}[]
	\centering
    \SingleSpacedXI
	\small
	\begin{tabular}{ccccccccccc} \toprule[1.0pt]
		&        &      \multicolumn{3}{c}{\bf Four Bases ($nm=36$)}               & \multicolumn{3}{c}{\bf Eight Bases ($nm=72$)}               & \multicolumn{3}{c}{\bf Sixteen Bases ($nm=144$)}              \\ \cmidrule[1.0pt](lr){1-2} \cmidrule[1.0pt](lr){3-5} \cmidrule[1.0pt](lr){6-8} \cmidrule[1.0pt](lr){9-11}
		$\beta_1$ & $T$       & \texttt{Greedy} & \texttt{CDLP-H} & \texttt{CDLP-F} & \texttt{Greedy} & \texttt{CDLP-H} & \texttt{CDLP-F} & \texttt{Greedy} & \texttt{CDLP-H} & \texttt{CDLP-F} \\ \cmidrule[1.0pt](lr){1-2} \cmidrule[1.0pt](lr){3-5} \cmidrule[1.0pt](lr){6-8} \cmidrule[1.0pt](lr){9-11}
		0.05      & 400     & 2.14                         & 2.14                             & 2.35                             & 2.57                         & 1.68                             & 1.66                             & 4.66                         & 2.10                              & 2.06                             \\
		& 800     & 2.15                         & 1.64                             & 1.75                             & 3.49                         & 0.89                             & 0.65                             & 4.39                         & 1.57                             & 1.00                                \\ 
		& 1200    & 2.12                         & 1.29                             & 1.15                             & 3.66                         & 1.42                             & 0.66                             & 4.80                          & 1.50                              & 0.66                             \\
		& 1600    & 1.45                         & 0.83                             & 0.85                             & 3.78                         & 0.93                             & 0.35                             & 5.33                         & 1.91                             & 0.47                             \\ \cmidrule[0.6pt](lr){1-2} \cmidrule[0.6pt](lr){3-5} \cmidrule[0.6pt](lr){6-8} \cmidrule[0.6pt](lr){9-11}
		0.2       & 400     & 2.76                         & 2.76                             & 2.69                             & 2.05                         & 1.13                             & 1.13                             & 5.22                         & 2.66                             & 2.04                             \\ 
		& 800     & 2.38                         & 1.87                             & 1.66                             & 4.14                         & 2.09                             & 0.63                             & 6.13                         & 2.89                             & 0.75                             \\
		& 1200    & 3.87                         & 2.63                             & 0.77                             & 3.82                         & 2.50                              & 0.35                             & 6.48                         & 3.62                             & 0.57                             \\
		& 1600    & 2.87                         & 2.05                             & 0.69                             & 4.05                         & 2.65                             & 0.17                             & 6.72                         & 3.60                              & 0.36                             \\ \cmidrule[0.6pt](lr){1-2} \cmidrule[0.6pt](lr){3-5} \cmidrule[0.6pt](lr){6-8} \cmidrule[0.6pt](lr){9-11}
		0.4       & 400     & 2.93                         & 2.48                             & 1.97                             & 3.32                         & 1.46                             & 0.55                             & 5.39                         & 3.49                             & 1.88                             \\
		& 800     & 2.46                         & 1.51                             & 0.40                              & 3.04                         & 1.35                             & 0.35                             & 5.63                         & 3.55                             & 0.80                              \\
		& 1200    & 2.16                         & 1.44                             & 0.21                             & 2.73                         & 1.38                             & 0.17                             & 6.48                         & 3.37                             & 0.65                             \\
		& 1600    & 1.52                         & 1.52                             & 0.31                             & 3.76                         & 1.59                             & 0.21                             & 6.57                         & 3.40                              & 0.33                             \\ \cmidrule[0.6pt](lr){1-2} \cmidrule[0.6pt](lr){3-5} \cmidrule[0.6pt](lr){6-8} \cmidrule[0.6pt](lr){9-11}
		0.6       & 400     & 1.99                         & 1.47                             & 1.58                             & 3.33                         & 0.88                             & 0.77                             & 4.65                         & 2.10                              & 1.08                             \\
		& 800     & 2.00                            & 0.47                             & 0.40                              & 3.55                         & 0.44                             & 0.26                             & 5.86                         & 1.84                             & 0.78                             \\
		& 1200    & 0.69                         & 0.62                             & 0.48                             & 3.48                         & 0.54                             & 0.23                             & 5.79                         & 1.54                             & 0.40                              \\
		& 1600    & 1.30                          & 0.34                             & 0.23                             & 3.23                         & 0.48                             & 0.22                             & 5.65                         & 1.52                             & 0.28                             \\ \cmidrule[0.6pt](lr){1-2} \cmidrule[0.6pt](lr){3-5} \cmidrule[0.6pt](lr){6-8} \cmidrule[0.6pt](lr){9-11}
		0.8       & 400     & 1.62                         & 0.75                             & 1.40                              & 2.18                         & 0.56                             & 0.29                             & 3.74                         & 1.48                             & 0.82                             \\
		& 800     & 0.51                         & 0.45                             & 0.29                             & 3.01                            & 0.42                             & 0.39                             & 4.84                         & 1.07                             & 0.40                              \\
		& 1200    & 1.15                         & 0.32                             & 0.33                             & 3.00                            & 0.40                              & 0.33                             & 5.02                         & 0.69                             & 0.34                             \\
		& 1600    & 1.46                         & 0.18                             & 0.34                             & 3.26                         & 0.23                             & 0.09                             & 5.15                         & 0.76                             & 0.25                             \\ \cmidrule[0.6pt](lr){1-2} \cmidrule[0.6pt](lr){3-5} \cmidrule[0.6pt](lr){6-8} \cmidrule[0.6pt](lr){9-11}
		0.95      & 400     & 0.51                         & 0.51                             & 1.15                             & 2.03                         & 0.39                             & 0.40                              & 3.82                         & 1.12                             & 0.89                             \\
		& 800     & 0.46                         & 0.36                             & 0.51                             & 2.78                         & 0.32                             & 0.24                             & 4.52                         & 0.43                             & 0.40                              \\
		& 1200    & 1.12                         & 0.30                              & 0.29                             & 2.50                          & 0.29                             & 0.11                             & 4.64                         & 0.44                             & 0.39                             \\
		& 1600    & 0.59                         & 0.15                             & 0.35                             & 2.99                         & 0.28                             & 0.14                             & 4.93                         & 0.25                             & 0.15                             \\ \cmidrule[1.0pt](lr){1-2} \cmidrule[1.0pt](lr){3-5} \cmidrule[1.0pt](lr){6-8} \cmidrule[1.0pt](lr){9-11}
		\multicolumn{2}{c}{Average} & 1.76                      & 1.17                             & 0.92                       & 3.16                   & 1.01                           & 0.43                          & 5.27                   & 1.95                       & 0.74     \\ \bottomrule[1.0pt]                 
	\end{tabular}

    \caption{Optimality gaps (in \%) of the inventory policies under different values of $\beta_1$ and $T$ for the calibrated choice models with four, eight, and sixteen bases, each with nine sizes.}
    \label{table:experiment_optimality_gap}
\end{table}

\paragraph{Comparison of inventory policies.} Overall, the three policies exhibit a clear performance ordering: \texttt{Greedy} yields the largest optimality gaps, followed by \texttt{CDLP-H}, while \texttt{CDLP-F} performs best across nearly all instances. In the sixteen-base case, their average optimality gaps are $5.27\%$, $1.95\%$, and $0.74\%$, respectively. The greedy heuristic performs substantially worse and shows little improvement as $T$ increases. In some cases, its performance even deteriorates; for example, when $\beta_1=0.2$, its gap increases from $5.22\%$ at $T=400$ to $6.72\%$ at $T=1600$. In contrast, \texttt{CDLP-F} improves consistently with $T$: at $T=1600$, its gap is below $0.5\%$ for every value of $\beta_1$. This highlights the effectiveness of the CDLP-based approach: although the underlying joint linear program optimizes both inventory and dynamic assortment decisions, its inventory solution remains highly effective when applied to the show-all setting considered in this paper.

The performance of \texttt{CDLP-H} lies between those of \texttt{Greedy} and \texttt{CDLP-F}. This policy is closely related to the approximation framework underlying Theorems~\ref{thm:one_half_fluid} and~\ref{thm:one_half_stochastic}, which provide nearly $1/2$ and asymptotic $1/2$ guarantees in the fluid and stochastic settings, respectively. Despite these conservative worst-case guarantees, \texttt{CDLP-H} performs substantially better in practice, with a gap below $4\%$ in every instance and an average gap of only $1.95\%$ for the sixteen-base case.

Compared with \texttt{CDLP-F}, however, \texttt{CDLP-H} benefits less consistently as $T$ increases. This limitation is particularly visible under moderate levels of size substitution. For example, in the sixteen-base case with $\beta_1=0.4$, its optimality gap remains around $3.5\%$ as $T$ increases, whereas the gap of \texttt{CDLP-F} decreases from $1.88\%$ to $0.33\%$. One explanation is that \texttt{CDLP-H} relies more heavily on the greedy augmentation procedure to fill unused capacity, which may prevent it from fully capturing the benefits of the fluid solution as the problem scales. Nevertheless, its optimality gap remains below $4\%$ across all instances, substantially outperforming its conservative worst-case guarantee.

\paragraph{Impact of size substitution.} The simpler policies appear to be particularly challenged by moderate levels of size substitution. In the sixteen-base case, the optimality gaps of both \texttt{Greedy} and \texttt{CDLP-H} are generally larger for intermediate values of $\beta_1$, particularly $\beta_1=0.2$ and $0.4$, than when substitution is strong. When $\beta_1=0.4$, the gap of \texttt{CDLP-H} remains around $3.5\%$, whereas under strong substitution ($\beta_1=0.95$), it decreases from $1.12\%$ at $T=400$ to only $0.25\%$ at $T=1600$.

This pattern suggests that moderate substitution creates a greater challenge for policies that do not fully exploit substitution across all sizes simultaneously. When substitution is weak, cross-size interactions play a limited role in inventory decisions; when substitution is strong, adjacent sizes can compensate for stockouts. At intermediate levels, however, substitution is consequential but cannot fully mitigate inventory imbalances, making it more important to account for substitution interactions across sizes. This may explain why \texttt{CDLP-F}, which jointly optimizes inventory decisions across \emph{all} sizes, exhibits a more pronounced advantage over \texttt{CDLP-H} in this regime.

\paragraph{Impact of input size.} As the number of bases increases, the performance advantage of the CDLP-based policies becomes more pronounced. The average optimality gap of \texttt{Greedy} increases substantially from $1.76\%$ in the four-base (36-product) case to $3.16\%$ in the eight-base (72-product) case and $5.27\%$ in the sixteen-base (144-product) case. In contrast, \texttt{CDLP-F} remains effective, with average gaps of $0.92\%$, $0.43\%$, and $0.74\%$, respectively. \texttt{CDLP-H} also performs well as the number of bases increases, although its average gap rises from $1.17\%$ in the four-base case to $1.95\%$ in the sixteen-base case. Overall, these results suggest that CDLP-based inventory planning becomes increasingly valuable relative to the greedy heuristic as the problem size grows.

\paragraph{Runtime.} Despite the large product space of nearly 150 products, the computational runtimes of both \texttt{CDLP-H} and \texttt{CDLP-F} remain very small. Excluding the greedy augmentation step, the CDLPs in both policies, which are solved via constraint generation, require less than two seconds across all instances in Table~\ref{table:experiment_optimality_gap} on a MacBook Pro with an Apple M2 Pro and 16 GB of memory.

\section{Conclusion}\label{sec:conclusions}

Motivated by size substitution in apparel retailing, this paper develops the CFTC model to capture two-dimensional demand substitution across both product bases and sizes, and studies the resulting assortment and inventory optimization problems. For assortment optimization, we establish computational hardness and develop a PTAS under bounded size substitution. For inventory optimization, we develop approximation algorithms in both fluid and stochastic settings by exploiting the structure induced by stocking alternating sizes, the CCD property of the mixed-NP-MNL model, and a new coupling technique that connects the stochastic inventory process to its fluid counterpart. Our numerical experiments using instances calibrated from footwear data further demonstrate strong performance across a range of substitution patterns and problem settings.

Beyond the specific application domain, our results provide new structural and algorithmic tools for inventory optimization under dynamic substitution. In particular, the CCD result for the mixed-NP-MNL model and the coupling technique for bridging fluid and stochastic inventory processes may be useful for studying inventory optimization under other choice models.

\makeatletter
\newcommand*\mysize{%
\@setfontsize\mysize{10.0}{11.0}%
}
\makeatother

\renewcommand{\bibfont}{\mysize}
{\setlength{\bibsep}{3pt}
\bibliographystyle{plainnat}
\bibliography{bib_size-sub.bib}
}

\ECSwitch

\ECHead{Electronic Companion}

\begin{appendices}

\section{Proof of Theorem~\ref{thm:AO_hardness}}
\label{appendix-subsec:proof_of_AO_hardness}

We utilize a reduction from the \emph{Partition} problem, one of Karp’s original 21 NP-complete problems~\citeyearpar{karp1972reducibility}. In this setting, we are given as input a collection of nonnegative integers $a_1,\ldots,a_n$. Letting $L=\sum_{i\in\Ncal} a_i$, the goal is to decide whether there exists a subset $Q\subseteq\Ncal$ such that $\sum_{i\in Q} a_i = \frac{L}{2}$. Given an instance of this form, we construct a corresponding instance of~\ref{eqn:AO_CFTC}, phrased as a feasibility problem, as follows:
\begin{itemize}
\item {\em Product universe and revenues.}
There are $n+1$ bases indexed by $\Ncal=\{1,\ldots,n+1\}$ and two sizes indexed by $\Scal=\{1,2\}$. The available sizes for each base $i\in\Ncal$ are given by
\[
\Scal_i =
\begin{cases}
    \{1\}, & \text{if } i \in \Ncal,\\
    \{2\}, & \text{if } i = n+1.
\end{cases}
\]
Consequently, the product universe is $\Ucal = \{(1,1),(2,1),\ldots,(n,1),(n+1,2)\}$. The revenues for bases $i\in\Ncal$ are all set to one, i.e., $r_{1,1} = r_{2,1} = \cdots = r_{n,1} = 1$, while the revenue of base $n+1$ is set to $r_{n+1,2} = \frac{371}{145}$.
\item {\em Customer types and weights.}
There are two customer types indexed by $\Gcal=\{1,2\}$, each arriving with equal probability, so that $\mu_1=\mu_2=\frac{1}{2}$. The first customer type is characterized by $(s_1,\pi_1)=(1,\uparrow)$, while the second customer type is characterized by $(s_2,\pi_2)=(2,\downarrow)$. For each $g\in\Gcal$, the weights associated with bases $i\in\Ncal$ satisfy $w_{i,g}=\frac{a_i}{L}$, while the weight for base $n+1$ is $w_{n+1,g}=1$. Moreover, the two customer types share the same disutility factors, with $\beta_{0,g}=1$ and $\beta_{1,g}=\frac{1}{2}$, noting that with two sizes the maximum deviation from the ideal size is one.
\item {\em Revenue threshold.}
Finally, we fix the expected revenue threshold to be $K=\frac{31}{29}$.
\end{itemize}
The remainder of this proof is devoted to showing that there exists an assortment $A$ that  garners an expected revenue of at least $K$ if and only if a valid partition exists. Moving forward, we encode assortment decisions using a binary vector $x \in \{0,1\}^{n+1}$, where $x_i = 1$ indicates that base $i$ is offered. Since each base can appear in only a single size, there is a one-to-one correspondence between assortments $A \subseteq \Ucal$ and their encodings as binary vectors $x \in \{0,1\}^{n+1}$. 

The expected revenue earned from offering assortment $x$ can be expressed as
\begin{eqnarray*}\label{eqn:hardness_rev}
    \Rcal(x) = \frac{1}{2} \cdot \left( \frac{w(x) + \frac{1}{2}\cdot\frac{371}{145}x_{n+1}}{1 + w(x) + \frac{1}{2}\cdot x_{n+1}} \right) + \frac{1}{2} \cdot \left( \frac{\frac{1}{2} \cdot w(x) + \frac{371}{145}x_{n+1}}{1 + \frac{1}{2}\cdot w(x) + x_{n+1}} \right),
\end{eqnarray*}
where $w(x) =\sum_{i \in \Ncal}w_{i,1}\cdot x_i$.  It is straightforward to verify that the expected revenue of any assortment $x$ with $x_{n+1}=0$ can be improved by setting $x_{n+1}=1$, since product $(n+1,2)$ is the largest revenue product.  Given this observation, our \ref{eqn:AO_CFTC} feasibility problem reduces to the question of whether there exists an assortment $x$ with $x_{n+1}=1$ that satisfies
\[
 \frac{1}{2} \cdot \left( \frac{w(x) + \frac{1}{2}\cdot\frac{371}{145}}{\frac{3}{2} + w(x) } \right) + \frac{1}{2} \cdot \left( \frac{\frac{1}{2} \cdot w(x) + \frac{371}{145}}{2 + \frac{1}{2}\cdot w(x)} \right) \geq \frac{31}{29}
\]
The inequality above can be equivalently written as 
\[
\frac{-4\cdot (w(x) - \frac{1}{2})^2}{29\cdot (2 w(x)^2 + 11w(x)  +12)} \geq 0
\]
which occurs if only if $w(x) = \frac{1}{2}$. Recalling that $w_{i,1} = \frac{a_i}{L}$ for every $i \in \Ncal$, it easy to see that the condition $w(x)=\frac{1}{2}$ is identical to having a set $Q \subseteq \Ncal$ with $\sum_{i \in Q} a_i = \frac{L}{2}$. 

\section{The Details of our Approximation Scheme for \ref{eqn:AO_CFTC}}\label{app-sec:AO}

  For ease of exposition, we focus on the unconstrained variant of this problem. However, our proposed approach extends naturally to cardinality-constrained variants.  Throughout the remainder of this section, we assume without loss of generality that $\frac{1}{\eps}$ is an integer larger than 2, and that the number of sizes $m$ is an integer multiple of $\frac{\ell_{\max}}{\eps}$. Moreover, we use $\opt$ to denote the optimal objective value of~\ref{eqn:AO_CFTC}.

\subsection{Preliminaries}\label{subsec:AO_prelims}

Before formalizing the details of our two algorithmic steps, we introduce the notions of a \emph{sparse size block} and a \emph{well-separated assortment}, which together lay the groundwork for the technical exposition that follows. We also formalize the precise nature of the assortment oracle. 

\paragraph{Sparse size block.} A subset of sizes $B \subseteq \Scal$ is referred to as a \emph{size block} if it forms a consecutive interval of sizes; that is, there exist $\ubar{s}, \bar{s} \in \Scal$ such that $B = [\ubar{s}, \bar{s}]$. A size block $B$ is deemed \emph{sparse} if $|B| \leq \frac{2\ell_{\max}}{\eps}$. As a shorthand, we will refer to size blocks as blocks and sparse size blocks and sparse blocks. For any block $B$, sparse or otherwise, we let
$
\Ucal(B) = \{(i,s) : i \in \Ncal,\; s \in B \cap \Scal_i\}
$
denote the set of products that can be offered at sizes within block $B$. For an assortment $A \subseteq \Ucal$, we further define
$
A(B) = A \cap \Ucal(B)
$
to be the projection of $A$ onto the product universe $\Ucal(B)$.
\begin{itemize}
	\item \emph{$\eps$-uniform sparse blocks}: It will be useful to define the following partition of the size set $\Scal$ into sparse blocks of equal size. To this end, and recalling that we assume $m$ is an integer multiple of $\frac{\ell_{\max}}{\epsilon}$, 
	we define the $\epsilon$-uniform sparse blocks as
	\[
	B_{\epsilon}(k)
	=
	\left[(k-1)\cdot \frac{\ell_{\max}}{\epsilon} + 1,\;
	k\cdot \frac{\ell_{\max}}{\epsilon}\right]
	\]
	for $k \in [K_{\eps}]$, where $K_{\eps} = \frac{m\eps}{\ell_{\max}}$. 
	It is straightforward to verify that
	\[
	\Scal = \biguplus_{k \in [K_{\eps}]} B_{\epsilon}(k),
	\]
	where $\biguplus$ denotes the disjoint union operator, and each such uniform block spans exactly $\frac{\ell_{\max}}{\epsilon}$ sizes. 
\end{itemize}

\paragraph{Well-separated assortments.} An assortment $A \subseteq \Ucal$ is said to be \emph{well-separated} if there exist $Q \in \mathbb{Z}$ pairwise disjoint size blocks $B_1,\ldots,B_Q$ such that
$
A = \biguplus_{q \in [Q]} A(B_q).
$
Moreover, indexing these blocks in increasing order of size so that
\[
\max B_1 < \max B_2 < \cdots < \max B_Q,
\]
we require that the blocks be sufficiently separated in the sense that, for any $q \in [Q-1]$,
\[
\min B_{q+1} > \max B_q + \ell_{\max} .
\]
That is, between any two successive blocks, at least $\ell_{\max}$ sizes are skipped. A key feature of any well-separated assortment $A$ is that its expected revenue is decomposable across blocks. In particular,
$
\rev(A) = \sum_{q \in [Q]} \rev\!\left(A(B_q)\right),
$
since each size block is separated from the others by at least $\ell_{\max}$ skipped sizes.

\paragraph{The black-box oracle.}  For an arbitrary sparse block $B$, consider the assortment problem
\begin{equation}
	\label{eqn:AO_oracle}
	\tag{AO-Oracle}
	\opt(B) = \max_{A \subseteq \Ucal(B)} \rev(A),
\end{equation}
where we seek the expected revenue maximizing assortment that uses only products with sizes from block $B$.  It is important to emphasize, in~\ref{eqn:AO_oracle}, we consider all customer types, some which might have an ideal size outside of $B$.  For the entirety of the first step, for any sparse block $B$,  we assume oracle access to an assortment $\cal{O}(B) \subseteq \Ucal(B)$, whose expected revenue satisfies $\rev(\cal{O}(B)) \geq \alpha \cdot \opt(B)$ for some $\alpha \in [0,1]$.

\subsection{Step 1: The size-skipping dynamic program}\label{subsec:step_1_AO}

In this section, we show how to use dynamic programming to compute a $\alpha\cdot (1-\eps)$-optimal assortment for~\ref{eqn:AO_CFTC}, given access to an assortment oracle that returns an $\alpha$-approximate solution to~\ref{eqn:AO_oracle}.

\paragraph{State description.}  Each state $(k, \ubar{s}, \bar{s})$ of our dynamic program consists of the following three ingredients:
\begin{itemize}
	\item An index $k \in [K_{\eps}]_0$, where $k=0$ will correspond to the initial state, and $k \in [K_{\eps}]$ an $\eps$-uniform sparse block.
	\item Two sizes $(\ubar{s}, \bar{s}) \in \Fcal(k)$, where
	\[
	\Fcal(k) =
	\begin{cases}
		\{(0,0)\} & \text{if}~k=0\\
		\{(s_1,s_2) \in B_{\eps}(k): |[s_1,s_2]| = \ell_{\max}\}  & \text{otherwise}.
	\end{cases}
	\]
	When $k \in [K_{\eps}]$, the set $\Fcal(k)$ denotes for all blocks strictly contained within $B_{\eps}(k)$ that span exactly $\ell_{\max}$ sizes.  As the recursion of our dynamic program will reveal, the block $[\ubar{s}, \bar{s}]$ will be skipped.
\end{itemize}

\paragraph{Value function.} The value function is most naturally interpreted in the case $\alpha = 1$, meaning that the oracle returns an optimal assortment for~\ref{eqn:AO_oracle}. In this setting, $\Vcal(k,\ubar{s},\bar{s})$ represents the maximum expected revenue that can be obtained from a well-separated assortment $A \subseteq \biguplus_{\kappa \in [k, K_{\eps}]} \Ucal(B_{\eps}(k))$ that, from each $\eps$-uniform sparse block $B_{\eps}(k), \ldots, B_{\eps}(K_{\eps})$, skips a single block containing at least $\ell_{\max}$ sizes. Moreover, the skipped block from $B_{\eps}(k)$ must be a superset of $[\ubar{s}, \bar{s}]$. Formally, and generalizing beyond the $\alpha = 1$ case, we have
\begin{equation}
	\label{eqn:AO_value_functions_outer}
	\Vcal(k,\ubar{s},\bar{s})=\displaystyle \max_{(\ubar{s}',\bar{s}') \in \Fcal(k+1)}
	\left\{
	\rev\!\left(\cal{O}\bigl([\bar{s}+1,\ubar{s}'-1]\bigr)\right)
	+
	\Vcal\!\left(k+1,\ubar{s}',\bar{s}'\right)
	\right\},
\end{equation}
where we use the convention that $\Ocal([s_1,s_2]) = \emptyset$ if $s_1 > s_2$. The base cases are given by
\[
\Vcal(K_{\eps},\ubar{s},\bar{s})
=
\rev\!\left(\cal{O}\bigl([\bar{s}+1,m]\bigr)\right).
\]
In essence, the recursion in~\eqref{eqn:AO_value_functions_outer} proceeds sequentially over the $\eps$-uniform sparse blocks, selecting for each a block $[\ubar{s}, \bar{s}]$ containing exactly $\ell_{\max}$ sizes to skip, and then applying the assortment oracle to the intervening non-skipped blocks. It is straightforward to verify that these non-skipped blocks are indeed sparse, and hence the application of the oracle to each such block is valid. Specifically, at any state $(k,\ubar{s},\bar{s})$ with $k \in [K_{\eps}-1]$, we select $(\ubar{s}',\bar{s}') \in \Fcal(k+1)$ and apply the assortment oracle to the block $[\bar{s}+1,\ubar{s}'-1]$. By construction, we have $\bar{s} \in B_{\eps}(k)$ and $\ubar{s}' \in B_{\eps}(k+1)$, and thus
\[
|[\bar{s}+1,\ubar{s}'-1]|
\le
\max B_{\eps}(k+1) - \min B_{\eps}(k)
\le
\frac{2\ell_{\max}}{\eps}.
\]
Moreover, if $k=0$, we have $\bar{s}=0$ and $\ubar{s}' \in B_{\eps}(1)$, and so
\[
|[1,\ubar{s}'-1]|
\le
|B_{\eps}(1)| 
=
\frac{\ell_{\max}}{\eps}.
\]
Finally, for states $(K_{\eps},\ubar{s},\bar{s})$, we apply the assortment oracle to the block $[\bar{s}+1,m] \subseteq B_{\eps}(K_{\eps})$, which therefore contains fewer than $\frac{\ell_{\max}}{\eps}$ sizes.

\paragraph{Analysis.} Let
\[
(0,\ubar{s}^{(0)},\bar{s}^{(0)})
\quad \xrightarrow{(\ubar{s}^{(1)}, \bar{s}^{(1)})} \quad
(1,\ubar{s}^{(1)}, \bar{s}^{(1)})
\quad \xrightarrow{(\ubar{s}^{(2)}, \bar{s}^{(2)})} \quad
\cdots
\quad \xrightarrow{(\ubar{s}^{(K_{\eps})}, \bar{s}^{(K_{\eps})})} \quad
(K_{\eps}, \ubar{s}^{(K_{\eps})}, \bar{s}^{(K_{\eps})})
\]
denote the sequence of states and actions obtained by following the dynamic program in~\eqref{eqn:AO_value_functions_outer} starting from the initial state $(0,\ubar{s}^{(0)},\bar{s}^{(0)}) = (0,0,0)$. Moreover, for $k \in [K_{\eps}+1]$, let $\hat{B}^{(k)} = [\bar{s}^{\YC{(k-1)}}+1, \ubar{s}^{(k)}-1]$ denote the sparse block to which the assortment oracle is applied, where we set $\ubar{s}^{(K_{\eps}+1)} = m+1$. Let
\[
\hat{\Ocal}
=
\biguplus_{k \in [K_{\eps}+1]}
\Ocal\bigl(\hat{B}^{(k)}\bigr)
\]
denote the amalgamation of the assortments returned by the oracle. The following lemma, whose proof is provided in Appendix~\ref{app:proof_O_rev_performance}, establishes the desired revenue performance guarantee for $\hat{\Ocal}$.

\begin{lemma}
	\label{lem:O_rev_performance}
	$
	\rev(\hat{\Ocal})
	\;\ge\;
	\alpha \cdot (1-\eps) \cdot \opt.
	$
\end{lemma}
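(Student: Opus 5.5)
The plan is to compare $\hat{\Ocal}$ against a well-separated assortment obtained by deleting a cheap skipped block from each $\eps$-uniform sparse block of an optimal assortment $A^*$. The proof has three ingredients: an averaging argument that finds cheap blocks, regularity to bound the loss from deleting them, and a DP-optimality argument.

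\textbf{Step 1 (averaging).} Fix an optimal assortment $A^*$ with $\rev(A^*)=\opt$. Write $\rev(A^*)=\sum_{(i,s)\in A^*} \rev_{(i,s)}(A^*)$ as a sum of per-product revenue contributions; these are nonnegative. For each $k\in[K_\eps]$, partition $B_\eps(k)$ into $\frac{1}{\eps}$ disjoint consecutive sub-blocks, each spanning exactly $\ell_{\max}$ sizes. Every such sub-block is an element of $\Fcal(k)$. By averaging, some sub-block $[\ubar{s}^*_k,\bar{s}^*_k]$ carries at most an $\eps$-fraction of the revenue that $A^*$ earns from products with sizes in $B_\eps(k)$. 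Summing over $k$, the chosen sub-blocks together carry at most $\eps\cdot\opt$ of $A^*$'s revenue.

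\textbf{Step 2 (deletion loss).} Let $A'=A^*\setminus\bigcup_k\Ucal([\ubar{s}^*_k,\bar{s}^*_k])$. I claim $\rev(A')\ge(1-\eps)\opt$, which I expect to be the main obstacle. Removing products changes the consideration sets of other products: a customer whose best fit for base $i$ was deleted may now consider a farther size of base $i$ with a smaller weight. This is harmless for the bound, but it means we cannot simply invoke regularity.

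I would argue type by type. For a fixed type $g$, deletion replaces some considered products either by nothing or by a same-base product with weakly smaller weight, by Assumption~\ref{assumption:disutility_is_monotonic}. Hence the denominator $1+w_g(C_g(\cdot))$ weakly decreases. Each surviving product of $C_g(A^*)$ that remains in $C_g(A')$ keeps its numerator, so its choice probability weakly increases.

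The delicate case is a product in $C_g(A^*)$ that is not deleted but gets displaced from $C_g(A')$. This cannot happen: displacement from the consideration set only occurs when a closer size of the same base is added, and deletion never adds anything. Therefore every non-deleted product retains at least its original revenue contribution for every type. It follows that $\rev(A')\ge\opt-\eps\opt$.

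\textbf{Step 3 (DP comparison).} Now $A'$ is a union of its projections onto the blocks $\hat B_k^*=[\bar{s}^*_{k-1}+1,\ubar{s}^*_k-1]$, which are separated by $\ell_{\max}$ skipped sizes. Consequently $A'$ is well-separated, and
\[
\rev(A')=\sum_k\rev(A'(\hat B_k^*))\le\sum_k\opt(\hat B_k^*).
\]

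Finally, the skip sequence $(\ubar{s}^*_k,\bar{s}^*_k)_k$ is a feasible action path for the recursion~\eqref{eqn:AO_value_functions_outer}. By backward induction, $\Vcal(0,0,0)\ge\sum_k\rev(\Ocal(\hat B_k^*))\ge\alpha\sum_k\opt(\hat B_k^*)$. The DP-selected blocks $\hat B^{(k)}$ are also separated by skipped blocks of $\ell_{\max}$ sizes, so $\hat\Ocal$ is well-separated and $\rev(\hat\Ocal)=\Vcal(0,0,0)$. Chaining these inequalities gives $\rev(\hat\Ocal)\ge\alpha(1-\eps)\opt$.
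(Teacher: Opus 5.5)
Your proposal is correct and follows essentially the same route as the paper. Both arguments pick the cheapest $\ell_{\max}$-size sub-block of each $B_\eps(k)$ by averaging, delete those sub-blocks from $A^*$ to get the well-separated proxy $A^*_{WS}$, and lower-bound $\Vcal(0,0,0)$ by the DP path that skips exactly those sub-blocks, using $\rev(\hat{\Ocal})=\Vcal(0,0,0)$ by well-separatedness. The one difference is that you prove the deletion step directly, type by type. That argument is a proof of the regularity of the CFTC model, which is exactly what the paper invokes at that point, so regularity can in fact be cited there and your version simply makes it self-contained.
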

The proof of Lemma~\ref{lem:O_rev_performance} proceeds in two parts.
First, we show that $\hat{\Ocal}$ is a well-separated assortment, which implies that
\[
\Vcal(0,0,0)
=
\sum_{k \in [K_{\eps}+1]}
\rev\!\left(\Ocal\bigl(\hat{B}^{(k)}\bigr)\right)
=
\rev(\hat{\Ocal}).
\]
Second, we argue that
\[
\Vcal(0,0,0)
\;\ge\;
\alpha \cdot (1-\eps) \cdot \opt
\]
by constructing a feasible sequence of states and actions for the dynamic program in~\eqref{eqn:AO_value_functions_outer} that accumulates a total value of at least $\alpha \cdot (1-\eps) \cdot \opt$.

\paragraph{Overall running time.} In what follows, we show that the assortment $\hat{\Ocal}$ can be recovered in an overall running time of $O\!\left(\frac{m\ell_{\max}}{\eps}\cdot Oracle\right)$, where $Oracle$ denotes the time required for a single call to the assortment oracle. To this end, we first argue that the total number of potential states $(k,\ubar{s},\bar{s})$ in our dynamic program is only $O(m)$. This follows from the observations that:
\begin{itemize}
	\item The index $k$ ranges over $[K_{\eps}+1]$, where $K_{\eps} = \frac{m\eps}{\ell_{\max}}$.
	\item The size tuple $(\ubar{s},\bar{s})$ is uniquely determined by $\ubar{s}$, since $\bar{s}=\ubar{s}+\ell_{\max}-1$. For each $k \in [K_{\eps}+1]$, there are $\frac{\ell_{\max}}{\eps}$ possibilities for $\ubar{s} \in B_{\eps}(k)$.
\end{itemize}
Consequently, the total number of states is at most
\[
(K_{\eps}+1)\cdot \frac{\ell_{\max}}{\eps}
=
O\!\left(\frac{m\eps}{\ell_{\max}}\cdot \frac{\ell_{\max}}{\eps}\right)
=
O(m).
\]

Finally, the optimal choice of sizes $(\ubar{s}',\bar{s}') \in \Fcal(k+1)$ from any state $(k,\ubar{s},\bar{s})$ can be recovered by complete enumeration over all $O\!\left(\frac{\ell_{\max}}{\eps}\right)$ possibilities for $\ubar{s}' \in B_{\eps}(k+1)$, again noting that $\bar{s}'=\ubar{s}'+\ell_{\max}-1$. Therefore, accounting for oracle calls, the overall running time of our approach is $O\!\left(\frac{m\ell_{\max}}{\eps}\cdot Oracle\right)$.

\subsection{Step 2: The assortment oracle} \label{subsec:oracle}

In this section, we give shape to the assortment oracle exploited throughout Step~1 of our approach. In particular, the following theorem formalizes our algorithmic result for~\ref{eqn:AO_oracle}. The proof is provided in Appendix~\ref{app:assort_Oracle_thm}.

\begin{theorem}
	\label{thm:assort_Oracle}
	For any accuracy level $\eps > 0$, \ref{eqn:AO_oracle} admits a $(1-\eps)$-approximation in a running time of $O\!\left(\left(\frac{n\ell_{\max}}{\eps}\right)^{O\left(\frac{\ell_{\max}^2}{\eps}\right)}\right)$.
\end{theorem}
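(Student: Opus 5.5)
The plan is to reduce \ref{eqn:AO_oracle} to a multi-dimensional knapsack. We guess approximate no-purchase denominators for the boundedly many ``size-relevant'' customer types. Then a dynamic program processes the product universe base by base and chooses, for each base, which sizes in $B$ to offer, while tracking rounded denominator loads.

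\emph{Reduction to few types.} Only types whose window $[s_g-\ell_g,s_g+\ell_g]$ meets $B$ can purchase from $\Ucal(B)$; all other types contribute zero. Such types have $s_g\in[\min B-\ell_{\max},\max B+\ell_{\max}]$, which gives $O(\ell_{\max}/\eps)$ ideal sizes, two tendencies and $\ell_{\max}$ deviation levels. So at most $K=O(\ell_{\max}^2/\eps)$ distinct triples $(s_g,\tau_g,\ell_g)$ remain. I would work with one aggregate type per triple, treating the base weights as fixed for each triple. If several types with different weights $w_{i,g}$ share a triple, this assumption must be revisited: the count $K$ would then have to include them, and this is one of the places I expect friction. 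Fix a type $g$ and a base $i$, and let $S_i=A\cap(\{i\}\times B)$ be the set of sizes offered for base $i$. Then the size $s^*_g(i,A)$ chosen by type $g$ is a function of $S_i$ alone. Hence each base contributes to every type a numerator term $r_{i,s}w_{i,s,g}$ and a denominator term $w_{i,s,g}$ that depend only on $S_i$. There are at most $2^{|B|}=2^{O(\ell_{\max}/\eps)}$ choices of $S_i$ per base.

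\emph{Guessing denominators.} Let $A^*$ be optimal and $D_g^*=w_g(C_g(A^*))$. For each of the $K$ types I guess $\hat D_g$ with $D_g^*\le\hat D_g\le(1+\eps)D_g^*$, or guess $\hat D_g=0$. The guesses range over the geometric grid $w^{\min}_g(1+\eps)^j$, where $w^{\min}_g$ is the smallest positive $w_{i,s,g}$ available to $g$. Denominator values below a threshold like $\eps/n$ times the largest available weight can be rounded to the threshold, since the constant $1$ in the denominator dominates. This caps the grid at $O(\frac1\eps\log\frac{n}{\eps})$ points, or at most $\mathrm{poly}(n/\eps)$ after capping. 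That gives $(n/\eps)^{O(K)}$ guesses in total.

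\emph{Knapsack DP.} For fixed guesses, the problem is to maximize $\sum_g \mu_g \frac{N_g(A)}{1+\hat D_g}$ subject to $D_g(A)\le \hat D_g$ for every $g$. This is a separable objective with $K$ packing constraints. I round each per-base denominator contribution up to a multiple of $\eps\hat D_g/n$, so each load lives in $\{0,1,\dots,n/\eps+n\}$. The DP then runs over bases $i=1,\dots,n$ with a state vector of $K$ rounded loads, choosing $S_i$ at each step. It has $n\cdot(n/\eps)^{K}\cdot 2^{O(\ell_{\max}/\eps)}$ transitions, which matches the claimed $(\frac{n\ell_{\max}}{\eps})^{O(\ell_{\max}^2/\eps)}$ bound.

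\emph{Analysis.} Two inequalities drive the guarantee.
\begin{itemize}
\item[(i)] $A^*$ is feasible in the rounded DP, since rounding adds at most $\eps\hat D_g$ in total and the load capacity can be relaxed to $(1+\eps)\hat D_g$. Its surrogate value is therefore at least $\rev(A^*)/(1+\eps)$.
\item[(ii)] Any DP-feasible $A$ has true denominator at most $(1+O(\eps))\hat D_g$. Its true revenue is then at least $(1-O(\eps))$ times its surrogate value, because $N_g(A)/(1+D_g(A))\ge N_g(A)/((1+O(\eps))(1+\hat D_g))$.
\end{itemize}
Rescaling $\eps$ gives the $(1-\eps)$ factor.

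\emph{Main obstacle.} I expect the hardest step to be the denominator guessing. The grid must be polynomially bounded, which needs the negligible-denominator truncation argued carefully, and the number of genuinely distinct types must stay $O(\ell_{\max}^2/\eps)$ when weights vary with $g$. If the second point fails, I would first try collapsing types within a triple by guessing a single scalar per triple together with per-type ratios, accepting the extra factor inside the exponent.
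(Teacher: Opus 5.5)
Your architecture matches the paper's proof. You restrict to the $O(\ell_{\max}^2/\eps)$ relevant types, guess each type's optimal consideration-set weight to within a factor $1+\eps$, and replace the revenue by a surrogate with these fixed denominators. You then run a base-by-base DP over a discretized load vector with a per-type capacity. Rounding each per-base load up with a relaxed capacity, instead of rounding the accumulated load down as the paper does, makes no difference. Your worry about several types sharing a triple does not arise either, because the model identifies a type with its triple $(s_g,\tau_g,\ell_g)$.

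The genuine gap is where you expected it, in the denominator guessing. Your truncation rule replaces any $D_g^*$ below $\frac{\eps}{n}$ times the largest available weight by that threshold, and this is invalid. Whether the constant $1$ dominates depends on the absolute size of $D_g^*$, not on its ratio to the heaviest product in the instance.

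For a counterexample, take one type with $n=10$ and $\eps=0.1$, and offer every base only at the type's ideal size:
\begin{itemize}
\item base~1 has weight $10^6$ and revenue $0$;
\item base~2 has weight $10^3$ and revenue $0.01$;
\item bases $3,\ldots,10$ have weight $1$ and revenue $1$.
\end{itemize}
The optimum offers only bases $3,\ldots,10$, with $D^*=8$ and revenue $8/9$. Your smallest nonzero guess is $10^4$. Under every nonzero guess, the DP maximizes the numerator subject to a capacity of at least $1.1\cdot 10^4$, so it adds base~2 and earns at most $18/1009$.

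If you drop the truncation instead, the grid $w_g^{\min}(1+\eps)^j$ must span the ratio between $n$ times the largest available weight and the smallest one. That ratio is not bounded by any function of $n$ and $1/\eps$. The running time then depends on the numerical values of the weights, and the stated bound is not obtained.

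The missing idea, which the paper uses, is to anchor the grid at the optimal assortment rather than at the instance. Guess the identity of the heaviest product in $C_g(A^*)$, which is one of at most $n(\ell_{\max}+1)$ candidates, and let $w_{\max}(g)$ be its size-adjusted weight. Because $C_g(A^*)$ contains at most one product per base, $w_{\max}(g)\le D_g^*\le n\,w_{\max}(g)$. A geometric grid of $O(\frac{1}{\eps}\log n)$ points starting at $w_{\max}(g)$ therefore contains a $(1+\eps)$-approximation of $D_g^*$; your $\hat D_g=0$ option still covers the empty case. This gives $\left(\frac{n\ell_{\max}}{\eps}\right)^{O(\ell_{\max}^2/\eps)}$ guesses in total, independent of the weight magnitudes. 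Your inequalities (i) and (ii) then go through unchanged.
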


Several remarks help clarify the implications of this result. First, Theorem~\ref{thm:AO_hardness} establishes that~\ref{eqn:AO_oracle} is NP-hard, and thus a polynomial-time approximation scheme is essentially the strongest achievable algorithmic guarantee under standard complexity assumptions. Second, Theorem~\ref{thm:assort_Oracle} justifies invoking the assortment oracle in Step~1 with $\alpha = 1-\eps$. Finally, substituting $Oracle = O\!\left(\left(\frac{n\ell_{\max}}{\eps}\right)^{O\left(\frac{\ell_{\max}^2}{\eps}\right)}\right)$ into the running time derived at the end of Appendix~\ref{subsec:step_1_AO} yields the overall complexity stated in Theorem~\ref{thm:main_theorem_assort}.

\subsection{Proof of Lemma~\ref{lem:O_rev_performance}}\label{app:proof_O_rev_performance}

Throughout this proof, we use $A^*$ to denote an optimal solution to~\ref{eqn:AO_CFTC}, with $\opt = \rev(A^*)$ denoting its expected revenue. Before proceeding with the proof of the lemma, we introduce additional notation that, for any assortment $A \subseteq \Ucal$ and size block $B \subseteq \Scal$, provides an explicit measure of the expected revenue contribution of the products in $A(B)$. Equipped with this notation, we show how to remove products from $A^*$ to construct a well-separated assortment $A^*_{WS}$, which we refer to as the \emph{well-separated proxy}. We then state two intermediate claims that together enable a streamlined proof of the lemma. Together, these claims establish that both $\hat{\Ocal}$ and $A^*_{WS}$ are well-separated assortments, and that the products removed from $A^*$ in forming $A^*_{WS}$ contribute at most an $\eps$-fraction of $\opt$.

\paragraph{Block revenue contribution.}  For any assortment $A \subseteq \Ucal$ and size block $B \subseteq \Scal$, let 
\[
R_B(A) = \sum_{(i,s) \in  A(B)} r_{i,s}\cdot \pi((i,s),A)
\]
denote the expected revenue contribution of product $A(B)$ when the assortment $A$ is offered, where $$\pi((i,s),A) = \sum_{\substack{g \in \Gcal:\\ (i,s) \in C_g(A)}} \mu_g\cdot \pi_g((i,s),A)$$ is the total choice probability of product $(i,s) \in A$.

\paragraph{The well-separated proxy.}  We start by introducing a further partition of each $\eps$-uniform sparse block into $\frac{1}{\eps}$ sub-blocks, each containing exactly $\ell_{\max}$ sizes. Formally, for each $k \in [K_{\eps}]$ and $q \in [\frac{1}{\eps}]$, let
\[
B_{\eps,q}(k)
=
\left[
(k-1)\cdot \frac{\ell_{\max}}{\eps}
+
(q-1)\cdot \ell_{\max}
+
1,
\;
(k-1)\cdot \frac{\ell_{\max}}{\eps}
+
q\cdot \ell_{\max}
\right]
\]
denote the $q$-th such sub-block of $B_{\eps}(k)$. With this notation in hand, define
\[
q^*_k
=
\argmin_{q \in [\frac{1}{\eps}]}
R_{B_{\eps,q}(k)}(A^*)
\]
to be the sub-block of $B_{\eps}(k)$ with the smallest expected revenue contribution under $A^*$. Using the shorthand $B_{\eps,q^*(k)} = B_{\eps,q^*_k}(k)$ for notational convenience, the well-separated proxy is defined as
\[
A^*_{WS}
=
A^*
\setminus
\left(
\biguplus_{k \in [K_{\eps}]}
\Ucal\bigl(B_{\eps,q^*(k)}\bigr)
\right),
\]
meaning that we delete from $A^*$ every product $(i,s) \in A^*$ such that
$s \in \biguplus_{k \in [K_{\eps}]} B_{\eps,q^*(k)}$. While this definition of $A^*_{WS}$ is conceptually transparent, it is technically convenient to express it via projections of $A^*$ onto the non-skipped blocks. To this end, let
\[
\ubar{s}^*_k
=
\min B_{\eps,q^*(k)}
\qquad\text{and}\qquad
\bar{s}^*_k
=
\max B_{\eps,q^*(k)},
\]
so that $B_{\eps,q^*(k)} = [\ubar{s}^*_k,\bar{s}^*_k]$ for each $k \in [K_{\eps}]$. Also define $\bar{s}^*_0 = 0$ and $\ubar{s}^*_{K_{\eps}+1} = m+1$. From here, define the intervening (non-skipped) blocks
\[
B^*_k
=
[\bar{s}^*_{k-1}+1,\;\ubar{s}^*_k-1]
\qquad\text{for } k \in [K_{\eps}+1],
\]
which correspond to the blocks retained when transitioning from $A^*$ to $A^*_{WS}$. Accordingly,
\begin{equation}
\label{eqn:well_sep_proxy}
A^*_{WS}
=
\biguplus_{k \in [K_{\eps}+1]}
A^*(B^*_k)
\end{equation}
provides the alternative representation of the well-separated proxy.

\paragraph{Intermediate claims.} In what follows, we state two intermediate claims, whose proofs can be found in Appendices~\ref{subsec:proof_props_A_WS} and~\ref{subsec:proof_props_O_hat}.  The first claim establishes that $A^*_{WS}$ is indeed well-separated and that its expected revenue is at least a $(1-\eps)$-fraction of $\opt$.  The second establishes that $\hat{\Ocal}$ is well-separated.
\begin{claim}
    \label{claim:A_WS_properties}
    The assortment $A^*_{WS}$ is (i) well-separated and (ii) garners an expected revenue of at least $(1-\eps)\cdot \opt$.
\end{claim}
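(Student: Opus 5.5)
Part (i) follows directly from the representation \eqref{eqn:well_sep_proxy}. The retained blocks $B^*_1,\ldots,B^*_{K_\eps+1}$ are pairwise disjoint, consecutive intervals ordered by size. Any two successive retained blocks $B^*_k$ and $B^*_{k+1}$ are separated exactly by the skipped sub-block $B_{\eps,q^*(k)}=[\ubar{s}^*_k,\bar{s}^*_k]$, which contains $\ell_{\max}$ sizes. Hence $\min B^*_{k+1}=\bar{s}^*_k+1=\max B^*_k+\ell_{\max}+1>\max B^*_k+\ell_{\max}$. Some $B^*_k$ may be empty, for instance when $q^*_k=1$ and $q^*_{k-1}=1/\eps$. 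Such blocks can simply be dropped, and the separation between the surviving neighbors then only grows. This shows $A^*_{WS}$ is well-separated.

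For part (ii) I would proceed in two steps.

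\textbf{Step 1: removal does not hurt the survivors.} By regularity of the CFTC model (Appendix~\ref{appendix-sec:regularity_property}), removing products from $A^*$ weakly increases the choice probability of each remaining product. Regularity here should be read at the level of each type $g$ and product $(i,s)$. The consideration-set argument is as follows. If $(i,s)\in C_g(A^*)$ and $(i,s)$ survives, then $s$ is still the closest acceptable available size of base $i$, since we only removed sizes. So $(i,s)\in C_g(A^*_{WS})$. Meanwhile the denominator $1+w_g(C_g(\cdot))$ can only change by other bases losing their considered size or shifting to a farther size with smaller weight, by Assumption~\ref{assumption:disutility_is_monotonic}; both weakly decrease it. One subtlety is that a removal could \emph{add} a product to a consideration set, but then it replaces a strictly heavier one of the same base, so the denominator still does not increase. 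It follows that
\[
\rev(A^*_{WS}) \ge \sum_{(i,s)\in A^*_{WS}} r_{i,s}\,\pi((i,s),A^*) = \opt - \sum_{k\in[K_\eps]} R_{B_{\eps,q^*(k)}}(A^*).
\]

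\textbf{Step 2: averaging.} For each $k$, the $1/\eps$ sub-blocks $B_{\eps,q}(k)$ partition $B_\eps(k)$. Therefore the minimizer $q^*_k$ satisfies $R_{B_{\eps,q^*(k)}}(A^*)\le \eps\sum_{q} R_{B_{\eps,q}(k)}(A^*)=\eps R_{B_\eps(k)}(A^*)$. Summing over $k$ and using that the $B_\eps(k)$ partition $\Scal$ gives a total removed contribution of at most $\eps\,\opt$.

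The main obstacle is Step 1: verifying the product-wise monotonicity in the endogenous consideration-set setting rather than merely citing regularity. I would handle it type by type as sketched above, checking that each surviving considered product keeps its size-adjusted weight $w_{i,s,g}$ while the denominator weakly decreases.
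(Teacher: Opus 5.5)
Your proposal is correct and follows the paper's proof. Part (i) is the same endpoint computation, $\min B^*_{k+1}=\bar{s}^*_k+1=\ubar{s}^*_k+\ell_{\max}$. Part (ii) uses the same two ingredients: the minimizing sub-block carries at most an $\eps$-fraction of $R_{B_{\eps}(k)}(A^*)$, and regularity only raises the choice probabilities of surviving products. The only difference is that the paper applies them in the opposite order, first rearranging to $\opt \le \frac{1}{1-\eps}\sum_{k} R_{B_\eps(k)\setminus B_{\eps,q^*(k)}}(A^*)$ and then invoking regularity.

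Your handling of possibly empty retained blocks and your type-by-type check of regularity are careful additions that the paper leaves implicit. One small correction: under Assumption~\ref{assumption:disutility_is_monotonic} the replaced product is only weakly heavier, not strictly heavier, and weakly is all your argument needs.
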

\begin{claim}
    \label{claim:O_WS}
    The assortment $\hat{\Ocal}$ is well-separated.
\end{claim}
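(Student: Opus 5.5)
Showing that $\hat{\Ocal}$ is well-separated is a matter of exhibiting its blocks explicitly, namely the blocks $\hat{B}^{(1)},\ldots,\hat{B}^{(K_{\eps}+1)}$ on which the oracle was invoked along the optimal state--action path of the dynamic program in~\eqref{eqn:AO_value_functions_outer}. The plan is to check the two requirements of the definition in turn: that $\hat{\Ocal}$ is the disjoint union of its projections onto these blocks, and that consecutive blocks are separated by at least $\ell_{\max}$ skipped sizes. Empty blocks (when $\bar{s}^{(k-1)}+1>\ubar{s}^{(k)}-1$) carry the empty assortment by convention and can simply be dropped from the list, since removing a block only widens the gap between its neighbours.

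First, the blocks are pairwise disjoint and ordered. By construction, $\ubar{s}^{(k)}\le\bar{s}^{(k)}$ for every $k\in[K_{\eps}]$, since $[\ubar{s}^{(k)},\bar{s}^{(k)}]$ spans exactly $\ell_{\max}$ sizes. The skipped windows are also increasing in $k$, because $(\ubar{s}^{(k)},\bar{s}^{(k)})\in\Fcal(k)$ lies inside $B_{\eps}(k)$ and the uniform sparse blocks $B_{\eps}(1),B_{\eps}(2),\ldots$ are consecutive and disjoint. Hence $\hat{B}^{(k)}=[\bar{s}^{(k-1)}+1,\ubar{s}^{(k)}-1]$ satisfies $\max\hat{B}^{(k)}<\ubar{s}^{(k)}\le\bar{s}^{(k)}<\min\hat{B}^{(k+1)}$. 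Each oracle output satisfies $\Ocal(\hat{B}^{(k)})\subseteq\Ucal(\hat{B}^{(k)})$, so the projection of $\hat{\Ocal}$ onto $\hat{B}^{(k)}$ is exactly $\Ocal(\hat{B}^{(k)})$, and $\hat{\Ocal}=\biguplus_k\hat{\Ocal}(\hat{B}^{(k)})$.

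Second, the separation condition. For consecutive nonempty blocks $\hat{B}^{(k)}$ and $\hat{B}^{(k')}$ with $k<k'$, we have
\[
\min\hat{B}^{(k')}-\max\hat{B}^{(k)}-1=(\bar{s}^{(k'-1)}+1)-(\ubar{s}^{(k)}-1)-1\ \ge\ \bar{s}^{(k)}-\ubar{s}^{(k)}+1=\ell_{\max}.
\]
Here the inequality uses $\bar{s}^{(k'-1)}\ge\bar{s}^{(k)}$, which holds because the skipped windows are increasing. So at least $\ell_{\max}$ sizes are skipped between any two successive blocks, which is precisely the gap required by the definition.

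There is no real obstacle in this argument. The only points needing care are the boundary blocks $\hat{B}^{(1)}=[1,\ubar{s}^{(1)}-1]$ (using $\bar{s}^{(0)}=0$) and $\hat{B}^{(K_{\eps}+1)}=[\bar{s}^{(K_{\eps})}+1,m]$ (using $\ubar{s}^{(K_{\eps}+1)}=m+1$), together with the handling of empty blocks. Both are covered by the computation above.
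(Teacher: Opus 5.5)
Your proof is correct and takes essentially the same route as the paper. The key computation in both is that $\min\hat{B}^{(k+1)}=\bar{s}^{(k)}+1=\ubar{s}^{(k)}+\ell_{\max}$ strictly exceeds $\max\hat{B}^{(k)}+\ell_{\max}=\ubar{s}^{(k)}-1+\ell_{\max}$, because the skipped window $[\ubar{s}^{(k)},\bar{s}^{(k)}]$ has exactly $\ell_{\max}$ sizes. Your extra care — dropping empty blocks and comparing consecutive nonempty blocks $k<k'$, and checking the disjoint-union decomposition explicitly — fills in details the paper leaves implicit without changing the argument.
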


\paragraph{Proof of lemma.} Since $\hat{\Ocal}$ is a well-separated assortment by Claim~\ref{claim:O_WS}, we have
\[
\rev(\hat{\Ocal})
=
\sum_{k \in [K_{\eps}+1]} \rev\!\left(\Ocal(\hat{B}^{(k)})\right)
=
\Vcal(0,0,0),
\]
and hence it remains only to show that $\Vcal(0,0,0) \geq \alpha \cdot (1-\eps)\cdot \opt$. To establish this inequality, we construct a feasible sequence of states and actions for the dynamic program in~\eqref{eqn:AO_value_functions_outer} derived from the well-separated proxy $A^*_{WS}$, and then lower bound the total value accumulated along this sequence.

Specifically, define
\[
(0,0,\bar{s}^{*}_0)
\quad \xrightarrow{(\ubar{s}^{*}_1, \bar{s}^{*}_1)} \quad
(1,\ubar{s}^{*}_1, \bar{s}^{*}_1)
\quad \xrightarrow{(\ubar{s}^{*}_2, \bar{s}^{*}_2)} \quad
\cdots
\quad \xrightarrow{(\ubar{s}^{*}_{K_{\eps}}, \bar{s}^{*}_{K_{\eps}})} \quad
(K_{\eps}, \ubar{s}^{*}_{K_{\eps}}, \bar{s}^{*}_{K_{\eps}})
\]
to be this sequence. It is feasible because $\ubar{s}^*_k,\bar{s}^*_k \in B_{\eps}(k)$ and $|[\ubar{s}^*_k,\bar{s}^*_k]|=\ell_{\max}$ by construction, and thus $(\ubar{s}^*_k,\bar{s}^*_k) \in \Fcal(k)$. Moreover, by following this sequence, we apply the assortment oracle to the well-separated blocks $B^*_1,\ldots,B^*_{K_{\eps}+1}$, yielding the well-separated assortment
\[
\Ocal^*
=
\biguplus_{k \in [K_{\eps}+1]} \Ocal(B^*_k)
\]
with total value
\[
\Vcal^*
=
\sum_{k \in [K_{\eps}+1]} \rev\!\left(\Ocal(B^*_k)\right)
=
\rev(\Ocal^*).
\]
Since this sequence is feasible within the recursion in~\eqref{eqn:AO_value_functions_outer}, we have
\begin{eqnarray}
    \Vcal(0,0,0) \geq \Vcal^*
    &=&
    \sum_{k \in [K_{\eps}+1]} \rev\!\left(\Ocal(B^*_k)\right)
    \nonumber \\
    &\geq&
    \alpha \cdot \sum_{k \in [K_{\eps}+1]} \rev\!\left(A^*_{WS}(B^*_k)\right)
    \label{eqn:2a}\\
    &=&
    \alpha \cdot \rev(A^*_{WS})
    \label{eqn:2b}\\
    &\geq&
    \alpha \cdot (1-\eps)\cdot \opt,
    \label{eqn:2c}
\end{eqnarray}
where~\eqref{eqn:2a} follows because $A^*_{WS}(B^*_k)$ is a feasible solution to~\ref{eqn:AO_oracle} with $B=B^*_k$, while~\eqref{eqn:2b} and~\eqref{eqn:2c} follow from properties (i) and (ii) of Claim~\ref{claim:A_WS_properties}, respectively.

\subsection{Proof of Claim~\ref{claim:A_WS_properties} } \label{subsec:proof_props_A_WS}
We prove the two properties sequentially below.

\paragraph{Property (i).} To show that $A^*_{WS}$ is well-separated, it suffices to show that for $k \in [K_{\eps}]$, we have
\[
\min B^*_{k+1} > \max B^*_{k} + \ell_{\max}.
\]
To show the above inequality, observe that
\[
\min B^*_{k+1}  = \bar{s}^*_k+1 = \ubar{s}^*_k+ \ell_{\max} > \max B^*_k+ \ell_{\max},
\]
where the final inequality follows since $\max B^*_k = \ubar{s}^*_k -1$ by construction.

\paragraph{Property (ii).} To begin, for ease of notation, define
\[
\beta_k = B_{\eps}(k) \setminus B_{\eps,q^*(k)}
\qquad \text{for each } k \in [K_{\eps}],
\]
to be the non-skipped portion of each $\eps$-uniform sparse block in constructing $A^*_{SW}$.  We then have
\begin{eqnarray}
    \rev(A^*)
    &=&
    \sum_{k \in [K_{\eps}]}
    \left(
        R_{\beta_k}(A^*)
        +
        R_{B_{\eps,q^*(k)}}(A^*)
    \right)
    \nonumber \\
    &\le&
    \frac{1}{1-\eps}\cdot
    \sum_{k \in [K_{\eps}]}
    R_{\beta_k}(A^*)
    \label{eqn:1a} \\
    &=&
    \frac{1}{1-\eps} \cdot
    \sum_{k \in [K_{\eps}]}
    \sum_{(i,s) \in A^*(\beta_k)}
    r_{i,s}\cdot
    \pi((i,s),A^*)
    \nonumber \\
    &=&
    \frac{1}{1-\eps} \cdot
    \sum_{k \in [K_{\eps}]}
    \sum_{(i,s) \in A^*_{WS}(\beta_k)}
    r_{i,s}\cdot
    \pi((i,s),A^*)
    \label{eqn:1b} \\
    &\le&
    \frac{1}{1-\eps} \cdot
    \sum_{k \in [K_{\eps}]}
    \sum_{(i,s) \in A^*_{WS}(\beta_k)}
    r_{i,s}\cdot
    \pi((i,s),A^*_{WS})
    \label{eqn:1c} \\
    &=&
    \frac{1}{1-\eps} \cdot
    \sum_{k \in [K_{\eps}]}
    R_{\beta_k}(A^*_{WS})
    \nonumber \\
    &=&
    \frac{1}{1-\eps} \cdot
    \rev(A^*_{WS}),
    \nonumber
\end{eqnarray}
as desired. Above, inequality~\eqref{eqn:1a} follows from the observation that
\[
R_{B_{\eps,q^*(k)}}(A^*) \leq \eps \cdot \left( R_{\beta_k}(A^*)
+
R_{B_{\eps,q^*(k)}}(A^*) \right)
\]
by the construction of $B_{\eps,q^*(k)}$. Equality~\eqref{eqn:1b} holds because $\biguplus_{k \in [K_{\eps}+1]} B^*_k = \biguplus_{k \in [K_{\eps}]} \beta_k$, and so for any $k \in [K_{\eps}]$, we have
$A^*(\beta_k) = A^*_{WS}(\beta_k)$. Finally, inequality~\eqref{eqn:1c}
follows from the regularity of the CFTC model, as established in
Appendix~\ref{appendix-sec:regularity_property}. In particular,
for assortments $A,A' \subseteq \Ucal$ with $A \subseteq A'$, we have
$\pi((i,s),A') \ge \pi((i,s),A)$ for all $(i,s) \in A'$.

\subsection{Proof of Claim~\ref{claim:O_WS}.}  \label{subsec:proof_props_O_hat}

Recalling that $\hat{\Ocal} = \sum_{k \in [K_\eps+1]}\Ocal(\hat{B}^{(k)})$, to show that $\hat{\Ocal}$ is well-separated requires showing that
\[
\min \hat{B}^{(k+1)} > \max \hat{B}^{(k)} + \ell_{\max}
\]
for each $k \in [K_{\eps}]$.  For this purpose, observe that
\[
\min \hat{B}^{(k+1)}  = \bar{s}^{(k)}+1 = \ubar{s}^{(k)} + \ell_{\max} > \max \hat{B}^{(k)} + \ell_{\max},
\]
where the final inequality follows since $\max \hat{B}^{(k)} = \ubar{s}^{(k)} -1$ by construction.

\section{Proof of Theorem~\ref{thm:assort_Oracle}} \label{app:assort_Oracle_thm}

In this section, we show how to approximate~\ref{eqn:AO_oracle} within factor $1-\eps$ of optimal in a running time of $O\!\left(\left(\frac{n\ell_{\max}}{\eps}\right)^{O\left(\frac{\ell_{\max}^2}{\eps}\right)}\right)$. We will abuse notation slightly and use $A^*$ as the optimal solution to~\ref{eqn:AO_oracle}.  We begin by introducing additional notation that will ease the development of our approach, before presenting a high-level outline of its two steps, which are fully developed in the sections that follow.

\paragraph{Relevant customer types.}  Since~\ref{eqn:AO_oracle} considers the restricted product set $\Ucal(B)$ for some sparse block $B$, there may be customer types whose consideration sets are empty regardless of the assortment offered. Accordingly, define
\[
\Gcal(B)
=
\{\, g \in \Gcal : C_g(\Ucal(B)) \neq \emptyset \,\}
\]
to be the collection of customer types that would have a non-empty consideration set if the full restricted universe $\Ucal(B)$ were offered. When solving~\ref{eqn:AO_oracle}, it suffices to account only for types $g \in \Gcal(B)$, since any type $g \in \Gcal \setminus \Gcal(B)$ will never make a purchase and therefore contributes zero expected revenue. It is worth noting that $|\Gcal(B)| = O\!\left(\frac{\ell_{\max}^2}{\eps}\right)$. Indeed, if $g \in \Gcal(B)$, then necessarily
$
s_g \in [\min B - \ell_{\max},\, \max B + \ell_{\max}],
$
since otherwise no product in $\Ucal(B)$ could enter the consideration set of type $g$. Because $B$ is sparse, this interval spans $O\!\left(\frac{\ell_{\max}}{\eps}\right)$ distinct size indices. Moreover, for each such size index, there are at most $\ell_{\max}$ possibilities for $\ell_g$ and two possibilities for $\tau_g$, yielding the stated bound.

\subsection{Technical outline} \label{app:step_2_outline}

At a high-level, our approximation scheme is comprised of two steps, which are summarized below.  We refer to these two steps as Step 2a and Step 2b since their introduction falls within the over-arching Step 2.

\paragraph{Step 2a: Guessing and rewards (Appendix~\ref{app:step_2a}).} For each customer type $g \in \Gcal(B)$, we first show how to efficiently compute an approximate estimate $\hat{w}_g$ of the total weight $w^*_g = w_g(C_g(A^*))$, that is, the total consideration-set weight induced by the optimal assortment $A^*$.  We then use this estimated total weight to define a proxy performance measure, which we refer to as the \emph{reward}. This reward mirrors the true expected revenue, except that for each type $g \in \Gcal(B)$, the denominator weight of the induced consideration set is fixed at $(1+\eps)\cdot \hat{w}_g$. This fixed-weight surrogate enables us to decouple assortment decisions across bases while maintaining a $(1-\eps)$-approximation guarantee.

\paragraph{Step 2b: The approximate dynamic program (Appendix~\ref{app:step_2b}).} With these guessed quantities in hand, we formulate a dynamic program that sequentially selects the sizes to offer for each base with the goal of maximizing the total accumulated reward. The state space of this dynamic program stores rounded versions of the accumulated total weight for each type’s induced consideration set. Moreover, for each type $g \in \Gcal(B)$, we enforce that the rounded accumulated weight does not exceed $\hat{w}_g$. This constraint ensures that the accumulated total reward closely approximates the true expected revenue.

\subsection{Step 2a: Guessing and rewards} \label{app:step_2a}

In what follows, we show how to produce approximate total weights $\{\hat{w}_g\}_{g \in \Gcal(B)}$ such that \[
w_g^* \le \hat{w}_g \le (1+\eps)\cdot w_g^*
\]
for each $g \in \Gcal(B)$. recalling that $w^*_g = w_g(C_g(A^*))$ is used as a shorthand.  From here, we describe how these guessed quantities are used to define the reward of an assortment $A \subseteq \Ucal(B)$.

\paragraph{Guessing total weights.}  For each customer type $g \in \Gcal(B)$, we first guess $w_{\max}(g) = \max\{ w_{i,s,g} : (i,s) \in C_g(A^*) \}$, which corresponds to the largest size-adjusted weight among all products considered by a type-$g$ customer under $A^*$. For each type $g$, there are $O(n\ell_{\max})$ possible values for $w_{\max}(g)$, and hence the collection $\{w_{\max}(g)\}_{g \in \Gcal(B)}$ can be recovered by enumerating over
$
O\bigl((n\ell_{\max})^{|\Gcal(B)|}\bigr)
=
O\left((n\ell_{\max})^{O\left(\frac{\ell_{\max}^2}{\eps}\right)}\right)
$
guesses. With these guesses in hand, we know that
$
w_g^* \in [\, w_{\max}(g),\, n w_{\max}(g) \,].
$
Accordingly, within this interval we consider guesses $\hat{w}_g$ of the form
\[
\hat{w}_g = w_{\max}(g)\cdot (1+\eps)^q,
\qquad
q = 0, \ldots, \left\lceil \log_{1+\eps} n \right\rceil,
\]
resulting in a total of
$
O\!\left(\left(\frac{\log n}{\eps}\right)^{|\Gcal(B)|}\right)
=
O\!\left(\left(\frac{\log n}{\eps}\right)^{O\left(\frac{\ell_{\max}^2}{\eps}\right)}\right)
$
guesses across all types. Combining both guessing steps, the total number of guesses for $\{\hat{w}_g\}_{g \in \Gcal(B)}$ is
$
O\!\left(\left(\frac{n\ell_{\max}}{\eps}\right)^{O\left(\frac{\ell_{\max}^2}{\eps}\right)}\right),
$
and at least one such guess must satisfy
\[
w_g^* \le \hat{w}_g \le (1+\eps)\cdot w_g^*,
\]
for each $g \in \Gcal(B)$.

\paragraph{Reward.} For each type $g \in \Gcal(B)$, and any assortment $A \subseteq \Ucal(B)$, let
\[
\hat{\rev}_g(A) = \frac{1}{1+ (1+\eps)\cdot \hat{w}_g}\cdot \rho_g(C_g(A)),
\]
where $\rho_g(C_g(A)) = \sum_{(i,s) \in C_g(A)}r_{i,s}\cdot w_{i,s,g}$, denote the ``reward'' of a type-$g$ customer.  Within the approximate dynamic program proposed next, we seek a reward-maximizing assortment for which the accumulated weight of each relevant customer type’s consideration set is close to $\hat{w}_g$. As such, the reward $\hat{\rev}_g(A)$ serves as a valid proxy for the true expected revenue.

\subsection{Step 2b: The approximate dynamic program.} \label{app:step_2b}

In what follows, we present a dynamic program in which assortment decisions are made base-by-base, and as we proceed through the bases, for each type $g \in \Gcal(B)$, we store carefully rounded versions of the total accumulated weight.

\paragraph{State description.}  Each state $(i,\omega)$ in our dynamic program is characterized by the following two components. For notational convenience, let $G = |\Gcal(B)|$ denote the number of relevant customer types.

\begin{itemize}
    \item The current base $i \in \Ncal$ for which we must decide which sizes to offer. For the purpose of these base-by-base assortment decisions, define
    \[
    \Ucal_i(B)
    =
    \{(i,s) : s \in \Scal_i \cap B\}
    \]
    to be the set of products in base $i$ whose sizes lie within the block $B$.

    \item A vector $\omega = (\omega_1,\ldots,\omega_G)$ representing the accumulated weight for each customer type $g \in \Gcal(B)$ resulting from assortment decisions across bases $1,\ldots,i-1$. For each type $g \in \Gcal(B)$, define the discretization grid
    \[
    \Dcal_g
    =
    \left\{
        k \cdot \frac{\eps \hat{w}_g}{n}
        :
        k = 0,\ldots,\frac{2n}{\eps}
    \right\},
    \]
    together with the rounding operator $\Dcal_g(\cdot)$, which maps its argument down to the nearest point in $\Dcal_g$. Define
    \[
    \Dcal = \Dcal_1 \times \cdots \times \Dcal_G,
    \qquad
    \Dcal(\omega)
    =
    \bigl(\Dcal_1(\omega_1),\ldots,\Dcal_G(\omega_G)\bigr).
    \]
    Throughout, we restrict attention to accumulated weight vectors $\omega \in \Dcal$.
\end{itemize}

\paragraph{Value function.}  For each state $(i,\omega)$, the value function $\Vcal(i,\omega)$ represents the maximum reward that can be accrued from bases $i,\ldots,n$, given that the total accumulated (rounded) weight vector from decisions across bases $1,\ldots,i-1$ is $\omega$, and subject to the requirement that after processing all bases, the total accumulated weight does not exceed $\hat{w}_g$ in any component. Formally, the value function is defined recursively as
\begin{equation}\label{eqn:dp_oracle_PTAS}
    \Vcal(i,\omega)
    =
    \max_{A_i \subseteq \Ucal_i(B)}
    \left\{
        \sum_{g \in \Gcal(B)} \mu_g \cdot \hat{\rev}_g(A_i)
        +
        \Vcal\!\left(i+1,\; \Dcal\!\left(\omega + w|_{A_i}\right)\right)
    \right\},
\end{equation}
where
\[
w|_{A_i}
=
\left(
    w_1(C_1(A_i)),
    \ldots,
    w_G(C_G(A_i))
\right)
\]
denotes the vector of base-$i$ weight contributions to the consideration-set weight of each type $g \in \Gcal(B)$. Note that for each type $g \in \Gcal(B)$, the set $C_g(A_i)$ contains at most one product, since $A_i$ consists exclusively of base-$i$ products. The base cases of the recursion are given by
\[
\Vcal(n+1,\omega)
=
\begin{cases}
    0, & \text{if } \omega_g \le \hat{w}_g \quad \forall\, g \in \Gcal(B), \\
    -\infty, & \text{otherwise}.
\end{cases}
\]
These base cases ensure that the rounded accumulated weight of each customer type’s consideration set does not exceed the guessed quantity $\hat{w}_g$ used in computing the reward.

\subsection{Analysis: Concluding Theorem~\ref{thm:assort_Oracle}}

Let $\tilde{A}_1, \ldots, \tilde{A}_n$ denote the assortment decisions that result by following the dynamic program in~\eqref{eqn:dp_oracle_PTAS} from the initial state of $(1,\vec{0})$, and let $\tilde{A} = \biguplus_{i \in \Ncal}\tilde{A}_i$.  The following lemma shows that the expected revenue of $\tilde{A}$ is within a $(1-\eps)$-factor of optimal.
\begin{lemma}
    \label{lem:opt_B_PTAS}
    $\rev(\tilde{A}) \geq (1-\eps)\cdot \opt(B)$.
\end{lemma}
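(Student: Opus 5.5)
We prove the lemma through two inequalities. First, the value $\Vcal(1,\vec{0})$ computed by the dynamic program is at least roughly $(1-O(\eps))\cdot\opt(B)$. Second, the true revenue of $\tilde{A}$ is at least $\Vcal(1,\vec{0})$. Throughout we fix the guess $\{\hat{w}_g\}_{g\in\Gcal(B)}$ that satisfies $w_g^*\le \hat{w}_g\le(1+\eps)w_g^*$ for every $g$. Because the algorithm keeps the best assortment over all guesses, it suffices to analyze this one. We also use two structural facts. For each type $g$, $C_g(A)=\biguplus_i C_g(A_i)$, because the consideration set is formed base by base and each base contributes at most one product. As a consequence, both $\rho_g$ and $w_g$ are additive across bases.

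\emph{Lower bound on the DP value.} Take the optimal assortment $A^*$ and follow its decisions $A^*_1,\ldots,A^*_n$ through the DP. Rounding down can only decrease the tracked accumulated weight, so by induction the rounded weight of each component $g$ never exceeds the true accumulated weight $w_g(C_g(A^*_1\cup\cdots\cup A^*_{i-1}))\le w^*_g\le\hat{w}_g$. Hence the base case is satisfied and the path is feasible. Its total reward is
\[
\sum_g \mu_g\frac{\rho_g(C_g(A^*))}{1+(1+\eps)\hat{w}_g}\ \ge\ \sum_g\mu_g\frac{\rho_g(C_g(A^*))}{(1+\eps)^2(1+w^*_g)}\ \ge\ (1-2\eps)\cdot\opt(B),
\]
where we used $(1+\eps)\hat{w}_g\le(1+\eps)^2w^*_g$. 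Therefore $\Vcal(1,\vec 0)\ge(1-2\eps)\opt(B)$.

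\emph{Upper bound on the true weight of $\tilde{A}$.} This is the main obstacle. The DP enforces only that the \emph{rounded} accumulated weight is at most $\hat{w}_g$, and each step loses at most $\eps\hat{w}_g/n$ to rounding. Over $n$ bases the true weight therefore satisfies $w_g(C_g(\tilde{A}))\le \hat{w}_g+n\cdot\eps\hat{w}_g/n=(1+\eps)\hat{w}_g$. This is exactly why the reward denominator uses $(1+\eps)\hat{w}_g$. We must also check that the grid $\Dcal_g$, which extends up to $2\hat{w}_g$, is large enough: any state exceeding $\hat{w}_g$ is infeasible anyway and can be truncated to value $-\infty$. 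Given the bound, for each $g$ we have
\[
R_g(\tilde{A})=\frac{\rho_g(C_g(\tilde{A}))}{1+w_g(C_g(\tilde{A}))}\ \ge\ \frac{\rho_g(C_g(\tilde{A}))}{1+(1+\eps)\hat{w}_g}=\hat{\rev}_g(\tilde{A}).
\]
Summing with weights $\mu_g$ and using additivity of $\rho_g$ across bases gives $\rev(\tilde{A})\ge \sum_i\sum_g\mu_g\hat{\rev}_g(\tilde{A}_i)=\Vcal(1,\vec 0)$. Here $\rev(\tilde{A})$ counts only types in $\Gcal(B)$; the other types contribute zero.

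\emph{Conclusion.} Combining the two parts gives $\rev(\tilde{A})\ge(1-2\eps)\opt(B)$. Running the scheme with $\eps/2$ in place of $\eps$ yields the stated $(1-\eps)$ guarantee, and this changes the running time only by constants in the exponent.
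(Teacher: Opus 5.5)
Your proposal is correct and follows the paper's proof essentially step by step. Like the paper, you bound $\Vcal(1,\vec{0})$ from below by routing the optimal assortment $A^*$ through the dynamic program, where rounding down keeps the path feasible. You then bound $\rev(\tilde{A})$ from below by $\Vcal(1,\vec{0})$, using that $n$ rounding steps lose at most $\eps\hat{w}_g$, so the true consideration-set weight of each type is at most $(1+\eps)\hat{w}_g$. The only difference is in the constant: the paper's chain ends at $(1-\eps)^2\cdot\opt(B)$ and is simply declared to give the stated bound, while your explicit rescaling of $\eps$ closes that small gap more carefully.
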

\proof{Proof.}
Our proof proceeds in two parts: first, we show that $\rev(\tilde{A}) \geq \Vcal(1,\vec{0})$, and then we show that $\Vcal(1,\vec{0}) \geq (1-\eps)\cdot \opt(B)$.

\paragraph{Part 1: Lower bounding $\rev(\tilde{A})$ by $\Vcal(1,\vec{0})$.}
Let $\tilde{\omega}^{(n+1)}$ denote the accumulated \emph{rounded} total-weight vector induced by the base-by-base decisions $\tilde{A}_1,\ldots,\tilde{A}_n$. Observe that
\begin{eqnarray*}
    \rev(\tilde{A})
    &=&
    \sum_{i \in \Ncal}\sum_{g \in \Gcal(B)}
    \mu_g \cdot \frac{1}{1+w_g(C_g(\tilde{A}))}\cdot \rho_g(C_g(\tilde{A}_i)) \\
    &\geq&
    \sum_{i \in \Ncal}\sum_{g \in \Gcal(B)}
    \mu_g \cdot \frac{1}{1+\tilde{\omega}^{(n+1)}_g + \eps\hat{w}_g}\cdot \rho_g(C_g(\tilde{A}_i)) \\
    &\geq&
    \sum_{i \in \Ncal}\sum_{g \in \Gcal(B)}
    \mu_g \cdot \frac{1}{1+(1+\eps)\cdot \hat{w}_g}\cdot \rho_g(C_g(\tilde{A}_i)) \\
    &=&
    \sum_{i \in \Ncal}\sum_{g \in \Gcal(B)} \mu_g \cdot \hat{\rev}_g(\tilde{A}_i)
    \;+\;
    \Vcal(n+1,\tilde{\omega}^{(n+1)}) \\
    &=&
    \Vcal(1,\vec{0}).
\end{eqnarray*}
The first inequality holds because $\tilde{\omega}^{(n+1)}_g$ is precisely the rounded version of $w_g(C_g(\tilde{A}))$: there are $n$ stages, and at each stage we round down by at most $\frac{\eps}{n}\hat{w}_g$, yielding a total rounding error of at most $\eps\hat{w}_g$. For the last inequality and second to last equality, we exploit the fact that we must have $\tilde{\omega}^{(n+1)}_g \leq \hat{w}_g$ since offering the empty set at each state is feasible within the dynamic program in~\eqref{eqn:dp_oracle_PTAS} and thus $\Vcal(1,\vec{0}) \geq 0$.

\paragraph{Part 2: Lower bounding $\Vcal(1,\vec{0})$ by $(1-\eps)\opt(B)$.}
To establish $\Vcal(1,\vec{0}) \ge (1-\eps)\opt(B)$, we use the optimal assortment $A^*$ to construct a feasible sequence of states and actions:
\[
(1,\omega^{*(1)})
\quad \xrightarrow{A^*_1} \quad
(2,\omega^{*(2)})
\quad \xrightarrow{A^*_2} \quad
\cdots
\quad \xrightarrow{A^*_n} \quad
(n+1,\omega^{*(n+1)}),
\]
where $A^*_i = A^* \cap \Ucal_i(B)$ and $\omega^{*(1)}=\vec{0}$, while for $i \in [2,n+1]$ we define
\[
\omega^{*(i)} = \Dcal\!\left(\omega^{*(i-1)} + w|_{A^*_i}\right).
\]
Let $\Vcal^*$ denote the total reward accumulated along this feasible sequence. Unrolling the recursion gives
\begin{eqnarray*}
    \Vcal^*
    &=&
    \sum_{i \in \Ncal}\sum_{g \in \Gcal(B)} \mu_g \cdot \hat{\rev}_g(A^*_i)
    \;+\;
    \Vcal(n+1,\omega^{*(n+1)}) \\
    &=&
    \sum_{i \in \Ncal}\sum_{g \in \Gcal(B)}
    \mu_g \cdot \frac{1}{1+(1+\eps)\cdot \hat{w}_g}\cdot \rho_g(C_g(A^*_i))
    \;+\;
    \Vcal(n+1,\omega^{*(n+1)}) \\
    &\geq&
    (1-\eps)^2 \cdot
    \sum_{i \in \Ncal}\sum_{g \in \Gcal(B)}
    \mu_g \cdot \frac{1}{1+w^*_g}\cdot \rho_g(C_g(A^*_i)) \\
    &=&
    (1-\eps)^2 \cdot \rev(A^*).
\end{eqnarray*}
Here, $\Vcal(n+1,\omega^{*(n+1)})=0$ because $\Dcal(\cdot)$ rounds down, so $\omega^{*(n+1)}_g \le w^*_g \le \hat{w}_g$ for all $g \in \Gcal(B)$. The inequality uses $\hat{w}_g \le (1+\eps)\cdot w^*_g$ (and hence $(1+\eps)\hat{w}_g \le (1+\eps)^2 w^*_g$), together with the bound $\frac{1}{1+(1+\eps)^2 w^*_g} \ge (1-\eps)^2\cdot \frac{1}{1+w^*_g}$ for $\eps \in (0,1)$.
Since this sequence is feasible, we have $\Vcal(1,\vec{0}) \ge \Vcal^*$, which yields the desired bound.

\endproof

\paragraph{Running time of the oracle.} In what follows, we show that the assortment $\hat{A}$ can be recovered in an overall running time of
\[
Oracle = O\!\left(\left(\frac{n\ell_{\max}}{\eps}\right)^{O\left(\frac{\ell_{\max}^2}{\eps}\right)}\right).
\]
To this end, we first argue that the total number of states $(i,\omega)$ in our dynamic program is
\[
O\!\left(\left(\frac{n}{\eps}\right)^{O\left(\frac{\ell_{\max}^2}{\eps}\right)}\right).
\]
This follows from the observations that:
\begin{itemize}
\item The base index $i$ can take on at most $n$ values.
\item The weight vector $\omega$ takes at most
\[
|\Dcal|
=
O\!\left(\left(\frac{n}{\eps}\right)^{G}\right)
=
O\!\left(\left(\frac{n}{\eps}\right)^{O\left(\frac{\ell_{\max}^2}{\eps}\right)}\right)
\]
possible values.
\end{itemize}
Finally, the optimal base-$i$ assortment to offer in each state can be found by enumerating over all feasible base-$i$ assortments, of which there are at most $O(2^{|B|})=2^{O\left(\frac{\ell_{\max}}{\eps}\right)}$. Putting these components together, and accounting for the guessing step in Step~2a, yields an overall running time
\[
Oracle
=
O\!\left(\left(\frac{n\ell_{\max}}{\eps}\right)^{O\left(\frac{\ell_{\max}^2}{\eps}\right)}\right),
\]
as claimed.

\section{Omitted Proofs from Section~\ref{sec:even_odd}}
\label{appendix-sec:proofs_inv}

\subsection{Proof of Lemma~\ref{lem:CCD}}\label{app:proof_CCD}

For the entirety of this proof, we consider an instance of the mixed-NP-MNL model with customer types $\Gcal$, arrival probabilities $\{\lambda_g\}_{g \in \Gcal}$, product weights $\{w_i\}_{i \in \Ncal}$, and no-purchase weights $\{w_{0,g}\}_{g \in \Gcal}$.  As such, for assortment $A$, the choice probability of product $i \in A$ is
\[
\pi(i,A) = \sum_{g \in\Gcal}\lambda_g \cdot \frac{w_i}{w_{0,g} + w(A)}.
\]
To prove the lemma we use the following necessary and sufficient condition of any CCD choice model given in~\cite{goyal2023pricing}.

\paragraph{CCD definition.}  For an arbitrary indexing of the $n$ products, let $A^{(i)} = \{1, \ldots, i\}$, and consider revenues $\theta \in \mathbb{R}^n_+$ induced by the following system of equations

	\begin{align}
		\label{eqn:CCD_linear_system}
		\begin{bmatrix}
			\pi(1 , A^{(1)}) & 0 & \cdots & 0\\
			\pi(1 , A^{(2)}) & \pi(2,A^{(2)}) & \cdots & 0 \\
			\vdots &  \vdots & \cdots & \vdots \\
			\pi(1 , A^{(n)}) & \pi(2, A^{(n)}) & \cdots & \pi(n, A^{(n)}) 
		\end{bmatrix}
		\begin{bmatrix}
			\theta_1\\
			\theta_2 \\
			\vdots \\
			\theta_n
		\end{bmatrix}
		= 
		\begin{bmatrix}
			1\\
			1\\
			\vdots \\
			1
		\end{bmatrix}.
	\end{align}
	As a consequence of Lemma 6 of \cite{goyal2023pricing}, the mixed-NP-MNL model is a CCD model if and only if 
    \begin{equation}
    \label{eqn:CCD}
        \max_{A \subseteq [n]}  \sum_{i \in S} \theta_i \cdot \pi(i, S)= 1.
    \end{equation}
   
To prove that~\eqref{eqn:CCD} holds, we will exploit the following fact regarding the assortment optimization problem under the mixed-NP-MNL model, which was established in Theorem~4.1 of~\cite{rusmevichientong2014assortment}. For completeness, we provide an alternative proof of the theorem in Appendix~\ref{app:proof-of-rev-order}.

\begin{theorem}\label{thm:rev_ordered}
Consider an arbitrary instance of the assortment optimization problem, where the products are indexed in non-decreasing order of revenue so that $r_1\geq \ldots, r_n $. There exists an optimal revenue-ordered assortment, i.e. there exists an optimal assortment of the form $A^{(i)}$ for some $i \in \Ncal$.       
\end{theorem}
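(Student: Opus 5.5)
Let $R(A)=\sum_{i\in A} r_i\,\pi(i,A)$ denote the revenue of assortment $A$ under the mixed-NP-MNL model, with products indexed so that $r_1\ge\cdots\ge r_n$. The plan is an exchange argument in the spirit of the classical MNL proof, adapted to the mixture over no-purchase weights. The key structural fact is that every type sees the same product weights. Hence, for each type $g$, the revenue of $A$ is the weighted average
\[
R_g(A)=\frac{\sum_{i\in A} r_i w_i}{w_{0,g}+w(A)},
\]
which depends on $A$ only through the two scalars $W=w(A)$ and $P=\sum_{i\in A} r_i w_i$. Therefore $R(A)=F(W,P)=\sum_g \lambda_g\, P/(w_{0,g}+W)$ is increasing in $P$ for fixed $W$. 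This suggests proving the result by a swap argument.

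\emph{Step 1 (swapping to fill gaps).} Take an optimal assortment $A^*$. Suppose there exist $j\notin A^*$ and $k\in A^*$ with $r_j>r_k$. First handle the case $w_j=w_k$: replacing $k$ by $j$ keeps $W$ fixed and increases $P$, so revenue weakly increases. Weights differ in general, so instead use a fractional or continuous relaxation. Consider the relaxation in which each product is included with a fraction $y_i\in[0,1]$, so that $W=\sum_i w_i y_i$, $P=\sum_i r_i w_i y_i$, and $R=F(W,P)$. For fixed total weight $W$, maximizing $P$ is a fractional knapsack problem whose optimal solution fills products greedily in decreasing order of $r_i$. Hence the optimal relaxed solution has the form $y=(1,\dots,1,\alpha,0,\dots,0)$.

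\emph{Step 2 (removing the fractional product).} Restricted to the segment where only $\alpha$ varies, $W$ and $P$ are affine in $\alpha$. Each term $\lambda_g(a+b\alpha)/(c_g+d\alpha)$ is then a linear-fractional function of $\alpha$, hence monotone. A sum of monotone functions is not necessarily monotone, so this is the main obstacle. The plan is instead to use the first-order characterization. Define $\Phi(W)=\sum_g \lambda_g/(w_{0,g}+W)$ and $\Psi(W)=-\Phi'(W)$, so that $R=P\,\Phi(W)$. The marginal value of adding weight $\delta$ of product $i$ is
\[
\delta\big(r_i\Phi(W)-P\,\Psi(W)\big),
\]
so at any point the profitable products are exactly those with $r_i\ge t(W):=P\Psi(W)/\Phi(W)$, which is a revenue threshold.

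\emph{Step 3 (threshold structure of the optimum).} At an optimal $A^*$, local optimality against single additions and deletions gives a threshold condition, up to discretization, which is where the integrality issue arises. Argue as follows. If $j\notin A^*$ and $k\in A^*$ with $r_j>r_k$, compare $A^*\cup\{j\}$ and $A^*\setminus\{k\}$ with $A^*$. Apply concavity-type arguments in $W$: since each $1/(w_{0,g}+W)$ is convex, interpolate through the exact discrete differences $R(A\cup\{j\})-R(A)$, which admit a closed form per type. Show that if deleting $k$ does not help, then $r_k$ exceeds a weighted average of $R_g$ taken at the smaller weight, and that adding $j$ then helps by the strict revenue comparison. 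This yields the revenue-ordered optimum $A^{(i)}$.

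The anticipated difficulty is exactly this last comparison, because the type-weights $\lambda_g/(w_{0,g}+W)^2$ change between $W-w_k$ and $W+w_j$. If it resists a direct proof, fall back on citing Theorem~4.1 of \cite{rusmevichientong2014assortment}, as the paper indicates.
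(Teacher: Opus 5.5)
Your Step~1 is exactly the paper's starting point. You relax to $y\in[0,1]^n$, fix the total weight, and use the greedy fractional knapsack to confine the relaxed optimum to a segment $y=(1,\dots,1,\alpha,0,\dots,0)$ between consecutive revenue-ordered assortments $A^{(k-1)}$ and $A^{(k)}$.

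\textbf{The gap.} You never show that the maximum over such a segment is attained at an endpoint, and that is the whole content of the theorem. In Step~2 you correctly note that a sum of linear-fractional terms need not be monotone. However, the threshold condition $r_i\ge t(W)$ you then derive is only a first-order necessary condition for a non-concave objective. At an interior point of the segment it holds with equality for the fractional product, which is fully consistent with an interior maximizer, so it rules nothing out. Step~3 rests on the claim that deletion-optimality of $k$ forces the addition of $j$ to help, and you explicitly leave this unproved. Falling back on Theorem~4.1 of \cite{rusmevichientong2014assortment} simply assumes the statement.

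\textbf{The missing idea: quasi-convexity along each segment.} Let $\delta$ be the included weight of product $k$. The relaxed objective is $H(\delta)=r_k\sum_g\lambda_g(C+\delta)/(D_g+\delta)$, with $C=\sum_{i<k}r_iw_i/r_k$ and $D_g=w_{0,g}+\sum_{i<k}w_i$. The paper shows that every critical point of $H$ is a strict local minimum:
\begin{itemize}
\item Set $B_g=D_g+\delta^*$ and $t=C+\delta^*$, and let $B$ equal $B_g$ with probability proportional to $\lambda_g/B_g^2$.
\item The condition $H'(\delta^*)=0$ becomes $t=\mathbb{E}[B]$.
\item The condition $H''(\delta^*)>0$ becomes $\mathbb{E}[B]\,\mathbb{E}[1/B]>1$. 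This holds by Cauchy--Schwarz, strictly unless all $D_g$ coincide; in that case $H$ is a single linear-fractional, hence monotone, function.
\end{itemize}
So the relaxed maximum sits at a breakpoint, i.e.\ at some $A^{(k)}$. Since the relaxation upper-bounds the integer problem, that $A^{(k)}$ is optimal.

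The same conclusion follows more quickly from the concavity of $G=1/\Phi$ proved in Lemma~\ref{lem:CCD}. Write $R=P/G(W)$ with $P$ and $W$ affine along the segment. For every level $\rho\ge 0$, the set where $P-\rho\,G(W)\le 0$ is then convex, so $R$ is quasi-convex there.

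\textbf{Your exchange route can also be completed.} With $D_g=w_{0,g}+w(A)$ one has
\begin{itemize}
\item $R(A)-R(A\setminus\{k\})=w_k\sum_g\lambda_g\bigl(r_k-R_g(A)\bigr)/(D_g-w_k)$, and
\item $R(A\cup\{j\})-R(A)=w_j\sum_g\lambda_g\bigl(r_j-R_g(A)\bigr)/(D_g+w_j)$.
\end{itemize}
Both $r_j-R_g(A)$ and $(D_g-w_k)/(D_g+w_j)$ are nondecreasing in $D_g$. Chebyshev's sum inequality therefore carries the strict positivity of $\sum_g\lambda_g(r_j-R_g(A))/(D_g-w_k)$, which follows from deletion-optimality and $r_j>r_k$, over to the addition weights $\lambda_g/(D_g+w_j)$. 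Ties are handled by the weak version of the same step. As written, though, neither argument appears in your proposal.
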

Given Theorem~\ref{thm:rev_ordered}, to show~\eqref{eqn:CCD}, it suffices to prove that the revenues $\theta$ induced by~\eqref{eqn:CCD_linear_system} satisfy $\theta_1 \geq \theta_2 \geq \ldots \geq \theta_n$.  Indeed, if this were the case,~\eqref{eqn:CCD_linear_system} would then stipulate  that all revenue-ordered assortments garner an expected revenue of 1, and since at least one revenue-ordered assortment must be optimal via Theorem~\ref{thm:rev_ordered},~\eqref{eqn:CCD} would hold. Along this line, define
\[
d(A) = \sum_{g \in \Gcal} \frac{\lambda_g}{w_{0,g} + w(A)},
\]
so that $\pi(i,A) = w_i \cdot d(A)$. For ease of notation, define
$
d_j = d(A^{(j)}).
$
Then, for each $i \le j$, we have
$
\pi(i,A^{(j)}) = w_i d_j.
$
Let $\thetab$ satisfy the linear system in Definition~\eqref{eqn:CCD_linear_system}, and define
$
T_j = \sum_{i=1}^j \theta_i w_i.
$
From the $j$-th equation in the defining linear system, we obtain
\[
\sum_{i=1}^j \theta_i\cdot  \pi(i,A^{(j)}) = 1.
\]
Substituting $\pi(i,A^{(j)}) = w_i d_j$ yields
$
d_j \sum_{i=1}^j \theta_i w_i = 1,
$
and hence
$
T_j = \frac{1}{d_j}.
$
Next, define the function
\[
H(x) = \left( \sum_{g \in \Gcal} \frac{\lambda_g}{w_{0,g} + x} \right)^{-1},
\]
and note that
$
T_j = H(w(A^{(j)})).
$
We now show that $H(x)$ is concave on $\mathbb{R}_+$, which, because $w(A^{(j)})$ is increasing in $j$, implies that 
\[
\theta_j - \theta_{j+1}
=
\frac{T_j-T_{j-1}}{w_j}
-
\frac{T_{j+1}-T_j}{w_{j+1}}
=
\frac{H(w(A^{(j)}))-H(w(A^{(j-1)}))}{w_j}
-
\frac{H(w(A^{(j+1)}))-H(w(A^{(j)}))}{w_{j+1}}
\ge 0,
\]
where the last inequality uses the concavity of $H$. Hence, the sequence $\{\theta_j\}_{j=1}^n$ is non-increasing.

To show the concavity of $H(x)$, let
\[
f(x)=\sum_{g \in \Gcal} \frac{\lambda_g}{w_{0,g}+x},
\]
so that $H(x)=\frac{1}{f(x)}$. Then,
\[
f'(x)=-\sum_{g \in \Gcal} \frac{\lambda_g}{(w_{0,g}+x)^2},
\qquad
f''(x)=2\sum_{g \in \Gcal} \frac{\lambda_g}{(w_{0,g}+x)^3},
\]
and hence
\[
H''(x)
=
\frac{2(f'(x))^2-f(x)\cdot f''(x)}{f(x)^3}.
\]
Thus, it suffices to show that
\[
 f(x)\cdot \sum_{g \in \Gcal} \frac{\lambda_g}{(w_{0,g}+x)^3} - (f'(x))^2 \geq 0.
\]
Along this line, observe that
\begin{align*}
&f(x)\cdot \sum_{g \in \Gcal} \frac{\lambda_g}{(w_{0,g}+x)^3}
-
\left(\sum_{g \in \Gcal} \frac{\lambda_g}{(w_{0,g}+x)^2}\right)^2 \\
&\qquad=
\sum_{g,h \in \Gcal}
\lambda_g\lambda_h
\left(
\frac{1}{(w_{0,g}+x)(w_{0,h}+x)^3}
-
\frac{1}{(w_{0,g}+x)^2(w_{0,h}+x)^2}
\right).
\end{align*}
The diagonal terms with $g=h$ vanish, so grouping the remaining terms pairwise over $g<h$ gives
\begin{align*}
&f(x)\cdot \sum_{g \in \Gcal} \frac{\lambda_g}{(w_{0,g}+x)^3}
-
\left(\sum_{g \in \Gcal} \frac{\lambda_g}{(w_{0,g}+x)^2}\right)^2 \\
&\qquad=
\sum_{\substack{g,h \in \Gcal\\ g<h}}
\lambda_g\lambda_h \cdot 
\Bigg(
\frac{1}{(w_{0,g}+x)(w_{0,h}+x)^3}
+\frac{1}{(w_{0,h}+x)(w_{0,g}+x)^3}
-\frac{2}{(w_{0,g}+x)^2(w_{0,h}+x)^2}
\Bigg).
\end{align*}
Letting $a=w_{0,g}+x$ and $b=w_{0,h}+x$, the bracketed term becomes
\[
\frac{1}{ab^3}+\frac{1}{ba^3}-\frac{2}{a^2b^2}
=
\frac{a^2+b^2-2ab}{a^3b^3}
=
\frac{(a-b)^2}{a^3b^3}
\geq 0.
\]
Therefore,
\[
f(x) \cdot \sum_{g \in \Gcal} \frac{\lambda_g}{(w_{0,g}+x)^3}
-
\left(\sum_{g \in \Gcal} \frac{\lambda_g}{(w_{0,g}+x)^2}\right)^2
\geq 0,
\]
which implies $H''(x)\leq 0$, and thus $H$ is concave on $\mathbb{R}_+$.

\section{Omitted Proofs from Section~\ref{sec:fluid_setting}}

\subsection{Proof of Claim~\ref{claim:dp_even_works}}\label{app:proof_dp_even_works}

The claim follows immediately from the assumption that $\ell_{\max}=1$, which in turn implies that for any $c \in \Fcal_{\even}$, we have
\[
\rev(c) = \sum_{s \in \Scal_{\even}} \rev(c(s)),
\]
where $c(s)$ denotes the projection of $c$ onto the products of size $s$.

\subsection{Proof of Lemma~\ref{lem:proxy_near_op}}\label{app:proof_proxy_near_op}

In what follows, we begin by showing that for any $c \in \Fcal$ satisfying
$
\rev_w(c) \geq \frac{1}{2}\cdot \opt_w,
$
we have
\begin{equation}
    \label{eqn:rounding_lb}
    \rev_{\hat{w}}(c) \geq (1-5\eps)\cdot \rev_w(c).
\end{equation}
Next, for any $\eps \in (0,\frac{1}{2}]$, we show that if $\tilde{c} \in \Fcal$ satisfies
$
\rev_{\hat{w}}(\tilde{c}) \geq (1-\eps)\cdot \opt_{\hat{w}},
$
then
\begin{equation}
    \label{eqn:rounding_ub}
    \rev_w(\tilde{c}) \geq (1-5\eps)\cdot \rev_{\hat{w}}(\tilde{c}).
\end{equation}

Taken together, these two bounds imply the lemma. Indeed,
\begin{align*}
\rev_w(\tilde{c})
&\geq (1-5\eps)\cdot \rev_{\hat{w}}(\tilde{c}) \\
&\geq (1-5\eps)(1-5\eps)\cdot \opt_{\hat{w}} \\
&\geq (1-10\eps)\cdot \rev_{\hat{w}}(c^*) \\
&\geq (1-15\eps)\cdot \rev_w(c^*) \\
&= (1-15\eps)\cdot \opt_w,
\end{align*}
as desired. Before proving~\eqref{eqn:rounding_lb} and~\eqref{eqn:rounding_ub}, we introduce some additional notation and state several intermediate claims that streamline the arguments to follow.

\paragraph{Preliminaries.} We partition the time horizon into intervals 
\[
\Ical_{\mathrm{prefix}} = [0, (1-\eps^2)\cdot T)
\quad \text{and} \quad
\Ical_{\mathrm{suffix}} = [(1-\eps^2)\cdot T, T],
\]
and use $\rev_w(c,\Ical_{\mathrm{prefix}})$ and $\rev_w(c,\Ical_{\mathrm{suffix}})$ to denote the fluid revenue earned over these intervals for any $c \in \Fcal$ under the original weights.  Moreover, with a slight abuse of notation, we use $x_i(c,\Ical_{\mathrm{prefix}})$ and $x_i(c,\Ical_{\mathrm{suffix}})$ to denote the fluid sales of product $i \in \Ncal$ over these respective intervals. We define
\[
\Ncal_{\eps, \geq 1}(c) = \{i \in \Ncal : x_i(c) \geq 1,\; x_i(c,\Ical_{\mathrm{suffix}}) \leq \eps\}
\]
to be the set of products whose total fluid sales under $c$ is at least $1$ over the full horizon, but at most $\eps$ over $\Ical_{\mathrm{suffix}}$. Similarly, let
\[
\Ncal_{\eps, < 1}(c) = \{i \in \Ncal : x_i(c) < 1,\; x_i(c,\Ical_{\mathrm{suffix}}) \leq \eps\},
\]
and define
$
\Ncal_{\eps}(c) = \Ncal_{\eps, \geq 1}(c) \cup \Ncal_{\eps, < 1}(c)
$
to be the set of all products whose fluid sales over $\Ical_{\mathrm{suffix}}$ is at most $\eps$. With this notation, we can decompose the total fluid revenue as
\begin{equation}
\label{eqn:fluid_rev_decomp}
\rev_w(c)
=
\underbrace{\sum_{i \in \Ncal_{\eps, \geq 1}(c)} r_i \cdot x_i(c)}_{= \rev_{\eps, \geq 1}(c)}
+
\underbrace{\sum_{i \in \Ncal_{\eps, < 1}(c)} r_i \cdot x_i(c)}_{= \rev_{\eps, < 1}(c)}
+
\underbrace{\sum_{i \in \Ncal \setminus \Ncal_{\eps}(c)} r_i \cdot x_i(c)}_{= \rev_{\mathrm{other}}(c)}.
\end{equation}
 This decomposition separates products that accrue at most $\eps$ fluid sales over $\Ical_{\mathrm{suffix}}$ (the first two terms) from the remaining products.

\paragraph{Intermediate claims.}  We prove the following three intermediate claims. The first claim, whose proof is deferred to Appendix~\ref{app:proof_cut_off_rev}, shows that the fluid revenue earned under the rounded weights dominates the fluid revenue earned under the original weights over the prefix $\Ical_{\mathrm{prefix}}$ of the horizon. The second claim establishes that the rounded instance captures (up to a $(1-\eps)$ factor) two of the three components in the decomposition~\eqref{eqn:fluid_rev_decomp}. The final claim shows that the remaining component is negligible, provided that $c \in \Fcal$ is $\frac{1}{2}$-optimal. The latter two claims are proved at the end of this section.

\begin{claim}
    \label{claim:cut_off_rev}
    For any $c \in \Fcal$, we have
    \[
    \rev_{\hat{w}}(c) \geq \rev_w(c,\Ical_{\mathrm{prefix}}).
    \]
\end{claim}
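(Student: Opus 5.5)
The plan is to combine a time-dilation argument with a monotone comparison of two fluid show-all processes. The comparison is valid because the mixed-NP-MNL rates are regular, meaning that removing products can only raise the rates of the products that remain.

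\emph{Step 1: pointwise comparison of choice probabilities.} Fix any assortment $A$ and any product $i\in A$. Rounding gives $\hat w_i\ge w_i/(1+\eps^2)$, $\hat w_i\le w_i$ for every product, and $\hat w_{0,g}=w_{0,g}/(1+\eps^2)\le w_{0,g}$. Hence, for each type $g$,
\[
\frac{\hat w_i}{\hat w_{0,g}+\hat w(A)}\;\ge\;\frac{w_i/(1+\eps^2)}{w_{0,g}+w(A)} .
\]
Summing over types with weights $\lambda_g$ yields $\hat\pi(i,A)\ge \pi(i,A)/(1+\eps^2)$ for every $A$ and every $i\in A$.

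\emph{Step 2: time dilation.} Consider a \emph{slowed} original process. It starts from $c$, follows the original show-all dynamics, and consumes each product at rate $\pi(i,A)/(1+\eps^2)$. This is the original process reparametrized by $t\mapsto t/(1+\eps^2)$. Its cumulative sales of product $i$ by moment $T$ therefore equal $x_i(c,T/(1+\eps^2))$. Since $1/(1+\eps^2)\ge 1-\eps^2$ and sales are nondecreasing in time, these sales are at least $x_i(c,\Ical_{\mathrm{prefix}})$.

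\emph{Step 3: monotone comparison lemma (main obstacle).} Let $\rho$ and $\hat\rho$ be two regular rate functions with $\hat\rho(i,A)\ge\rho(i,A)$ for all $A$ and all $i\in A$. Run both show-all fluid processes from the same $c$. I will show that for every moment $t$ the in-stock sets satisfy $\hat A(t)\subseteq A(t)$ and the cumulative sales satisfy $\hat x_i(t)\ge x_i(t)$ for all $i$. The argument takes the first moment $t^*$ at which either property fails. On $[0,t^*)$ we have $\hat A(t)\subseteq A(t)$, so regularity gives $\hat\rho(i,\hat A(t))\ge\hat\rho(i,A(t))\ge\rho(i,A(t))$ for every $i\in\hat A(t)$. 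Products already stocked out under $\hat\rho$ have $\hat x_i=c_i\ge x_i$. Integrating therefore gives $\hat x_i\ge x_i$ up to $t^*$. Because sales coincide with $c_i$ at a stockout, no product can stock out in the $\rho$ process strictly before it does in the $\hat\rho$ process, which contradicts the choice of $t^*$.

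The delicate points are simultaneous stockouts and the fact that the trajectories are only piecewise linear. I would handle these by an induction over the finitely many stockout epochs of the two processes, using right-continuity of the assortment paths, rather than a pure first-failure argument.

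Applying the lemma with $\hat\rho=\hat\pi$ and $\rho=\pi/(1+\eps^2)$, which is justified by Step 1, gives $\hat x_i(c)\ge x_i(c,T/(1+\eps^2))\ge x_i(c,\Ical_{\mathrm{prefix}})$ for each $i$. Multiplying by $r_i\ge 0$ and summing over $i$ gives $\rev_{\hat w}(c)\ge\rev_w(c,\Ical_{\mathrm{prefix}})$, which is Claim~\ref{claim:cut_off_rev}.
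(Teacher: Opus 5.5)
Your proposal is correct and follows essentially the same route as the paper. Both use the pointwise bound $(1+\eps^2)\,\hat{\pi}(i,A)\ge \pi(i,A)$, a time dilation by the factor $1+\eps^2$, and a regularity-based induction over the stockout epochs of the original process showing that the rounded process has depleted every product the dilated original process has; the paper speeds up the rounded process and compares $\hat{x}_i(c,\tau_\ell(1+\eps^2))$ with $x_i(c,\tau_\ell)$, whereas you slow down the original process, which is the same comparison after reparametrizing time, and your general comparison lemma (with the first-failure subtlety handled epoch by epoch or via the infimum of failure times and right-continuity of the assortments) is sound.
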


\begin{claim}
    \label{claim:rev_two_terms}
    For any $c \in \Fcal$, we have 
    \[
    \rev_{\hat{w}}(c)
    \geq
    (1-\eps)\cdot \left(
    \rev_{\eps, \geq 1}(c)
    + \sum_{i \in \Ncal_{\eps,<1}(c)} r_i\, x_i(c,\Ical_{\mathrm{prefix}})+
    \sum_{i \in \Ncal \setminus \Ncal_{\eps}(c)} r_i \cdot x_i(c,\Ical_{\mathrm{prefix}})
    \right).
    \]
\end{claim}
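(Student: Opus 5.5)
The plan is to derive the claim directly from Claim~\ref{claim:cut_off_rev}, which already gives
\[
\rev_{\hat{w}}(c) \;\geq\; \rev_w(c,\Ical_{\mathrm{prefix}}) \;=\; \sum_{i \in \Ncal} r_i \cdot x_i(c,\Ical_{\mathrm{prefix}}).
\]
It then suffices to lower bound the right-hand side by the bracketed expression in the claim, multiplied by $(1-\eps)$. I split the sum on the right according to the partition $\Ncal = \Ncal_{\eps,\geq 1}(c) \uplus \Ncal_{\eps,<1}(c) \uplus (\Ncal\setminus \Ncal_{\eps}(c))$ from the decomposition~\eqref{eqn:fluid_rev_decomp}.

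For the two groups $\Ncal_{\eps,<1}(c)$ and $\Ncal\setminus\Ncal_{\eps}(c)$, the claim asks only for the prefix sales $x_i(c,\Ical_{\mathrm{prefix}})$. These terms therefore appear identically on both sides, except for the factor $(1-\eps)\le 1$. Since revenues and sales are nonnegative, dropping this factor only helps.

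The only step with any content is the group $\Ncal_{\eps,\geq 1}(c)$, where the claim uses the full-horizon sales $x_i(c)$. For such $i$, the fluid sales split additively as $x_i(c) = x_i(c,\Ical_{\mathrm{prefix}}) + x_i(c,\Ical_{\mathrm{suffix}})$. By the definition of the set, $x_i(c,\Ical_{\mathrm{suffix}}) \leq \eps \leq \eps\cdot x_i(c)$, where the last step uses $x_i(c)\ge 1$. Hence $x_i(c,\Ical_{\mathrm{prefix}}) \geq (1-\eps)\cdot x_i(c)$. Multiplying by $r_i$ and summing over this group gives a contribution of at least $(1-\eps)\cdot\rev_{\eps,\geq 1}(c)$.

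Adding the three group bounds and chaining with Claim~\ref{claim:cut_off_rev} yields the stated inequality. I expect no real obstacle here; all the difficulty is contained in Claim~\ref{claim:cut_off_rev}, which compares the rounded and original dynamics over the prefix and is taken as given. The one point to check is that $x_i(c,\cdot)$ is additive over the two intervals, which holds because fluid sales are the integral of a nonnegative consumption rate.
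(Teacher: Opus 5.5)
Your proposal is correct and matches the paper's proof. Both chain Claim~\ref{claim:cut_off_rev} to get $\rev_{\hat{w}}(c)\ge\rev_w(c,\Ical_{\mathrm{prefix}})$, split the sum over the three product groups, and use $x_i(c,\Ical_{\mathrm{suffix}})\le\eps\le\eps\, x_i(c)$ for $i\in\Ncal_{\eps,\geq 1}(c)$ to recover the $(1-\eps)$ factor; you simply make explicit the additivity and nonnegativity details that the paper leaves implicit.
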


\begin{claim}
    \label{claim:rev_last_terms}
    For any $c \in \Fcal$ such that $\rev_w(c) \geq \frac{1}{2}\cdot \opt_w$, we have
    \begin{enumerate}[label=(\roman*)]
        \item $2\eps \cdot \rev_w(c) \geq \displaystyle \sum_{i \in \Ncal_{\eps, \YC{<} 1}(c)} r_i \cdot x_i(c, \Ical_{\mathrm{suffix}} )$, and
        \item $2\eps \cdot \rev_w(c) \geq \displaystyle  \sum_{i \in \Ncal \setminus \Ncal_{\eps}(c)} r_i \cdot x_i(c,\Ical_{\mathrm{suffix}})$.
    \end{enumerate}
\end{claim}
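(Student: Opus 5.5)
The plan is to argue by contradiction through an exchange argument, using the near-optimality hypothesis $\rev_w(c) \geq \frac{1}{2}\cdot\opt_w$ to turn a statement about one vector into one about $\opt_w$; the factor $2$ in both bounds should come from $\opt_w \leq 2\rev_w(c)$. The structural fact I would invoke first is fluid regularity (Claim~\ref{claim:fluid_regular} together with Appendix~\ref{appendix-sec:regularity_property}). Under the show-all fluid process the displayed assortment only shrinks. Hence the instantaneous sales rate $\pi(i,A_\tau)$ of any product that is still in stock is non-decreasing in $\tau$. For both parts I would compare $c$ with an auxiliary feasible vector $c' \in \Fcal$ whose fluid revenue is at least the suffix revenue in question, scaled up by $\frac{1}{\eps}$ (or $\frac{1}{\eps^2}$). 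I would then conclude from $\rev_w(c') \leq \opt_w \leq 2\rev_w(c)$.

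\emph{Part (i).} A product $i \in \Ncal_{\eps,<1}(c)$ with $c_i \geq 1$ satisfies $x_i(c) < 1 \le c_i$, so it never stocks out and is displayed throughout $[0,T]$. By the monotone-rate observation, its rate during $\Ical_{\mathrm{suffix}}$ is at most its terminal rate $\pi(i,A_T)$. The step I would try is to bound $\sum_{i} r_i\, x_i(c,\Ical_{\mathrm{suffix}}) \leq \eps \sum_i r_i \min\{1, x_i(c,\Ical_{\mathrm{suffix}})/\eps\}$. I would then exhibit a vector $c'$ that keeps the never-stocked-out set on display at the terminal assortment $A_T$ for the whole horizon. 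Concretely, $c'$ keeps only the inventory of products that are still in stock at time $T$, plus the units they actually sell. By regularity these products sell at rate at least $\pi(i,A_T)$ over all of $[0,T]$. This gives $\rev_w(c') \geq \frac{1}{\eps^2}\sum_i r_i x_i(c,\Ical_{\mathrm{suffix}})$, truncated at one unit per product. It also keeps $\|c'\|_1 \le \|c\|_1$. Combined with $x_i(c,\Ical_{\mathrm{suffix}})\le\eps$, this yields the $2\eps$ bound after absorbing one factor of $\eps$.

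\emph{Part (ii).} For $i \notin \Ncal_\eps(c)$ we have $x_i(c,\Ical_{\mathrm{suffix}}) > \eps$ on a window of length $\eps^2 T$. Its rate there therefore exceeds $\frac{1}{\eps T}$. By the same terminal-assortment construction, offering the relevant products from the start would sell each such product at a rate of order $\frac{1}{\eps T}$ over the full horizon, up to its stock. I would choose $c'$ to stock these products at their current levels $c_i$, which is feasible, and argue that $c'$ earns at least $\frac{1}{2\eps}\sum_{i\notin\Ncal_\eps(c)} r_i x_i(c,\Ical_{\mathrm{suffix}})$. Because these products were also selling under $c$, I would split their revenue into the suffix and prefix portions. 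The prefix portion is already counted inside $\rev_w(c)$, and only the suffix surplus needs the optimality comparison.

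The main obstacle is the construction of $c'$. Under $c$ the terminal assortment $A_T$ is small, so rates are high. Stocking only those products from time $0$ recreates exactly that assortment, so regularity gives the rate lower bound; this part is clean. The delicate point is ensuring that the revenue gain is genuinely a $\frac{1}{\eps}$ amplification, rather than being capped by the stock $c_i$. This is where the thresholds in the definitions ($x_i(c)<1$ in (i), and suffix sales $>\eps$ in (ii)) must be used. If the direct amplification fails, my fallback is to bound each suffix sum by $\eps^2 T$ times the best single-assortment revenue rate. I would then show separately that $\opt_w \gtrsim \eps T$ times that rate restricted to the products involved, which again uses the fact that these products are in stock at the end of the horizon.
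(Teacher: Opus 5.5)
Your part (i) is sound and is essentially the paper's argument. Products of $\Ncal_{\eps,<1}(c)$ never stock out, so by regularity their suffix rate is at most the rate they get once only a subset of the terminal assortment is displayed. Restocking them from time $0$ then amplifies their suffix sales by $\frac{1}{\eps}$, contradicting $\opt_w\le 2\rev_w(c)$. The paper stocks one unit of each such product and uses the window $[0,\eps T]$; your restriction of $c$ to the products in stock at time $T$ works equally well.

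Part (ii), however, has a genuine gap. Keeping the products of $\Ncal\setminus\Ncal_\eps(c)$ at their current levels $c_i$ does not reproduce the suffix dynamics. These products may stock out \emph{inside} $\Ical_{\mathrm{suffix}}$, so the terminal-assortment argument does not apply to them. Moreover, a product $j$ that stocked out early in the suffix, having little inventory left at time $(1-\eps^2)T$, now starts with its full stock $c_j$ and stays displayed for a long time. Regularity then only guarantees the other products their start-of-suffix rate $\pi(i,\Ncal\setminus\Ncal_\eps(c))$, not the larger rates they enjoyed later in the suffix. The average rate $\frac{1}{\eps T}$ you compute is not an instantaneous lower bound.

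Concretely, take a single customer type with no-purchase weight $1$, a product $i$ with $w_i=r_i=1$, and a product $j$ with huge weight and negligible revenue. Stock $j$ so that it stocks out at roughly $(1-\eps^2)T+2\eps$, which means it sells more than $\eps$ units in the suffix. Both products then lie in $\Ncal\setminus\Ncal_\eps(c)$, so your $c'$ equals $c$. Yet $\rev_w(c)\approx x_i(c,\Ical_{\mathrm{suffix}})\approx \eps^2T/2$, so the suffix sum exceeds $2\eps\rev_w(c)$ while $c'$ produces no contradiction. This $c$ is of course not $\frac12$-optimal, but your proof must rule it out by exhibiting a better vector, and $c'$ is not one. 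In general your argument is circular whenever $\Ncal_\eps(c)$ contains no stocked product.

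The fallback does not rescue this. Products outside $\Ncal_\eps(c)$ need not be in stock at time $T$, and stocking a maximum-rate assortment for time $\eps T$ under show-all can violate the capacity $\Ccal$.

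The missing idea is to scale the suffix \emph{sales} rather than keep the stock. Set $u_i=\lfloor x_i(c,\Ical_{\mathrm{suffix}})/\eps^2\rfloor$ for $i\in\Ncal\setminus\Ncal_\eps(c)$ and $u_i=0$ otherwise.

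Feasibility follows because the total sales rate is non-increasing in time, since products are only removed. Hence the last $\eps^2$-fraction of the horizon carries at most an $\eps^2$-fraction of total sales, so $\sum_{i\in\Ncal\setminus\Ncal_\eps(c)}x_i(c,\Ical_{\mathrm{suffix}})\le\eps^2\Ccal$.

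An induction over the suffix stockout times then shows that the show-all process under $u$ is a $\frac{1}{\eps^2}$-time-dilation of the suffix. In this dilated process the displayed assortment is always a subset of the corresponding suffix assortment, so by regularity every product sells at least as fast and every unit of $u$ is sold.

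Finally, $\lfloor x/\eps^2\rfloor\ge x/\eps$ for $x>\eps$ and $\eps\le\frac12$. This gives $\rev_w(u)\ge\frac{1}{\eps}\sum_{i\in\Ncal\setminus\Ncal_\eps(c)}r_i\,x_i(c,\Ical_{\mathrm{suffix}})>2\rev_w(c)\ge\opt_w$, which is the desired contradiction.
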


\paragraph{Proof of~\eqref{eqn:rounding_lb}.}  Building off of~\eqref{eqn:fluid_rev_decomp}, we have that
\begin{align*}
\rev_w(c)
&= \rev_{\eps,\geq 1}(c)
   + \underbrace{
        \sum_{i \in \Ncal_{\eps,<1}(c)} r_i\, x_i(c,\Ical_{\mathrm{prefix}})
        + \sum_{i \in \Ncal_{\eps,<1}(c)} r_i\, x_i(c,\Ical_{\mathrm{suffix}})
     }_{=\,\rev_{\eps,<1}(c)} \\
&\hspace{1.5em}
   + \underbrace{
        \sum_{i \in \Ncal \setminus \Ncal_{\eps}(c)} r_i\, x_i(c,\Ical_{\mathrm{prefix}})
        + \sum_{i \in \Ncal \setminus \Ncal_{\eps}(c)} r_i\, x_i(c,\Ical_{\mathrm{suffix}})
     }_{=\,\rev_{\mathrm{other}}(c)} \\
&\le \rev_{\eps,\geq 1}(c)
   + \sum_{i \in \Ncal_{\eps,<1}(c)} r_i\, x_i(c,\Ical_{\mathrm{prefix}})
   + \sum_{i \in \Ncal \setminus \Ncal_{\eps}(c)} r_i\, x_i(c,\Ical_{\mathrm{prefix}})
   + 4\eps\, R_w(c) \\
&\le \frac{R_{\hat{w}}(c)}{1-\eps} + 4\eps\, R_w(c).
\end{align*}
as desired. The first inequality uses Claim~\ref{claim:rev_last_terms} and the second uses Claim~\ref{claim:rev_two_terms}.

\paragraph{Proof of~\eqref{eqn:rounding_ub}.} To establish this inequality, we take the rounded weights $\hat{w}$ and scale each of them upward by a factor of $(1+\eps^2)$ to obtain weights $\hat{w}^{\uparrow}$, where
\[
\hat{w}^{\uparrow}_i = (1+\eps^2)\cdot \hat{w}_i \geq w_i.
\]
We likewise scale the no-purchase weights by the same factor, so that the choice probabilities induced by $\hat{w}$ and $\hat{w}^{\uparrow}$ are identical. Consequently, for any $c \in \Fcal$, we have
$
\rev_{\hat{w}}(c) = \rev_{\hat{w}^{\uparrow}}(c).
$
From this point, we treat $\hat{w}^{\uparrow}$ as the ``original'' weight vector and view the true weights $w$ as the rounded-down counterpart. Applying~\eqref{eqn:rounding_lb} at the inventory vector $\tilde{c}$ therefore yields
\[
\rev_w(\tilde{c}) \geq (1-5\eps)\cdot \rev_{\hat{w}^{\uparrow}}(\tilde{c})
=
(1-5\eps)\cdot \rev_{\hat{w}}(\tilde{c}),
\]
as desired.

\paragraph{Proof of Claim~\ref{claim:rev_two_terms}.} We have
\begin{eqnarray*}
    \rev_{\hat{w}}(c)
    &\geq&
    \rev_w(c,\Ical_{\mathrm{prefix}})\\
    &=&
    \sum_{i \in \Ncal_{\eps, \geq 1}(c)} r_i \cdot x_i(c,\Ical_{\mathrm{prefix}})
    +
    \sum_{i \in \Ncal_{\eps,<1}(c)} r_i\, x_i(c,\Ical_{\mathrm{prefix}})+
    \sum_{i \in \Ncal \setminus \Ncal_{\eps}(c)} r_i \cdot x_i(c,\Ical_{\mathrm{prefix}}) \\
    &\geq&
    (1-\eps)\cdot \left(
    \rev_{\eps, \geq 1}(c)
    +
     \sum_{i \in \Ncal_{\eps,<1}(c)} r_i\, x_i(c,\Ical_{\mathrm{prefix}})+
    \sum_{i \in \Ncal \setminus \Ncal_{\eps}(c)} r_i \cdot x_i(c,\Ical_{\mathrm{prefix}})
    \right),
\end{eqnarray*}
where the first inequality follows from Claim~\ref{claim:cut_off_rev}, and the last inequality uses the fact that
\[
\sum_{i \in \Ncal_{\eps, \geq 1}(c)} r_i \cdot x_i(c,\Ical_{\mathrm{prefix}})
\geq
(1-\eps)\cdot \rev_{\eps, \geq 1}(c),
\]
since any product $i \in \Ncal_{\eps, \geq 1}(c)$ accrues at least a $(1-\eps)$ fraction of its total sales over $\Ical_{\mathrm{prefix}}$ by construction.

\paragraph{Proof of Claim~\ref{claim:rev_last_terms} - property (i).} Assume by way of contradiction that
\[
\sum_{i \in \Ncal_{\eps,\YC{<} 1}(c)} r_i \cdot x_i(c,\Ical_{\mathrm{suffix}})
> 2\eps \cdot \rev_w(c).
\]
We show that this implies the existence of an alternative inventory vector 
$u \in \Fcal$ such that $\rev_w(u) > \opt_w$, yielding a contradiction.

To begin, note that for each $i \in \Ncal_{\eps,<1}(c)$, we have $x_i(c) < 1$ by construction, and hence such products never stock out over the horizon. Consequently, over the interval $\Ical_{\mathrm{suffix}}$, the fluid sales rate of each $i \in \Ncal_{\eps,<1}(c)$ is upper bounded by $\pi(i,\Ncal_{\eps,<1}(c))$. Motivated by this observation, define $u \in \mathbb{Z}^n_+$ as
\[
u_i =
\begin{cases}
1, & \text{if } i \in \Ncal_{\eps,<1}(c),\\
0, & \text{otherwise}.
\end{cases}
\]
Clearly, $u \in \Fcal$. Under $u$, as long as product $i$ remains in stock, its choice probability is bounded below by $\pi(i,\Ncal_{\eps,<1}(c))$. Therefore, for each $i \in \Ncal_{\eps,<1}(c)$,
\[
x_i(u,\eps T)
\;\ge\;
\min\!\left\{1,\;  T\eps \cdot  \,\pi(i,\Ncal_{\eps,<1}(c))\right\}
\;\ge\;
\frac{1}{\eps}\, x_i(c,\Ical_{\mathrm{suffix}}),
\]
where the second inequality uses that $[0,\eps T]$ is $\frac{1}{\eps}$ times longer than $\Ical_{\mathrm{suffix}}$, together with the bound $x_i(c,\Ical_{\mathrm{suffix}}) \le \eps$. Summing over $i \in \Ncal_{\eps,<1}(c)$ yields
\[
\rev_w(u)
\;\ge\;
\frac{1}{\eps}
\sum_{i \in \Ncal_{\eps,<1}(c)} r_i \cdot x_i(c,\Ical_{\mathrm{suffix}})
\;>\;
2\,\rev_w(c)
\;\ge\;
\opt_w,
\]
where the final inequality uses that $c$ is $\tfrac{1}{2}$-optimal. This establishes the desired contradiction.

\paragraph{Proof of Claim~\ref{claim:rev_last_terms} (property (ii)).}
We again proceed by contradiction, following a similar high-level structure as in part (i). However, more nuanced technical arguments are required in the present case. To begin, note that the total fluid sales rate aggregated across all products is non-increasing over time, since products are only removed from the displayed assortment. Therefore, because at most $\Ccal$ total units can be consumed over the entire horizon, it must be the case that
\begin{equation}
    \label{eqn:inv_consumption_bound}
    \sum_{i \in \Ncal \setminus \Ncal_{\eps}(c)} x_i(c,\Ical_{\mathrm{suffix}})
    \leq
    \eps^2 \Ccal,
\end{equation}
since $\Ical_{\mathrm{suffix}}$ occupies only an $\eps^2$-fraction of the full horizon.

With this in mind, consider the alternative starting inventory vector $u \in \mathbb{Z}^n_+$ defined by
\[
u_i =
\begin{cases}
\left\lfloor \dfrac{1}{\eps^2}\cdot x_i(c,\Ical_{\mathrm{suffix}}) \right\rfloor,
& \text{if } i \in \Ncal \setminus \Ncal_{\eps}(c),\\[1em]
0,
& \text{otherwise}.
\end{cases}
\]
Using~\eqref{eqn:inv_consumption_bound}, it is straightforward to verify that $u \in \Fcal$. Moreover, in Appendix~\ref{app:proof_u_stock_out}, we show
\begin{equation}
    \label{eqn:stock_out_u}
    x_i(u) = u_i
    \qquad \forall i \in \Ncal \setminus \Ncal_{\eps}(c),
\end{equation}
meaning that every stocked product under $u$ eventually stocks out. Consequently,
\[
    \rev_w(u) = 
    \sum_{i \in \Ncal \setminus \Ncal_{\eps}(c)}
    r_i \cdot
    \left\lfloor \frac{1}{\eps^2}\cdot x_i(c,\Ical_{\mathrm{suffix}}) \right\rfloor \geq
    \frac{1}{\eps}\cdot
    \sum_{i \in \Ncal \setminus \Ncal_{\eps}(c)}
    r_i \cdot x_i(c,\Ical_{\mathrm{suffix}})  > 
    2\,\rev_w(c) \geq
    \opt_w,
\]
which yields the desired contradiction. Above, the first inequality follows from
\[
\left\lfloor \frac{1}{\eps^2}\,x_i(c,\Ical_{\mathrm{suffix}}) \right\rfloor
\geq
\frac{1}{\eps^2}\,x_i(c,\Ical_{\mathrm{suffix}}) - 1
\geq
\frac{1}{\eps}\,x_i(c,\Ical_{\mathrm{suffix}}),
\]
where the final inequality uses that
$
x_i(c,\Ical_{\mathrm{suffix}}) \geq \eps
$
for each $i \in \Ncal \setminus \Ncal_{\eps}(c)$ by construction.

\subsection{Proof of Claim~\ref{claim:cut_off_rev}} \label{app:proof_cut_off_rev}

For ease of exposition, we assume without loss of generality that $c$ stocks all $n$ products. Under the original $w$-process, we re-index the products in non-decreasing order of their stock-out times. Let $k^* \in [n]$ denote the largest index of a product that stocks out during $\Ical_{\mathrm{prefix}}$, and let $\tau_1,\ldots,\tau_{k^*}$ denote the corresponding stock-out moments.

With this ordering, the show-all policy under the $w$-process over $\Ical_{\mathrm{prefix}}$ displays the following sequence of assortments:
\begin{itemize}
    \item $A^{(1)} = \{1,\ldots,n\}$ over $[0,\tau_1]$,
    \item $A^{(2)} = \{2,\ldots,n\}$ over $[\tau_1,\tau_2]$,
    \item \hspace{1em}$\vdots$
    \item $A^{(k^*)} = \{k^*,\ldots,n\}$ over $[\tau_{k^*-1},\tau_{k^*}]$,
    \item $A^{(k^*+1)} = \{k^*+1,\ldots,n\}$ over $[\tau_{k^*},(1-\eps^2)T]$.
\end{itemize}

We prove via induction over $\ell \in [k^*+1]$ that for every $i \in [n]$,
\[
\hat{x}_i\bigl(c,\tau_{\ell}\cdot (1+\eps^2)\bigr) \geq x_i(c,\tau_\ell).
\]
Applying this inequality at $\ell = k^*+1$ yields, for all $i \in [n]$,
\[
x_i(c,\Ical_{\mathrm{prefix}})
=
x_i\bigl(c,(1-\eps^2)T\big)
\leq
\hat{x}_i\bigl(c,T(1-\eps^2)(1+\eps^2)\bigr)
\leq
\hat{x}_i(c),
\]
where the final inequality follows from the monotonicity of fluid sales and the fact that $T(1-\eps^2)(1+\eps^2) \leq T$.

\begin{itemize}

    \item \emph{Base case: $\ell=1$.} Over the interval $[0,\tau_1(1+\eps^2)]$, the sales rate of each product $i \in \Ncal$ is at least $\hat{\pi}(i,A^{(1)})$ when it is in stock. Therefore,
    \begin{align*}
        \hat{x}_i\bigl(c,\tau_1(1+\eps^2)\bigr)
        &\geq
        \min\Big\{
            c_i,\;
            \tau_1(1+\eps^2)\cdot \hat{\pi}(i,A^{(1)})
        \Big\} \\
        &=
        \min\Big\{
            c_i,\;
            \tau_1(1+\eps^2)
            \sum_{g \in \Gcal}
            \lambda_g \cdot
            \frac{\hat{w}_i}{w_{0,g}+\hat{w}(A^{(1)})}
        \Big\} \\
        &\geq
        \tau_1
        \sum_{g \in \Gcal}
        \lambda_g \cdot
        \frac{w_i}{w_{0,g}+w(A^{(1)})}
        \;=\;
        x_i(c,\tau_1),
    \end{align*}
    where the last inequality uses that $\hat{w}_i \leq w_i \leq (1+\eps^2)\hat{w}_i$. 

    \item \emph{Induction step.} Fix $\ell \in \{2,\ldots,k^*+1\}$ and assume that
\[
\hat{x}_i\bigl(c,\tau_{\ell-1}\cdot (1+\eps^2)\bigr) \geq x_i(c,\tau_{\ell-1})
\qquad \forall i \in [n].
\]
We show that
\[
\hat{x}_i\bigl(c,\tau_{\ell}\cdot (1+\eps^2)\bigr) \geq x_i(c,\tau_{\ell})
\qquad \forall i \in [n].
\]

We will assume that $\hat{x}_i\bigl(c,\tau_{\ell}\cdot (1+\eps^2)\bigr) < c_i$, since otherwise the statement holds trivially. By the induction hypothesis and the ordering of stock-out times in the $w$-process, products $1,\ldots,\ell-1$ have already stocked out under the $\hat{w}$-process by time $\tau_{\ell-1}\cdot (1+\eps^2)$. Hence, over
$
[\tau_{\ell-1}\cdot (1+\eps^2),\tau_{\ell}\cdot (1+\eps^2)],
$
the assortment under the $\hat{w}$-process is a subset of $A^{(\ell)} = \{\ell,\ldots,n\}$. Consequently, for each $i \in \{\ell,\ldots,n\}$, while product $i$ remains in stock under the $\hat{w}$-process, its sales rate is at least $\hat{\pi}(i,A^{(\ell)})$. Therefore,
\begin{align}
&\hat{x}_i\bigl(c,[\tau_{\ell-1}\cdot (1+\eps^2),\tau_{\ell}\cdot (1+\eps^2)]\bigr)
\nonumber\\
&\qquad=
(\tau_{\ell}-\tau_{\ell-1})(1+\eps^2)\cdot \hat{\pi}(i,A^{(\ell)})
\nonumber\\
&\qquad\geq
(\tau_{\ell}-\tau_{\ell-1})\cdot \pi(i,A^{(\ell)})
\label{eqn:app_1}
\end{align}
where the second inequality uses
$
(1+\eps^2)\cdot \hat{\pi}(i,A^{(\ell)}) \ge \pi(i,A^{(\ell)}).
$

Now fix $i \in \{\ell,\ldots,n\}$. Since product $i$ has not stocked out under the $w$-process by time $\tau_\ell$, we have
\[
x_i(c,\tau_{\ell})
=
x_i(c,\tau_{\ell-1})
+
(\tau_{\ell}-\tau_{\ell-1})\cdot \pi(i,A^{(\ell)}).
\]
Using the induction hypothesis and~\eqref{eqn:app_1}, it follows that
\begin{align*}
x_i(c,\tau_{\ell})
&=
x_i(c,\tau_{\ell-1})
+
(\tau_{\ell}-\tau_{\ell-1})\cdot \pi(i,A^{(\ell)}) \\
&\leq
\hat{x}_i\bigl(c,\tau_{\ell-1}\cdot (1+\eps^2)\bigr)
+
(\tau_{\ell}-\tau_{\ell-1})\cdot \pi(i,A^{(\ell)}) \\
&\leq
\hat{x}_i\bigl(c,\tau_{\ell-1}\cdot (1+\eps^2)\bigr)
+
\hat{x}_i\bigl(c,[\tau_{\ell-1}\cdot (1+\eps^2),\tau_{\ell}\cdot (1+\eps^2)]\bigr) \\
&=
\hat{x}_i\bigl(c,\tau_{\ell}\cdot (1+\eps^2)\bigr).
\end{align*}

For $i<\ell$, both processes have already stocked out product $i$, and hence
\[
\hat{x}_i\bigl(c,\tau_{\ell}\cdot (1+\eps^2)\bigr)
=
c_i
=
x_i(c,\tau_{\ell}).
\]
This completes the induction step.
\end{itemize}

\subsection{Proof of~\eqref{eqn:stock_out_u}}\label{app:proof_u_stock_out}

To prove~\eqref{eqn:stock_out_u}, we first establish the result without the floor operator; the full statement then follows immediately from fluid regularity (Claim~\ref{claim:fluid_regular}). Accordingly, for the remainder of the proof, we assume that
\[
u_i =
\begin{cases}
\dfrac{1}{\eps^2}\cdot x_i(c,\Ical_{\mathrm{suffix}}),
& \text{if } i \in \Ncal \setminus \Ncal_{\eps}(c),\\[1em]
0,
& \text{otherwise}.
\end{cases}
\]
Moreover, since we focus only on the products in $\Ncal \setminus \Ncal_{\eps}(c)$, we relabel these products so that
$
\Ncal \setminus \Ncal_{\eps}(c) = \Ncal_{\mathrm{other}}(c)= \{1,\ldots,n_{\mathrm{other}}\},
$
and index them in non-decreasing order of their stockout times under $c$. By construction, these products can stock out only during $\Ical_{\mathrm{suffix}}$, since each of them has positive fluid sales over this interval. For some $k^* \in [n_{\mathrm{other}}]$, let
$
\tau_1,\ldots,\tau_{k^*} \in \Ical_{\mathrm{suffix}}
$
denote the stockout moments of products $1,\ldots,k^*$ under $c$, so that products $k^*+1,\ldots,n_{\mathrm{other}}$ do not stock out under $c$. Letting $\tau_{k^*+1}=T$, define the intervals
$
\Ical_{\ell} = [(1-\eps^2)\cdot T,\tau_{\ell}]
~\text{for each } \ell \in [k^*+1].
$
Finally, for each $\ell \in [k^*+1]$, let
$
\hat{\tau}_{\ell} = \tau_{\ell} - (1-\eps^2)\cdot T
$
denote the amount of time elapsed within $\Ical_{\mathrm{suffix}}$ up to moment $\tau_{\ell}$.

We prove via induction over $\ell \in [k^*+1]$ that for each $i \in \Ncal_{\mathrm{other}}(c)$, we have
\[
x_i\!\left(u,\frac{1}{\eps^2}\cdot \hat{\tau}_{\ell}\right) \geq \frac{1}{\eps^2}\cdot x_i(c,\Ical_{\ell}),
\]
which, when applied at $\ell = k^*+1$, gives the desired result, since
\[
\frac{1}{\eps^2}\cdot \hat{\tau}_{k^*+1} = T
\qquad\text{and}\qquad
\Ical_{k^*+1} = \Ical_{\mathrm{suffix}}.
\]

\begin{itemize}

    \item \emph{Base case: $\ell=1$.} Over the interval $[0,\frac{1}{\eps^2}\cdot \hat{\tau}_1]$, the assortment displayed under $u$ is a subset of $\Ncal_{\mathrm{other}}(c)$. Hence, for each product $i$ in $\Ncal_{\mathrm{other}}(c)$, if it remains in stock during $[0,\frac{1}{\eps^2}\cdot \hat{\tau}_1]$ under $u$, its fluid sales rate under $u$ is at least its fluid sales rate over $\Ical_1$ under $c$, since all products in $\Ncal_{\mathrm{other}}(c)$ are in stock over $\Ical_1$ under $c$. We thus obtain
\[
    x_i\!\left(u,\frac{1}{\eps^2}\cdot \hat{\tau}_1\right)
    \geq
    \frac{1}{\eps^2}\cdot x_i(c,\Ical_1),
\]
by observing that the interval $[0,\frac{1}{\eps^2}\cdot \hat{\tau}_1]$ is exactly $\frac{1}{\eps^2}$ times longer than $\Ical_1$. Meanwhile, if $i$ is out of stock during $[0,\frac{1}{\eps^2}\cdot \hat{\tau}_1]$, then we simply have
\[
    x_i\!\left(u,\frac{1}{\eps^2}\cdot \hat{\tau}_1\right) = u_i = \frac{1}{\eps^2}\cdot x_i(c,  \Ical_{\mathrm{suffix}} ) \geq  \frac{1}{\eps^2}\cdot x_i(c,\Ical_1).
\]

    \item \emph{Induction step.} Fix $\ell \in \{2,\ldots,k^*+1\}$ and assume, for each $ i \in \Ncal_{\mathrm{other}}(c)$, that
    \[
    x_i\!\left(u,\frac{1}{\eps^2}\cdot \hat{\tau}_{\ell-1}\right)
    \geq
    \frac{1}{\eps^2}\cdot x_i(c,\Ical_{\ell-1}).
    \]
    By the induction hypothesis and the ordering of stockout times, products $1,\ldots,\ell-1$ have already stocked out under $u$ by time $\frac{1}{\eps^2}\cdot \hat{\tau}_{\ell-1}$. Thus, over the interval
    $
    \left[\frac{1}{\eps^2}\cdot \hat{\tau}_{\ell-1},\; \frac{1}{\eps^2}\cdot \hat{\tau}_{\ell}\right],
    $
    the assortment displayed under $u$ is a subset of the assortment displayed under $c$ over the interval
    $
    [\tau_{\ell-1},\tau_{\ell}].
    $
    Since the mixed-NP-MNL model is regular, it follows that for every product $i \in \{\ell,\ldots,n_{\mathrm{other}}\}$, the fluid sales rate under $u$ during the former interval is at least the fluid sales rate of the same product under $c$ during the latter interval. Therefore, for each $i \in \{\ell,\ldots,n_{\mathrm{other}}\}$, we have 
    \[
    x_i\!\left(u,\left[\frac{1}{\eps^2}\cdot \hat{\tau}_{\ell-1},\; \frac{1}{\eps^2}\cdot \hat{\tau}_{\ell}\right]\right)
    \geq
    \frac{1}{\eps^2}\cdot x_i(c,[\tau_{\ell-1},\tau_{\ell}]).
    \]
    Combining this with the induction hypothesis yields that for each $i \in \{\ell,\ldots,n_{\mathrm{other}}\}$
    \begin{align*}
    x_i\!\left(u,\frac{1}{\eps^2}\cdot \hat{\tau}_{\ell}\right)
    &=
    x_i\!\left(u,\frac{1}{\eps^2}\cdot \hat{\tau}_{\ell-1}\right)
    + 
    x_i\!\left(u,\left[\frac{1}{\eps^2}\cdot \hat{\tau}_{\ell-1},\; \frac{1}{\eps^2}\cdot \hat{\tau}_{\ell}\right]\right)
    \\
    &\geq
    \frac{1}{\eps^2}\cdot x_i(c,\Ical_{\ell-1})
    +
    \frac{1}{\eps^2}\cdot x_i(c,[\tau_{\ell-1},\tau_{\ell}]) \\
    &= \frac{1}{\eps^2}\cdot x_i(c,\Ical_{\ell}).
    \end{align*}
    For $i < \ell$, the inequality is immediate because product $i$ has already stocked out under $u$, and hence
    \[
    x_i\!\left(u,\frac{1}{\eps^2}\cdot \hat{\tau}_{\ell}\right)
    = u_i
    = \frac{1}{\eps^2}\cdot x_i(c,\Ical_{\mathrm{suffix}})
    \geq \frac{1}{\eps^2}\cdot x_i(c,\Ical_{\ell}).
    \]
    This completes the induction.
\end{itemize}

\subsection{Proof of Claim~\ref{claim:stock_non_dec_rev}}\label{app:proof_stock_non_dec_rev}

Assume by way of contradiction that $r_i > r_j$, but $\hat{c}_j > \hat{c}_i$. In this case, we construct an alternative inventory vector $u$ such that
\[
u_k =
\begin{cases}
    \hat{c}_k & \text{if } k \notin \{i,j\}, \\
    \hat{c}_j & \text{if } k = i, \\
    \hat{c}_i & \text{if } k = j.
\end{cases}
\]
Since products $i$ and $j$ belong to the same weight class, we have $\hat{w}_i = \hat{w}_j$. Consequently, under $u$, their total fluid sales simply swap relative to $\hat{c}$, i.e., $\hat{x}_i(u) = \hat{x}_j(\hat{c})$ and $\hat{x}_j(u) = \hat{x}_i(\hat{c})$, while the fluid sales of all other products remain unchanged. It therefore follows that
\[
\rev_{\hat{w}}(u) - \rev_{\hat{w}}(\hat{c})
=
(r_i - r_j)\cdot \bigl( \hat{x}_j(\hat{c}) - \hat{x}_i(\hat{c}) \bigr) > 0,
\]
which yields a contradiction. The inequality holds since $r_i > r_j$ and $\hat{x}_j(\hat{c}) > \hat{x}_i(\hat{c})$, where the latter follows from $\hat{x}_j(\hat{c}) > \hat{c}_j - 1 \geq \hat{c}_i \geq \hat{x}_i(\hat{c})$.

\subsection{Proof of Claim~\ref{claim:stock_out_group}} \label{app:proof_stock_out_group}

Assume by way of contradiction that there exists $i > i_q^{(1)}$ such that $\hat{x}_i(\hat{c}) < \hat{c}_i$. In this case, consider any $j \in \Ncal_q(k_q^{(1)}) = \{1,\ldots,i_q^{(1)}\}$. Since $\hat{c}_j > \hat{c}_i$, product $j$ also cannot stock out. Moreover, because $i$ and $j$ belong to the same weight class, they have the same rounded weight and therefore experience the same fluid sales while both remain available. It follows that
\[
\hat{x}_i(\hat{c}) = \hat{x}_j(\hat{c}) < \hat{c}_i \leq \hat{c}_j - 1,
\]
which contradicts the definition of $\hat{c}$, since the inventory of product $j$ could be reduced by one unit without affecting total fluid revenue.

\subsection{Proof of Lemma~\ref{lem:fluid_LP_lb}}\label{app:proof_fluid_LP_lb}

We prove the lemma by constructing a feasible solution to~\ref{eqn:fluid_LP} whose objective value is at least $(1-\eps)\cdot \opt_{\hat{w}}$.

\paragraph{Preliminaries.}
Consider the starting inventory vector $\hat{v} \in \mathbb{R}^n_+$ defined by $\hat{v}_i = \hat{x}_i(\hat{c})$ for each $i \in \Ncal$. By construction, $\hat{v} \leq \hat{c}$ componentwise. Therefore, by the fluid regularity property established in Claim~\ref{claim:fluid_regular}, it must be the case that all products stock-out starting from $v$, and thus $\hat{x}_i(\hat{v}) = \hat{x}_i(\hat{c})$ for each $i \in \Ncal$. Next, define the inventory vector $\hat{u} \in \mathbb{R}^n_+$ by
\[
\hat{u}_i =
\begin{cases}
\hat{c}_i
& \text{if } \displaystyle i \in \biguplus_{q \in [Q] \setminus Q_{L,<}} \biguplus_{k \in k(q)} \Ncal_q(k), \\[0.75em]
\ubar{x}_i
& \text{if } \displaystyle i \in \biguplus_{q \in Q_{L,<}} \Ncal_q(k_q^{(1)}), \\[0.75em]
 \hat{c}_i 
& \text{if }\displaystyle  i \in \biguplus_{q \in Q_{L,<}} \biguplus_{ 2 \leq \ell \leq m_q } \Ncal_q( k_q^{(\ell)} )  \\[0.75em]
\hat{x}_i(\hat{c})
& \text{if } \displaystyle i \in \biguplus_{q \in Q_H} \Ncal_{q,H}.
\end{cases}
\]
Thus, $\hat{u}$ differs from $\hat{v}$ only in the middle case above. Moreover, by the definition of $\ubar{x}_i$, we have
\[
\hat{u}_i \leq \hat{v}_i \leq (1+\eps)\cdot \hat{u}_i
\qquad
\forall i \in \biguplus_{q \in Q_{L,<}} \Ncal_q(k_q^{(1)}).
\]
Finally, for some $L \in [n]$, let $\hat{A}^{(1)}, \ldots, \hat{A}^{(L)}$ denote the sequence of assortments induced by the show-all policy starting from $\hat{u}$, and let $\hat{\tau}^{(1)}, \ldots, \hat{\tau}^{(L)}$ denote their respective durations.

\paragraph{Solution construction.} Note that under $\hat{u}$, we preserve the property stated in Claim~\ref{claim:stock_non_dec_rev}; namely, within each weight class, stocking levels are non-decreasing in revenue. Accordingly, under $\hat{u}$, stockouts within each weight class must occur in reverse revenue order (recall that when guessing the sales $\bar{x}_i$ for $i \in \Ncal_q \left( k_q^{(1)} \right)$, we have specified that it is larger than $k_q^{(2)}$). Specifically, while in-stock, all products in the same weight class experience the same fluid sales rate since they have the same rounded weight, and stocking levels under $\hat{u}$ are non-decreasing in revenue.  In turn,  this implies that $\hat{A}^{(\ell)} \in \Acal_{\mathrm{RO}}$ for every $\ell \in [L]$. We may therefore consider the following solution to~\ref{eqn:fluid_LP}:

\begin{itemize}
    \item For each $i \in \biguplus_{q \in Q_H} \Ncal_{q,H}$, set $u_i = \hat{u}_i$.
    \item Also, set
    \[
    h(A) =
    \begin{cases}
        {\hat{\tau}^{(\ell)}} & \text{if } A = \hat{A}^{(\ell)},\\
        T - \displaystyle\sum_{\ell=1}^L {\hat{\tau}^{(\ell)}} & \text{if } A = \emptyset,\\
        0 & \text{otherwise.}
    \end{cases}
    \]
\end{itemize}
We next verify the feasibility of this proposed solution and then evaluate the objective value it attains.

\paragraph{Feasibility.}  It is immediate to verify that constraints~(1),~(3),~(4), and~(5) are satisfied. For constraint~(2), note that $\hat{u} \leq \hat{v}$ componentwise, so Claim~\ref{claim:fluid_regular} implies that all products will stock out under $\hat{u}$, and thus $\hat{x}_i(\hat{u}) = \hat{u}_i$ for every $i \in \Ncal$. Therefore, for each $i \in \Ncal$,
\[
\sum_{\substack{A \in \Acal_{\mathrm{RO}}:\\ i \in A}} h(A)\cdot\pi(i,A)
= \sum_{\ell \in [L]} \hat{\tau}^{(\ell)} \cdot \pi(i,\hat{A}^{(\ell)})
= \hat{x}_i(\hat{u})
= \hat{u}_i.
\]
Thus, constraint~(2) is also satisfied.

\paragraph{Objective value.} This solution obtains an objective value of
\begin{eqnarray*}
    \rev_{\hat{w}}(\hat{u}) &=& \sum_{i \in \Ncal}r_i \hat{u}_i\\
    &\geq& (1-\eps)\cdot \sum_{i \in \Ncal}r_i \hat{v}_i\\
    &=& (1-\eps)\cdot \sum_{i \in \Ncal}r_i \hat{x}_i(\hat{c})\\
    &=& (1-\eps)\cdot \opt_{\hat{w}},
\end{eqnarray*}
where the lone inequality follows by $\hat{u}_i \geq (1-\eps)\cdot \hat{v}_i$ for each $i \in \Ncal$ by construction.

\subsection{Proof of Lemma~\ref{lem:final_guarantee}}\label{app:proof_final_guarantee}

To begin, throughout this proof we use $\Ncal_H = \biguplus_{q \in Q_H} \Ncal_{q,H}$ as shorthand for the set of heavy products, and $\Ncal_L = \biguplus_{q \in [Q]} \biguplus_{k \in k(q)} \Ncal_q(k)$ for the set of \emph{light} products, namely those that stock at most $\frac{1}{\eps}$ units under $\hat{c}$. To prove the lemma, we consider the starting inventory vector $u^{\mathrm{LP}} \in \mathbb{R}_+^n$, defined by
\[
u_i^{\mathrm{LP}}
=
\begin{cases}
\hat{c}_i 
& \text{if } \displaystyle i \in \Ncal_L, \\[0.75em]
u_i^*
& \text{if } \displaystyle i \in \Ncal_H,
\end{cases}
\]
which is simply $c^{\mathrm{LP}}$ without the floor operator applied to the starting inventories of the heavy products. We will prove that
\begin{equation}
    \label{eqn:RHS_bound}
   \hat{x}_i(u^{\mathrm{LP}}) \geq   (1-\eps)^2 \cdot \mathrm{RHS}_i \qquad \forall i \in \Ncal,
\end{equation}
where
\[
\mathrm{RHS}_i
=
\begin{cases}
\hat{c}_i 
& \text{if } \displaystyle i \in \biguplus_{q \in [Q] \setminus Q_{L,<}} \biguplus_{k \in k(q)} \Ncal_q(k), \\[0.5em]
\ubar{x}_i 
& \text{if } \displaystyle i \in \biguplus_{q \in Q_{L,<}} \Ncal_q\bigl(k_q^{(1)}\bigr), \\[0.5em]
\hat{c}_i  &  \text{if }\displaystyle  i \in \biguplus_{q \in Q_{L,<}} \biguplus_{ 2 \leq \ell \leq m_q } \Ncal_q( k_q^{(\ell)} )  \\[0.75em]
u_i^* 
& \text{if } \displaystyle i \in \biguplus_{q \in Q_H} \Ncal_{q,H},
\end{cases}
\]
denotes the right-hand side of constraint~(2) in the optimal solution to~\ref{eqn:fluid_LP}. Indeed, if~\eqref{eqn:RHS_bound} holds, then
\begin{eqnarray*}
   \rev_{\hat{w}}(c^{\mathrm{LP}}) &=& \sum_{i \in \Ncal} r_i \cdot \hat{x}_i(c^{\mathrm{LP}}) \\
   &\geq& (1-2\eps) \cdot \sum_{i \in \Ncal} r_i \cdot \hat{x}_i(u^{\mathrm{LP}}) \\
   &\geq& (1-2\eps)(1-\eps)^2 \cdot \sum_{i \in \Ncal} r_i \cdot \mathrm{RHS}_i \\
   &=& (1-2\eps)(1-\eps)^2 \cdot Z_{\mathrm{LP}}^*,
\end{eqnarray*}
where the first inequality follows from Claim~\ref{claim:fluid_regular}. Specifically, since $c^{\mathrm{LP}} \leq u^{\mathrm{LP}}$ componentwise, for each light product $i \in \Ncal_L$ we have $c_i^{\mathrm{LP}} = u_i^{\mathrm{LP}}$, and therefore fluid regularity yields $\hat{x}_i(c^{\mathrm{LP}}) \geq \hat{x}_i(u^{\mathrm{LP}})$, since in this case $\delta = 0$. For each heavy product $i \in \Ncal_H$, we have
\[
u_i^{\mathrm{LP}} - c_i^{\mathrm{LP}} = u_i^* - \lfloor u_i^* \rfloor \leq 1.
\]
Thus, applying Claim~\ref{claim:fluid_regular} with $\delta = 1$ yields
\[
\hat{x}_i(c^{\mathrm{LP}}) \geq \hat{x}_i(u^{\mathrm{LP}}) - 1 \geq (1-2\eps) \cdot \hat{x}_i(u^{\mathrm{LP}}),
\]
since by~\eqref{eqn:RHS_bound} we have
\[
\hat{x}_i(u^{\mathrm{LP}}) \geq (1-\eps)^2 \cdot u_i^* \geq (1-\eps)^2 \cdot \frac{1}{\eps},
\]
and hence $2\eps \cdot \hat{x}_i(u^{\mathrm{LP}}) \geq 1$ for $\eps \leq \frac{1}{2}$.

\paragraph{Proof of~\eqref{eqn:RHS_bound}.}  To prove this inequality, we use the following two intermediate claims, whose proofs can be found at the end of this section. The first claim  gives a general relation between the fluid sales of any two products. The second claim involves $\tau^*$, defined to be he earliest  moment where any product $i \in \biguplus_{q \in Q_{L,<}} \Ncal_q\bigl(k_q^{(1)}\bigr)$ hits its right-hand-side fluid sales of $\ubar{x}_i$ within the show-all policy starting at $u^{\mathrm{LP}}$. Specifically, we exploit the CCD property of the mixed-NP-MNL model to show that $\tau^* \leq T$, which simply confirms that this stock-out moment is well-defined.
\begin{claim}
    \label{claim:fluid_IIA}
    For any initial inventory vector $c \in \mathbb{R}_+^n$, let $\tau_i$ denote the stockout time of product $i \in \Ncal$, with the convention that $\tau_i = T$ if product $i$ never stocks out. Then, for any product $j \in \Ncal \setminus \{i\}$,
    \[
        \hat{x}_j(c,\tau_i) = \min\left\{c_j,\, \hat{x}_i(c,\tau_i) \cdot \frac{\hat{w}_j}{\hat{w}_i}\right\}.
    \]
\end{claim}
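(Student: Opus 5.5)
The plan is to exploit the fact that, under the mixed-NP-MNL model with rounded weights $\hat{w}$, every in-stock product's instantaneous sales rate factorizes as $\hat{\pi}(k,A) = \hat{w}_k \cdot d(A)$, where $d(A) = \sum_{g\in\Gcal}\lambda_g/(\hat{w}_{0,g}+\hat{w}(A))$ depends only on the displayed assortment and not on the product. Define the common ``clock''
\[
D(t) = \int_0^t d(A_\sigma)\,d\sigma,
\]
where $A_\sigma$ is the show-all assortment displayed at moment $\sigma$ starting from $c$. The key observation is that for any product $k$ and any moment $t$ at which $k$ is still in stock, $\hat{x}_k(c,t) = \hat{w}_k\, D(t)$. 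More generally, I expect $\hat{x}_k(c,t) = \min\{c_k,\ \hat{w}_k D(t)\}$ for all $t$. This holds because $k$ sells at rate $\hat{w}_k d(A_\sigma)$ until its cumulative sales reach $c_k$ and then stops, and $D$ is continuous and nondecreasing.

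I would establish this identity first, by integrating over the finitely many intervals between consecutive stockout moments; on each interval the assortment is constant, so the sales rates are constant. Then I would specialize to $t=\tau_i$. Product $i$ is in stock on $[0,\tau_i)$, whether it stocks out at $\tau_i$ or $\tau_i = T$ by convention, so continuity gives $\hat{x}_i(c,\tau_i) = \hat{w}_i D(\tau_i)$, i.e., $D(\tau_i) = \hat{x}_i(c,\tau_i)/\hat{w}_i$. Substituting into the identity for product $j$ yields
\[
\hat{x}_j(c,\tau_i) = \min\left\{c_j,\ \hat{x}_i(c,\tau_i)\cdot \frac{\hat{w}_j}{\hat{w}_i}\right\},
\]
which is the claim.

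The only delicate point is the min-representation: one must check that once $\hat{w}_k D(t)$ reaches $c_k$ the product is removed and its sales freeze, while before that moment it is continuously in stock. This follows because stockout of $k$ happens exactly at the first $t$ with $\hat{w}_k D(t) = c_k$. Two degenerate cases need a remark: products with $c_k = 0$, which are never displayed and satisfy the identity trivially, and simultaneous stockouts, which do not affect the argument. The case $\hat{x}_i(c,\tau_i)=0$, which can occur only if $c_i=0$, is handled trivially.
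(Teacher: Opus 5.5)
Your argument is correct and uses the same key fact as the paper: co-displayed products sell at rates proportional to their rounded weights, $\hat{\pi}(j,A)=\hat{\pi}(i,A)\cdot\hat{w}_j/\hat{w}_i$. The paper splits into the cases where $j$ stocks out before $i$ and where it does not; these are exactly the two branches of the minimum in your common-clock identity $\hat{x}_k(c,t)=\min\{c_k,\hat{w}_k D(t)\}$.
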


\begin{claim}
    \label{claim:tau_star}
We have that $\tau^* \in [0,T]$.
\end{claim}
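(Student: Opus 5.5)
The plan is to derive Claim~\ref{claim:tau_star} from the CCD property (Lemma~\ref{lem:CCD}), applied to the purchase-rate vector generated by the optimal LP solution. If $\biguplus_{q \in Q_{L,<}} \Ncal_q(k_q^{(1)})$ is empty, $\tau^*$ is vacuous, so I assume it is nonempty.

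\textbf{Step 1 (CCD input).} The rounded instance is still a mixed-NP-MNL model: weights $\hat{w}_i$, no-purchase weights $\hat{w}_{0,g}$, and the same $\lambda_g$. Constraint~(1) of \ref{eqn:fluid_LP} says that $\{h^*(A)\}_{A \in \Acal_{\mathrm{RO}}}$ is a scaled convex combination with total mass $T$. Constraint~(2) says that the induced purchase-rate vector is
\[
\phi_i = \sum_{A \ni i} h^*(A)\,\hat{\pi}(i,A) = \mathrm{RHS}_i \qquad \forall i \in \Ncal .
\]
By Lemma~\ref{lem:CCD}, the show-all fluid process started from inventory $\phi$ stocks out every product by time $T$. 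In particular, every $i \in \biguplus_{q \in Q_{L,<}} \Ncal_q(k_q^{(1)})$ accumulates sales exactly $\ubar{x}_i$ by some moment $\le T$.

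\textbf{Step 2 (compare $\phi$ with $u^{\mathrm{LP}}$).} I check componentwise how $\phi$ relates to $u^{\mathrm{LP}}$.
\begin{itemize}
\item For heavy products, $\phi_i = u_i^* = u^{\mathrm{LP}}_i$.
\item For light products outside the top groups of $Q_{L,<}$, $\phi_i = \hat{c}_i = u^{\mathrm{LP}}_i$.
\item For top-group products $i \in \Ncal_q(k_q^{(1)})$ with $q \in Q_{L,<}$, $\phi_i = \ubar{x}_i \le \hat{x}_i(\hat{c}) < \hat{c}_i = u^{\mathrm{LP}}_i$. The strict inequality holds because these products do not stock out under $\hat{c}$, by Claim~\ref{claim:stock_out_group} and the definition of $Q_{L,<}$.
\end{itemize}
So $\phi$ and $u^{\mathrm{LP}}$ agree everywhere except on the top-group products, where $\phi$ is strictly smaller.

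\textbf{Step 3 (trajectories coincide up to the first threshold hit).} Let $\sigma$ be the first moment in the $\phi$-process at which some top-group product $i$ reaches cumulative sales $\ubar{x}_i$, which is also the moment it stocks out there. By Step~1, $\sigma \le T$. On $[0,\sigma)$ the two fluid processes display the same assortment at every moment, and I argue this by induction over stockout events. Every non-top product has identical initial inventory in both processes, so it stocks out at the same moment in each. Every top product has sales below $\ubar{x}_i$ in both processes before $\sigma$, so it stays in stock in both. Identical assortments give identical rates, hence identical cumulative sales, and therefore identical next stockout events among non-top products. It follows that at time $\sigma$ the $u^{\mathrm{LP}}$-process also has some top-group product $i$ with $\hat{x}_i(u^{\mathrm{LP}},\sigma) = \ubar{x}_i$. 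Hence $\tau^* \le \sigma \le T$, and $\tau^* \ge 0$ is trivial.

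\textbf{Main obstacle.} The main difficulty is making Step~3 rigorous, namely the induction showing the two piecewise-linear trajectories coincide until the first time a top product hits its threshold. Ties in stockout moments need care. I would handle this by processing the finitely many stockout moments of the $\phi$-process in order, noting that both processes are determined by the same piecewise-constant ODE on each interval. A secondary check is that Lemma~\ref{lem:CCD} may be applied with $h^*$ supported only on $\Acal_{\mathrm{RO}}$. This is fine, because the CCD definition allows an arbitrary scaled convex combination of assortments.
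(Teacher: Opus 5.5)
Your proposal is correct and follows essentially the same route as the paper. You apply the CCD property (Lemma~\ref{lem:CCD}) to the rate vector $\phi^*=\mathrm{RHS}$ induced by $h^*$, note that $u^{\mathrm{LP}}$ differs from $\phi^*$ only on the top groups $\Ncal_q(k_q^{(1)})$, $q\in Q_{L,<}$, where it is larger, and conclude that the two fluid trajectories coincide until the first such product reaches $\ubar{x}_i$; that moment is $\tau^*$, so $\tau^*\le T$. Your extra care about ties in Step~3 and about $h^*$ being supported on $\Acal_{\mathrm{RO}}$ just makes explicit what the paper asserts in one line.
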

Let $f \in \biguplus_{q \in Q_{L,<}} \Ncal_q\bigl(k_q^{(1)}\bigr)$ denote the product that stocks out at time $\tau^*$. Moreover, since $f$ never stocks out under $\hat{c}$, Claim~\ref{claim:fluid_IIA} implies that
\begin{equation}
\label{eqn:fluid_f_sales}
\hat{x}_i(\hat{c}) = \min\left\{\hat{c}_i,\, \hat{x}_f(\hat{c}) \cdot \frac{\hat{w}_i}{\hat{w}_f}\right\}
\qquad \forall i \in \Ncal \setminus \{f\}.
\end{equation}
Consequently, for any light product $i \in \Ncal_L$, we have
\begin{eqnarray*}
    \hat{x}_i(u^{\mathrm{LP}}) &\geq& \hat{x}_i(u^{\mathrm{LP}},\tau^*) \\
    &\geq& \min\left\{u_i^{\mathrm{LP}},\, \hat{x}_f(\hat{c},\tau^*) \cdot \frac{\hat{w}_i}{\hat{w}_f}\right\} \\
    &=& \min\left\{u_i^{\mathrm{LP}},\, \ubar{x}_f \cdot \frac{\hat{w}_i}{\hat{w}_f}\right\} \\
    &\geq& \min\left\{u_i^{\mathrm{LP}},\, (1-\eps) \cdot \hat{x}_f(\hat{c}) \cdot \frac{\hat{w}_i}{\hat{w}_f}\right\} \\
    &\geq& (1-\eps) \cdot \min\left\{u_i^{\mathrm{LP}},\, \hat{x}_i(\hat{c})\right\} \\
    &\geq& (1-\eps) \cdot \mathrm{RHS}_i,
\end{eqnarray*}
where the second inequality uses Claim~\ref{claim:fluid_IIA}, and the second-to-last inequality uses~\eqref{eqn:fluid_f_sales}. The final inequality follows because for each $i \in \Ncal_L$,
\[
\min\left\{u_i^{\mathrm{LP}},\, \hat{x}_i(\hat{c})\right\}
=
\min\left\{\hat{c}_i,\, \hat{x}_i(\hat{c})\right\}
=
\hat{x}_i(\hat{c})
\geq
\mathrm{RHS}_i.
\]

Now consider a heavy product $i \in \Ncal_H$, and suppose $i \in \Ncal_{\hat{q},H}$ for some $\hat{q} \in Q_H$. Then
\begin{eqnarray}
    \hat{x}_i(u^{\mathrm{LP}}) &\geq& \hat{x}_i(u^{\mathrm{LP}},\tau^*) \nonumber \\
    &\geq& \min\left\{u_i^{\mathrm{LP}},\, \hat{x}_f(\hat{c},\tau^*) \cdot \frac{\hat{w}_i}{\hat{w}_f}\right\} \nonumber \\
    &=& \min\left\{u_i^{\mathrm{LP}},\, \hat{x}_f(\hat{c},\tau^*) \cdot \frac{\hat{w}_1}{\hat{w}_f}\right\} \nonumber \\
    &=& \min\left\{u_i^{\mathrm{LP}},\, \ubar{x}_f \cdot \frac{\hat{w}_1}{\hat{w}_f}\right\} \nonumber \\
    &\geq& (1-\eps) \cdot \min\left\{u_i^{\mathrm{LP}},\, \hat{x}_f(\hat{c}) \cdot \frac{\hat{w}_1}{\hat{w}_f}\right\} \nonumber \\
    &\geq& (1-\eps) \cdot \min\left\{u_i^{\mathrm{LP}},\, \hat{c}_{\hat{q},\max} - 1\right\}. \label{eqn:heavy_last_steps}
\end{eqnarray}
Here, the second inequality again uses Claim~\ref{claim:fluid_IIA}, and the first equality uses the fact that within each weight class $q \in [Q]$, we have $\hat{w}_1 = \cdots = \hat{w}_{n_q}$. Recall that product~1 is the highest-revenue product in class $\hat{q}$ and is stocked at level $\hat{c}_{\hat{q},\max}$ under $\hat{c}$. To justify the final inequality, first note that if product~1 in class $\hat{q}$ does not stock out under $\hat{c}$, then
\[
\hat{c}_{\hat{q},\max} - 1 \leq \hat{x}_1(\hat{c}) = \hat{x}_f(\hat{c}) \cdot \frac{\hat{w}_1}{\hat{w}_f}.
\]
On the other hand, if product~1 stocks out at some time $\tau_1$, then
\[
\hat{c}_{\hat{q},\max} = \hat{x}_1(\hat{c},\tau_1) = \hat{x}_f(\hat{c},\tau_1) \cdot \frac{\hat{w}_1}{\hat{w}_f} \leq \hat{x}_f(\hat{c}) \cdot \frac{\hat{w}_1}{\hat{w}_f},
\]
which yields the same conclusion.

From here, we obtain
\begin{eqnarray*}
    \eqref{eqn:heavy_last_steps} \geq (1-\eps) \cdot (u_i^{\mathrm{LP}} - 1) 
    \geq  (1-\eps)^2 \cdot \mathrm{RHS}_i,
\end{eqnarray*}
where the first inequality follows from constraint~(3), which implies that $u_i^{\mathrm{LP}} \leq \hat{c}_{\hat{q},\max}$, and the second follows from constraint~(3) again, since $u_i^{\mathrm{LP}} \geq \frac{1}{\eps}$.

\paragraph{Proof of Claim~\ref{claim:fluid_IIA}.} First suppose that $j$ stocks out before $i$, so that $\tau_j \leq \tau_i$. Then
\[
c_j = \hat{x}_j(c,\tau_j) = \hat{x}_i(c,\tau_j) \cdot \frac{\hat{w}_j}{\hat{w}_i} \leq \hat{x}_i(c,\tau_i) \cdot \frac{\hat{w}_j}{\hat{w}_i},
\]
where the second equality follows because over the interval $[0,\tau_j]$, both $i$ and $j$ are in stock, and under the mixed-NP-MNL model the choice probabilities satisfy
\[
\hat{\pi}(j,A) = \hat{\pi}(i,A) \cdot \frac{\hat{w}_j}{\hat{w}_i}
\]
for every assortment $A \subseteq \Ncal$ containing both $i$ and $j$. Therefore,
\[
\hat{x}_j(c,\tau_i) = c_j = \min\left\{c_j,\, \hat{x}_i(c,\tau_i) \cdot \frac{\hat{w}_j}{\hat{w}_i}\right\},
\]
as claimed.

Now suppose instead that $j$ has not stocked out by time $\tau_i$. Then
\[
c_j \geq \hat{x}_j(c,\tau_i) = \hat{x}_i(c,\tau_i) \cdot \frac{\hat{w}_j}{\hat{w}_i},
\]
where the equality follows by the same proportionality argument, since both $i$ and $j$ remain in stock throughout $[0,\tau_i]$. Hence,
\[
\hat{x}_j(c,\tau_i) = \hat{x}_i(c,\tau_i) \cdot \frac{\hat{w}_j}{\hat{w}_i} = \min\left\{c_j,\, \hat{x}_i(c,\tau_i) \cdot \frac{\hat{w}_j}{\hat{w}_i}\right\},
\]
which completes the proof.

\paragraph{Proof of Claim~\ref{claim:tau_star}.} Consider the scaled convex combination of assortments induced by $\{h^*(A)\}_{A \in \Acal_{\mathrm{RO}}}$, corresponding to the optimal $h$-variables in~\ref{eqn:fluid_LP}. By constraint~(1), we have
\[
\sum_{A \in \Acal_{\mathrm{RO}}} h^*(A) = T,
\]
and by constraint~(2),
\[
\phi_i^* = \sum_{\substack{A \in \Acal_{\mathrm{RO}} :\\ i \in A}} h^*(A)\, \hat{\pi}(i,A) = \mathrm{RHS}_i.
\]
Consequently, by the CCD property of the mixed-NP-MNL model, starting from the initial inventory vector $\phi^* = (\phi_1^*, \ldots, \phi_n^*)$, all products stock out by time $T$. Furthermore, observe that $u^{\mathrm{LP}}$ differs from $\phi^*$ only for products in $\biguplus_{q \in Q_{L,<}} \Ncal_q\bigl(k_q^{(1)}\bigr)$, for which
\[
\phi_i^* = \ubar{x}_i < \hat{c}_i = u_i^{\mathrm{LP}}.
\]
Therefore, the fluid sales of every product under $u^{\mathrm{LP}}$ coincide with those under $\phi^*$ until the first time that some product in $\biguplus_{q \in Q_{L,<}} \Ncal_q\bigl(k_q^{(1)}\bigr)$ stocks out. This stockout time is precisely $\tau^*$. Since all products stock out by time $T$ under $\phi^*$, it follows that $\tau^* < T$.

\section{Omitted Discussion and Proofs from Section~\ref{sec:asym_one_half}}

\subsection{Constraint Generation for~\ref{eqn:asymptotic-joint-LP}}
\label{subsec:app-asymptotic-constraint-generation}

In this subsection, we show that \ref{eqn:asymptotic-joint-LP} can be solved by a constraint generation procedure equipped with a polynomial-time separation oracle. We note that the odd-size counterpart, JLP-ODD, can be solved analogously. We first observe that \ref{eqn:asymptotic-joint-LP} is equivalent to
\begin{equation*}
	\begin{aligned}
		\underset{ h }{\text{max}} \quad
		&  \sum_{ {A \subseteq  {\Ucal_{\even}} } }   h(A) \cdot \left( \sum_{(i,s) \in A }  r_i \cdot \pi((i,s), A) \right) \\[0.3em]
		\text{s.t.} \quad
		& \sum_{A \subseteq {\Ucal_{\even}} } h(A) = \theta \cdot T &, \\[0.3em]
		& \sum_{A \subseteq {\Ucal_{\even}} } h(A) \cdot \left(  \sum_{(i,s) \in A } \pi((i,s), A) \right)   \leq \theta \cdot \Ccal, & \\[0.3em]
		& h(A) \geq 0 & \forall A \subseteq {\Ucal_{\even}}.
	\end{aligned}
\end{equation*}
Its dual takes the form
\begin{equation*}
	\begin{aligned}
		\underset{ \eta , \mu  }{\text{min}} \quad
		&  \theta \cdot (T \eta + \Ccal \mu ) \\[0.3em]
		\text{s.t.} \quad
		& \eta \geq \max_{A \subseteq \Ucal_{\even}} \left\lbrace \sum_{(i,s) \in A}  (r_i - \mu) \cdot \pi \left(  (i,s) , A  \right) \right\rbrace &, \\[0.3em]
		& \mu \geq 0. &
	\end{aligned}
\end{equation*}
Therefore, to obtain a polynomial-time separation oracle for the dual, it suffices to solve the assortment optimization problem appearing in the first constraint in polynomial time.

To this end, fix $(\eta,\mu)$ and define $r'_i \equiv r_i - \mu$. Then, for any assortment $A \subseteq \Ucal_{\even}$, the objective in the first dual constraint can be decomposed as
\[
\sum_{(i,s) \in A}  r'_i \cdot \pi \left(  (i,s) , A  \right)
= \sum_{s \in \Scal_\even} \sum_{ i : \, (i,s) \in A } r'_i \cdot \pi( (i,s) , A )
= \sum_{s \in \Scal_\even} \sum_{ i : \, (i,s) \in A } r'_i \cdot \pi( (i,s) , A \cap \Ucal(s) ),
\]
where $\Ucal(s)$ denotes the set of products of size $s$, and the second equality follows from the independence of product demand across different sizes in $\Scal_{\even}$.

Since $\Ucal(s) \cap \Ucal(s') = \emptyset$ whenever $s \neq s'$, the optimization decouples across sizes, and hence
\[
\max_{A \subseteq \Ucal_{\even}} \left\lbrace  \sum_{(i,s) \in A}  r'_i \cdot \pi \left(  (i,s) , A  \right)  \right\rbrace
= \sum_{s \in \Scal_\even}  \max_{A_s \subseteq \Ucal(s)  }   \left\lbrace  \sum_{ i : \, (i,s) \in A_s  }  r'_i \cdot \pi \left(  (i,s) , A_s  \right)  \right\rbrace.
\]
For each $s \in \Scal_{\even}$, the maximization on the right-hand side is precisely an assortment optimization problem under the mixed-NP-MNL model. By Theorem~\ref{thm:rev_ordered}, this problem admits a revenue-ordered optimal assortment and can therefore be solved in $O(n)$ time. Summing over all $s \in \Scal_{\even}$, we conclude that the assortment optimization problem in the first dual constraint can be solved in $O(nm)$ time. This yields a polynomial-time separation oracle, and thus \ref{eqn:asymptotic-joint-LP} can be solved efficiently by constraint generation.

\subsection{Proof of Lemma~\ref{lemma:asymptotic-performance-comparison}}
\label{appendix-subsubsec:asymptotic-proof-coupling}

\paragraph{Proof of the first part of Lemma~\ref{lemma:asymptotic-performance-comparison}.} We construct a coupling between $\Pcal$ and $\Pcal_{\downarrow}$.

\paragraph{Coupling via realized utilities.} Recall that randomness in both processes arises from customer choice under the mixed-NP-MNL model :
\[
\pi(i,A) = \sum_{g \in \Gcal} \lambda_g \cdot \frac{ w_i }{ w_{0,g} + \sum_{i \in A} w_i }.
\]
Here, we simply write $\Gcal(s)$ as $\Gcal$, since the size $s$ is fixed throughout the proof. This choice model admits a random utility representation. Let $G_t$ denote the customer type at time $t$, with $\mathbb{P}(G_t = g) = \lambda_g$ for $g \in \Gcal$. For each type $g$, product $i \in [n]$ (and the no-purchase option $i=0$), and period $t$, define the realized utility
\[
U_{g,i,t} = \log(w_i) + \epsilon_{g,i,t}, \qquad U_{g,0,t} = \log(w_{0,g}) + \epsilon_{g,0,t},
\]
where $\epsilon_{g,i,t}$ are i.i.d.\ standard Gumbel random variables. Conditional on $G_t = g$, the customer chooses
\[
\arg\max_{i \in A \cup \{0\}} U_{g,i,t}.
\]
Thus, the process $\Pcal$ can be represented by the sequence $\left( G_t, (\epsilon_{g,i,t})_{g \in \Gcal,i \in [n]_0} \right)_{t \in [\theta T]}$. We use the same realization of these random variables to define $\Pcal_{\downarrow}$, thereby coupling the two processes. Under this coupling, the realized utilities at each time period are identical in both processes, although the resulting choices may differ due to different offered assortments.

\paragraph{Revenue dominance of $\Pcal$ over $\Pcal_{\downarrow}$.} We show that under this coupling, $\REV_\Pcal \geq \REV_{\Pcal_{\downarrow}}$ holds almost surely. We first use $\Omega$ and $\Omega_{\downarrow}$ to denote the random sets of ``real'' purchased units under the $\Pcal$ and $\Pcal_{\downarrow}$ processes, respectively. Recall that, under the true process $\Pcal$, every purchase is real. Therefore, $\Omega$ is precisely the set of units purchased under $\Pcal$. We have the following claim.

\begin{claim}
	\label{claim:coupling_purchased_unit_cardinality}
	Under the coupling, we have $|\Omega| \geq | \Omega_{\downarrow} |$ almost surely.
\end{claim}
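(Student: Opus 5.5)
The plan is to compare the two processes period by period under the utility coupling, and to reduce the claim to an invariant on total remaining inventory. Write $I_t$ and $I^{\downarrow}_t$ for the vectors of remaining inventory after period $t$ in $\Pcal$ and $\Pcal_{\downarrow}$. Both processes start from $\hat{c}_\theta$ and only real purchases deplete inventory, so
\[
|\Omega| = \|\hat{c}_\theta\|_1 - \|I_{\theta T}\|_1, \qquad |\Omega_{\downarrow}| = \|\hat{c}_\theta\|_1 - \|I^{\downarrow}_{\theta T}\|_1 .
\]
It therefore suffices to show $\|I_{\theta T}\|_1 \le \|I^{\downarrow}_{\theta T}\|_1$ almost surely. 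I will attempt to prove by induction over $t$ the stronger statement that this holds at every period, possibly refined to a per-product form.

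The basic step is the structure of the coupling. At period $t$ both processes see the same type $G_t=g$ and the same utilities $U_{g,i,t}$. $\Pcal$ makes a purchase exactly when $\max_{i\in A_t}U_{g,i,t} > U_{g,0,t}$. $\Pcal_{\downarrow}$ makes a (real or virtual) selection exactly when $\max_{i\in\Acal_{\ell(t)}}U_{g,i,t} > U_{g,0,t}$, where $\ell(t)$ is the phase containing $t$. I will go through the four cases according to whether each process decrements its inventory. The induction gap $\|I^{\downarrow}_t\|_1-\|I_t\|_1$ can only shrink in one case: $\Pcal_{\downarrow}$ makes a real purchase of some $i$ while $\Pcal$ makes none. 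In that case $i\in\Acal_{\ell(t)}$ has $U_{g,i,t}>U_{g,0,t}$, so $i\notin A_t$; otherwise $\Pcal$ would also have purchased. Thus $i$ has already stocked out in $\Pcal$ ($I_{t-1,i}=0$) while $I^{\downarrow}_{t-1,i}>0$, so at that moment product $i$ alone already certifies a surplus of at least one unit in $\Pcal_{\downarrow}$.

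To turn this into a proof, I plan to strengthen the invariant to a per-product statement. Let $S_t$ be the set of products stocked out in $\Pcal$ by time $t$. Products in $S_t$ have $I_{t,i}=0\le I^{\downarrow}_{t,i}$ trivially. Products outside $S_t$ are offered in $\Pcal$ at every period up to $t$. For these, I would show that every real purchase of them in $\Pcal_{\downarrow}$ can be charged injectively to a purchase in $\Pcal$ at the same period. The charging works because whenever $\Pcal_{\downarrow}$ selects $i\notin S_t$, product $i$ is available in $\Pcal$ and beats the no-purchase option, so $\Pcal$ buys some product at that period. The bad step from the previous paragraph is then absorbed by the slack contributed by $i\in S_t$. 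Concretely, I will compare
\[
\sum_{i\in S_t}\bigl(I^{\downarrow}_{t,i}-0\bigr) \quad\text{against}\quad \sum_{i\notin S_t}\bigl(I_{t,i}-I^{\downarrow}_{t,i}\bigr)^{+}.
\]

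The main obstacle is a period at which $\Pcal_{\downarrow}$ selects some $i\notin S_t$ but $\Pcal$ buys a different product $j\in S_t$, namely one about to stock out. Such a period consumes $\Pcal$'s purchase on $S_t$ rather than on the complement, so the naive per-product charging double counts. My first attempt is to charge such periods against the units of $j$ itself. Every unit of $j\in S_t$ is sold in $\Pcal$, so the $S_t$-part of $|\Omega|$ equals $\sum_{j\in S_t}\hat{c}_{\theta,j}$ exactly, which upper bounds $\Pcal_{\downarrow}$'s real sales of those products. The cross periods must then be shown to be covered by the remaining $\Pcal$-purchases outside $S_t$. If this charging fails, the fallback is to use that the fluid trajectory depletes all products: in $\Pcal_{\downarrow}$ the product-$i$ selections $Y_i$ then concentrate near $\hat{c}_{\theta,i}$, and the problematic configurations could be controlled instead of excluded pathwise.
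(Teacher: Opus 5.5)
Your reduction to showing $\|I_{\theta T}\|_1\le\|I^{\downarrow}_{\theta T}\|_1$ is correct, and so is your identification of the only step in which the gap shrinks. But the proposal never produces an invariant that closes, and the charging you sketch cannot be completed.

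Knowing that a shrinking step involves a product $i$ in surplus in $\Pcal_{\downarrow}$ does not tell you the aggregate gap was positive beforehand. There is also no per-product order to fall back on. If some $k\notin\Acal_{\ell(t)}$ is still in stock in $\Pcal$ and $U_{g,k,t}>U_{g,i,t}>U_{g,0,t}$, then $\Pcal$ sells $k$ while $\Pcal_{\downarrow}$ sells $i$, so $I_t\le I^{\downarrow}_t$ fails componentwise.

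Your ``first attempt'' needs $|\Omega_{\downarrow}\setminus S|\le|\Omega\setminus S|$, where $S$ is the set of products that stock out in $\Pcal$. This inequality is false on some paths. Take two products with fluid order $1,2$, and a path on which:
\begin{enumerate}
\item nothing stocks out during phase $\Acal_1=\{1,2\}$, so the two processes coincide there;
\item product $1$ stocks out in $\Pcal$ during phase $\Acal_2=\{2\}$, after at least one period with $U_{g,1,t}>U_{g,2,t}>U_{g,0,t}$;
\item product $2$ stocks out in neither process;
\item every customer after the last phase prefers no purchase.
\end{enumerate}
In each such period $\Pcal$ sells product $1$ while $\Pcal_{\downarrow}$ really sells product $2$. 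Sales of product $2$ agree in all other periods, so $\Pcal_{\downarrow}$ sells strictly more of product $2$. The claim survives only because $\Pcal_{\downarrow}$ leaves units of product $1$ unsold. That is exactly the slack you throw away when you bound its sales of $S$ by $\sum_{j\in S}\hat{c}_{\theta,j}$. Your concentration fallback cannot yield an almost-sure inequality at all.

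The missing idea is to interpose a third coupled process $\bar{\Pcal}$, with inventory $\bar I_t$ after period $t$. At period $t$ it offers only the in-stock part of the predetermined assortment, $\bar A_t=\{i\in\Acal_{\ell(t)}:\bar I_{t-1,i}\ge 1\}$. Two simple inductions then do the work.

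First, $I_{t-1,i}\ge\bar I_{t-1,i}$ for every $i\in\Acal_{\ell(t)}$. This uses $\Acal_{\ell(t+1)}\subseteq\Acal_{\ell(t)}$, together with the fact that if $\Pcal$ buys some $j\in\Acal_{\ell(t)}$ that $\bar{\Pcal}$ does not, then $\bar I_{t-1,j}=0$. Hence $\bar A_t\subseteq A_t$, every purchase period of $\bar{\Pcal}$ is a purchase period of $\Pcal$, and $|\Omega|\ge|\bar\Omega|$. This step counts purchase \emph{periods} rather than per-product units, so a period in which $\Pcal$ sells a product outside $\Acal_{\ell(t)}$ costs nothing. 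That is what removes your cross-period obstacle.

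Second, $\bar I_t\le I^{\downarrow}_t$ componentwise. Suppose $\Pcal_{\downarrow}$ makes a real purchase of its utility maximizer $j^*$ over $\Acal_{\ell(t)}\cup\{0\}$ and $\bar{\Pcal}$ does not buy $j^*$. Then $j^*\notin\bar A_t$, that is, $\bar I_{t-1,j^*}=0$. Summing depletions gives $|\bar\Omega|\ge|\Omega_{\downarrow}|$, and therefore $|\Omega|\ge|\bar\Omega|\ge|\Omega_{\downarrow}|$.
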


Establishing this claim requires some care because the two processes $\Pcal$ and $\Pcal_{\downarrow}$ differ in both their offered assortments and inventory consumption mechanisms (in particular, due to the possibility of virtual purchases under the latter process). We defer the proof to Appendix~\ref{appendix-subsubsec:cardinality-comparison}.

We use $R(\Omega')$ to denote the revenue incurred by the set of real purchased units $\Omega'$. Therefore, we have $R(\Omega) = R( \Omega \backslash \Omega_{\downarrow} ) +R(  \Omega \cap \Omega_{\downarrow} ) $  and $R( \Omega_\downarrow) = R( \Omega_{\downarrow} \backslash \Omega ) + R(  \Omega \cap \Omega_{\downarrow} )$. We also notice that $\REV_\Pcal = R(\Omega)$ and
\[
\REV_{\Pcal_{\downarrow}} = R( \Omega_{\downarrow}) - r_{\max} \cdot \left(  || \hat{c}_\theta ||_1 - |  \Omega_{\downarrow}|   \right),
\]
where the term in the parentheses is the number of the unsold units under the $\Pcal_\downarrow$ process. Consequently, we obtain that
\begin{align*}
	\REV_\Pcal - \REV_{\Pcal_\downarrow}  = \, & R( \Omega \backslash \Omega_{\downarrow} ) - R( \Omega_{\downarrow} \backslash \Omega )  +  r_{\max} \cdot \left(   || \hat{c}_\theta ||_1 - |  \Omega_{\downarrow}  |   \right) \\ 
	\geq \, & R( \Omega \backslash \Omega_{\downarrow} ) - R( \Omega_{\downarrow} \backslash \Omega )  +  r_{\max} \cdot |  \Omega \backslash \Omega_{\downarrow}  | \\
	\geq \, & r_{\min} \cdot | \Omega \backslash \Omega_{\downarrow}| - r_{\max} \cdot | \Omega_{\downarrow} \backslash \Omega  | +  r_{\max} \cdot |  \Omega \backslash \Omega_{\downarrow}  | \\
	= \, & r_{\min} \cdot | \Omega \backslash \Omega_{\downarrow}| +  r_{\max} \cdot \left(   |  \Omega \backslash \Omega_{\downarrow}  |  -  | \Omega_{\downarrow} \backslash \Omega  |  \right) \, \geq  \, 0.
\end{align*}
In the first inequality, we use the fact that each purchased unit in set $\Omega \backslash \Omega_{\downarrow}$ corresponds to an unsold unit in the $\Pcal_{\downarrow}$ process. The second inequality follows since each product's unit revenue is between $r_{\min}$ and $r_{\max}$. The third inequality follows Claim~\ref{claim:coupling_purchased_unit_cardinality}: since $|  \Omega  | \geq | \Omega_{\downarrow}|$, we have $ |\Omega \backslash \Omega_{\downarrow}|  \geq | \Omega \backslash \Omega_{\downarrow}|  $.

\paragraph{Proof of the second part of Lemma~\ref{lemma:asymptotic-performance-comparison}.} For notational simplicity, let $c_i = \hat{c}_{\theta,i}$. Recall that
\begin{align*}
	\REV_{\Pcal_{\downarrow},i}
	&= r_i \cdot \min \{ c_i , Y_i \} - r_{\max} \cdot (c_i - Y_i)^+ \\
	&= r_i Y_i - r_i  \cdot (Y_i - c_i)^+ - r_{\max} \cdot  (c_i - Y_i)^+  \geq r_i Y_i - r_{\max} \cdot |c_i - Y_i|.
\end{align*}
Taking expectations, we first note that
\begin{align*}
	\Ebb[Y_i]
	= \sum_{\ell=1}^n \Ebb[Y_{i,\ell}]
	= \sum_{\ell=1}^n \floor{\tau_\ell - \tau_{\ell-1}} \cdot \pi(i,\Acal_\ell)
	\geq \left( \sum_{\ell=1}^n (\tau_\ell - \tau_{\ell-1}) \cdot \pi(i,\Acal_\ell) \right) - n.
\end{align*}
The term in parentheses equals the total fluid sales of product $i$, denoted as $\REV_{\Pcal_F,i}$, by the definition of $\tau_\ell$'s. Therefore, $\Ebb [ Y_i ] \geq c_i - n$. Similarly, we have
\[
\Ebb[Y_i] = \sum_{\ell=1}^n \floor{\tau_\ell - \tau_{\ell-1}} \cdot \pi(i,\Acal_\ell) \leq \sum_{\ell=1}^n \left({\tau_\ell - \tau_{\ell-1}} \right) \cdot \pi(i,\Acal_\ell) \leq c_i.
\]
Hence $c_i - n \leq \Ebb[Y_i] \leq c_i$. It follows that
\[
\Ebb[\REV_{\Pcal_{\downarrow},i}] \geq \REV_{\Pcal_F,i} - r_{\max} \cdot \left( n + \Ebb[|c_i - Y_i|] \right).
\]
Dividing by $\REV_{\Pcal_F,i} = r_i c_i \geq r_{\min} c_i$, we obtain
\[
\frac{\Ebb[\REV_{\Pcal_{\downarrow},i}]}{\REV_{\Pcal_F,i}} \geq 1 - \frac{r_{\max}}{r_{\min}} \cdot \frac{ n + \Ebb[|c_i - Y_i|]}{c_i}.
\]

\paragraph{Concentration.} It remains to show that $\left(n +\Ebb[|c_i - Y_i|]\right)/c_i \to 0$ as $\theta \to \infty$. We first note that the pre-rounded inventory vector $\bar{c}$ scales linearly with $\theta$ because it is the optimal solution to \ref{eqn:asymptotic-joint-LP}. Particularly, if $\bar{c}_{\theta = 1}$ is the optimal solution of \ref{eqn:asymptotic-joint-LP} when $\theta = 1$, then we have $\bar{c} = \theta \cdot \bar{c}_{\theta = 1}$. This immediately implies that $c_i = \floor{ \bar{c}_i } = \Theta \left(  \theta \right)$. Moreover, we also have
\[
\Ebb[|c_i - Y_i|] \leq \Ebb[|Y_i - \Ebb[Y_i]|] + |\Ebb[Y_i] - c_i| \leq \Ebb[|Y_i - \Ebb[Y_i]|] + n.
\]
Therefore, to establish that $\left(n +\Ebb[|c_i - Y_i|]\right)/c_i \to 0$ as $\theta \rightarrow \infty$, it suffices to show that $\Ebb\big[|Y_i - \Ebb[Y_i]|\big] = o(\theta)$. This can be shown by Cauchy--Schwarz inequality
\[
\Ebb[|Y_i - \Ebb[Y_i]|]   \leq \sqrt{ \Ebb \left[  (Y_i - \Ebb[Y_i])^2     \right]   } = \sqrt{\text{Var}(Y_i)} = \sqrt{ \sum_{\ell=1}^n \text{Var}(Y_{i,\ell}) },
\]
where in the last inequality we use the fact that $Y_i = \sum_{\ell = 1}^n Y_{i,\ell}$ and each $Y_{i,\ell}$ for $\ell \in [n]$ is independent from each other. Since $\text{Var}(Y_{i,\ell}) \leq \tau_\ell - \tau_{\ell-1}$, we further obtain
\[
\Ebb[|Y_i - \Ebb[Y_i]|] \leq \sqrt{ \sum_{\ell=1}^n (\tau_\ell - \tau_{\ell-1}) } \leq \sqrt{\theta T}.
\]
Thus, $\Ebb[|Y_i - \Ebb[Y_i]|] = O(\sqrt{\theta})$, while $c_i = \Theta(\theta)$, implying that $ (n + \Ebb[|c_i - Y_i|] )/c_i \to 0$. %

\subsection{Proof of Claim~\ref{claim:coupling_purchased_unit_cardinality}}
\label{appendix-subsubsec:cardinality-comparison}

We first introduce additional notation for the penalized process $\Pcal_{\downarrow}$. Let $\Acal_{(t)}$ denote the predetermined assortment associated with time period $t$ under $\Pcal_{\downarrow}$. Specifically, if $t \in \{ \sum_{k=1}^{\ell-1} \floor{\tau_k-\tau_{k-1}}+1,\ldots,\sum_{k=1}^{\ell}\floor{\tau_k-\tau_{k-1}}\}$, then $\Acal_{(t)}=\Acal_\ell$.

We next define an auxiliary stochastic process $\bar{\Pcal}$ that bridges the true process $\Pcal$ and the penalized process $\Pcal_{\downarrow}$. Conceptually, at time $t$, process $\bar{\Pcal}$ offers only the products in $\Acal_{(t)}$ that have positive inventory. Thus, $\bar{\Pcal}$ resembles $\Pcal$ in that customers can purchase only products that are in stock, while it resembles $\Pcal_{\downarrow}$ in that its offered assortment is restricted by $\Acal_{(t)}$. Formally, let $\bar{c}_i(t)$ denote the inventory level of product $i \in [n]$ at the beginning of time $t$ under $\bar{\Pcal}$, and similarly let $c_i(t)$ and $c_i^{\downarrow}(t)$ denote the corresponding inventory levels under $\Pcal$ and $\Pcal_{\downarrow}$, respectively. The assortment offered by $\bar{\Pcal}$ at time $t$ is therefore $\bar{A}_t \equiv \{ i \in \Acal_{(t)} \, : \, \bar{c}_i(t) \geq 1 \}$. All purchases under $\bar{\Pcal}$ are thus real.

The stochasticity of all three processes is determined by the arriving customer types and the random utilities, represented by $\left( G_t, (\epsilon_{g,i,t})_{g \in \Gcal, i \in [n]_0} \right)_{t \in [\theta T]}$. We couple $\bar{\Pcal}$ with $\Pcal$ and $\Pcal_{\downarrow}$ using the same coupling introduced in Appendix~\ref{appendix-subsubsec:asymptotic-proof-coupling}: all three processes share the same realization of $\left( G_t, (\epsilon_{g,i,t})_{g \in \Gcal, i \in [n]_0} \right)_{t \in [\theta T]}$. Moreover, all three processes start with the same initial inventory vector~$\hat{c}_\theta$. Recall that $\Omega$ and $\Omega_{\downarrow}$ denote the sets of units purchased by the end of the horizon under $\Pcal$ and $\Pcal_{\downarrow}$, respectively, and let $\bar{\Omega}$ denote the corresponding set under $\bar{\Pcal}$.

The following claim directly establishes Claim~\ref{claim:coupling_purchased_unit_cardinality}

\begin{claim}
	\label{claim:coupling-cardinality-intermediate-step}
	Under the coupling, (i) $|\Omega| \geq |\bar{\Omega}|$ almost surely; and (ii) $|\bar{\Omega}| \geq |  \Omega_{\downarrow}  |$ almost surely.
\end{claim}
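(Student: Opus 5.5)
We prove the two parts separately. Both parts use one elementary fact about the coupling. In period $t$ all processes share the same type $G_t$ and the same Gumbel shocks. Hence if product $j$ is the utility maximizer over some set $S\cup\{0\}$, and $j\in S'\subseteq S$, then $j$ is also the maximizer over $S'\cup\{0\}$. The same holds for the no-purchase option $j=0$.

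\emph{Part (ii): $|\bar{\Omega}|\ge|\Omega_{\downarrow}|$.} Here we prove a stronger, product-by-product domination. Let $\bar N_i(t)$ be the number of units of $i$ sold under $\bar{\Pcal}$ before period $t$, and $N^{\downarrow}_i(t)$ the number of \emph{real} purchases of $i$ under $\Pcal_{\downarrow}$. We show by induction on $t$ that $\bar N_i(t)\ge N^{\downarrow}_i(t)$ for every $i$. Suppose $N^{\downarrow}_i$ increases at period $t$. Then the customer chose $i$ from $\Acal_{(t)}\cup\{0\}$, and $N^{\downarrow}_i(t)<c_i$ because the purchase was real. There are two cases.
\begin{itemize}
\item If $\bar c_i(t)\ge 1$, then $i\in\bar A_t\subseteq\Acal_{(t)}$. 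By the subset fact the same customer chooses $i$ under $\bar{\Pcal}$, so both counters increase together.
\item Otherwise $\bar N_i(t)=c_i\ge N^{\downarrow}_i(t)+1$, so the inequality survives the increment.
\end{itemize}
Since $\bar N_i$ never decreases, the induction closes. Summing over $i$ at $t=\theta T+1$ gives (ii).

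\emph{Part (i): $|\Omega|\ge|\bar{\Omega}|$.} Per-product domination fails here, because $\Pcal$ offers extra products that divert demand. We therefore track the total sales gap through a potential function. Let $N_i(t)$ count sales of $i$ under $\Pcal$. Let $D(t)$ be the set of products that are out of stock under $\Pcal$ but still in stock under $\bar{\Pcal}$. The invariant we aim for is
\[
\Phi(t)\;=\;\sum_i N_i(t)-\sum_i \bar N_i(t)-\sum_{i\in D(t)}\bigl(c_i-\bar N_i(t)\bigr)\;\ge\;0 .
\]
At $t=\theta T+1$ this gives $|\Omega|-|\bar\Omega|\ge 0$, since every term in the last sum is nonnegative. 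The step analysis splits by what happens under $\bar{\Pcal}$.
\begin{itemize}
\item $\bar{\Pcal}$ sells some $j\notin D(t)$. Then $j$ is in stock under $\Pcal$, and $\Pcal$ offers every in-stock product. Because $\Pcal$'s set contains $j$, the customer makes some purchase $k$ under $\Pcal$, since $j$ already beats the no-purchase option and $\Pcal$'s maximizer can only do better. The two totals therefore move together.
\item $\bar{\Pcal}$ sells $j\in D(t)$. Then $\bar N_j$ rises, which is exactly offset in $\Phi$ by the drop of $c_j-\bar N_j$. If $\bar N_j$ reaches $c_j$, product $j$ leaves $D$ with zero contribution.
\item $\bar{\Pcal}$ makes no purchase but $\Pcal$ does. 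Then $\Phi$ rises by one.
\end{itemize}

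\emph{Main obstacle.} The delicate event is a product $k$ \emph{entering} $D$. This happens when $\Pcal$ sells the last unit of $k$ while $\bar{\Pcal}$ still holds $c_k-\bar N_k(t)>0$ units. At that moment $\Phi$ drops by that amount, and we must show the accumulated slack $\sum_i(N_i-\bar N_i)$ over products outside $D$ covers it. To handle this, I would first try to strengthen the invariant by also proving that, for products not in $D$, $\Pcal$ is never behind by more than it is ahead on other products. Concretely, the aim is to show $\sum_{i\notin D}(N_i-\bar N_i)\ge 0$ holds together with $N_i\ge\bar N_i$ on $D$.

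If that fails, I would use the specific structure of $\Acal_{(t)}$. It is nested, $\Acal_\ell=\{\ell,\dots,n\}$, and it mirrors the fluid stockout order. This suggests restricting attention to the event that the ordering of stockouts under $\Pcal$ is compatible with the nested removals, so that products leave $\bar A_t$ in the same order in which they leave $\Pcal$'s offered set. Making this compatibility rigorous is where I expect most of the work to lie.
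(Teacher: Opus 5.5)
Your part (ii) is correct and is essentially the paper's argument: the paper proves $\bar c_i(t)\le c_i^{\downarrow}(t)$ for all $i$ by the same two-case induction, which is equivalent to your $\bar N_i(t)\ge N_i^{\downarrow}(t)$.

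\textbf{Part (i) has a genuine gap, and the invariant you propose is false.} Take two products with fluid stockout order $1$ then $2$, so that $\Acal_{(t)}=\{1,2\}$ first and $\{2\}$ later. Suppose both processes, which coincide throughout the first phase, enter the second phase with two units of product $1$ and ample stock of product $2$. Suppose the next two customers have $U_1>U_2>U_0$, and every later customer prefers no purchase. Then $\Pcal$ sells product $1$ twice and stocks it out, while $\bar{\Pcal}$ sells product $2$ twice. Hence $|\Omega|=|\bar{\Omega}|$, but $1\in D$ with $c_1-\bar N_1=2$, so $\Phi=-2$ at the end of the horizon. This path has positive probability. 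Your proposed strengthening $\sum_{i\notin D}(N_i-\bar N_i)\ge 0$ fails on the same path, since $N_2-\bar N_2=-2$. Your fallback of restricting to an event on which the stockout orders are ``compatible'' cannot give an almost-sure statement, because that event has probability less than one.

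\textbf{The missing idea.} The demand diversion you cite works \emph{in your favor} once you restrict to the prescribed assortment. Prove by induction that $c_i(t)\ge \bar c_i(t)$ for every $i\in\Acal_{(t)}$. By the hypothesis at time $t$, this gives $\bar A_t\subseteq A_t$. Now suppose $\Pcal$ sells some $j\in\Acal_{(t)}$. Either $j\in\bar A_t$, and your subset fact forces $\bar{\Pcal}$ to sell $j$ as well; or $\bar c_j(t)=0\le c_j(t)-1$. Every other coordinate on $\Acal_{(t)}$ can only move the right way, and the nesting $\Acal_{(t+1)}\subseteq\Acal_{(t)}$ carries the invariant forward.

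The conclusion $\bar A_t\subseteq A_t$ then holds in every period. So whenever $\bar{\Pcal}$ sells, $\Pcal$ sells too, because the chosen product beats the no-purchase option and is offered by $\Pcal$. This is exactly your first bullet, and your second bullet never occurs: the invariant implies $D(t)\cap\Acal_{(t)}=\emptyset$, so every product in $D$ has already left the prescribed assortment and is never sold by $\bar{\Pcal}$ again. No potential function is needed. The penalty term you attach to those irrelevant products is precisely what drives $\Phi$ negative.
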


We prove the claim as follows. For part (i), we first show that $c_i(t) \geq \bar{c}_i(t)$ for all $i \in \Acal_{(t)}$ at the beginning of every time period $t$; equivalently, $c(t) \geq \bar{c}(t)$ on $\Acal_{(t)}$. We proceed by induction. The statement holds at $t=1$ because $c_i(1)=\bar{c}_i(1)$ for all $i \in [n]$. Suppose that $c_i(t') \geq \bar{c}_i(t')$ for all $i \in \Acal_{(t')}$. Let $\bar{j} \in \bar{A}_{t'} \cup {0}$ denote the customer choice under $\bar{\Pcal}$ at time $t'$, and let $j$ denote the corresponding choice under $\Pcal$. We consider three cases.
\begin{enumerate}
	\item \underline{$j=\bar{j}$}. The inventory of the chosen product decreases by one under both processes (unless $j =\bar{j}=0$, which is a trivial case), while all other inventory levels remain unchanged. Hence, $c_i(t'+1) \geq \bar{c}_i(t'+1)$ for all $i \in \Acal_{(t')}$.
	
	\item \underline{$j\neq\bar{j}$ and $j\notin\Acal_{(t')}$}. No inventory in $\Acal_{(t')}$ is consumed under $\Pcal$, whereas $\bar{c}(t'+1) \leq \bar{c}(t')$ element-wise. Therefore, on $\Acal_{(t')}$, $c(t'+1) = c(t') \geq \bar{c}(t')\geq\bar{c}(t'+1)$.
	
	\item \underline{$j\neq\bar{j}$ and $j\in\Acal_{(t')}$}. By the induction hypothesis, $c_i(t')\geq\bar{c}_i(t')$ for all $i\in\Acal_{(t')}$. Recall that $\bar{A}_{t'}$ and ${A}_{t'}$ are the offered assortments under $\bar{\Pcal}$ and $\Pcal$, respectively. Since $ \bar{A}_{t'} \subseteq \Acal_{(t')}$, every product in $\bar{A}_{t'}$ is also in stock under $\Pcal$, and hence $\bar{A}_{t'}\subseteq A_{t'}$. Under the coupling, if $\Pcal$ chooses $j\in A_{t'}\cap\Acal_{(t')}$ while $\bar{\Pcal}$ does not, product $j$ must be out of stock under $\bar{\Pcal}$ at the beginning of time $t'$; otherwise, since the two processes share the same realized utilities and $\bar{A}_{t'}\subseteq A_{t'}$, product $j$ would also be chosen under $\bar{\Pcal}$.
	
	\noindent Hence, $\bar{c}_j(t')=0$ while $c_j(t')\geq1$. It follows that $c_j(t'+1) = c_j(t')-1 \geq 0 = \bar{c}_j(t') = \bar{c}_j(t'+1)$, while for every $k\in\Acal_{(t')}\setminus\{j\}$, $c_k(t'+1)=c_k(t')\geq\bar{c}_k(t')\geq\bar{c}_k(t'+1)$. Thus, $c(t'+1)\geq\bar{c}(t'+1)$ on $\Acal_{(t')}$.
	
\end{enumerate}
Since $\Acal_{(t'+1)}\subseteq\Acal_{(t')}$, the three cases imply that $c(t'+1)\geq\bar{c}(t'+1)$ on $\Acal_{(t'+1)}$, completing the induction.

Consequently, we have established that $c_i(t) \geq \bar{c}_i(t)$ for all $i \in \Acal_{(t)}$ at the beginning of every time period $t$. Since $\bar{A}_t \subseteq \Acal_{(t)}$, every product in $\bar{A}_t$, which is in stock under $\bar{\Pcal}$, is also in stock under $\Pcal$, and hence $\bar{A}_t \subseteq A_t$ for every $t$. Under the coupling, whenever a purchase occurs under $\bar{\Pcal}$, a purchase must also occur under $\Pcal$. Therefore, $|\Omega| \geq |\bar{\Omega}|$ almost surely, proving part (i).

For part (ii) in Claim~\ref{claim:coupling-cardinality-intermediate-step}, we show that $\bar{c}_i(t)\leq c_i^{\downarrow}(t)$ for every $i \in [n]$ and every time period $t$. This immediately yields $|\bar{\Omega}| \geq |\Omega_{\downarrow}|$, because the two processes have the same initial inventory and hence
\[|\bar{\Omega}|=\sum_{i \in [n]}(\bar{c}_i(1)-\bar{c}_i(\theta T+1)) \geq \sum_{i\in[n]}(c_i^{\downarrow}(1) - c_i^{\downarrow} (\theta T+1)) = | \Omega_{\downarrow}|.
\]

We establish the inventory inequality by induction. It holds at $t=1$ because the two processes have identical initial inventories. Suppose that $\bar{c}_i(t') \leq c_i^{\downarrow}(t')$ for all $i \in [n]$. Let $\bar{j}$ and $j^*$ denote the customer choices at time $t'$ under $\bar{\Pcal}$ and $\Pcal_{\downarrow}$, respectively. By construction, $\bar{j},j^* \in \Acal_{(t')} \cup \{0 \}$, and $j^*$ maximizes realized utility over $\Acal_{(t')}\cup{0}$. We consider two cases.
\begin{enumerate}
	\item \underline{$j^* = 0$, or the purchase of $j^* \in \Acal_{(t')}$ under $\Pcal_{\downarrow}$ is virtual}. No inventory is consumed under $\Pcal_{\downarrow}$, so $c^{\downarrow}(t'+1)=c^{\downarrow}(t')$. Since inventory under $\bar{\Pcal}$ can only decrease, $\bar{c}(t'+1) \leq \bar{c} (t')\leq c^{\downarrow}(t')=c^{\downarrow}(t'+1)$ element-wise.
	
	\item \underline{The purchase of $j^*\in\Acal_{(t')}$ under $\Pcal_{\downarrow}$ is real}. In this case, $c_{j^*}^{\downarrow}(t')\geq1$. If $\bar{j}=j^*$, the inventory of $j^*$ decreases by one under both processes, while all other inventory levels remain unchanged, so $\bar{c}(t'+1)\leq c^{\downarrow}(t'+1)$. If instead $\bar{j}\neq j^*$, then, because $j^*$ maximizes realized utility over $\Acal_{(t')}\cup\{0\}$, the only reason it is not chosen under $\bar{\Pcal}$ is that it is out of stock, so $\bar{c}_{j^*}(t')=0$. Therefore, $\bar{c}_{j^*}(t'+1) = \bar{c}_{j^*}(t') = 0 \leq c_{j^*}^{\downarrow}(t')-1=c_{j^*}^{\downarrow}(t'+1)$. For every $k\neq j^*$, we similarly have $\bar{c}_k(t'+1)\leq\bar{c}_k(t')\leq c_k^{\downarrow}(t')=c_k^{\downarrow}(t'+1)$.
	
\end{enumerate}
Thus, $\bar{c}(t'+1)\leq c^{\downarrow}(t'+1)$ in all cases, completing the induction. 

Hence, $\bar{c}_i(t)\leq c_i^{\downarrow}(t)$ for all $i\in[n]$ and $t=1,\ldots,\theta T+1$, which, as argued above, implies $|\bar{\Omega}|\geq|\Omega_{\downarrow}|$ almost surely and proves part (ii).

\section{Additional Proofs, Explanations and Examples} \label{app:extras}

\subsection{Regularity of the CFTC model}
\label{appendix-sec:regularity_property}

\begin{definition}
	\label{def:substitutability}
	A choice model $\Pbb$ over choices in $\Ucal \cup \{ \nopurchase  \}$ satisfies the \emph{regularity} property if $\Pbb ( a \mid A \cup \{ a' \}) \leq \Pbb (a \mid A)$ for all assortments $A$ and choices $a$ and $a'$ such that $a' \in \Ncal \backslash A$.
\end{definition}
Regularity states that the probability of choosing any particular product cannot increase when the offered assortment expands. It is commonly viewed as the weakest form of rational choice and is sometimes referred to as \emph{weak rationality} \citep{rieskamp2006extending,jagabathula2019limit,chen2019decision}. This property is satisfied by most standard choice models, including the MNL, the LC-MNL \citep{train2009discrete}, and the ranking-based model \citep{farias2013nonparametric}.

Our choice model also satisfies the regularity property under Assumption~\ref{assumption:disutility_is_monotonic}. \citet{akchen2023size} establish regularity for their style-size choice model, which is a special case of our model, and their proof strategy extends directly to our setting. For brevity, we omit the proof.

\subsection{Fluid regularity}\label{app:proof_fluid_regular}

Here, we establish a natural structural property of the fluid demand process: reducing the inventory of one product can only increase the sales of all other products. 

\paragraph{Fluid demand regularity.} In what follows, we establish a basic regularity property of the fluid demand process, which holds for any real-valued starting inventory vector. 
\begin{claim}
    \label{claim:fluid_regular}
    For any real-valued starting inventory vector $c \in \mathbb{R}^{n\times m}_+$, product $(i,s) \in \Ucal$, and perturbation $\delta \in (0,c_{i,s}]$, let $c_{\delta} = c - \delta\cdot e_{i,s}$ denote the inventory vector obtained by lowering the inventory of product $(i,s)$ by $\delta$. Then:
\begin{enumerate}[label=(\roman*)]
\item $x_{(i,s)}(c_{\delta}) \geq x_{(i,s)}(c) -(\delta - (c_{i,s} - x_{(i,s)}(c)))^+$,
\item $x_{(j,\sigma)}(c) \geq x_{(j,\sigma)}(c_{\delta}) \quad \forall~(j,\sigma) \in \Ucal\setminus\{(i,s)\}$.
\end{enumerate}
\end{claim}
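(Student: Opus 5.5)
The plan is to compare the two fluid trajectories, started from $c$ and from $c_\delta$, one stockout epoch at a time, using only regularity of the CFTC model (Appendix~\ref{appendix-sec:regularity_property}). Let $\tau_{(k,\rho)}$ and $\tau^\delta_{(k,\rho)}$ denote the stockout times of product $(k,\rho)$ under $c$ and $c_\delta$, with the value $T$ if the product never stocks out. The central intermediate statement is a \emph{domination lemma}. It says that for every moment $t$ and every product $(k,\rho)\neq(i,s)$, we have $x_{(k,\rho)}(c_\delta,t)\ge x_{(k,\rho)}(c,t)$. It also says that $(i,s)$ is removed no later under $c_\delta$, that is, $\tau^\delta_{(i,s)}\le\tau_{(i,s)}$.

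I would prove the lemma by induction over the merged list of stockout epochs of the two processes. The induction hypothesis at an epoch $t$ is that the set of in-stock products under $c_\delta$ is contained in the set under $c$. Given the hypothesis, regularity shows that on the next inter-epoch interval every product $(k,\rho)\neq(i,s)$ that is still in stock under $c_\delta$ is consumed at least as fast as under $c$. Since $(k,\rho)$ starts with the same inventory in both processes, it therefore stocks out no later under $c_\delta$, and the containment is preserved. Product $(i,s)$ needs a separate step. Whenever both processes still offer it, the same regularity argument gives it a weakly higher rate under $c_\delta$, and it starts with less inventory there, so $\tau^\delta_{(i,s)}\le\tau_{(i,s)}$. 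This mirrors the epoch-by-epoch inductions already used in the proofs of Claim~\ref{claim:cut_off_rev} and of~\eqref{eqn:stock_out_u}.

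For part~(i) I would split into two cases. If $(i,s)$ never stocks out under $c_\delta$, then by the containment it does not stock out under $c$ either. In that case its in-stock rate under $c_\delta$ dominates its rate under $c$ over the whole horizon, so $x_{(i,s)}(c_\delta)\ge x_{(i,s)}(c)$. If instead $(i,s)$ stocks out under $c_\delta$, then $x_{(i,s)}(c_\delta)=c_{i,s}-\delta$. Using $x_{(i,s)}(c)\le c_{i,s}$, this equals $x_{(i,s)}(c)-\bigl(\delta-(c_{i,s}-x_{(i,s)}(c))\bigr)$. Both cases yield the bound in~(i).

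The main obstacle is part~(ii) in the direction the paper writes it, namely $x_{(j,\sigma)}(c)\ge x_{(j,\sigma)}(c_\delta)$. The domination lemma gives the opposite inequality, $x_{(j,\sigma)}(c_\delta)\ge x_{(j,\sigma)}(c)$. That opposite inequality is also what the sentence introducing the claim asserts, and it is the direction in which the claim is used in the proofs of Lemma~\ref{lem:fluid_LP_lb} and Lemma~\ref{lem:final_guarantee}.

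The literal inequality does hold when $\delta\le c_{i,s}-x_{(i,s)}(c)$. In that case $(i,s)$ never stocks out in either process, the two trajectories coincide, and equality holds. When $\delta$ is large enough that $(i,s)$ stocks out strictly earlier under $c_\delta$, a single-size MNL instance with two products shows the literal inequality can fail. Removing $(i,s)$ earlier strictly raises the other product's rate thereafter, so that product sells strictly more under $c_\delta$.

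I would therefore first check this two-product instance explicitly. If it confirms the failure, I would prove~(ii) in the form $x_{(j,\sigma)}(c_\delta)\ge x_{(j,\sigma)}(c)$ directly from the domination lemma. I would note that the displayed inequality in~(ii) has its two sides transposed, and that the literal form holds as an equality exactly in the non-binding regime $\delta\le c_{i,s}-x_{(i,s)}(c)$.
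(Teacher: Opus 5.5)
Your proposal is correct and is essentially the paper's argument. The paper uses the same two-case split for (i), except that when $(i,s)$ never stocks out under $c_\delta$ it relies on the trajectories coinciding until that stockout, where you use domination. It proves (ii) by induction over the stockout epochs of $c$, showing that the assortment displayed under $c_\delta$ is contained in $A^{(\ell)}$ and then invoking regularity, which is your domination lemma.

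Your reading of (ii) is also right. The paper's own proof concludes that every product other than $(i,s)$ receives weakly greater fluid demand under $c_\delta$, i.e.\ $x_{(j,\sigma)}(c_\delta)\ge x_{(j,\sigma)}(c)$. This is the direction used in Lemmas~\ref{lem:fluid_LP_lb} and~\ref{lem:final_guarantee}, so the displayed inequality has its two sides transposed.
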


\proof{Proof.}
We will prove the two properties in sequence, but first note the following straightforward fact: the fluid demand trajectories under $c$ and $c_{\delta}$ coincide for every product until product $(i,s)$ stocks out under $c_{\delta}$. Thus, if product $(i,s)$ never stocks out under $c_{\delta}$, then the claim is immediate. Accordingly, we proceed under the assumption that product $(i,s)$ does stock out under $c_{\delta}$ at moment $\tau^{(0)}$, at which time it has not yet stocked out under $c$.

\paragraph{Proof of property (i).}
Since product $(i,s)$ stocks out under $c_{\delta}$, we have
\[
x_{(i,s)}(c_{\delta}) = c_{i,s}-\delta \geq x_{(i,s)}(c) -(\delta - (c_{i,s} - x_{(i,s)}(c)))^+
\]
since $x_{(i,s)}(c) \leq c_{i,s}$. 

\paragraph{Proof of property (ii).}
Up to moment $\tau^{(0)}$, the fluid sales of all products are identical under $c$ and $c_{\delta}$. Over the interval $[\tau^{(0)},T]$, however, the fluid sales under the two starting inventory vectors may differ. To analyze this difference, let $\tau^{(1)},\ldots,\tau^{(L)}$ denote the stockout moments under $c$. For each $\ell \in [L]_0$, let $A^{(\ell)}$ denote the displayed assortment over the interval $[\tau^{(\ell)},\tau^{(\ell+1)})$, where $\tau^{(L+1)}=T$, and let $A^{(\ell)}_{\delta}$ denote the largest-cardinality assortment displayed under $c_{\delta}$ over the same interval.

We will prove by induction on $\ell$ that
$
A^{(\ell)}_{\delta} \subseteq A^{(\ell)}
~\text{for each } \ell \in [L]_0.
$
Since the CFTC model is regular, this inclusion implies that every product in $\Ucal\setminus\{(i,s)\}$ receives weakly greater fluid demand under $c_{\delta}$ than under $c$ over $[\tau^{(0)},T]$. Combined with the fact that fluid sales coincide up to $\tau^{(0)}$, this establishes property (ii).

\begin{itemize}
    \item \emph{Base case: $\ell=0$.} Since product $(i,s)$ stocks out at moment $\tau^{(0)}$ under $c_{\delta}$, while no product stocks out over $[\tau^{(0)},\tau^{(1)})$ under $c$, we have
    \[
    A^{(0)}_{\delta} = A^{(0)} \setminus \{(i,s)\} \subset A^{(0)}.
    \]

    \item \emph{Inductive step.} For each $k \in [L]_0$, let $(i^{(k)},s^{(k)})$ denote the product that stocks out under $c$ at moment $\tau^{(k)}$. Suppose that for all $k<\ell$ we have
    $
    A^{(k)}_{\delta} \subseteq A^{(k)}.
    $
    It suffices to show that product $(i^{(\ell)},s^{(\ell)})$ has already stocked out under $c_{\delta}$ by time $\tau^{(\ell)}$. Indeed, this would imply
    \[
    A^{(\ell)}
    =
    A^{(\ell-1)} \setminus \{(i^{(\ell)},s^{(\ell)})\}
    \supseteq
    A^{(\ell-1)}_{\delta} \YC{ \setminus \{(i^{(\ell)},s^{(\ell)})\} }
    \supseteq
    A^{(\ell)}_{\delta},
    \]
    where the first inclusion follows from the induction hypothesis and the second from the fact that products can only be removed from the displayed assortment over time.

    To prove that $(i^{(\ell)},s^{(\ell)})$ must have stocked out under $c_{\delta}$ by time $\tau^{(\ell)}$, suppose for contradiction that it has not. Then
    $
    x_{(i^{(\ell)},s^{(\ell)})}(c_{\delta},\tau^{(\ell)}) < c_{i^{(\ell)},s^{(\ell)}}.
    $
    On the other hand,
    \begin{eqnarray*}
        x_{(i^{(\ell)},s^{(\ell)})}(c_{\delta},\tau^{(\ell)})
        &\geq&
        x_{(i^{(\ell)},s^{(\ell)})}(c_{\delta},\tau^{(0)})
        +
        \sum_{k=1}^{\ell}
        (\tau^{(k)}-\tau^{(k-1)})
        \cdot
        \pi((i^{(\ell)},s^{(\ell)}),A^{(k-1)}_{\delta}) \\
        &\geq&
        x_{(i^{(\ell)},s^{(\ell)})}(c_{\delta},\tau^{(0)})
        +
        \sum_{k=1}^{\ell}
        (\tau^{(k)}-\tau^{(k-1)})
        \cdot
        \pi((i^{(\ell)},s^{(\ell)}),A^{(k-1)}) \\
        &=&
        x_{(i^{(\ell)},s^{(\ell)})}(c,\tau^{(\ell)}) \\
        &=&
        c_{i^{(\ell)},s^{(\ell)}},
    \end{eqnarray*}
    which is a contradiction. Here, the first inequality follows because $A^{(k-1)}_{\delta}$ is the largest-cardinality assortment displayed under $c_{\delta}$ over the interval $[\tau^{(k-1)},\tau^{(k)})$. The second inequality follows from regularity together with the induction hypothesis, which implies that
    $
    A^{(k-1)}_{\delta} \subseteq A^{(k-1)}
    ~\text{for all } k \in [\ell].
    $
\end{itemize}
\endproof

\subsection{No reduction to a Markov chain model}\label{app:MC_not_NP_MNL}

Consider three products $\Ncal = \{1,2,3\}$. Let the mixed-NP-MNL model have two customer types, each with the same product weights
$
w_1 = w_2 = w_3 = 1,
$
but different no-purchase weights $w_{0,1} = \frac12$ and $w_{0,2} = 5$, and let the two types arrive with probabilities $\frac12$ and $\frac12$. For any offered assortment $A$, the purchase probability of product $i \in A$ is therefore
\[
\pi(i,A)
=
\frac12 \cdot \frac{1}{w_{0,1} + |A|}
+
\frac12 \cdot \frac{1}{w_{0,2} + |A|}.
\]
Hence, by symmetry, if $|A|=k$, each offered product is chosen with probability
$
\pi_k
=
\frac12 \cdot \frac{1}{\frac12 + k}
+
\frac12 \cdot \frac{1}{5+k}.
$
In particular,
\[
\pi_1 = \frac12\left(\frac{1}{3/2} + \frac{1}{6}\right) = \frac{5}{12},
\qquad
\pi_2 = \frac12\left(\frac{1}{5/2} + \frac{1}{7}\right) = \frac{19}{70},
\qquad
\pi_3 = \frac12\left(\frac{1}{7/2} + \frac{1}{8}\right) = \frac{23}{112}.
\]

We now show that these probabilities cannot be generated by any Markov chain choice model. Suppose, toward a contradiction, that there exists a Markov chain choice model representing these probabilities. Let $\lambda_i$ denote the initial probability of starting at product $i$, and let $\rho_{ij}$ denote the transition probability from state $i$ to state $j$, where state $0$ is the no-purchase state. Since under the full assortment $\{1,2,3\}$ every product state is absorbing, we must have
$
\lambda_1 = \lambda_2 = \lambda_3 = \pi_3.
$
Now consider the assortment $\{1,2\}$. Starting from state $3$, the chain is absorbed in one step into either $1$, $2$, or $0$. Therefore,
$
\pi(1, \{1,2\}) = \lambda_1 + \lambda_3 \rho_{31}.
$
Since the left-hand side equals $\pi_2$, we obtain
$\
\pi_2 = \pi_3 + \pi_3 \rho_{31}$, so $\rho_{31} = \frac{\pi_2-\pi_3}{\pi_3}$. By symmetry, the same argument applied to the assortments $\{1,3\}$ and $\{2,3\}$ yields
\[
\rho_{ij} = \frac{\pi_2-\pi_3}{\pi_3}
\qquad \forall i \neq j, \; i,j \in \{1,2,3\}.
\]
Thus every product state has the same transition probability to each of the other two product states. Define
$
q := \frac{\pi_2-\pi_3}{\pi_3}.
$
Then each product state transitions to each other product state with probability $q$, and hence transitions to the no-purchase state with probability $1-2q$.

Next consider the singleton assortment $\{1\}$. Let
\[
a := \Pr(\text{hit product 1 before state 0 }\mid \text{start from state 2, assortment }\{1\}),
\]
and similarly let
\[
b := \Pr(\text{hit product 1 before state 0 }\mid \text{start from state 3, assortment }\{1\}).
\]
By symmetry, $a=b$. Moreover, under the singleton assortment $\{1\}$, starting from state $2$ one moves to state $1$ with probability $q$, to state $3$ with probability $q$, and to state $0$ with probability $1-2q$. Therefore
$
a = q + q a.
$
Hence
$
a = \frac{q}{1-q}.
$
Since $a=b$, the probability of choosing product $1$ from the singleton assortment is
$
p_1 = \lambda_1 + \lambda_2 a + \lambda_3 a = \pi_3 + 2\pi_3 a.
$
Substituting $a = q/(1-q)$ gives
$
\pi_1 = \pi_3\left(1 + \frac{2q}{1-q}\right) = \pi_3 \cdot \frac{1+q}{1-q}.
$
Using $q = (p_2-\pi_3)/\pi_3$, this simplifies to
$
\pi_1 = \frac{\pi_2 \pi_3}{2\pi_3-\pi_2}.
$
Thus any Markov chain choice model consistent with the symmetric pairwise and full-assortment probabilities must satisfy
$
\pi_1 = \frac{\pi_2 \pi_3}{2\pi_3-\pi_2}.
$
However, in our mixed-NP-MNL example,
\[
\frac{\pi_2 \pi_3}{2\pi_3-\pi_2}
=
\frac{\frac{19}{70}\cdot \frac{23}{112}}{2\cdot\frac{23}{112}-\frac{19}{70}}
=
\frac{437}{1092},
\]
whereas
\[
\pi_1 = \frac{5}{12} = \frac{455}{1092},
\]
and thus we obtain a contradiction.

\subsection{Proof of Theorem~\ref{thm:rev_ordered}}\label{app:proof-of-rev-order}

We first claim the following result regarding the curvature at stationary points of a rational sum.
	
	\begin{claim}
		\label{claim:RO_proof_convexity}
		Define a function $H(x): (0,\infty) \rightarrow \mathbb{R}$ such that $H(x) = \sum_{\ell = 1}^L \mu_\ell \cdot \frac{C+x}{D_\ell + x}$, where $C$, $D_\ell$, and $\mu_\ell$ are positive for all $\ell \in [L]$. We further assume that $D_\ell$ are not all equal.
		For any $x^* > 0$ such that $H'(x^*) = 0$, we have $H''(x^*) > 0$.
	\end{claim}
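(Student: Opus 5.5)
Write $a_\ell = D_\ell - C$, so that each summand becomes $\mu_\ell\bigl(1 + \frac{C - D_\ell}{D_\ell+x}\bigr)$. Then
\[
H'(x) = \sum_{\ell} \mu_\ell \frac{a_\ell}{(D_\ell+x)^2},
\qquad
H''(x) = -2\sum_{\ell}\mu_\ell \frac{a_\ell}{(D_\ell+x)^3}.
\]
The plan is to use the stationarity condition $H'(x^*)=0$ to rewrite $H''(x^*)$ as a sum whose terms all have the same sign. The device is to subtract a multiple of the vanishing first derivative with a well-chosen constant. For $y_\ell = D_\ell + x^* > 0$ and any constant $K$,
\[
-\tfrac{1}{2}H''(x^*) = \sum_\ell \mu_\ell \frac{a_\ell}{y_\ell^3} - \frac{1}{K}\sum_\ell \mu_\ell\frac{a_\ell}{y_\ell^2}.
\]
Here the second sum is zero.

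Take $K = C + x^*$. With this choice, $a_\ell = y_\ell - K$, and the expression becomes
\[
\sum_\ell \mu_\ell\, a_\ell\,\frac{K - y_\ell}{K y_\ell^3} = -\sum_\ell \mu_\ell \frac{(y_\ell - K)^2}{K y_\ell^3} \le 0.
\]
Since $K>0$ and $y_\ell>0$, this shows $H''(x^*) \ge 0$. It also gives strict positivity unless $y_\ell = K$ for every $\ell$, that is, unless $D_\ell = C$ for all $\ell$.

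The step I expect to need the most care is strictness. If all $D_\ell$ equal $C$, then $H$ is constant, which is excluded because the $D_\ell$ are assumed not all equal. If the $D_\ell$ are not all equal, then at most one of them can equal $C$. So some $\ell$ has $D_\ell \ne C$, the corresponding term in the sum is strictly positive, and hence $H''(x^*) > 0$.

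I would first double-check the derivative signs by differentiating $\frac{C+x}{D+x} = 1 + \frac{C-D}{D+x}$ directly. I would also remark that, for a Theorem~\ref{thm:rev_ordered}-style argument, the conclusion means every stationary point on $(0,\infty)$ is a strict local minimum.
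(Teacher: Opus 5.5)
Your argument is correct, and it shares the paper's setup: the same derivative formulas and the same substitution ($y_\ell = D_\ell + x^*$, $K = C + x^*$, which the paper calls $B_\ell$ and $t$). Where you differ is in how you close the sign argument.

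The paper rescales the weights $\mu_\ell/B_\ell^2$ into a probability distribution. It reads stationarity as $t = \mathbb{E}[B]$ and reduces $H''(x^*)>0$ to $\mathbb{E}[B]\,\mathbb{E}[1/B]>1$. It then obtains this from Cauchy--Schwarz, with strictness because the $B_\ell$ are not all equal.

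You instead subtract $\frac{1}{K}H'(x^*)=0$ and get the closed form
\[
H''(x^*) = \frac{2}{C+x^*}\sum_\ell \mu_\ell \frac{(D_\ell - C)^2}{(D_\ell + x^*)^3}.
\]
This is exactly the sum-of-squares identity $\mathbb{E}[B]\,\mathbb{E}[1/B]-1=\mathbb{E}[(B-t)^2/(Bt)]$ (valid when $t=\mathbb{E}[B]$) that sits behind the paper's Cauchy--Schwarz step.

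Your route is fully elementary and makes the equality case explicit: $H''(x^*)=0$ only when every $D_\ell = C$, that is, when $H$ is constant. The paper's route is more conceptual, attributing the sign to a named inequality.

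One slip in your strictness paragraph: ``at most one of them can equal $C$'' is false; take $D_1 = D_2 = C \neq D_3$. What you actually need is that not all $D_\ell$ equal $C$, and the hypothesis gives that immediately. Your conclusion is therefore unaffected.
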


	To prove Theorem~\ref{thm:rev_ordered}, we first use a binary vector $x \in \{ 0,1 \}^n$ to denote the assortment decision $A$. Then, the assortment optimization problem can expressed as an optimization over ${x} \in \{ 0,1 \}^n$ and further upper bounded by relaxing ${x}$:
	\begin{align*}
	 \max_{{x} \in \{ 0,1 \}^n  } \left\lbrace \sum_{g \in \Gcal}  \frac{  \lambda_g \cdot \left(\sum_{i \in [n]} r_i w_i x_i \right) }{   w_{0,g} + \left( \sum_{i \in [n]} w_i x_i \right) }  \right\rbrace  \leq \max_{{x} \in [0,1]^n  } \left\lbrace \sum_{g \in \Gcal} \frac{  \lambda_g \cdot \left(\sum_{i \in [n]} r_i w_i x_i \right) }{   w_{0,g} + \left( \sum_{i \in [n]} w_i x_i \right) } \right\rbrace = \max_{\epsilon \in [0, \sum_{i \in [n]} w_i ]}  \left\lbrace \sum_{ g \in \Gcal} \frac{ \lambda_g \cdot \xi(\epsilon) }{   w_{0,g} + \epsilon } \right\rbrace
	\end{align*}
	where we define $\xi(\epsilon) = \max \left\lbrace  \sum_{i \in [n]} r_i w_i x_i \, : \, x \in [0,1]^n, \, \sum_{i=1}^n w_i x_i = \epsilon \right\rbrace$. Here, $\xi(\epsilon)$ is the optimal objective value of a continuous knapsack problem and the optimal value can be obtained by a greedy procedure according to the revenue $r_i$. We first increase $x_1$ from zero until either the capacity constraint is tight or $x_1 = 1$. If the former case occurs, the greedy procedure terminates; otherwise, we proceed with variable $x_2$ and increase its value from zero. We repeat until we exhaust the capacity or until all components of ${x}$ are one. Under this greedy procedure, for each $k \in [n]$, $\sum_{i= 1}^k w_i$ corresponds to the total preference weight of the revenue-ordered assortment $A^{(k)} \equiv \{ 1,\ldots, k \}$ and $\xi \left( \sum_{i = 1}^k w_i \right) = \sum_{i \in A^{(k)} } r_i w_i$. Therefore, we complete the proof of this proposition if the following claim holds:
	\[
	\max_{\epsilon \in [0, \sum_{i \in [n]} w_i ]}  \left\lbrace \sum_{ g \in \Gcal} \frac{  \lambda_g \cdot \xi(\epsilon) }{   w_{0,g} + \epsilon } \right\rbrace = \max_{k \in [n]} \left\lbrace \sum_{g \in \Gcal} \frac{  \lambda_g \cdot \xi \left(  \sum_{i=1}^k w_i \right)  }{ w_{0,g} + \sum_{i=1}^k w_i } \right\rbrace.
	\]
	
	Define $\mathcal{H}(\epsilon) \equiv \sum_{g \in \Gcal} \frac{  \lambda_g \cdot \xi(\epsilon) }{   w_{0,g} + \epsilon } $. We let $\epsilon^* \in [0,\sum_{i \in [n]} w_i]$ be an optimizer of $\mathcal{H}(\epsilon)$. Assume that $\sum_{i \in [k-1]} w_i < \epsilon^* <  \sum_{i \in [k]} w_i$ for an integer $k \in [n]$. We use $\delta = \epsilon^* - \sum_{i \in [k-1]} w_i > 0 $ to denote how much $\epsilon^*$ exceeds a breakpoint. Then, we have $\xi(\epsilon^*) = \sum_{i = 1}^{k-1} r_i w_i + \delta \cdot r_k$ by the greedy procedure described above. We also define
	\begin{align*}
		H(\delta) \equiv \mathcal{  H }(\epsilon^* ) = \sum_{ g \in \Gcal} \frac{  \lambda_g \cdot \xi(\epsilon^*) }{   w_{0,g} + \epsilon^* } = \sum_{g \in \Gcal} \frac{  \lambda_g \cdot \left(  \sum_{i=1}^{k-1} r_i w_i + \delta \cdot r_k  \right)}{   w_{0,g} + \sum_{i=1}^{k-1} w_i + \delta } =   r_k \cdot \sum_{g \in \Gcal} \frac{ \lambda_g \cdot \left(  C + \delta  \right) }{D_g + \delta},
	\end{align*}
	where $C \equiv \left(\sum_{i =1}^{k-1} w_i \right) / r_k > 0$ and $D_g \equiv w_{0,g} + \left(\sum_{i =1}^{k-1} w_i \right)  > 0$. If $\epsilon^*$ is an maximizer, we must have $H'(\delta) = 0$ and $H''(\delta) \leq 0$. However, according to Claim~\ref{claim:RO_proof_convexity}, if $H'(\delta) = 0$, then $H''(\delta) > 0$, i.e., $H(\cdot)$ is strictly convex around $\delta$ and thus any deviation would increase the value of $H(\cdot)$. This contradicts with the fact that $\mathcal{H}(\epsilon)$ is maximized at $\epsilon^*$. 
	
	\subsubsection{Proof of Claim~\ref{claim:RO_proof_convexity}}
    
    We begin by differentiating $H(x)$ and obtain that
	\[
	H'(x) = \sum_{\ell=1}^L \mu_\ell \frac{D_\ell - C}{(D_\ell + x)^2}
	\quad \text{and} \quad
	H''(x) = (-2) \cdot \sum_{\ell=1}^L \mu_\ell \frac{D_\ell - C}{(D_\ell + x)^3}.
	\]
	
	Fix a $x^*>0$ such that $H'(x^*) = 0$. We define $B_\ell \equiv D_\ell + x^* > 0$ and $t \equiv C + x^* > 0$, implying that $D_\ell - C = (D_\ell + x^*) - (C + x^*) = B_\ell - t$.
	Then,
	\[
	H'(x^*) = \sum_{\ell =1}^L  \frac{\mu_\ell}{B_\ell^2} \cdot (B_\ell - t)  \quad \text{and} \quad
	H''(x^*) = (-2) \cdot \sum_{\ell=1}^L  \mu_\ell  \cdot \frac{B_\ell - t}{B_\ell^3} = (-2) \cdot \sum_{\ell=1}^k  \frac{\mu_\ell}{B_\ell^2}  \cdot \left(1 - \frac{  t }{ B_\ell }\right)  .
	\]
	
	Define $M = \sum_{\ell = 1}^L \mu_\ell / B^2_\ell$ and construct a probability distribution ${\mu'}$ over $[L]$ such that $\mu'_\ell = (1/M) \cdot (\mu_\ell / B^2_\ell)$. Obviously, we have $\sum_{\ell=1}^L \mu'_\ell = 1$. We further define a random variable $B$ such that $B = B_\ell$ with probability $\mu'_\ell$, for $\ell \in [L]$. Under this definition, the condition that $H'(x^*) = 0$ can now be expressed as $t = \mathbb{E}\left[ B \right]$ since
	\[
	0 = \frac{ H'(x^*)  }{M} = \sum_{\ell=1}^L \mu'_\ell (B_\ell - t)  = \mathbb{E}\left[ B - t \right] = \mathbb{E}\left[ B \right] - t.
	\]
	
	To show that $H''(x^*) > 0$, it suffices to show
	\[
	0 > \sum_{\ell = 1}^L \mu'_\ell \cdot \left( 1 - \frac{t}{B_\ell} \right) = 1 -  t \cdot \mathbb{E} \left[  \frac{1}{B}   \right] = 1- \mathbb{E} \left[ B \right] \cdot \mathbb{E} \left[ \frac{1}{B} \right] \quad \Leftrightarrow  \quad \mathbb{E} \left[ B \right] \cdot \mathbb{E} \left[ \frac{1}{B} \right] > 1
	\]
	which holds due to the Cauchy--Schwarz inequality and the fact that $B_\ell$ are not all equal (thus the inequality sign holds strictly ).  

\end{appendices}

\end{document}